\theoremstyle{plain}
\numberwithin{equation}{section}
\newtheorem{theorem}{Theorem}[section]
\newtheorem{proposition}[theorem]{Proposition}
\newtheorem{lemma}[theorem]{Lemma}
\newtheorem{remark}[theorem]{Remark}
\newtheorem{remarks}[theorem]{Remark}
\newtheorem{definition}[theorem]{Definition}
\newcommand{\be}{\begin{equation}}
\newcommand{\ee}{\end{equation}}
\newcommand{\teta}{\theta}
\newcommand{\om}{\omega}
\newcommand{\e}{\varepsilon}
\newcommand{\ep}{\epsilon}
\newcommand{\al}{\alpha}
\newcommand{\ov}{\overline}
\newcommand{\wtilde}{\widetilde}
\newcommand{\R}{\mathbb R}
\newcommand{\C}{\mathbb C}
\newcommand{\Z}{\mathbb Z}
\newcommand{\N}{\mathbb N}
\newcommand{\T}{\mathbb T}
\newcommand{\sign}{{\rm sign}}
\renewcommand{\b }{\beta }
\newcommand{\s }{\sigma }
\newcommand{\ii }{{\rm i} }
\newcommand{\g }{\gamma}
\renewcommand{\l }{\lambda }
\newcommand{\m }{\mu }
\newcommand{\bral}{[ \! [} 
\newcommand{\brar}{] \! ]} 
\newcommand{\Bral}{\big[\! \! \big[} 
\newcommand{\Brar}{\big]\! \! \big]}
\newcommand{\dps}{\displaystyle}
\newcommand{\x }{\xi }
\newcommand{\pa}{\partial}
\newcommand{\opbw}{{Op^{\mathrm{BW}}}}
\def\hat{\widehat}
\def\wt{\widetilde}
\def\bar{\overline}
\def\cal{\mathcal}
\renewcommand{\Re}{\mathrm{Re}\,}
\def\ba{\begin{aligned}}
\def\ea{\end{aligned}}
\def\beginm{\begin{multline}}
\def\endm{\end{multline}}
\newcommand{\sgn}{{\rm sign}}
\providecommand{\vect}[2]{{\bigl[\begin{smallmatrix}#1\\#2\end{smallmatrix}\bigr]}} \providecommand{\sign}{\mathrm{sgn}\,}  
\providecommand{\sm}[4]{{\bigl[\begin{smallmatrix}#1&#2\\#3&#4\end{smallmatrix}\bigr]}}
\begin{document}

\title[Birkhoff normal form for periodic gravity water waves]{Birkhoff normal form and long time existence \\ 
for periodic gravity water waves} 

\date{}

\author{Massimiliano Berti}
\address{SISSA, Trieste}
\email{berti@sissa.it}

\author{Roberto Feola}
\address{SISSA, Trieste}
\email{rfeola@sissa.it}

\author{Fabio Pusateri}
\address{University of Toronto}
\email{fabiop@math.toronto.edu}

\thanks{This research was supported by PRIN 2015 ``Variational methods, with applications to problems in 
mathematical physics and geometry''.
The third author was supported in part by Start-up grants from Princeton University and the University of Toronto, 
and NSERC grant RGPIN-201}

\begin{abstract}
We consider the gravity water waves system with a periodic one-dimensional interface in infinite depth,
and prove a rigorous reduction of these equations to Birkhoff normal form up to degree four.
This proves a conjecture of Zakharov-Dyachenko  \cite{Zak2}
suggested by  the formal Birkhoff integrability of the water waves Hamiltonian  truncated at order four. 
As a consequence, we also obtain 
a long-time stability result: periodic perturbations of a flat interface that are of size $\e$ 
in a sufficiently smooth Sobolev space lead to solutions 
that remain regular and small up to times of order $\e^{-3}$. 
This time scale is expected to be optimal. 

Main difficulties in the proof are the presence of 
non-trivial resonant four-waves interactions, the so-called Benjamin-Feir resonances,
the small divisors arising from near-resonances and 
the quasilinear nature of the equations.
Some of the main ingredients that we use are: 
$(1)$ a reduction  procedure to constant coefficient operators up to 
smoothing remainders, that, together 
with the verification of key algebraic cancellations of the system,
 implies the integrability of the equations at non-negative orders; 
 $(2)$ a Poincar\'e-Birkhoff normal form of the smoothing remainders
 that deals with near-resonances;
$(3)$ an a priori algebraic identification argument
of the above Poincar\'e-Birkhoff normal form equations with 
the formal Hamiltonian  computations of  \cite{Zak2,CW, DZ, CS},
that allows us to handle the Benjamin-Feir resonances.
\end{abstract}

\maketitle

\setcounter{tocdepth}{1}
\tableofcontents 

\section{ Introduction}\label{PGWW}
We consider an incompressible and irrotational perfect fluid, under the action of gravity,
occupying at time
$t$ a two dimensional domain with infinite depth, periodic in the horizontal variable, given by 
\be\label{Deta}
    {\mathcal D}_{\eta} := \big\{ (x,y)\in \T \times\R \, ;  \ - \infty <y<\eta(t,x) \big\},  \quad 
    \T := \R \slash (2 \pi \Z) \, ,
\ee
where $\eta$ 
is a smooth enough function. 
The velocity field in the time dependent domain 
$ {\mathcal D}_{\eta} $ is the gradient of a harmonic function $\Phi$, called the velocity potential. 
The time-evolution of the fluid is determined by a system of equations for 
the two functions $(t,x)\to \eta(t,x) $, $ (t,x,y)\to \Phi(t,x,y)$.
Following Zakharov \cite{Zak1} and Craig-Sulem \cite{CrSu} we denote
$\psi(t,x) = \Phi(t,x,\eta(t,x))$ the restriction 
of the velocity potential to the free interface.
Given the shape $\eta(t,x)$ of the domain $ {\cal D}_\eta $ 
and the Dirichlet value $\psi(t,x)$ of the velocity potential at the top boundary, one can recover 
$\Phi(t,x,y)$ as the unique solution of the elliptic problem 
\begin{equation} \label{BoundaryPr}
\Delta \Phi = 0  \ \text{in } 
{\cal D}_\eta  \, , \quad 
\partial_y \Phi \to 0  \ \text{as } y \to - \infty \, , \quad 
\Phi = \psi \  \text{on } \{y = \eta(t,x)\}. 
\end{equation}
The $(\eta,\psi)$ variables then satisfy the gravity water waves system
\begin{equation} \label{eq:113}
\begin{cases}
    \partial_t \eta = G(\eta)\psi \cr
\partial_t\psi = \dps -g\eta  -\frac{1}{2} \psi_x^2 +  \frac{1}{2}\frac{(\eta_x  \psi_x + G(\eta)\psi)^2}{1+\eta_x^2}
\end{cases}
\end{equation}
where $ G(\eta)\psi $ is the Dirichlet-Neumann operator  
\begin{equation}
  \label{eq:112a}
  G(\eta)\psi := \sqrt{1+\eta_x^2}(\partial_n\Phi)\vert_{y=\eta(t,x)} = (\partial_y\Phi -\eta_x \partial_x\Phi)(t,x,\eta(t,x))
\end{equation}
and $n$ is the outward unit normal at the free interface $y= \eta(t,x)$.
$ G(\eta)$ is a pseudo-differential operator with principal symbol $|D|$,
self-adjoint with respect to the $L^2$ scalar product, positive-semidefinite, 
and its kernel contains only the constant functions. 
Without loss of generality, we set the gravity constant to $g = 1$.

It was first observed by Zakharov \cite{Zak1} that \eqref{eq:113} are the Hamiltonian system  
\be\label{HS}
\begin{aligned}
& \qquad \pa_t \eta = \nabla_\psi H (\eta, \psi) \, , \quad  \pa_t \psi = - \nabla_\eta H (\eta, \psi)  \, , 
\end{aligned}
\ee
where $ \nabla $ denotes the $ L^2 $-gradient, with Hamiltonian
\be\label{Hamiltonian}
H(\eta, \psi) := \frac12 \int_\T \psi \, G(\eta ) \psi \, dx + \frac{1}{2} \int_{\T} \eta^2  \, dx
\ee
given by the sum of the kinetic 
and potential energy of the fluid.
Recall that  the Poisson bracket between functions  $ H(\eta,\psi), F(\eta,\psi)$ is defined as
\begin{equation}\label{poissonBra}
\{F, H\}=\int_{\T}\big(\nabla_{\eta}H\nabla_{\psi}F-\nabla_{\psi}H\nabla_{\eta}F\big)dx \, .
\end{equation}
Note that the ``mass'' $\int_\T \eta \, dx$ is a prime integral of \eqref{eq:113} and, 
with no loss of generality, we can fix it to zero by shifting the $y$ coordinate. 
Moreover \eqref{eq:113} is invariant under spatial translations
and Noether's theorem implies that the momentum $ \int_{\T} \eta_x (x) \psi (x)  \, dx $
is a prime integral of \eqref{HS}.

Let $ H^s (\T) := H^s $, $ s\in \R $, 
be the Sobolev spaces of $ 2 \pi $-periodic functions of $ x $. 
It is natural to consider 
$ \eta $ in the subspace of zero average functions
$H^s_0(\T) \subset H^s(\T)$, 
and  $ \psi  \in  {\dot H}^s (\T)   $
where ${\dot H}^s (\T):= H^s (\T) \slash_{ \sim}$ 
is the homogeneous Sobolev space
obtained by the equivalence relation
$\psi_1 (x) \sim \psi_2 (x)$ if and only if $ \psi_1 (x) - \psi_2 (x) = c $ is a constant\footnote{The 
fact that $ \psi \in \dot H^s $ is coherent with the fact 
that only the velocity field $ \nabla_{x,y} \Phi $ has physical meaning, 
and the velocity potential $ \Phi $ is defined up to a constant.
For simplicity of notation 
we denote the equivalence class $ [\psi] $ by $ \psi $ and,  
since the quotient map induces an isometry of $ {\dot H}^s (\T) $ onto $ H^s_0 (\T) $, 
we will conveniently identify $ \psi $ with a function with zero average.}.
Moreover, since the space averages
$ \widehat \eta_0 (t)  := \frac{1}{2\pi} \int_{\T} \eta (t, x) \, dx $, 
$\widehat \psi_0 (t) := \frac{1}{2\pi} \int_{\T} \psi (t, x) \, dx $
evolve according to the decoupled equations\footnote{Since  the ocean has infinite depth,   if $\Phi$ solves \eqref{BoundaryPr},
then $\Phi_{c}(x,y):=\Phi(x,y-c)$ solves the same problem in $\mathcal{D}_{\eta+c}$ 
assuming the Dirichlet datum $\psi$ at the free boundary
 $\eta+c $. Therefore $G(\eta+c)=G(\eta) $, $ \forall c \in \R $, 
and $ \int_\T \nabla_\eta K \, dx = 0$ where $ K := \frac12 \int_{\T} \psi G(\eta) \psi \, dx $ denotes the kinetic energy.}
\be\label{medie00}
\pa_t {\widehat \eta}_0 (t) = 0 \, , \quad \pa_t  \widehat \psi_0 (t) = - g \widehat \eta_0 (t) \, ,  
\ee
we may restrict,  
with no loss of generality, to the invariant subspace 
$$
\int_{\T} \eta \, dx = \int_{\T} \psi \, dx = 0  \, . 
$$
The main result of this paper (Theorem \ref{BNFtheorem}) proves
a conjecture of Zakharov-Dyachenko \cite{Zak2} and Craig-Worfolk \cite{CW}, 
on the approximate integrability of the water waves system \eqref{eq:113}.
More precisely, we show that \eqref{eq:113} can be conjugated, via a bounded and invertible transformation  
in a neighborhood of the origin in phase space, to its Birkhoff normal form, up to order $4$.
This latter was formally computed in \cite{Zak2,CW}, see also \cite{CS},
and, remarkably, shown to be integrable. It is fair to say that the 
formal approach in \cite{Zak2,CW} can not be  translated into a rigorous proof
and that 
Theorem \ref{BNFtheorem} requires a completely different approach to the 
Birkhoff normal form reduction.
As a consequence, we obtain a long-time stability result  (Theorem \ref{thm:main}) 
for small periodic perturbations of flat interfaces:
periodic perturbations that are initially 
$\e$-close to the flat equilibrium in a sufficiently regular Sobolev space,
lead to solutions that remain regular and small for times of order  $\e^{-3}$. 
This result has been announced in \cite{BFP}. 

While in recent years several results
have been obtained for quasilinear equations set in an Euclidean space $ \R^d $, 
fewer results are available in the periodic setting, or on other compact manifolds. 
In this context, 
the achievement of extended stability results through rigorous reductions 
to high-order Birkhoff normal forms should be seen as a 
key step to understand the global dynamics of evolution PDEs in non-dispersive settings.

\subsection{Main results}

We denote the horizontal and vertical components of the velocity field at the free interface by
\begin{align} 
\label{def:V}
& V =  V (\eta, \psi) :=  (\pa_x \Phi) (x, \eta(x)) = \psi_x - \eta_x B \, , 
\\
\label{form-of-B}
& B =  B(\eta, \psi) := (\pa_y \Phi) (x, \eta(x)) =  \frac{G(\eta) \psi + \eta_x \psi_x}{ 1 + \eta_x^2} \, , 
\end{align}
and the ``good unknown'' of Alinhac 
\begin{equation}\label{omega0}
\omega := \psi-\opbw(B(\eta,\psi))\eta \, ,
\end{equation} 
as introduced in Alazard-Metivier \cite{AlM} 
(see Definition \ref{quantizationtotale} for the definition of the  paradifferential operator $\opbw$).

To state our first main result concerning the rigorous reduction to Birkhoff normal form of 
the 
system \eqref{eq:113},
let us assume that, for $N$ large enough and some $T>0$, we have a classical solution
\begin{align}\label{etapsi}
(\eta,\psi)\in C^{0}([-T,T]; H^{N+\frac{1}{4}}\times H^{N+\frac{1}{4}})
\end{align}
of the Cauchy problem for \eqref{eq:113} with the initial height satisfying 
\begin{align}
\label{etaave0}
\int_{\T}\eta(0,x) \, dx=0  \, .
\end{align}
The existence of such a solution, at least for small enough $T$, is guaranteed by the 
local well-posedness Theorem of Alazard-Burq-Zuily \cite{ABZ1} (see Theorem \ref{thm:local} below) 
under the regularity assumption $ (\eta,\psi,V,B)(0) \in X^{N - \frac{1}{4}}$ where we denote
\begin{align}\label{Xs}
X^s := H^{s+\frac{1}{2}} \times H^{s+\frac{1}{2}} \times H^{s} \times H^{s} \, .
\end{align}
Defining the complex scalar unknown
\begin{equation}\label{u0}
u := \frac{1}{\sqrt{2}}|D|^{-\frac{1}{4}}\eta+\frac{\ii}{\sqrt{2}}|D|^{\frac{1}{4}}\omega \, , 
\end{equation}
we deduce, by \eqref{etapsi},  that $u \in C^{0}([-T,T];H^{N})$, and $u$ solves an evolution equation of the form
\begin{align}\label{equ}
\pa_t u + \ii |D|^{\frac{1}{2}} u = M_{\geq 2}(u, \ov{u}) \, , 
\end{align}
where $ M_{\geq 2}(u, \ov{u}) $ is a fully nonlinear vector field which contains up to first order derivatives of $ u $. 
Moreover, since the zero average condition \eqref{etaave0} 
is preserved by the flow of \eqref{eq:113}, it follows that
\begin{align}
\label{uave0}
\int_{\T}u(t,x) \, dx=0 \, , \qquad \forall\; t\in [-T,T] \, .
\end{align}
This is our first main result. 

\begin{theorem}{\bf (Birkhoff normal form)} 
\label{BNFtheorem}
Let $u$ be defined as in \eqref{u0},  with $\omega$ as in \eqref{omega0}, for $(\eta,\psi)$ solution of \eqref{eq:113}
satisfying \eqref{etapsi}-\eqref{etaave0}. 
There exist $ N \gg K \gg 1 $  and $ 0 < \bar{\e} \ll 1  $,   such that, if 
\begin{equation}\label{u01}
\sup_{t\in [-T,T]} \sum_{k=0}^K \|\partial_t^k u(t)\|_{\dot{H}^{N-k}} \leq \bar{\e} \, , 
\end{equation}
then there exist a bounded 
and invertible transformation $\mathfrak{B}=\mathfrak{B}(u)$ of $ \dot H^N $, which depends (nonlinearly) on $u$, 
and a constant $C :=C(N)>0$ 
such that
\begin{equation}\label{Germe}
{\|\mathfrak{B}(u) \|}_{\mathcal{L}(\dot{H}^{N}, \dot{H}^{N})}
	+ {\|(\mathfrak{B}(u))^{-1} \|}_{\mathcal{L}(\dot{H}^{N}, \dot{H}^{N})} \leq 1+C{\|u\|}_{\dot{H}^{N}},
\end{equation}
and the variable 
$z:=\mathfrak{B}(u)u$
satisfies the equation
\begin{equation}\label{theoBireq}
\pa_{t}z = -\ii \pa_{\bar{z}}H_{ZD}(z,\bar{z}) + {\mathcal X}^{+}_{\geq 4}
\end{equation}
where:

\begin{itemize}

\smallskip
\item[(1)] the Hamiltonian $H_{ZD}$ has the form 
\begin{align}
\label{theoBirHfull}
H_{ZD} = H^{(2)}_{ZD} + H^{(4)}_{ZD} \, , 
\qquad H^{(2)}_{ZD}(z,\bar{z}) := \frac12 \int_\T \big| |D|^{\frac14} z \big|^{2} \, dx \, ,
\end{align}
with
\begin{equation}\label{theoBirH}
\begin{aligned}
H^{(4)}_{ZD} (z,\bar{z}) &:=
\frac{1}{4 \pi} \sum_{k \in \Z} |k|^3  \big(  |z_k|^4  - 2 |z_{k}|^2 |z_{-k}|^2   \big)
\\
&+  \frac{1}{\pi} \sum_{\substack{k_1, k_2 \in \Z, \, \sign(k_1) = \sign( k_2 )  \\ |k_2| < |k_1|}} |k_1| |k_2|^2  
  \big( - |z_{-k_1}|^2 |z_{k_2} |^2 + |z_{k_1}|^2  |z_{k_2}|^2 \big)
\end{aligned}
\end{equation}
where $z_k$ denotes the $k$-th Fourier coefficient of the function $z$, see \eqref{complex-uU}.

\smallskip
\item[(2)] $ {\mathcal X}^{+}_{\geq 4}  := {\mathcal X}^{+}_{\geq 4} (u,\bar{u},z,\bar{z})$ 
is a quartic nonlinear term satisfying,
for some $C :=C(N)>0$, the ``energy estimate''
\begin{equation}\label{theoBirR}
{\rm Re}\int_{\T}|D|^N {\mathcal X}^{+}_{\geq 4} \cdot \bar{|D|^N z} \, dx\leq C \|z\|_{\dot{H}^N}^{5} \, .
\end{equation} 
\end{itemize}
\end{theorem}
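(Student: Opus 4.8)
The plan is to perform the normal form reduction in two largely independent stages: first a \emph{paradifferential reduction} of the quasilinear equation \eqref{equ} to a system with constant-coefficient principal and subprincipal parts, and then a \emph{Poincar\'e--Birkhoff normal form} on the resulting semilinear-up-to-smoothing system, identifying the surviving resonant terms with the formal Hamiltonian $H_{ZD}$.

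First I would set up the paradifferential/para-linearization machinery: using the good unknown of Alinhac \eqref{omega0} and the complex variable \eqref{u0}, rewrite \eqref{eq:113} as a paradifferential system for $u$, so that $M_{\geq 2}(u,\bar u)$ in \eqref{equ} is decomposed into a paradifferential operator $\opbw(a(u;x,\xi))u$ of order $\leq 1$ plus a smoothing remainder. Exploiting self-adjointness of $G(\eta)$ and the Hamiltonian structure (reality of the principal symbol, the algebraic identities listed as ingredient $(2)$ in the abstract), I would conjugate by a sequence of para-flows — a change of the space variable to straighten the transport term $V\partial_x$, a multiplication/gauge transform to remove the order-$1/2$ non-constant part, and further descent steps — so that after finitely many steps the equation reads $\partial_t z = -\ii\,\omega(D) z + (\text{constant-coeff. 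Fourier multiplier of negative order})z + (\text{at least quartic smoothing nonlinearity})$, with all transformations bounded on $\dot H^N$ with operator norm $1+C\|u\|_{\dot H^N}$. The assumption \eqref{u01} controlling $K$ time-derivatives is precisely what is needed to differentiate the para-flows in time and to close these estimates (the number of $t$-derivatives is consumed by the iteration). Here I would keep careful track of homogeneity: the quadratic and cubic parts of the nonlinearity must be tracked explicitly through each conjugation, since those are the ones producing the quartic normal form, while everything of degree $\geq 4$ can be absorbed into ${\mathcal X}^+_{\geq 4}$ after an energy estimate of the form \eqref{theoBirR}.

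Next, on the reduced constant-coefficient equation, I would run the Poincar\'e--Birkhoff normal form: choose bounded transformations (generated by para-differential/smoothing generators, so as not to lose derivatives) that cancel all \emph{non-resonant} cubic monomials $z_{k_1}z_{k_2}z_{k_3}\to z_k$ with $\omega(k)\neq \omega(k_1)\pm\omega(k_2)\pm\omega(k_3)$, and similarly at the quartic level. The small-divisor issue — frequencies $k_j$ for which $\omega(k_1)\pm\dots$ is small but nonzero — is handled by the standard $|\omega(k)\pm\omega(k')|\gtrsim \big(\max|k_i|\big)^{-1/2}$-type lower bounds for $\sqrt{|k|}$, at the cost of finitely many derivatives, which is why $N\gg K\gg 1$. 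What remains after this procedure are exactly the resonant quartic terms; using momentum conservation ($\sum \sigma_i k_i=0$ on a monomial) together with the resonance relation $\sum \sigma_i\sqrt{|k_i|}=0$, the surviving monomials are of the ``action-preserving'' type $|z_{k_1}|^2|z_{k_2}|^2$ and $|z_k|^2|z_{-k}|^2$, matching the structure of \eqref{theoBirH}. The coefficients are then pinned down by a \emph{normal form identification argument}: rather than recompute them, compare the resonant part of our reduced Hamiltonian with the formal fourth-order Birkhoff Hamiltonian of Zakharov--Dyachenko / Craig--Worfolk \cite{Zak2,CW,DZ,CS} — the two formal computations must agree on the resonant monomials because both are obtained from the same $\eqref{eq:113}$ by (possibly different) canonical transformations tangent to the identity, and resonant fourth-order coefficients are invariants of that equivalence class. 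This yields \eqref{theoBirHfull}--\eqref{theoBirH}, with the error ${\mathcal X}^+_{\geq 4}$ collecting (i) the genuinely quintic-and-higher vector field, and (ii) the remainders from the non-exact/non-Hamiltonian parts of the para-conjugations, all of which satisfy the energy estimate \eqref{theoBirR}.

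The main obstacle, and the real content of the theorem, is the interface between the two stages: the paradifferential reduction is carried out with transformations that are \emph{not} symplectic and introduce operators of negative but not arbitrarily negative order, so one must show that (a) the cubic and quartic monomials of the nonlinearity are not corrupted in an uncontrollable way — in particular that the reduction does not destroy the Hamiltonian/resonant structure at degree four — and (b) the normal form generators used in stage two remain bounded (no derivative loss) despite acting on an equation that is only para-differentially, not genuinely, diagonalized. Concretely, the hard point is proving that after the para-reduction the quartic resonant terms still coincide, modulo admissible smoothing remainders, with those of a \emph{Hamiltonian} system, so that the identification with $H_{ZD}$ is legitimate; this is where the algebraic identities of ingredient $(2)$ and the careful bookkeeping of homogeneous degrees are indispensable. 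Controlling the Benjamin--Feir resonances specifically means checking that these particular resonant quartets appear in \eqref{theoBirH} with exactly the coefficients that make the formal system integrable, which again is secured by the identification argument rather than by a direct (and error-prone) symbol computation.
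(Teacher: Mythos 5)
Your overall architecture (paradifferential reduction to constant coefficients, then Poincar\'e--Birkhoff elimination of non-resonant terms on the smoothed system, then identification with $H_{ZD}$) is the same as the paper's, but two steps as you state them contain genuine gaps.

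First, your description of the output of stage one is not achievable and is internally inconsistent: you claim that after the para-conjugations the equation is $-\ii\omega(D)z$ plus a constant-coefficient multiplier of negative order plus an ``at least quartic smoothing nonlinearity'', and then in stage two you cancel cubic monomials that, on that description, are no longer there. What the reduction can actually produce (and what the paper produces, see \eqref{stra10}--\eqref{stra12} and \eqref{diag-part}) is: an \emph{integrable} transport term $-\zeta(z)\partial_x z$ of order one with $x$-independent but $z$-dependent coefficient, which is resonant, survives to the end and is part of $-\ii\partial_{\bar z}H^{(4)}_{ZD}$, plus integrable symbols of negative order, plus smoothing remainders that still contain \emph{quadratic and cubic} homogeneous terms; only these smoothing terms can then be normalized by Birkhoff divisions, precisely because they absorb the $\max(|n_i|)^{N_0}$ small-divisor losses -- a division performed on the paradifferential (non-smoothing) part would be unbounded. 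Moreover, making the transport term integrable requires the specific cancellation $(\mathtt{V}_2^{(1)})^{+-}_{n,-n}=0$ (see \eqref{nullcoeff211}); this must be verified, and the paper stresses that such cancellations do not all follow from the Hamiltonian/reversible/parity structure you invoke, so ``exploiting self-adjointness of $G(\eta)$ and the Hamiltonian structure'' is not a proof of this step.

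Second, your identification argument is not valid as stated: you justify matching the resonant coefficients with \cite{Zak2,CW} by saying both normal forms come from ``canonical transformations tangent to the identity'' and that resonant fourth-order coefficients are invariants of that equivalence class. But the transformations actually constructed (para-flows, good unknown, smoothing flows) are \emph{not} symplectic -- you yourself flag this as the hard point -- so invariance under canonical changes of variables cannot be invoked; a priori the reduced cubic vector field need not even be Hamiltonian. The correct mechanism, and the one the paper uses, is a uniqueness argument that works for arbitrary bounded transformations tangent to the identity: because there are no three-wave resonances \eqref{torires2}, the quadratic homological equation \eqref{Poisson3} has a unique solution, which forces the composed quadratic generator to equal $X_{F^{(3)}_\C}$; projecting the cubic identity onto the kernel of $\mathrm{ad}_{X_{H^{(2)}}^\C}$ then pins the resonant cubic vector field to $X_{H^{(4)}_{ZD}}$ as in \eqref{straide}, and in particular proves a posteriori that it is Hamiltonian and that the Benjamin--Feir monomials are absent. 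Without this uniqueness step (which your proposal does not supply), the comparison with the formal computation is unjustified.
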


The main point of Theorem \ref{BNFtheorem}  is the construction of the {\it bounded} and {\it invertible} 
transformation $\mathfrak{B}(u)$ in \eqref{Germe}
which recasts the water waves system \eqref{eq:113} (in the form of the equation \eqref{equ} satisfied by $u$) into the equation \eqref{theoBireq}-\eqref{theoBirR}.
Purely formal
transformations mapping  the Hamiltonian \eqref{Hamiltonian} 
to the Hamiltonian \eqref{theoBirHfull}, up to higher order degrees of homogeneity,
were obtained by Zakharov-Dyachenko \cite{Zak2} (hence our notation $H_{ZD}$), 
Craig-Worfolk \cite{CW}, and Craig-Sulem \cite{CS}.

The main consequence of Theorem \ref{BNFtheorem} is to rigorously 
relate the flow of the full water waves system \eqref{eq:113} to the flow of the system \eqref{theoBireq},
which is made by the explicit Hamiltonian component $H_{ZD}$ plus remainders of higher homogeneity.
These remainders are under full control thanks to the energy estimates \eqref{theoBirR}.
The Hamiltonian $H_{ZD}$ is {\it integrable},
as observed in \cite{Zak2, CW},
and its flow preserves all the Sobolev norms; see Theorem \ref{Zakintegro}.
Thus, as a consequence of  Theorem \ref{BNFtheorem}, we  obtain the following long-time existence result.

\begin{theorem}{\bf (Long-time existence)}\label{thm:main}
There exists $ s_0 > 0 $ such that, 
for all $ s \geq s_0 $, there is $ \e_0 >  0 $ such that,  
for any initial data $ (\eta_0, \psi_0) $ satisfying (recall \eqref{Xs})
\be
\label{thm:main1}
\| (\eta_0, \psi_0, V_0, B_0) \|_{X^{s}} \leq \e \leq \e_0 \, , \quad
 \int_{\T} \eta_0 (x) d x =  0 \, ,  
\ee 
where $ V_0 := V(\eta_0, \psi_0) $, $ B_0 := B (\eta_0, \psi_0) $ are defined by
\eqref{def:V}-\eqref{form-of-B}, the following holds: there exist constants $c>0$, $C>0$ and a unique classical solution 
$
(\eta, \psi,V,B)  \in C^0([-T_\e, T_\e], X^{s})
$
of the water waves system \eqref{eq:113} 
with initial condition $(\eta,\psi)(0)= (\eta_0,\psi_0)$  
with   
\be
\label{time}
T_\e \geq c  \e^{-3} \, ,
\ee
satisfying 
\be
\label{thm:main2}
\sup_{[-T_\e, T_\e]} \big( \| (\eta, \psi) \|_{H^{s} \times H^{s} } + \| (V, B) \|_{H^{s-1} \times H^{s-1} } \big) \leq C \e \, , 
\qquad  \int_{\T} \eta (t,x) d x = 0  \, . 
\ee
\end{theorem}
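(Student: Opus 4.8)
The plan is to deduce the long-time existence Theorem \ref{thm:main} from the Birkhoff normal form Theorem \ref{BNFtheorem} together with the local well-posedness Theorem of Alazard-Burq-Zuily (Theorem \ref{thm:local}) and the integrability of $H_{ZD}$ (Theorem \ref{Zakintegro}), via a standard bootstrap/continuation argument. First, given initial data $(\eta_0,\psi_0,V_0,B_0)$ with $\|(\eta_0,\psi_0,V_0,B_0)\|_{X^s}\leq\e$, the local theory provides a classical solution $(\eta,\psi,V,B)\in C^0([-T,T],X^s)$ on some time interval whose length depends only on the size of the data; by the regularity theory one can also propagate the higher Sobolev regularity \eqref{etapsi} needed to run Theorem \ref{BNFtheorem}, and control of the time-derivatives $\partial_t^k u$ in \eqref{u01} follows by differentiating the equation \eqref{equ} and using that $M_{\geq2}$ gains structure (or, more simply, using that the right-hand side of \eqref{equ} is a smooth function of $u$ losing one derivative, so each $\partial_t$ costs one space derivative, which is why one needs $N\gg K$ in the first place). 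Thus the hypothesis \eqref{u01} of Theorem \ref{BNFtheorem} is satisfied as long as $\|u(t)\|_{\dot H^N}$ stays $\lesssim\bar\e$, and we may work with the conjugated variable $z=\mathfrak{B}(u)u$.

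The heart of the argument is the energy estimate for $z$. Define the energy $E_N(z):=\|z\|_{\dot H^N}^2$ (or, better, a slightly modified energy $E_N(z)=\|z\|_{\dot H^N}^2+$ a correction so that the lower-order cubic terms coming from the Hamiltonian part cancel exactly; but in fact the structure of \eqref{theoBireq} is already favorable). Differentiating along \eqref{theoBireq},
\begin{equation}\label{en-comp}
\frac{d}{dt} E_N(z) = 2\,{\rm Re}\int_\T |D|^N\big(-\ii\,\partial_{\bar z}H_{ZD}(z,\bar z)\big)\cdot\overline{|D|^N z}\,dx
+ 2\,{\rm Re}\int_\T |D|^N{\mathcal X}^+_{\geq4}\cdot\overline{|D|^N z}\,dx \, .
\end{equation}
The second term is bounded by $C\|z\|_{\dot H^N}^5$ directly by \eqref{theoBirR}. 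For the first term, the quadratic part $H^{(2)}_{ZD}$ generates the linear flow $-\ii|D|^{1/2}z$, which is skew-adjoint and contributes exactly zero to $\frac{d}{dt}\|z\|_{\dot H^N}^2$. The quartic part $H^{(4)}_{ZD}$ is a real-valued Hamiltonian that, as observed in \cite{Zak2,CW} and recorded in Theorem \ref{Zakintegro}, depends only on the actions $|z_k|^2$; hence its flow acts on each Fourier mode purely by a phase rotation $z_k\mapsto e^{\ii\theta_k(|z|^2)}z_k$, and therefore ${\rm Re}\int_\T|D|^N(-\ii\partial_{\bar z}H^{(4)}_{ZD})\cdot\overline{|D|^Nz}\,dx = \frac12\frac{d}{dt}\|z\|_{\dot H^N}^2\big|_{H^{(4)}_{ZD}\text{-flow}}=0$ as well — equivalently, $\{H^{(4)}_{ZD},\|z\|_{\dot H^N}^2\}=0$ because both are functions of the actions. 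So the first term in \eqref{en-comp} vanishes identically, and we obtain $\frac{d}{dt}E_N(z)\leq C\, E_N(z)^{5/2}$, i.e. $\frac{d}{dt}\|z\|_{\dot H^N}\leq C\|z\|_{\dot H^N}^4$.

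Next I would transfer this back to $u$, and then to $(\eta,\psi,V,B)$: by \eqref{Germe}, $\|u\|_{\dot H^N}$ and $\|z\|_{\dot H^N}$ are equivalent up to factors $1+C\|u\|_{\dot H^N}$, so for $\|u\|_{\dot H^N}\leq\bar\e$ small we have $\frac12\|u\|_{\dot H^N}\leq\|z\|_{\dot H^N}\leq 2\|u\|_{\dot H^N}$, and the differential inequality for $\|z\|$ yields the same type of inequality for $\|u\|$. Integrating, if $\|u(0)\|_{\dot H^N}\leq C'\e$ then $\|u(t)\|_{\dot H^N}\leq 2C'\e$ for all $|t|\leq T_\e$ with $T_\e\sim\e^{-3}$; by the equivalence of norms $\|u\|_{\dot H^N}\sim\|\eta\|_{H^{N-1/4}}+\|\omega\|_{H^{N+1/4}}$ coming from \eqref{u0}, and the tame estimates relating $\omega$ to $\psi$ (and $V,B$ to $(\eta,\psi)$) recalled in the excerpt — $\omega=\psi-\opbw(B)\eta$, so $\|\psi\|\lesssim\|\omega\|+\|B\|\|\eta\|$ and conversely — one gets $\sup_{[-T_\e,T_\e]}(\|(\eta,\psi)\|_{H^s\times H^s}+\|(V,B)\|_{H^{s-1}\times H^{s-1}})\leq C\e$ for $s=N-1/4$ (taking $s_0$ accordingly). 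A standard continuation argument then shows that the local solution extends to $[-T_\e,T_\e]$: on any maximal interval of existence the a priori bound \eqref{thm:main2} holds, hence the $X^s$ norm stays $\lesssim\e$, hence by Theorem \ref{thm:local} the solution can be continued, so the maximal time is at least $c\e^{-3}$. The zero-average condition $\int_\T\eta\,dx=0$ is preserved by \eqref{medie00}. The main obstacle in this deduction is essentially bookkeeping: making sure all the norm-equivalences (between $u$ and $z$, between $u$ and $(\eta,\psi,V,B)$, and between the smooth regularity \eqref{etapsi} required by Theorem \ref{BNFtheorem} and the $X^s$ regularity of the local theory) are quantitatively compatible, and verifying that the hypothesis \eqref{u01} on the time-derivatives is indeed propagated by the bootstrap — but none of these present genuine analytic difficulty once Theorem \ref{BNFtheorem} is in hand; the real work is in Theorem \ref{BNFtheorem} itself.
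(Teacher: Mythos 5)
Your proposal is correct and follows essentially the same route as the paper: local existence from Theorem \ref{thm:local}, propagation of the time-derivative bounds \eqref{u01} by differentiating the equation, the $\dot H^N$ energy estimate for $z$ in which the $H_{ZD}$ part contributes nothing (since $H^{(4)}_{ZD}$ depends only on the actions $|z_k|^2$) and the quartic remainder is handled by \eqref{theoBirR}, transfer back to $u$ via \eqref{Germe} and to $(\eta,\psi,V,B)$ via the good unknown, and a bootstrap plus the continuation criterion of Theorem \ref{thm:local}(2) to reach times $c\e^{-3}$. The only detail the paper treats more carefully is that the direct estimates control $(\eta,\psi,V,B)$ only in an $X^{s-1}$-type norm, which nonetheless suffices for the $X^5$ continuation criterion and then yields $C^0([-T_\e,T_\e],X^s)$ regularity a posteriori.
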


The main conclusion of the above theorem is the existence time $ T_\e $ of order $O(\e^{-3})$.
This goes well beyond the time of $O(\e^{-1})$ which is guaranteed by the local existence theory.
It also extends past the natural time scale of $O(\e^{-2})$ which one expects for non-resonant equations,
and that has indeed been achieved for the system \eqref{eq:113} in the works of Wu \cite{Wu}, Ionescu-Pusateri \cite{IP2} 
and Hunter-Ifrim-Tataru \cite{HuIT}.
To our knowledge, this is the first $\e^{-3}$ existence result for water waves, 
or quasilinear systems, in absence of external parameters. 
This time is expected to be optimal. 

The regularity index in Theorem \ref{thm:main} is a large number $s_0$ which we did not try to optimize.
By a more careful analysis and some adjustments to our setting for the paradifferential calculus, 
one could likely pick some $s_0 \leq 30$.
In any case, it is well understood that 
the exact Sobolev regularity is generally unimportant when dealing with the long-time behavior
of classical solutions of quasilinear problems.
Finally note that, at any time $t$, 
the solution $(\eta,\psi,V,B)$ belongs to the same space $X^{s}$ as 
the initial datum (see \eqref{thm:main1}), but in  \eqref{thm:main2} 
we  control only  a weaker norm of the solution.
This is  a well known phenomenon of the pure gravity water waves equations
(see for instance \cite{ABZ1}, \cite{AlDe,AlDe1}): 
in the variables $(\eta,\omega)$ the Sobolev regularity of the solution is preserved  along the flow, 
but there is a loss of derivatives 
in passing to the unknowns $(\eta,\psi)$. 
The weaker bound \eqref{thm:main2} is still more than sufficient to apply the continuation criterion 
of Theorem \ref{thm:local}-(2) below.

Before discussing the literature on long-time existence results and normal forms,  
we briefly describe some of the key points of this paper, and refer
to Subsection \ref{sec:strategy} for a longer explanation of our strategy.

\setlength{\leftmargini}{2em}
\begin{enumerate}

\medskip
\item The long-time existence  Theorem \ref{thm:main} is obtained by a completely 
different mechanism compared to all previous works in the literature, such as \cite{Wu,IP4,HuIT} which obtain 
a shorter $\e^{-2}$ stability time,  and does not make use of energies 
as in \cite{Del1,Del3,IP4,HuIT,ifrTat}.
Instead, we develop a novel approach to obtain the {\it complete conjugation} 
of the water waves vector field \eqref{eq:113}
to its Birkhoff normal form up to order $4$. 
In particular, Theorem \ref{BNFtheorem} provides an additional
accurate description of the 
dynamics of the water waves equations \eqref{eq:113}, in a full neighborhood 
of the origin in some Sobolev space, up to order $4$. 

\medskip
\item The gravity water waves system \eqref{eq:113} 
presents a family of non-trivial quartic resonances, the so-called Benjamin-Feir resonances, 
and there are {\it no  external parameters} which can be used to modulate
the dispersion relation and avoid them, as opposed to all previous normal form results. 

\medskip
\item Besides the resonant interactions, one also needs to pay attention to 
{\it near resonances} which can prevent the boundedness of 
Poincar\'e-Birkhoff normal form transformations.
This issue is overcome by performing an iterative reduction 
to \emph{constant integrable} coefficients, modulo smoothing remainders, 
of the system \eqref{eq:113} up to order $4$.
This procedure is possible thanks to {\it specific algebraic cancellations} of the pure gravity 
water waves equations in infinite depth, see comments below \eqref{betatV}.

\medskip
\item 
Since the gravity water waves dispersion relation $\sqrt{|k|}$ is sublinear, 
the reduction procedure is very 
different from \cite{BD}, where the dispersion relation  $ \sim |k|^{3/2}$ is superlinear.
However, we still employ the paradifferential framework developed in \cite{BD} as it readily provides us with a paralinearization of the Dirichlet-Neumann map with multilinear expansions. 
Other relevant differences with respect to  \cite{BD} are the presence of exact resonances (Benjamin-Feir), the absence of external parameters, and the fact
that we do not restrict to even initial data.   

\medskip
\item Our transformations are non-symplectic and 
the final resonant Poincar\'e-Birkhoff normal form system is not a priori explicit.
Then, an important step in our proof is a {\it normal form uniqueness argument},
which allows us to identify 
the reduced Poincar\'e-Birkhoff system obtained with our procedures 
with the Hamiltonian equations associated 
to the Zakharov-Dyachenko-Craig-Worfolk Hamiltonian $H_{ZD}$ in \eqref{theoBirHfull}-\eqref{theoBirH}, 
up to degree $4$ of homogeneity. 
This algebraic identification can be seen as philosophically similar 
to Moser's indirect proof of the convergence of the Lindsted series for a KAM torus \cite{M67}.

\medskip
\item The stability time $ \sim \e^{-3} $ in Theorem \ref{thm:main} 
is expected to be {\it optimal} in view of the presence of quintic resonances 
as exhibited by Craig-Worfolk \cite{CW} and Dyachenko-Lvov-Zakharov \cite{DZ}. 
This means that we can not expect a stability time $ \sim \e^{-4} $ for all the initial data.
\end{enumerate}

\medskip
We have chosen to formulate our long-time existence result using the original symplectic variables $(\eta,\psi)$ as well
as the velocity components $(V,B)$ in \eqref{def:V}-\eqref{form-of-B} consistently with the formulation of the local
existence theorem of \cite{ABZ1}, that we reproduce 
below. 

\begin{theorem}[Local existence \cite{ABZ1}] \label{thm:local} 
Let $ s > 3/2 $ and consider 
$ (\eta_0, \psi_0)  $ such that $ (\eta_0, \psi_0, V_0, B_0)$ is in $X^s $, see \eqref{Xs}. 
Then the following holds:
\begin{itemize}
\smallskip
\item[(1)] there exists $ T_{{\rm loc}} > 0 $ such that the Cauchy problem for \eqref{eq:113} with initial data $  (\eta_0, \psi_0) $
has a unique solution $ (\eta, \psi) \in C^0([0, T_{{\rm loc}}],  H^{s + \frac12} \times H^{s + \frac12}) $  
with $ (V, B) \in C^0([0, T_{{\rm loc}}], H^{s} \times H^{s}) $; 
\smallskip
\item[(2)]  let $ T_* $ be the maximal time of existence of the solution 
$ (\eta, \psi) \in C^0([0, T_{{\rm loc}}],  H^{s + \frac12} \times H^{s + \frac12}) $. If, for some $ T_0 > 0 $,  
\be\label{thm:localcont}
\sup_{[0, T_0]} \| (\eta, \psi, V, B)(t) \|_{X^5} < + \infty 
\ee
then $ T_0 < T_* $ and  $ \sup_{[0, T_0]} \| (\eta, \psi, V, B)(t) \|_{X^s} < + \infty $. 
\end{itemize}
\end{theorem}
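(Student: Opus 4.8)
The plan is to follow the paradifferential strategy of Alazard--Burq--Zuily \cite{ABZ1}. Although the Dirichlet--Neumann operator $G(\eta)$ and the quasilinear transport term in \eqref{eq:113} each lose one derivative relative to the phase space $X^s$, after paralinearization and a suitable symmetrizing change of unknown the system becomes a paradifferential evolution equation whose principal part is skew-adjoint on $L^2$ up to bounded errors. For such an equation the standard $H^s$ energy estimate closes with a tame, loss-free dependence on the data, and this is what yields both the short-time existence (1) and the continuation criterion (2).

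First I would paralinearize the Dirichlet--Neumann operator. Introducing the good unknown $\omega = \psi - \opbw(B)\eta$ of \eqref{omega0}, one proves an identity of the form $G(\eta)\psi = \opbw(\lambda)\omega - \pa_x\big(\opbw(V)\eta\big) + R(\eta)\psi$, where $\lambda(x,\xi)$ is an elliptic symbol of order one with principal part $|\xi|$ (plus a lower-order correction depending on $\eta_x$), and $R(\eta)$ is smoothing; all estimates are tame. This step relies on elliptic regularity for the velocity potential $\Phi$ in the rough moving domain $\mathcal{D}_\eta$: flatten $\mathcal{D}_\eta$ by a regularizing diffeomorphism, reduce to a variable-coefficient elliptic equation on a fixed strip, and propagate Sobolev regularity. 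It is this analysis that fixes the threshold $s>3/2$ and also produces the $H^s$ control of the trace velocities $V=V(\eta,\psi)$ and $B=B(\eta,\psi)$ entering $X^s$. Next I would symmetrize: passing to a complex unknown of the type \eqref{u0}, further weighted by a power of the Rayleigh--Taylor coefficient $a := -(\pa_y P)|_{y=\eta}$, reduces \eqref{eq:113} to a scalar paradifferential equation
\[
\pa_t u + \opbw(V)\,\pa_x u + \ii\,\opbw(\sqrt{a\lambda})\,u = f(u),
\]
where $f$ collects tame terms of lower order and the operator $\opbw(V)\pa_x + \ii\,\opbw(\sqrt{a\lambda})$ is skew-adjoint on $L^2$ modulo a bounded (order-zero) operator. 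Here one uses crucially that for infinite-depth gravity water waves the Taylor sign condition $a\geq c>0$ holds automatically \cite{Wu}, so $\sqrt{a\lambda}$ is a genuine elliptic symbol of order one.

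Then I would run the $H^s$ energy estimate on the reduced equation: computing $\tfrac{d}{dt}\|u\|_{H^s}^2$ with the paradifferential and commutator calculus, the skew-adjointness of the principal part annihilates the top-order term, and the remainder is bounded by $\mathcal{F}(\|u\|_{H^s})\,\|u\|_{H^s}^2$ with $\mathcal{F}$ depending only on lower norms (in fact on an $X^5$-type quantity). Transferring back to $(\eta,\psi,V,B)$ gives an a priori bound on $\|(\eta,\psi,V,B)\|_{X^s}$ on a time interval whose length depends only on the $X^s$-size of the data, which is the quantitative content of (1). The continuation criterion (2) is the same estimate read differently: since the controlling factor $\mathcal{F}$ only involves the $X^5$ norm, the hypothesis \eqref{thm:localcont} prevents blow-up of the $X^s$ norm, hence $T_0<T_*$ and the stated bound holds. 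Existence is then obtained by a mollification/iteration scheme (regularize the equation, get uniform estimates from the a priori bound, extract a limit by compactness, verify it solves \eqref{eq:113}), and uniqueness plus continuous dependence follow from a tame energy estimate for the difference of two solutions carried out in a norm one degree below the existence norm, as is standard for quasilinear problems.

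The main obstacle is exactly the derivative loss: $G(\eta)$ and the transport term are both of order one with respect to $X^s$, so naive energy estimates diverge. The resolution, and the technical heart of the argument, is the pairing of Alinhac's good unknown with the symmetrization by $\sqrt{a\lambda}$, together with a sharp and tame paralinearization of the Dirichlet--Neumann operator (which itself rests on delicate elliptic estimates in the rough, time-dependent fluid domain) and the Rayleigh--Taylor sign condition that makes the symmetrizer elliptic. Keeping every estimate tame and valid down to the critical regularity $s>3/2$ is what makes the analysis delicate.
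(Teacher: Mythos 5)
This theorem is not proved in the paper: it is quoted from Alazard--Burq--Zuily \cite{ABZ1}, with the continuation criterion in part (2) attributed to the refinement of De Poyferr\'e \cite{DPF}, the only remark being that the energy-method proof is insensitive to replacing $\R$ by $\T$. Your sketch reproduces precisely the strategy of those cited works (tame paralinearization of the Dirichlet--Neumann operator, Alinhac's good unknown, symmetrization by the Taylor coefficient --- automatically positive in infinite depth --- and tame $H^s$ energy estimates controlled by a fixed low norm, yielding both local existence and the continuation criterion), so it is essentially the same approach as the proof the paper relies on.
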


Part (1) of Theorem \ref{thm:local} is the local existence result \cite[Theorem 1.2]{ABZ1},
stated in the case of the torus $ \T $, for a fluid in infinite depth. 
The result is based on energy methods for hyperbolic symmetrizable quasi-linear systems, 
which are the same in $ \T^d $ and in $ \R^d$. 
A more precise version, which 
implies also the continuation criterion in (2), is Theorem 1.2 of De Poyferr\'e \cite{DPF}.
By time-reversibility, the solutions of \eqref{eq:113} are defined 
in a symmetric interval
$ [-T, T]$. 
Note that the system \eqref{sistemaDiag} which we derive in
Proposition \ref{teodiagonal} admits energy estimates.
Therefore, one could also prove a local existence result based on this, 
implementing an iterative scheme as in \cite{FI1}.

\medskip

\subsection{Literature}\label{literat}
We now present some known results on the well-posedness and normal form 
theory for the water waves  equations.

\smallskip
\emph{Local well-posedness.} 
Early results on the local well-posedness 
of the water waves system include those by 
Nalimov \cite{Nali}, Yosihara \cite{Yosi}, and Craig \cite{Crai}, 
which deal with the case of small perturbations of a flat interface. It was then proved by Wu \cite{Wu97}
 that local-in-time solutions can be constructed 
 with initial data of arbitrary size in Sobolev spaces, 
 in the irrotational case. The question of local well-posedness 
 of the water waves and free boundary Euler equations 
 has then been addressed by many authors, see for example 
 \cite{
 CrisLin, Lind1,
 Lannes, Cou, ShaZ, 
 ABZduke, ABZ1}, we refer to  \cite[Section $2$]{IP2} for a longer discussion and a more extensive list of references.  
The local well-posedness theory 
is presently well-understood: in a variety of different scenarios, 
for sufficiently nice initial data, it is possible to prove the existence of
classical smooth solutions on a small time interval that depends on the size of the initial data 
(and the arc-chord constant of the initial interface). 
In particular, for data which are $\e$ close to a flat 
interface,  solutions exist and stay regular for times of order $\e^{-1}$.

\smallskip
\emph{Long-time regularity in the Euclidean case.} 
In the Euclidean case, i.e. when the horizontal variable $x\in \R^d$, 
it is possible to construct global-in-time solutions. 
The main mechanism used in these cases is dispersion which, combined with localization 
(decay at  spatial infinity), transfers the decay of linear solutions to the nonlinear problem, 
and gives control for long times.

For 3-dimensional fluids (2d interfaces), 
the first global well-posedness results were proved by Germain-Masmoudi-Shatah \cite{GMS} and Wu
\cite{Wu2} for gravity water waves.
The more difficult question of global regularity for gravity-capillary water waves ($g>0$, $\kappa>0$,
where $\kappa$ is the surface tension coefficient when capillarity at the interface is included in the system) 
has been recently solved by Deng-Ionescu-Pausader-Pusateri \cite{DIPP}. 
For the case of a finite flat bottom see  Wang \cite{wang2}.

For 2-dimensional fluids (1d interfaces), 
Wu \cite{Wu} 
proved an almost-global existence result for gravity water waves, which was
improved to global regularity by Ionescu-Pusateri \cite{IP}, 
Alazard-Delort \cite{AlDe, AlDe1}, Hunter and Ifrim-Tataru \cite{HuIT, IFRT}.
For the capillary problem in 2d, global regularity was proved in \cite{IP3}, 
see also \cite{IP4}, and 
\cite{ifrTat}. 
We refer again to \cite{IP2} for more references.

\smallskip
\emph{Long-time existence on Tori: Normal Forms.}
In the case of the torus, $x\in \T^d$, there are no obvious dispersive effects 
that help to control solutions for long times.
In addition, the quasilinear nature of the equations 
and the lack of  conserved quantities which control high Sobolev norms,
prevent the effective use of semilinear techniques.

An important tool that can be used to extend the lifespan of solutions for quasilinear 
equations is normal form theory.
To explain the idea, let us consider a generic evolution equation of the form
\begin{equation}\label{tori1}
\pa_{t} u + \ii \omega(D) u = Q(u,\bar{u}) \, , \quad u(t=0)=u_0 \, , \quad \|u_0\|_{H^{N}}\leq \e \, , 
\end{equation}
where $\omega(D)$ is a  real-valued Fourier multiplier, 
and $Q$ is a quadratic nonlinearity, semi- or quasi-linear,  which
depends on $(u,\bar{u})$ and their derivatives.
In the case of \eqref{eq:113} the dispersion relation is $   \omega(D) =\sqrt{|k|}$.
An energy estimate for  \eqref{tori1} 
of the form
$\frac{d}{dt}E(t) \lesssim {\| u(t) \|}_{H^{N}} E(t)$,
where $E(t) \approx \|u(t)\|^{2}_{{H}^{N}}$,
allows the construction of local solutions on time scales of $O(\e^{-1})$.

To extend the time of existence one can try to obtain a {\it quartic energy inequality} of the form
\begin{equation}\label{tori2}
| E(t) - E(0) | \lesssim \int_0^t {\| u(\tau ) \|}_{H^{N}}^2 E( \tau )  \, d \tau  \, .
\end{equation}
This will then give existence for times of $O(\e^{-2})$.
For water waves, such inequalities have been proven in \cite{Wu,ToWu,IP,AlDe,HuIT} 
for the system \eqref{eq:113}, and also in the case of pure capillarity \cite{IP3,ifrTat}, 
and gravity over a flat bottom \cite{HarIT}. 
Similar results were  obtained in \cite{DS1}
for the  Klein-Gordon equation  on $\T^{d}$, 
which corresponds to the dispersion relation $\omega(k) = \sqrt{|k|^2+m^2}$ in \eqref{tori1}.
Although some delicate analysis is needed in the case of quasilinear PDEs,
the possibility of proving an inequality of the form \eqref{tori2} relies on the absence of
{\it 3-waves resonances}, that is, non-zero integers $(n_1,n_2,n_3)$ solving
\begin{equation}\label{torires2}
\s_1 \omega({n_1})+\s_2 \omega({n_2})+\s_3 \omega({n_3})=0 \, , \qquad \s_1n_1+\s_2n_2+\s_3n_3=0 \, , 
\end{equation}
for $\s_{j}\in\{+,-\}$.
In order to further extend the existence time, one can try to upgrade \eqref{tori2} to a quintic energy estimate like
\begin{equation}\label{tori3}
| E(t) - E(0) | \lesssim \int_0^t {\| u(\tau) \|}_{H^{N}}^3 E( \tau)  \, d \tau \, . 
\end{equation}
At a formal level, this is possible in the absence of \emph{4-waves resonances}, 
that is, non-trivial integer solutions of 
\begin{equation}\label{torires}
\sum_{j=1}^{4}\s_{j}\omega(n_{j})=0 \, , 
\qquad \sum_{j=1}^{4}\s_{j}n_{j}=0 \, .
\end{equation}
Here, by ``trivial'' solutions we mean those $4$-tuples where the frequencies
$n_1, \ldots, n_4 $ appear in pairs with corresponding opposite signs. 
These unavoidable resonances can then often be handled by exploiting the Hamiltonian/reversibile structure 
of the equation to show that they do not contribute to the energy inequality.

The condition on the absence of non-trivial solutions to \eqref{torires}
is, however, not satisfied 
by the gravity water waves system \eqref{eq:113}, 
see the expressions \eqref{straBFR} of the nontrivial solutions of \eqref{torires}.
We will actually prove a quintic energy inequality of the form \eqref{tori3} 
as a consequence of 
the reduction of \eqref{eq:113} to its integrable Birkhoff normal form up to order $ 4 $,  
formally obtained in \cite{DZ,CW}.
A key role here is played by a
cancellation of the resonant monomials in the normal form.

%

Existence results for longer times
can sometimes be obtained
when the dispersion relation $\omega(k)$ in \eqref{tori1} depends in a non-degenerate way on some additional {\it parameter}.
In these cases it is often possible to verify, 
for almost all values of the parameters,  
the nonexistence of integer solutions of 
\begin{equation*}
\sum_{j=1}^{N}\s_{j}\omega(n_{j})=0 \, ,  \qquad \sum_{j=1}^{N}\s_{j}n_{j}=0 \, ,
\end{equation*}
except, when $ N $ is even,  and 
$n_1, \ldots, n_N $ appear in pairs with corresponding opposite signs
(trivial resonances).  
Trivial resonances can again be handled using the the Hamiltonian (or reversible) structure of the equations.
In this direction we mention the works of
Bambusi \cite{Bam1}, Delort-Szeftel \cite{DS2}, 
Bambusi-Delort-Greb\'ert-Szeftel \cite{BDGS}, 
and Bambusi-Greb\'ert \cite{BG1} which developed normal form theory for Hamiltonian semilinear PDEs.
In the context of quasilinear PDEs, Delort \cite{Del2,Del3} 
obtained an $\e^{-M}$ existence result for arbitrary $M$, 
for almost all mass parameters $m$ for Hamiltonian Klein-Gordon equations on spheres.

For water waves, the only extended stability result proven so far, is that of Berti-Delort \cite{BD}
who obtained an $\e^{-M}$ existence result for 1d periodic and even 
gravity-capillary waves with depth $h$, 
corresponding to $\omega(k)=\sqrt{{\rm tanh}(h |k|)(g|k| + \kappa |k|^{3})}$, 
for almost all values of the surface tension parameter $\kappa $. 
This work is based on 
a paradifferential reduction of the gravity-capillary water waves equations 
to constant coefficients, up to smoothing remainders. 
We also refer to Feola-Iandoli \cite{FI2} for  
fully nonlinear reversible Schr\"odinger equations.

\smallskip
\emph{Quasi-periodic solutions.}
We finally mention that  global in time, even in $ x $,  quasi-periodic
solutions for $ 1$d space periodic water waves equations 
have been  recently constructed, using KAM techniques combined 
with a systematic use of pseudo-differential calculus, 
in Berti-Montalto \cite{BM1} for gravity-capillary waves, 
using $\kappa$ as a parameter (see \cite{Alaz-Bal} for periodic solutions),  
and in Baldi-Berti-Haus-Montalto \cite{BBHM} 
for pure  gravity waves in finite depth (see \cite{PlTo}, \cite{IPT} for periodic solutions)
using the depth or the wavelength as a parameter. 

\subsection{The Zakharov-Dyachenko conjecture and our strategy}\label{secZDstra}

In this subsection we first recall the  calculations of \cite{Zak2, CW, DZ, CS} 
concerning the formal integrability, up to  order four, 
of the pure gravity water waves Hamiltonian \eqref{Hamiltonian} in infinite depth.
We then discuss  the strategy of proof of  Theorem \ref{BNFtheorem} which rigorously justifies this integrability. 

\smallskip
\subsubsection{The formal Birkhoff normal form of Zakharov-Dyachenko \cite{Zak2}}\label{secZD}

Consider the Hamiltonian $ H $ in \eqref{Hamiltonian}.  
Introduce the complex variable
\be\label{briefu0}
w := \frac{1}{\sqrt{2}}|D|^{-\frac{1}{4}}\eta+\frac{\ii}{\sqrt{2}}|D|^{\frac{1}{4}}\psi \, ,
\ee
and let $H_{\C}$ be the Hamiltonian expressed in $(w,\ov{w})$.
By a Taylor expansion of the Dirichlet-Neumann operator for small $\eta$, see for example \cite{CrSu}, 
one can expand $H_{\C} = H_{\C}^{(2)} + H_{\C}^{(3)} + \cdots$ 
where $H_{\C}^{(\ell)}$ are $\ell$-homogeneous in 
$ (w, \ov{w})$, 
see \eqref{HVF2ham}-\eqref{cubic3}-\eqref{vectHam}. 
Note that in this Taylor expansion there is a priori no
control on the boundedness of the Hamiltonian vector fields associated to  $H_{\C}^{(\ell)}$, $\ell=3,4, \ldots$

 Applying the usual  Birkhoff normal form procedure for Hamiltonian systems 
 (see Subsection \ref{sec:formal}), it is possible to find a formal 
 symplectic transformation $\Phi$ such that
\be\label{brieftra}
H_\C \circ\Phi=H^{(2)}_{\C}+H^{(4)}_{ZD}+\cdots
\ee
where: $(1)$ all terms of homogeneity $3$ have been eliminated due to the absence 
of $3$-waves resonant interactions, 
that is, non-zero integer solutions of \eqref{torires2}, 
and $(2)$ the term $H^{(4)}_{ZD}$ is supported only on Birkhoff resonant quadruples, i.e. 
\begin{equation}\label{briefH^4}
H^{(4)}_{ZD}= \sum_{\substack{\s_1 n_1 + \s_2 n_2 + \s_3 n_3+\s_{4}n_{4} = 0 \\
\s_1 \omega({n_1})+\s_2 \omega({n_2})+\s_3 \omega({n_3})+\s_{4}\omega({n_4})=0}} 
H_{n_1, n_2, n_3,n_4}^{\s_1, \s_2, \s_3,\s_4} w_{n_1}^{\s_1} w_{n_2}^{\s_2} 
w_{n_3}^{\s_{3}} w_{n_4}^{\s_{4}}
\end{equation}
where $H_{n_1, n_2, n_3,n_4}^{\s_1, \s_2, \s_3,\s_4}\in\C$, $ \om (n) = \sqrt{|n|} $  
and $n_1, n_2, n_3,n_4\in \Z\setminus\{0\}$. 
As shown in \cite{Zak2},  
there are many solutions to the constraints
for the sum in \eqref{briefH^4}.
For example, if $\s_1 = \s_3 = 1 = -\s_2 = -\s_4$, and up to permutations, 
there are trivial solutions of the form 
$(k,k,j,j)$, which give rise to benign integrable monomials $ | w_k |^2 | w_j |^2 $
and the two parameter family of solutions, called Benjamin-Feir 
resonances,  
\be
\label{straBFR}
\bigcup_{\lambda\in\Z \setminus \{0\}, b\in\N} \Big\{ 
n_1 = -\lambda b^2, 
 \, n_2 = \lambda (b+1)^2 \, , \, n_3 = \lambda (b^2+b+1)^2, \, n_4= \lambda (b+1)^2 b^2 \Big\} \, . 
\ee
As a consequence, one could expect, a priori,  the presence in \eqref{briefH^4} of non-integrable monomials of the form
$ w_{- \l b^2} \ov{ w_{\l (b+1)^2 } } w_{\l (b^2+b+1)^2 }  \ov{ w_{\l (b+1)^2 b^2 } }$ 
and their complex conjugates. 
The striking property proved in \cite{Zak2}, see also \cite{CW,CS}, is that 
the coefficients $ H_{n_1, n_2, n_3,n_4}^{\s_1, \s_2, \s_3,\s_4}$ in  
\eqref{briefH^4} which are supported on Benjamin-Feir resonances are actually zero. 
The consequence of this ``null condition" of the gravity water waves system in infinite depth is the following remarkable result:

\begin{theorem}{\bf (Formal integrability of the water waves Hamiltonian \eqref{Hamiltonian}
at order four \cite{Zak2,DZ,CW,CS})}\label{Zakintegro}
The Hamiltonian $H^{(4)}_{ZD}$ in \eqref{briefH^4} has the form \eqref{theoBirH}. 
The Hamiltonian $H_{ZD} = H^{(2)}_{ZD} + H^{(4)}_{ZD}  $ 
in \eqref{theoBirHfull} is integrable and it can be written in action-angle variables as \eqref{new}.
The Hamiltonian vector field generated by $ H_{ZD} $,  
explicitly written in \eqref{HCS1} and \eqref{primRn}, possesses 
the actions $ | w_{n}|^{2}$, $n\in \Z\setminus\{0\}$,
as  prime integrals. 
In particular, the flow of $ H_{ZD}$ preserves all Sobolev norms.
\end{theorem}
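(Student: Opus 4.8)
The plan is to verify Theorem~\ref{Zakintegro} by a sequence of essentially explicit computations, once the support constraints on the degree-four Hamiltonian are understood. First I would start from the formal Birkhoff normal form \eqref{briefH^4}: the monomial $H_{n_1,n_2,n_3,n_4}^{\s_1,\s_2,\s_3,\s_4}u_{n_1}^{\s_1}u_{n_2}^{\s_2}u_{n_3}^{\s_3}u_{n_4}^{\s_4}$ survives only on integer quadruples satisfying simultaneously the momentum constraint $\s_1n_1+\s_2n_2+\s_3n_3+\s_4n_4=0$ and the resonance constraint $\s_1\sqrt{|n_1|}+\cdots+\s_4\sqrt{|n_4|}=0$. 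Reality of $H$ forces $H_{ZD}^{(4)}$ to be real, so complex-conjugate monomials are paired and it suffices to consider (up to permutations and overall sign) the signature $\s=(+,-,+,-)$, since $(+,+,+,+)$ and $(+,+,+,-)$ have no solutions by positivity/monotonicity of $\sqrt{|\cdot|}$. For $\s=(+,-,+,-)$ one solves $\sqrt{|n_1|}-\sqrt{|n_2|}+\sqrt{|n_3|}-\sqrt{|n_4|}=0$ together with $n_1-n_2+n_3-n_4=0$; the standard elementary argument (already in \cite{Zak2}) shows the integer solutions are exactly the trivial pairings $\{n_1=n_2,\,n_3=n_4\}$, $\{n_1=n_4,\,n_2=n_3\}$ (and $n_1=-n_3$, $n_2=-n_4$ type coincidences), plus the two-parameter Benjamin--Feir family \eqref{straBFR}. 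The key algebraic input, taken from \cite{Zak2,CW,CS}, is that the coefficient $H_{n_1,n_2,n_3,n_4}^{\s_1,\s_2,\s_3,\s_4}$ \emph{vanishes} on the nontrivial Benjamin--Feir quadruples, so only the trivial pairings remain; collecting the contributions of $|u_k|^2|u_j|^2$-type monomials with the explicit coefficients from the Taylor expansion of the Dirichlet--Neumann operator yields precisely the closed formula \eqref{theoBirH}. This is the computational heart of the statement ``$H_{ZD}^{(4)}$ has the form \eqref{theoBirH}''.

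Next I would prove integrability. The point is that every surviving monomial in \eqref{theoBirH} is of the form $|z_{k_1}|^2|z_{k_2}|^2$ (diagonal in the actions), so writing $I_n:=|u_n|^2$ we can express $H_{ZD}=H_{ZD}^{(2)}+H_{ZD}^{(4)}$ purely in terms of the actions $\{I_n\}_{n\in\Z\setminus\{0\}}$: indeed $H_{ZD}^{(2)}=\frac12\sum_n |n|^{1/2}I_n$ already depends only on actions, and from \eqref{theoBirH} one reads off $H_{ZD}^{(4)}=\frac1{4\pi}\sum_k|k|^3(I_k^2-2I_kI_{-k})+\frac1\pi\sum_{\sign(k_1)=\sign(k_2),\,|k_2|<|k_1|}|k_1||k_2|^2(-I_{-k_1}I_{k_2}+I_{k_1}I_{k_2})$. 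Passing to action--angle variables $(I_n,\theta_n)$ via $u_n=\sqrt{I_n}e^{\ii\theta_n}$ puts $H_{ZD}$ in the form \eqref{new}, a Hamiltonian depending on the actions alone. For such a Hamiltonian Hamilton's equations read $\dot I_n=-\partial_{\theta_n}H_{ZD}=0$ and $\dot\theta_n=\partial_{I_n}H_{ZD}=:\Omega_n(I)$, so each action $I_n=|u_n|^2$ is a constant of motion; writing out the resulting equations for $u_n$ gives the explicit vector field in \eqref{HCS1}, \eqref{primRn}. Since the $\Omega_n(I)$ are constant along the flow, the solution is $u_n(t)=u_n(0)e^{-\ii\Omega_n(I(0))t}$, which I would record as the explicit integration of the normal form.

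Finally, preservation of Sobolev norms is immediate from the previous step: $\|z(t)\|_{\dot H^s}^2=\sum_n|n|^{2s}|z_n(t)|^2=\sum_n|n|^{2s}I_n(t)=\sum_n|n|^{2s}I_n(0)=\|z(0)\|_{\dot H^s}^2$ for every $s$, because each $I_n$ is conserved; the same holds for any weighted $\ell^2$ norm of the Fourier coefficients, in particular all $H^s$ norms. I do not anticipate a genuine obstacle in the integrability and norm-preservation parts, which are bookkeeping once the formula \eqref{theoBirH} is in hand. The one step that carries real content — and which we legitimately import rather than reprove here, citing \cite{Zak2,DZ,CW,CS} — is the vanishing of the normal-form coefficients on the Benjamin--Feir resonances \eqref{straBFR}; that null condition is the nontrivial arithmetic/algebraic miracle underlying the whole theorem, and it is exactly what the rigorous analysis in the body of the paper (via the normal form uniqueness argument) will later confirm is consistent with our bounded, invertible reduction.
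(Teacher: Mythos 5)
Your proposal is correct and follows essentially the same route as the paper: the crucial vanishing of the normal-form coefficients on the Benjamin--Feir quadruples is imported from \cite{Zak2,DZ,CW,CS} (the paper likewise cites it rather than reproving it, confirming it only a posteriori via the normal form identification), and the remaining steps — the Hamiltonian depends only on the actions, hence \eqref{new}, the explicit system \eqref{HCS1}--\eqref{primRn}, conservation of each $|u_n|^2$, and preservation of all Sobolev norms — are the same bookkeeping the paper carries out in Subsection 7.1. No gaps beyond what is legitimately cited.
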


We refer the reader to Subsection \ref{sec:formal} for further details on the structure of the Hamiltonian $ H_{ZD}$.
Unfortunately, this striking result is  a purely formal calculation because
the transformation $\Phi$ in \eqref{brieftra} is not bounded and invertible, 
and there is no control on the higher order remainder terms. 
Thus, no actual relation can be established between the flow of $H$ 
(which is well-posed for short times) and that of $H_\C \circ{\Phi}$.

\smallskip
\subsubsection{Strategy for the proof of Theorem \ref{BNFtheorem}}\label{sec:strategy}
We now describe 
how the water-waves system \eqref{eq:113} can be conjugated, through 
finitely many  well-defined, {\it bounded} and {\it invertible} transformations, to the Hamiltonian equation \eqref{theoBireq}, 
$$
\partial_t z = -\ii |D|^{1/2} z - \ii \partial_{\bar{z}} H^{(4)}_{ZD}(z,\bar{z}) + {\mathcal X}_{\geq 4}^+ \, , 
$$
where $H^{(4)}_{ZD} $ is the Hamiltonian \eqref{theoBirH} and 
the quartic {\it remainder $ {\mathcal X}_{\geq 4}^+  $
admits energy estimates} in the sense of \eqref{theoBirR}.

\medskip
{\it Step 1: Diagonalization up to smoothing remainders}. 
We begin our analysis by paralinearizing the water-waves system \eqref{eq:113}, writing it as a system in the 
complex variable 
$$
U:=(u,\bar{u}), \qquad u :=\frac{1}{\sqrt{2}}|D|^{-\frac{1}{4}}\eta+\frac{\ii}{\sqrt{2}}|D|^{\frac{1}{4}}\omega  \, , 
$$
where $\omega$ is the ``good-unknown'' defined in \eqref{omega0}.
The good unknown $\omega$ has been introduced by Alazard-Metivier \cite{AlM} and 
systematically used 
in the works  on the local existence theory of Alazard-Burq-Zuily \cite{ABZduke,ABZ1} to prove energy estimates; 
see also Alazard-Delort \cite{AlDe, AlDe1}.
In this paper  we use  the paralinearization results
proved in \cite{BD}, 
collected in Proposition \ref{laprimapara},  
 which, in addition, provide expansions in homogeneous components in $ \eta, \omega $ of the 
 paralinearized system. 
The precise form of the 
system satisfied by the complex variable $U$ is  given by  \eqref{eq:415} in Proposition \ref{WWcomplexVar}.

Our first  task is to perform a diagonalization in $(u,\bar{u})$ 
of this system up to smoothing remainders.
We remark that the highest order quasilinear transport operator in the system \eqref{eq:415} 
is already diagonal. 
Hence, as a first step in Section \ref{diagoparaprod}
we diagonalize  the sub-principal operator (which is of order $1/2$), 
as one would do to obtain local-in-time energy estimates.
We then use 
an iterative descent procedure to diagonalize the operators of order $0$, $-1/2$, and so on, 
up to a large negative order.
The outcome of this procedure is described in Proposition \ref{teodiagonal}, in which we obtain that \eqref{eq:415} 
is reduced to the system \eqref{sistemaDiag}. 

The main reason for this ``super'' diagonalization procedure,
inspired by \cite{BD}, is that, 
when combined with a reduction to constant coefficients (Step 2 below), 
it allows us to handle all the losses of derivatives that arise from quasilinear terms and small divisors
when performing Poincar\'e-Birkhoff normal form reductions (Step 3 below). 

\medskip
{\it Step 2: Reduction to constant coefficients and Poincar\'e-Birkhoff normal forms}. 
In Section \ref{riduco} we reduce all the para-differential operators in the diagonalized system 
\eqref{sistemaDiag} to constant-in-$x$ coefficients, which are integrable in the sense of
Definition \ref{defiintegro},
up to smoothing remainders of homogeneity $2$ and $3$, 
and higher order contributions that admit energy estimates of the form \eqref{theoBirR}.
The most delicate reductions concern the quasilinear components in the right-hand side of \eqref{sistemaDiag}: 
the highest order fully nonlinear transport term $\ii\opbw(V\xi)$ and the quasilinear dispersive term 
$\ii\opbw( (1 + a^{(0)}) |\xi|^{\frac12} )$.

Let us briefly describe how to deal with the transport term.
Roughly speaking, at the highest order,  system \eqref{sistemaDiag} 
(which we represent using only its first equation, the second one being the complex conjugate) 
looks like
\be
\label{stratra1} 
\pa_t w = -\ii \opbw(V \xi) w + \cdots 
\ee
where $V=V(u)$ is a real-valued function that depends on $u$, hence on $x$ and $t$, and vanishes  at $u = 0$. 
Our aim is to transform \eqref{stratra1} into an equation of the form
\begin{align}
\label{stratra1'} 
\pa_t v = -\ii \opbw(\wt{V}\xi) v + \cdots   \qquad {\rm where} \qquad  
\wt{V}= \zeta (u) + O(u^3) 
\end{align}
is a real valued function 
independent of $x$ up to cubic order in $ u $.
To do this we consider an auxiliary flow $\Phi^\theta$ obtained by solving
\be
\label{straflow} 
\partial_\theta \Phi^\theta = \mathcal{A} \Phi^\theta, \quad \Phi^{\theta=0} = {\rm Id}\, ,
\ee
where $\mathcal{A}=\ii\opbw( \beta (u) \xi)$ is a para-differential operator,
with $ \beta (u)$ a real-valued function to be determined depending on 
the solution $u$, and possibly on $\theta$.
Since $\beta$ is real valued, \eqref{straflow} is a paradifferential transport equation
which is well-posed in the auxiliary time $\theta$, 
and gives rise to a bounded and invertible flow $\Phi^\theta$, $\theta\in[0,1]$. 
The conjugation through the flow  $\Phi^{\theta=1}$ corresponds 
to a paradifferential change of variables which is approximately given by the paracomposition operator associated to 
the diffeomorphism $ x \mapsto x + \beta (u(x,t))$ of $ \T$. 

We then define a new variable through the time-$1$ flow of \eqref{straflow}, 
\[ 
v := \Phi^{\theta=1}w \, .
\]
Conjugating \eqref{stratra1} through the flow $\Phi^{\theta=1}$ one obtains (see Lemma \ref{lem:tra.Vec}) 
\be
\label{stratra3} 
\pa_t v = -\ii \opbw(V\xi) v - [\partial_t,\mathcal{A}] v   
+ \cdots = 
 -\ii \opbw \big( (V (u)  +  \pa_t \beta (u))  \xi \big) v   + \cdots 
\ee
where 
``$\cdots$'' denote paradifferential operators 
of order less than $1$, or terms satisfying the energy estimates \eqref{theoBirR}. 
Note that the contribution at the highest order $ 1$ comes from the conjugation of $ \pa_t $ 
because the dispersion relation  $ - \ii |D|^{1/2} $ has sublinear growth. 
For this reason, all our transformations are very different with respect to those performed in
 \cite{BD} for the gravity-capillary equations  where the dispersion relation $ \sim - \ii |D|^{3/2} $ 
 is superlinear. 
In Appendix \ref{sez:A2} we provide the general 
transformation rules of a paradifferential operator under
the flow generated by a paradifferential equation like \eqref{straflow}. 
In particular, a key feature is that paradifferential
operators are transformed into paradifferential ones with  symbols which can be algorithmically computed. 

In light of \eqref{stratra3} we look for $\beta$ solving
\be\label{betatV}
\partial_t  \beta (u) + V(u) = \zeta(u) + O(u^3) \, , 
\ee
where $\zeta(u)$ is constant-in-$x$. 
However, in general it is only possible to obtain
\[
\partial_t  \beta (u) + V(u) = \sum_{n \in \Z \setminus \{0\} } (\mathtt{V}_2^{(1)})^{+-}_{n,n} |u_n|^2 
+ \sum_{n \in \Z \setminus \{0\} } 
(\mathtt{V}_2^{(1)})^{+-}_{n,-n} u_n \ov{u_{- n}} e^{\ii 2 n x } + O(u^3) \, , 
\]
where $(\mathtt{V}_2^{(1)})^{+-}_{n_1n_2}$ are some coefficients depending on the function $V$.
We then verify the essential cancellation $ (\mathtt{V}_2^{(1)})^{+-}_{n,-n}\equiv 0$,
thus  reducing the equation \eqref{stratra3} to the desired form \eqref{stratra1'}.
More specifically, $ \zeta (u) $ has the ``integrable" form
\[ \zeta (u) =  \frac{1}{\pi} \sum_{n \in \Z\setminus\{0\}} n |n | |u_{n}|^2 \, , \] 
and \eqref{stratra1'} is given in Fourier by
\begin{equation}
\label{stravdot} 
\dot v_n = - \frac{\ii}{\pi} \Big(\sum_{j \in \Z\setminus\{0\}} j |j| |u_{j}|^2 \Big) \,  n v_n + \cdots 
\end{equation}
which (substitute $ u_j = v_j + \cdots $) is composed only by Birkhoff resonant cubic vector field monomials.

\begin{remark}
While we do verify explicitly several key cancellations, such as the one leading to \eqref{betatV}, 
some, but not all, of them can be derived as a consequence of the following invariance properties
of the water waves system \eqref{eq:113}:

\noindent
$(i)$ The water waves  vector field  $X(\eta, \psi)$ is reversible with respect to  the involution
 \be\label{involution}
S : \vect{\eta(x)}{\psi (x) } \mapsto  \vect{\eta(-x)}{-\psi (-x) },\;\;\;{\rm i.e.}\;\;\;
X\circ S=-S\circ X \, ;
\ee
\noindent
$(ii)$ $X$ is even-to-even, i.e. maps even functions into even functions. 
\end{remark}

The reduction described above is performed in Subsection \ref{inteord1} 
in two separate steps corresponding to the degrees of homogeneity one and two in $u$. 
Similar arguments can be used to reduce to constant coefficients -- and in Birkhoff normal form -- 
the modified dispersive term $ \ii (1 + {\mathtt a}_2)|\xi|^{1/2}$. 
Actually, thanks to additional algebraic cancellations, which appear to be intrinsic
to the water waves system \eqref{eq:113}, it turns out that the new dispersive term 
is exactly $- \ii |D|^{\frac12}$, up to lower order symbols.
The transformation which is used for the conjugation is the flow (thus bounded and invertible) 
generated by a paradifferential ``semi-Fourier integral operator'' as \eqref{straflow} with generator
${\cal A } = \ii \opbw( \beta (u) |\xi|^{\frac{1}{2}} )$ for a suitable real $\b (u)$.

All lower order symbols can also be reduced to constant coefficients in $x$ 
-- and in Poincar\'e-Birkhoff normal form -- using flows generated by Banach space ODEs. 
Eventually we obtain the system \eqref{finalsyst}, which is in Poincar\'e-Birkhoff-normal form up to cubic degree in $ u $, 
and up to a smoothing remainder and admissible symbols which satisfy energy estimates. 
We say that \eqref{finalsyst} is in Poincar\'e-Birkhoff  normal form, and not just Birkhoff,  
because it is not Hamiltonian, since we performed non-symplectic transformations.   

\medskip
{\it Step 3: Poincar\'e-Birkhoff normal form reductions}. 
By the previous transformations 
we have obtained a system of the form  
\begin{align}\label{stra10} 
\pa_t z = - \zeta (z) \partial_x z -\ii |D|^{\frac12} z  + r_{-1/2} (z; D)[z] + R(z)  + {\mathcal X}_{\geq 4} 
\end{align}
where $ r_{-1/2} $ is a constant-coefficient integrable symbol of order $ - 1/ 2 $,
up to some very regular nonlinear term $ R(z)$, 
plus an admissible remainder term $ {\mathcal X}_{\geq 4} $
of higher homogeneity satisfying energy estimates as \eqref{theoBirR}. 
Our next step, in Section \ref{sec:BNF}, is to apply Poincar\'e-Birkhoff normal form transformations to eliminate 
all non-resonant quadratic and cubic nonlinear terms in the smoothing remainder $ R $. 
Thanks to these normal forms transformations the new system becomes (Proposition \ref{cor:BNF})
\begin{equation}\label{stra11}
\pa_{t}z= -\zeta  (z) \pa_x z - \ii |D|^{\frac{1}{2}}z 
+ r_{-1/2} (z; D)[z] +  R^{\mathrm{res}}(z)+ {\mathcal X}_{\geq 4}
\end{equation}
where $ R^{\mathrm{res}}(z) $ is a cubic term of the form
\begin{align}\label{stra12}
\begin{split}
& R^{\mathrm{res}}(z) = 
  \sum_{ 
 \substack{  \sigma_1n_1+\sigma_2n_2 + \s_3 n_3 = n \\
 \sigma_1\omega(n_1) +\sigma_2\omega(n_2) + \s_3 \omega(n_3) =  \omega(n) }} 
 c_{n_1,n_2,n_3}^{\sigma_1\sigma_2, \s_3} 
  	\, z_{n_1}^{\sigma_1} z_{n_2}^{\sigma_2} z_{n_3}^{\s_3} e^{\ii n x}
\end{split}
\end{align}
with coefficients $c_{n_1,n_2,n_3}^{\sigma_1\sigma_2\s_3}\in\C$
(compare with \eqref{briefH^4}).
Solutions to the constraints in \eqref{stra12} are of two types: 

\begin{itemize}
\item[(a)] {\it Trivial Resonances}: 
These occur when, say $(\s_1,\s_2,\s_3)= (+,-,+)$,
one has $n_1=n_2$ and $n = n_3$ (or permutations) producing resonant cubic terms of the 
form $c_{n_1, n} |z_{n_1}|^2 z_n e^{\ii n x}$.

\smallskip
\item[(b)] {\it Benjamin-Feir resonances}: 
These occur when three of the frequencies $(n_1,n_2,n_3,n_4)$
have the same sign
and, in the case of $(\s_1,\s_2)=(+,-)$ say, are given by the two-parameter family in \eqref{straBFR}.
\end{itemize}

In the case of Hamiltonian systems -- or, more in general, in the presence of other
algebraic structures -- 
one can expect that the trivially resonant terms will not impact the dynamics. 
Note however the following difficulty: we have performed non-symplectic transformations so that 
the Hamiltonian nature of \eqref{stra11} is lost. 
In addition, the presence of $4$-waves resonances, such as the Benjamin-Feir,  is
a strong obstruction to 
prove bounded dynamics for times of the order $\e^{-3}$.
One may expect, in analogy with Theorem \ref{Zakintegro}, to be able to check by direct computations 
that the coefficients $c_{n_1,n_2,n_3}^{\sigma_1\sigma_2 \s_3}$ in \eqref{stra12}
vanish on the Benjamin-Feir resonances. 
However, after having  performed all the reductions described before,
this computation seems rather involved. 
As we describe in Step 4 below, in this paper 
we will prove such a property by an indirect uniqueness argument of the cubic Poincar\'e-Birkhoff normal form.  

Before moving to this last step
let us comment on the issue of {\it small divisors}.
To perform the above Birkhoff normal form reduction of the smoothing terms, 
we  need to deal with near-resonances.
Indeed, our normal form transformation is generated by a flow as in \eqref{straflow} where,
roughly speaking, the coefficients of the operator $\mathcal{A}$ are obtained through division by 
the phase  $   \sigma_1\omega(n_1) +\sigma_2\omega(n_2) + \s_3 \omega(n_3) -  \omega(n) $. 
This becomes dangerous if it degenerates rapidly close to the resonances. 
For example, if $ \s_1 = 1 = \s_3 $, $ \s_2 = - 1 $, and 
$n_1=k$, $n_2=-k$, $n_3=j$,  $n =j+2k$, with $j \gg k$
we get  $|  \omega(n_1) - \omega(n_2) + \omega(n_3) -  \omega(n) | \approx j^{-1/2}$.
Dividing by this expression then causes a 
loss of (at least) a $1/2$ derivative.
In our proof this issue is overcome thanks to  
the fact that the smoothing remainders  $ R(z) $ can tolerate losses of derivatives.

\medskip
{\it Step 4: Normal form identification}. 
In our last main step in Subsection \ref{sec:INF} we prove that the cubic terms in \eqref{stra11}-\eqref{stra12}
coincide with the Hamiltonian vector field generated by the quartic Hamiltonian in  \eqref{theoBirH}
\be\label{straide}
-\zeta (z) \partial_x z + r_{-1/2} (z; D)[z] +  R^{\mathrm{res}}(z)  = -\ii\pa_{\bar{z}}H^{(4)}_{ZD} \, .
\ee
This implies in particular that $ R^{\mathrm{res}}(z) $ is supported only on trivial resonances.
To obtain \eqref{straide} we use a normal form identification argument 
which relies on the uniqueness of solutions of the quadratic homological equation \eqref{Poisson3}.
The final outcome is that the equation \eqref{stra11}, that we have obtained through 
the  bounded and invertible transformations described in Steps 1--3, coincides 
up to quartic terms, with the Hamiltonian vector field generated by the 
Hamiltonian \eqref{theoBirHfull} formally derived by Zakharov-Dyachenko-Craig-Worfolk-Sulem. 
Note that this argument also proves that the cubic vector field of the Poincar\'e-Birkhoff 
normal form \eqref{stra11} is Hamiltonian, which was not known a priori since we 
have performed non-symplectic transformations.
This identification argument is similar in spirit 
to Moser's indirect proof of the convergence of the Lindsted series for a KAM torus 
\cite{M67}: Moser rigorously proves the existence of quasi-periodic solutions, 
which are analytic in $\e$, and then shows,  a posteriori, 
that their Taylor expansions in $\e$ coincide with the formal Lindsted power series.

\medskip
\noindent
{\bf Acknowledgements}. We thank M. Procesi and W. Craig for stimulating discussions on the topic.

\bigskip
\section{ Functional setting and para-differential calculus}\label{sec:funcsett}
In this section we introduce our notation and recall several results on para-differential calculus,
mostly following Chapter $3$ of the monograph \cite{BD}.
We find convenient the use of this set-up to obtain our initial paralinearization of the 
water waves equations \eqref{eq:113} with multilinear expansions, as stated in Proposition \ref{laprimapara},
and several tools for conjugations via paradifferential flows which are contained in Appendix \ref{sez:A2}. 

Given  an interval $I \subset \R$ symmetric with respect to $t=0$ and  $s\in \R $ we define the space
$$
C^K_{*}(I,{\dot{H}}^{s}(\T,\C^2)):=\bigcap_{k=0}^{K}C^{k}\big(I;\dot{H}^{s-k}(\T;\C^2)\big) \, , 
$$ 
endowed 
with the norm
\begin{equation}\label{normaT}
\sup_{t\in I}\|{U(t,\cdot)}\|_{K,s} \quad \mbox {where} 
\quad \|{U(t,\cdot)}\|_{K,s}:=\sum_{k=0}^{K}\|{\partial_t^k U(t,\cdot)}\|_{{\dot{H}}^{s-k}}.
\end{equation}
We denote by $C^K_{*\R}(I,{\dot{H}}^{s}(\T,\C^2))$
the space of functions $U$ in $C^K_{*}(I,{\dot{H}}^{s}(\T,\C^2))$ such that $U=\vect{u}{\bar{u}}$.
Given $r > 0 $ we set
\begin{equation}\label{palla}
B_{s}^K(I;r):=\Big\{U\in C^K_{*}(I,\dot{H}^{s}(\T;\C^{2})):\, \sup_{t\in I}\|{U(t,\cdot)}\|_{K,s}<r \Big\} \, .
\end{equation}
With similar meaning we denote $C_{*}^{K}(I;\dot{H}^{s}(\T;\C))$.
 We expand a $ 2 \pi $-periodic function $ u(x) $, with zero average in $x$ (which is identified with 
 $ u $ in the homogeneous space),  in Fourier series as 
\be\label{complex-uU}
u(x) = \sum_{n \in \Z\setminus\{0\} } \hat{u}(n)\frac{e^{\ii n x }}{\sqrt{2\pi}} \, , \qquad 
\hat{u}(n) := \frac{1}{\sqrt{2\pi}} \int_\T u(x) e^{-\ii n x } \, dx \, .
\ee
We also use the notation
\begin{equation}\label{notaFou}
u_n^+ := u_n := \hat{u}(n) \qquad  {\rm and} \qquad  u_n^- := \ov{u_n}  := \ov{\hat{u}(n)} \, . 
\end{equation}
For $n\in \N^*:= \N \! \smallsetminus \! \{0\}$ we denote by $\Pi_{n}$ the orthogonal projector from $L^{2}(\T;\C)$ 
to the subspace spanned by $\{e^{\ii n x}, e^{-\ii nx}\}$,  i.e.
\begin{equation}\label{spectralpro}
(\Pi_{n}u)(x) := \hat{u}({n}) \frac{e^{\ii nx}}{\sqrt{2\pi}}+\hat{u}({-n})\frac{e^{-\ii nx}}{\sqrt{2\pi}} \, , 
\end{equation}
and we denote by $ \Pi_n $ also the corresponding projector in  $L^{2}(\T,\C^{2})$.
If $\mathcal{U}=(U_1,\ldots,U_{p})$
is a $p$-tuple
 of functions, $\vec{n}=(n_1,\ldots,n_p)\in (\N^{*})^{p}$, we set
 \begin{equation}\label{ptupla}
 \Pi_{\vec{n}}\mathcal{U}:=(\Pi_{n_1}U_1,\ldots,\Pi_{n_p}U_p) \, .
 \end{equation}
In this paper we deal with 
 vector fields $ X $  
which satisfy 
the {\it $x$-translation invariance} property
\be\label{commutes-tau} 
X \circ \tau_\theta = \tau_\theta \circ X   \, , \quad  \forall\, \theta \in \R \, ,
\ee
where 
\be\label{tau-theta}
\tau_\theta : u(x) \mapsto (\tau_\theta u)(x) := u(x + \theta ) \, . 
\ee

\noindent{\bf Para-differential operators.}
We first give the definition of the classes of symbols that we are going to use,
collecting Definitions $ 3.1$, $ 3.2$ and $ 3.4$ in \cite{BD}.
Roughly speaking, the class $\widetilde{\Gamma}_{p}^{m}$ contains homogeneous symbols of order $m$ and homogeneity $p$ in $U$, while the class $\Gamma_{K,K',p}^{m}$ contains non-homogeneous symbols of order $m$ which
vanish at degree at least $p$ in $U$, and that are $(K-K')$-times differentiable in $t$.

\begin{definition}{\bf (Classes of  symbols)}\label{pomosimb}
Let $m\in\R$, $p, N \in \N$, $ p \leq N $,  $K'\leq K$ in $\N$, $r>0 $.

\begin{itemize}

\smallskip
\item[(i)]{\bf  $p$-homogeneous symbols.} 
We denote by $\widetilde{\Gamma}_{p}^{m}\!$ the space of symmetric $p$-linear maps
from $(\dot{H}^{\infty}(\T;\C^{2}))^{p}$ to the space of $C^{\infty}$ functions of $(x,\x)\in \T\times\R$, 
$ \mathcal{U}\to ((x,\x)\to a(\mathcal{U};x,\x)) $,  
satisfying the following. There is $\mu>0$ and,  
for any $\alpha,\beta\in \N $,  there is $C>0$ such that
\begin{equation}\label{pomosimbo1}
|\pa_{x}^{\alpha}\pa_{\x}^{\beta}a(\Pi_{\vec{n}}\mathcal{U};x,\x)|\leq C | \vec{n} |^{\mu+\alpha}\langle\x\rangle^{m-\beta}
\prod_{j=1}^{p}\|\Pi_{n_j}U_{j}\|_{L^{2}} 
\end{equation}
for any $\mathcal{U}=(U_1,\ldots, U_p)$ in $(\dot{H}^{\infty}(\T;\C^{2}))^{p}$,
and $\vec{n}=(n_1,\ldots,n_p)\in (\N^*)^{p}$. 
Moreover, we assume that, if for some $(n_0,\ldots,n_{p})\in \N\times(\N^*)^{p}$,
\begin{equation}\label{pomosimbo2}
\Pi_{n_0}a(\Pi_{n_1}U_1,\ldots, \Pi_{n_p}U_{p};\cdot)\neq0 \, ,
\end{equation}
then there exists a choice of signs $\s_0,\ldots,\s_p\in\{-1,1\}$ such that $\sum_{j=0}^{p}\s_j n_j=0$.
The property \eqref{pomosimbo2} is automatically satisfied by requiring the 
translation invariance property
\be\label{def:tr-in}
a( \tau_\teta {\cal U}; x, \xi) =  a( {\cal U}; x + \theta, \xi) \, , \quad \forall \theta \in \R \, . 
\ee
For $p=0$ we denote by $\widetilde{\Gamma}_{0}^{m}$ the space of constant coefficients symbols
$\x\mapsto a(\x)$ which satisfy \eqref{pomosimbo1} with $\al=0$
and the right hand side replaced by $C\langle \x\rangle^{m-\beta}$. 

\smallskip
\item[(ii)]{\bf Non-homogeneous symbols.} Let $ p \geq 1 $. We denote by $\Gamma^m_{K,K',p}[r]$ the space 
of functions $(U;t,x,\xi)\!\mapsto \!a(U;t,x,\xi)$, defined for $U\in B_{s_0}^K(I;r)$, for some large enough $ s_0$, with complex values such that for any 
$0\leq k\leq K-K'$, any $\s\geq s_0$, there are $C>0$, $0<r(\s)<r$ and for any $U\in B_{s_0}^K(I;r(\s))\cap C^{k+K'}_{*}(I,{\dot{H}}^{\s}(\T;\C^{2}))$ and any $\alpha, \beta \in\N$, with $\alpha\leq \s-s_0$
\begin{equation}\label{simbo}
|{\partial_t^k\partial_x^{\alpha}\partial_{\xi}^{\beta}a(U;t,x,\xi)}|\leq C\langle\xi\rangle^{m-\beta} 
\|U\|^{p-1}_{k+K',s_0}
\|{U}\|_{k+K',\s} \, .
\end{equation}

\smallskip
\item[(iii)]{\bf Symbols.} We denote by $\Sigma\Gamma^{m}_{K,K',p}[r,N]$
the space of functions 
$(U,t,x,\x)\to a(U;t,x,\x)$ such that there are homogeneous symbols $a_{q}\in \widetilde{\Gamma}_{q}^{m}$
for $q=p,\ldots, N-1$ and a non-homogeneous symbol $a_{N}\in \Gamma^{m}_{K,K',N}[r]$
such that
\begin{equation}\label{simbotot1}
a(U;t,x,\x)=\sum_{q=p}^{N-1}a_{q}(U,\ldots,U;x,\x)+a_{N}(U;t,x,\x) \, .
\end{equation}
\end{itemize}
We denote by $\Sigma\Gamma^{m}_{K,K',p}[r,N]\otimes \mathcal{M}_{2}(\C)$ the space $2\times 2$
matrices whose entries are symbols in  $\Sigma\Gamma^{m}_{K,K',p}[r,N]$.
\end{definition}

\begin{remark}\label{2.2}
The translation invariance property \eqref{def:tr-in} means that the dependence with respect to the variable $x$ of the 
symbol $a({\cal U}; x, \xi)$ enters only through the function ${\cal U }(x)$.
As already observed, the translation invariance property \eqref{def:tr-in}   implies the more general assumption \eqref{pomosimbo2} made in \cite{BD} 
(that we report to be consistent with the notation of \cite{BD}).
\end{remark}

Note that 
\be
\begin{aligned}\label{prodottodisimboli2}
a\in \widetilde{\Gamma}_{p}^{m}, \;\; b\in \widetilde{\Gamma}_{q}^{m'} \quad 
& \Rightarrow \qquad ab\in \widetilde{\Gamma}_{p+q}^{m+m'}, \,  \pa_{x}a\in \widetilde{\Gamma}_{p}^{m} \, ,\, 
\pa_{\x}a\in \widetilde{\Gamma}_{p}^{m-1} \, ;\\
a\in \Gamma_{K,K',p}^{m}[r], \; K'+1\leq K \  &\Rightarrow \  
\pa_{t}a\in \Gamma^{m}_{K,K'+1,p}[r], \, 
\pa_x a\in \Gamma_{K,K',p}^{m}[r] \, , \  \pa_{\x}a\in \Gamma_{K,K',p}^{m-1}[r] \, ;\\
a\in \Gamma_{K,K',p}^{m}[r], b\in \Gamma_{K,K',q}^{m'}[r] \quad& \Rightarrow \quad   ab\in \Gamma_{K,K',p+q}^{m+m'}[r] \; \\
 a(\mathcal{U};\cdot) \in \widetilde{\Gamma}_{p}^{m} \quad &\Rightarrow \quad 
a(U,\ldots,U;\cdot) \in \Gamma_{K,0,p}^{m}[r] \, , \  \forall r>0 \, . 
\end{aligned}
\ee

Throughout this paper we will systematically use the following expansions, which are a consequence of
\eqref{def:tr-in} and $u\in \dot{H}^{\infty}(\T;\C)$. 
If $ {\mathtt a}_1 \in \widetilde{\Gamma}_{1}^{m}$ then 
\be\label{eq:F1}
{\mathtt a}_1(U;x,\x)  
=\frac{1}{\sqrt{2\pi}} \sum_{n \in \Z\setminus\{0\}, \s=\pm} (\mathtt{a}_1)^\s_n(\x) u^{\s}_n e^{\ii \s n x } 
\, ,
\ee
for some $(\mathtt{a}_1)^{\s}_{n}(\x) \in \C$,  
and,  if $ {\mathtt a}_2 \in \widetilde{\Gamma}_{2}^{m}$ then
\begin{align} 
 {\mathtt a}_2 (U,{U};x,\x) 
 & = 
\sum_{\substack{n_1, n_2 \in \Z\setminus\{0\}\\ \s=\pm}} (\mathtt{a}_2)^{\s\s}_{n_1, n_2}(\x) u^{\s}_{n_1} u^{\s}_{n_2} 
\frac{e^{\ii \s ( n_1 + n_2) x }}{2\pi} +
\sum_{n_1, n_2 \in \Z\setminus\{0\}} (\mathtt{a}_2)^{+-}_{n_1, n_2}(\x) u_{n_1} 
  \ov{u_{n_2}} \frac{e^{\ii ( n_1 - n_2) x }}{2\pi}
\label{eq:F2} \, 
\end{align}
for some $(\mathtt{a}_2)^{\s\s'}_{n_1, n_2}(\x)\in \C$ with $\s,\s'=\pm$.
In the sequel, for simplicity we may also write  
${\mathtt a}_2 (U;x,\x) $ instead of $ {\mathtt a}_2 (U,U;x,\x) $.

\smallskip
We also define the following classes of functions in analogy with our classes of symbols.

\begin{definition}{\bf (Functions)} \label{apeape} Fix $N\in \N$, 
$p\in \N$ with $p\leq N$,
 $K,K'\in \N$ with $K'\leq K$, $r>0$.
We denote by $\widetilde{\mathcal{F}}_{p}$, resp. $\mathcal{F}_{K,K',p}[r]$,  $\Sigma\mathcal{F}_{p}[r,N]$, 
the subspace of $\widetilde{\Gamma}^{0}_{p}$, resp. $\Gamma^{0}_{p}[r]$, 
resp. $\Sigma\Gamma^{0}_{p}[r,N]$, 
made of those symbols which are independent of $\x$.
We write $\widetilde{\mathcal{F}}^{\R}_{p}$, resp. $\mathcal{F}_{K,K',p}^{\R}[r]$, 
$\Sigma\mathcal{F}_{p}^{\R}[r,N]$,  to denote functions in $\widetilde{\mathcal{F}}_{p}$, 
resp. $\mathcal{F}_{K,K',p}[r]$,  $\Sigma\mathcal{F}_{p}[r,N]$, 
which are real valued.
\end{definition}

Note that  functions  $ {\mathtt a}_1 \in \widetilde{\cal F}_{1} $, $ {\mathtt a}_2 \in \widetilde{\cal F}_{2} $
 expanded as in \eqref{eq:F1}, \eqref{eq:F2} are real valued if and only if
\begin{align}
& \ov{(\mathtt{a}_1)_{n}^{+}}=(\mathtt{a}_1)_{n}^{-},  
\quad \overline{ (\mathtt{a}_2)^{++}_{n_1, n_2}} = (\mathtt{a}_2)^{--}_{n_1, n_2} \, , \quad 
\overline{ (\mathtt{a}_2)^{+-}_{n_1, n_2}} = (\mathtt{a}_2)^{+-}_{n_2, n_1} \, .
\label{condreal2} 
\end{align}

\noindent
{\bf Paradifferential quantization.}
Given $p\in \N$ we consider  smooth functions
  $\chi_{p}\in C^{\infty}(\R^{p}\times \R;\R)$ and $\chi\in C^{\infty}(\R\times\R;\R)$, 
  even with respect to each of their arguments, satisfying, for some $0<\delta\ll 1$,
\begin{align}
&{\rm{supp}}\, \chi_{p} \subset\{(\xi',\xi)\in\R^{p}\times\R; |\xi'|\leq\delta \langle\xi\rangle\} \, ,\qquad \chi_p (\xi',\xi)\equiv 1\,\,\, \rm{ for } \,\,\, |\xi'|\leq \delta \langle\xi\rangle / 2 \, ,\label{cutoff1}\\
&\rm{supp}\, \chi \subset\{(\xi',\xi)\in\R\times\R; |\xi'|\leq\delta \langle\xi\rangle\} \, ,\qquad \quad
 \chi(\xi',\xi) \equiv 1\,\,\, \rm{ for } \,\,\, |\xi'|\leq \delta   \langle\xi\rangle / 2 \, .\label{cutoff2}
\end{align}
For $p=0$ we set $\chi_0\equiv1$. 
We assume moreover that 
\begin{equation}\label{derab}
\begin{aligned}
& |\partial_{\xi}^{\alpha}\partial_{\xi'}^{\beta}\chi_p(\xi',\xi)|\leq C_{\alpha,\beta}\langle\xi\rangle^{-\alpha-|\beta|},\,\,
\quad \forall \alpha\in \N, \,\beta\in\N^{p} \, ,  \\
& |\partial_{\xi}^{\alpha}\partial_{\xi'}^{\beta}\chi(\xi',\xi)|\leq C_{\alpha,\beta}\langle\xi\rangle^{-\alpha-\beta},\,\,
\qquad \forall \alpha, \,\beta\in\N \, . 
\end{aligned}
\end{equation}
A function satisfying the above condition  is $\chi(\xi',\xi):=\widetilde{\chi}(\xi'/\langle\xi\rangle)$
where $\widetilde{\chi}$ is a function in $C_0^{\infty}(\R;\R)$  having a small enough support and equal to one in a neighborhood of zero.

\begin{definition}{\bf (Bony-Weyl quantization)}\label{quantizationtotale}
If a is a symbol in $\widetilde{\Gamma}^{m}_{p}$, respectively in $\Gamma^{m}_{K,K',p}[r]$,
we define its \emph{Weyl} quantization  as the operator
acting on a
$ 2 \pi $-periodic function
$u(x)$ (written as in \eqref{complex-uU})
 as
\be\label{Weil-Q}
Op^{W}(a)u=\frac{1}{\sqrt{2\pi}}\sum_{k\in \Z}
\Big(\sum_{j\in\Z}\hat{a}\big(k-j, \frac{k+j}{2}\big)\hat{u}(j) \Big)\frac{e^{\ii k x}}{\sqrt{2\pi}}
\ee
where $\hat{a}(k,\xi)$ is the $k^{th}-$Fourier coefficient of the $2\pi-$periodic function $x\mapsto a(x,\xi)$.

\noindent
We set, using notation \eqref{ptupla}, 
\[
a_{\chi_{p}}(\mathcal{U};x,\x) =\sum_{\vec{n}\in \N^{p}}\chi_{p}\left(\vec{n},\x\right)a(\Pi_{\vec{n}}\mathcal{U};x,\x),
\quad 
a_{\chi}(U;t,x,\x) =\frac{1}{2\pi}\int_{\R}  \chi\left(\xi',\x\right)\hat{a}(U;t,\xi',\x)e^{\ii \xi' x}d \xi' \, , 
\]
where in the last equality $  \hat a $ stands for the Fourier transform with respect to the $ x $ variable.
 Then we define the \emph{Bony-Weyl} quantization of $ a $ as 
\be\label{quantiz}
\opbw(a(\mathcal{U};\cdot))=Op^{W}(a_{\chi_{p}}(\mathcal{U};\cdot)),\qquad
\opbw(a(U;t,\cdot))=Op^{W}(a_{\chi}(U;t,\cdot)) \, .
\ee
If  $a$ is a symbol in  $\Sigma\Gamma^{m}_{K,K',p}[r,N]$, that we decompose as in \eqref{simbotot1},
we define its \emph{Bony-Weyl} quantization 
\[
\opbw(a(U;t,\cdot))=\sum_{q=p}^{N-1}\opbw(a_{q}(U,\ldots,U;\cdot))+\opbw(a_{N}(U;t,\cdot)) \, . 
\]
\end{definition}

\noindent
$ \bullet $ By the translation invariance property  \eqref{def:tr-in}, we have  
\be\label{tautheta}
\opbw(a_{q}( \tau_\theta U,\ldots, \tau_\theta U;  \cdot , \xi))[ \tau_\theta V] =  
\tau_\theta \big(  \opbw(a_{q}(  U,\ldots, U;\cdot, \xi ))[ V] \big) \, .
\ee

\noindent
$ \bullet $ The operator 
$ \opbw (a) $ acts on homogeneous spaces of functions, 
see Proposition \ref{azionepara}.

\noindent
$\bullet$ 
The action of
$\opbw(a)$ on  homogeneous spaces only depends
on the values of the symbol $ a = a(U;t,x,\xi)$ (or
$a(\mathcal{U};t,x,\xi)$) for $|\xi|\geq 1$.
Therefore, we may identify two symbols $ a(U;t,x,\xi)$ and
$ b(U;t,x,\xi)$ if they agree for $|\xi| \geq 1/2$.
In particular, whenever we encounter a symbol that is not smooth at $\xi=0 $,
such as, for example, $a = g(x)|\x|^{m}$ for $m\in \R\setminus\{0\}$, or $ \sign (\xi) $, 
we will consider its smoothed out version
$\chi(\xi)a$, where
$\chi\in {\cal C}^{\infty}(\R;\R)$ is an even and positive cut-off function satisfying  
\begin{equation}\label{cutoff11}
\chi(\x) =  0 \;\; {\rm if}\;\; |\x|\leq \frac{1}{8}\, , \quad 
\chi (\x) = 1 \;\; {\rm if}\;\; |\x|>\frac{1}{4}, 
\quad  \pa_{\x}\chi(\x)>0\quad\forall  \x\in \big(\frac{1}{8},\frac{1}{4} \big) \, .
\end{equation}
%
\noindent
$ \bullet $  If $ a$ is a homogeneous  symbol, the two definitions  of quantization in \eqref{quantiz}, differ by a  smoothing operator that  we introduce in Definition \ref{omosmoothing} below. 

\smallskip
Definition \ref{quantizationtotale} 
is  independent of the cut-off functions $\chi_{p}$, $\chi$ satisfying \eqref{cutoff1}-\eqref{derab}
up to smoothing operators that we define below (see Definition $ 3.7 $ in \cite{BD}).
Roughly speaking, the class $\widetilde{\mathcal{R}}^{-\rho}_{p}$ contains smoothing operators
which gain $\rho$ derivatives and are homogeneous of degree $p$ in $U$, while the class 
$\mathcal{R}_{K,K',p}^{-\rho}$ contains non-homogeneous $\rho$-smoothing operators which
vanish at degree at least $p$ in $U$, and are $(K-K')$-times differentiable in $t$.

Given  $(n_1,\ldots,n_{p+1})\in \N^{p+1}$ we denote by $\max_{2}(n_1 ,\ldots, n_{p+1})$ 
the second largest among the integers $ n_1,\ldots, n_{p+1}$.

 \begin{definition}{\bf (Classes of  smoothing operators)} \label{omosmoothing}
 Let $K'\leq K\in\N$, $N\in \N$ with $N\geq1$, $\m\in \R$,  $\rho\geq0$ and $r>0$.
 \begin{itemize}
 \item[(i)]{\bf $p$-homogeneous smoothing operators.} We denote by $\widetilde{\mathcal{R}}^{-\rho}_{p}$
 the space of $(p+1)$-linear maps $R$ 
 from the space $(\dot{H}^{\infty}(\T;\C^{2}))^{p}\times \dot{H}^{\infty}(\T;\C)$ to 
 the space $\dot{H}^{\infty}(\T;\C)$ symmetric
 in $(U_{1},\ldots,U_{p})$, of the form
$ (U_{1},\ldots,U_{p+1})\to R(U_1,\ldots, U_p)U_{p+1}$
 that satisfy the following. There are $\mu\geq0$, $C>0$ such that 
$$
 \|\Pi_{n_0}R(\Pi_{\vec{n}}\mathcal{U})\Pi_{n_{p+1}}U_{p+1}\|_{L^{2}}\leq
 C\frac{\max_2( n_1,\ldots, n_{p+1})^{\rho+\mu}}{\max( n_1,\ldots, n_{p+1})^{\rho}}
 \prod_{j=1}^{p+1}\|\Pi_{n_{j}}U_{j}\|_{L^{2}}
$$
  for any 
 $\mathcal{U}=(U_1,\ldots,U_{p})\in (\dot{H}^{\infty}(\T;\C^{2}))^{p}$, any 
 $U_{p+1}\in \dot{H}^{\infty}(\T;\C)$,
 any $\vec{n}=(n_1,\ldots,n_p)\in (\N^{*})^{p}$, any $n_0,n_{p+1}\in \N^*$.
 Moreover, if 
 \begin{equation}\label{omoresti2}
 \Pi_{n_0}R(\Pi_{n_1}U_1,\ldots,\Pi_{n_{p}}U_{p})\Pi_{n_{p+1}}U_{p+1}\neq 0 \, ,
 \end{equation}
 then there is a choice of signs $\s_0,\ldots,\s_{p+1}\in\{\pm 1\}$ such that 
 $\sum_{j=0}^{p+1}\s_j n_{j}=0$. 
 In addition we require the translation invariance property
\be\label{def:R-trin}
R( \tau_\teta {\cal U}) [\tau_\teta U_{p+1}]  =  \tau_\theta \big( R( {\cal U})U_{p+1} \big) \, , 
\quad \forall \theta \in \R \, . 
\ee

 \item[(ii)] {\bf Non-homogeneous smoothing operators.}
  We denote by $\mathcal{R}^{-\rho}_{K,K',N}[r]$ 
  the space of maps $(V,t,U)\mapsto R(V)U$ defined on 
  $B^K_{s_0}(I;r)\times I \times C^K_{*}(I,\dot{H}^{s_0}(\T,\C))$ 
  which are linear in the variable $U$ and such that the following holds true. 
  For any $s\geq s_0$ there exist a constant $C>0$ and $r(s)\in]0,r[$ such that for any $V\in B^K_{s_0}(I;r)\cap C^K_{*}(I,\dot{H}^{s}(\T,\C^2))$, 
  any $  U \in C^K_{*}(I,\dot{H}^{s}(\T,\C))$, any 
  $0\leq k\leq K-K'$ and any $t\in I$, we have 
\begin{equation}
\begin{aligned} \label{piove}
\|{\partial_t^k\left(R(V;t)U\right)(t,\cdot)}\|_{\dot{H}^{s-k+\rho}} 
& \leq \sum_{k'+k''=k}C\Big( \|{U}\|_{k'',s}\|{V}\|_{k'+K',s_0}^{N} 
\\
& \quad \qquad \  \quad +\|{U}\|_{k'',s_0}\|V\|_{k'+K',s_0}^{N-1}\|{V}\|_{k'+K',s}\Big) \, .
\end{aligned}
\end{equation}

\item[(iii)]{\bf Smoothing operators.} We denote by $\Sigma\mathcal{R}^{-\rho}_{K,K',p}[r,N]$
the space of maps $(V,t,U)\to R(V;t)U$ 
that may be written as 
$$
R(V;t) U =\sum_{q=p}^{N-1}R_{q}(V,\ldots,V) U +R_{N}(V;t) U 
$$
for some $R_{q} $ in $ \widetilde{\mathcal{R}}^{-\rho}_{q}$, $q=p,\ldots, N-1$ and $R_{N}$  in 
$\mathcal{R}^{-\rho}_{K,K',N}[r]$.
 \end{itemize}
 We denote by  $\Sigma\mathcal{R}^{-\rho}_{K,K',p}[r,N]\otimes\mathcal{M}_2(\C)$
the space of $2\times 2$ matrices whose entries are  in $\Sigma\mathcal{R}^{-\rho}_{K,K',p}[r,\! N]$.
 \end{definition}

 
$ \bullet $
If $R $ is in $ \widetilde{\mathcal{R}}^{-\rho}_{p}$  then $ (V, U ) \mapsto R(V, \ldots, V) U $
is in $\mathcal{R}^{-\rho}_{K,0,p}[r] $, i.e. \eqref{piove} holds with $ N \rightsquigarrow p, K' = 0 $.

$ \bullet $
If $R_i \in \Sigma\mathcal{R}^{-\rho}_{K,K',p_i}[r,N]$, $ i = 1,2 $,   then
the composition $R_1 \circ R_2 $ is in $ \Sigma\mathcal{R}^{-\rho}_{K,K',p_1+ p_2}[r,N]$. 

\smallskip
The next proposition states
boundedness properties on Sobolev spaces of the  paradifferential operators
(see Proposition 3.8  in \cite{BD}).

\begin{proposition}{\bf (Action of para-differential operator)} \label{azionepara}
Let $r>0$, $m\in \R$, $p\in \N$, $K'\leq K\in \N$. Then:

\smallskip
$(i)$ There is $ s_0 > 0 $ such that for any symbol
 $a\in \widetilde{\Gamma}_{p}^{m}$, 
there is a constant $C>0$, depending only on $s$ and on \eqref{pomosimbo1} with $\alpha=\beta=0$,
such that for any $\mathcal{U}=(U_1,\ldots,U_{p})$
\begin{equation}\label{stimapar}
\|\opbw(a(\mathcal{U};\cdot))U_{p+1}\|_{\dot{H}^{s-m}}\leq C\prod_{j=1}^{p}\|U_{j}\|_{\dot{H}^{s_0}}\|U_{p+1}\|_{\dot{H}^{s}} \, ,
\end{equation}
for $p\geq 1$, while for $p=0$ the \eqref{stimapar} holds by replacing the right hand side with $C\|U_{p+1}\|_{\dot{H}^{s}}$.

\smallskip
$(ii)$
There is $ s_0 > 0 $ such that  for any symbol $a\in \Gamma^{m}_{K,K',p}[r]$
there is  a constant $C>0$, depending  only on $s,r$ and \eqref{simbo}
with $0\leq \alpha\leq 2$, $\beta=0$, such that, for any $t\in I$, any $0\leq k\leq K-K'$,
$$
\|\opbw(\pa_{t}^{k}a(U;t,\cdot))\|_{\mathcal{L}(\dot{H}^{s},\dot{H}^{s-m})}\leq C\|U\|_{k+K',s_0}^{p} \, .
$$
\end{proposition}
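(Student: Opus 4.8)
The plan is to reduce both statements to weighted $\ell^2$ bounds on the matrix elements, in the Fourier basis, of the operators $\opbw(a)$, exploiting the frequency localizations built into the cut-offs $\chi_p,\chi$ of the Bony--Weyl quantization \eqref{quantiz}; this is the argument of Proposition $3.8$ in \cite{BD}, which I outline for completeness.

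\textbf{Part $(i)$.} Fix $a\in\widetilde\Gamma^m_p$. By \eqref{Weil-Q}--\eqref{quantiz} the $k$-th Fourier coefficient of $\opbw(a(\mathcal U;\cdot))U_{p+1}$ equals $\sum_{j}\hat a_{\chi_p}\big(\mathcal U;k-j,\tfrac{k+j}{2}\big)\,\hat U_{p+1}(j)$, where $\hat a_{\chi_p}(\mathcal U;\ell,\xi)$ is the $\ell$-th $x$-Fourier coefficient of $a_{\chi_p}(\mathcal U;x,\xi)=\sum_{\vec n}\chi_p(\vec n,\xi)a(\Pi_{\vec n}\mathcal U;x,\xi)$. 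Two structural facts do the work. First, by the spectral constraint \eqref{pomosimbo2} the coefficient $\hat a_{\chi_p}(\mathcal U;\ell,\xi)$ is supported where $\ell=\pm n_1\pm\cdots\pm n_p$ for some choice of signs, hence $|\ell|\le p|\vec n|$; combined with the support property \eqref{cutoff1} of $\chi_p$ this forces $|k-j|\le p\delta\langle\tfrac{k+j}{2}\rangle$ on the support of the kernel, whence $\langle k\rangle\sim\langle j\rangle\sim\langle\tfrac{k+j}{2}\rangle$ there. Second, integrating the symbol bound \eqref{pomosimbo1} by parts in $x$ (using the $\partial_x^\alpha$ estimate for $\alpha$ as large as needed), then summing over $\vec n$ after writing $\|\Pi_{n_j}U_j\|_{L^2}=\langle n_j\rangle^{-s_0}\|\Pi_{n_j}U_j\|_{\dot H^{s_0}}$ and applying Cauchy--Schwarz, yields --- provided $s_0$ is chosen large enough in terms of $p$, of $\mu$ in \eqref{pomosimbo1} and of a target decay exponent $M$ --- the bound $|\hat a_{\chi_p}(\mathcal U;\ell,\xi)|\le C\langle\ell\rangle^{-M}\langle\xi\rangle^{m}\prod_{j=1}^p\|U_j\|_{\dot H^{s_0}}$; the low-frequency localization $|\vec n|\lesssim\langle\xi\rangle$ coming from \eqref{cutoff1} is precisely what makes the sum over $\vec n$ converge. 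Putting these together and using $\langle\tfrac{k+j}{2}\rangle\sim\langle k\rangle\sim\langle j\rangle$ gives $\langle k\rangle^{s-m}\big|\hat a_{\chi_p}(\mathcal U;k-j,\tfrac{k+j}{2})\big|\lesssim\langle k-j\rangle^{-M}\langle j\rangle^{s}\prod_j\|U_j\|_{\dot H^{s_0}}$, so that \eqref{stimapar} follows by Young's inequality $\ell^1*\ell^2\hookrightarrow\ell^2$ once $M>1$. For $p=0$ the symbol is constant in $x$, $\opbw(a)$ is the Fourier multiplier by (the smoothed) $a(\xi)$, and $|a(\xi)|\le C\langle\xi\rangle^m$ gives the claim directly.

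\textbf{Part $(ii)$.} The argument is the analogous one for spectrally localized, non-homogeneous symbols. By Definition \ref{quantizationtotale}, $\opbw(a(U;t,\cdot))=\opw(a_\chi)$ has, in the Fourier basis, matrix elements $t_{kj}=\chi\big(k-j,\tfrac{k+j}{2}\big)\,\hat a\big(U;t,k-j,\tfrac{k+j}{2}\big)$; as above, the support property \eqref{cutoff2} of $\chi$ forces $\langle k\rangle\sim\langle j\rangle\sim\langle\tfrac{k+j}{2}\rangle$ on the support of $(t_{kj})$. Integrating \eqref{simbo} by parts in $x$ up to order $2$ --- this is the only place where $x$-derivatives of the symbol enter, and two of them suffice --- bounds $|t_{kj}|$ by $C\langle k-j\rangle^{-2}\langle\tfrac{k+j}{2}\rangle^{m}$ times a fixed combination of the symbol seminorms in \eqref{simbo} with $0\le\alpha\le2$, $\beta=0$, which in turn is $\lesssim\|U\|_{k+K',s_0}^{p}$ for $s_0$ chosen large enough. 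Since $\langle\tfrac{k+j}{2}\rangle^m\sim\langle k\rangle^m$, this gives $\langle k\rangle^{s-m}|t_{kj}|\lesssim\langle k-j\rangle^{-2}\langle j\rangle^{s}\|U\|_{k+K',s_0}^{p}$, and because the kernel $(\langle k-j\rangle^{-2})_{k,j}$ is bounded on $\ell^2$ by Schur's test we conclude $\|\opbw(a(U;t,\cdot))\|_{\mathcal L(\dot H^s,\dot H^{s-m})}\le C\|U\|_{k+K',s_0}^{p}$ for every $s\ge s_0$. Replacing $a$ by $\partial_t^k a$, which lies in $\Gamma^m_{K,K'+k,p}[r]$ by \eqref{prodottodisimboli2}, gives the stated estimate for the time derivatives.

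\textbf{Main obstacle.} The genuinely delicate point is the bookkeeping in Part $(i)$: one must fix the auxiliary index $s_0$ large enough, as a function of the homogeneity degree $p$ and of the loss $\mu$ in \eqref{pomosimbo1}, so that the sum over the internal frequencies $\vec n$ converges while producing both the product $\prod_j\|U_j\|_{\dot H^{s_0}}$ and a decay $\langle\ell\rangle^{-M}$ strong enough to run Young's inequality --- this works only because the cut-off $\chi_p$ confines the internal frequencies to $|\vec n|\lesssim\langle\xi\rangle$. In Part $(ii)$ the corresponding point is that exactly two $x$-derivatives of the symbol are needed for the Schur test, which is why the statement only invokes the seminorms with $0\le\alpha\le2$, $\beta=0$; no $\xi$-derivatives of the symbol are used.
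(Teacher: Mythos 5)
Your proof is correct and follows essentially the same route as the source the paper relies on: Proposition \ref{azionepara} is not proved in the paper but deferred to Proposition 3.8 of \cite{BD}, whose argument is exactly your reduction to Fourier matrix elements, the frequency comparability $\langle k\rangle\sim\langle j\rangle$ forced by the cut-offs \eqref{cutoff1}--\eqref{cutoff2}, off-diagonal decay from the $x$-regularity of the symbol, and a Schur/Young bound on $\ell^2$. One small remark: in Part $(i)$ the convergence of the sum over $\vec n$ is already guaranteed by the $\langle n_j\rangle^{-s_0}$ weights extracted from $\|\Pi_{n_j}U_j\|_{\dot H^{s_0}}$ (for $s_0$ large with respect to $\mu$ and $p$), while the true role of $\chi_p$ is the support restriction $|\vec n|\leq\delta\langle\xi\rangle$ that yields $\langle k\rangle\sim\langle j\rangle\sim\langle\tfrac{k+j}{2}\rangle$ and lets you trade $\langle\tfrac{k+j}{2}\rangle^{m}$ for $\langle k\rangle^{m}$; this does not affect the validity of your argument.
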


$ \bullet $ If $a\in \Sigma\Gamma^{m}_{K,K',p}[r,N]$ with $m\leq 0$ and $p\geq1$, 
then $ \opbw(a(V;t,\cdot))U$
is in $\Sigma\mathcal{R}^{m}_{K,K',p}[r,N]$.

\medskip
Below we deal with classes of operators without keeping track of the number of lost derivatives in a precise way 
(see Definition 3.9  in \cite{BD}).
The class $\widetilde{\mathcal{M}}^{m}_{p}$ denotes multilinear maps that lose $m$ derivatives
and are $p$-homogeneous in $U$, 
while the class $\mathcal{M}_{K,K',p}^{m}$ contains non-homogeneous maps which lose $m$ derivatives,
vanish at degree at least $p$ in $U$, and are $(K-K')$-times differentiable in $t$.

\begin{definition}{\bf (Classes of maps)} \label{smoothoperatormaps}
Let $p,N\in \N $, with $p\leq N$, $N\geq1$, $K,K'\in\N$ with $K'\leq K$
and $ m \geq 0 $. 

\begin{itemize}
\item[(i)] {\bf $p$-homogeneous maps.} We denote by $\widetilde{\mathcal{M}}^{m}_{p}$
 the space of $(p+1)$-linear maps $M$ 
 from the space $(\dot{H}^{\infty}(\T;\C^{2}))^{p}\times \dot{H}^{\infty}(\T;\C)$ to 
 the space $\dot{H}^{\infty}(\T;\C)$ which are symmetric
 in $(U_{1},\ldots,U_{p})$, of the form
$ (U_{1},\ldots,U_{p+1})\to M(U_1,\ldots, U_p)U_{p+1} $
and that satisfy the following. There is $C>0$ such that 
 \[
 \|\Pi_{n_0}M(\Pi_{\vec{n}}\mathcal{U})\Pi_{n_{p+1}}U_{p+1}\|_{L^{2}}\leq
 C( n_0 +  n_1 +\cdots+ n_{p+1})^{m} \prod_{j=1}^{p+1}\|\Pi_{n_{j}}U_{j}\|_{L^{2}}
\] 
  for any 
 $\mathcal{U}=(U_1,\ldots,U_{p})\in (\dot{H}^{\infty}(\T;\C^{2}))^{p}$, any 
 $U_{p+1}\in \dot{H}^{\infty}(\T;\C)$,
 any $\vec{n}=(n_1,\ldots,n_p)\in (\N^*)^{p}$, any $n_0,n_{p+1}\in \N^*$.
 Moreover the properties \eqref{omoresti2}-\eqref{def:R-trin} hold.

 \item[(ii)]{\bf Non-homogeneous maps.}
  We denote by  $\mathcal{M}^{m}_{K,K',N}[r]$ 
  the space of maps $(V,{t},{U})\mapsto M(V;{t}) U $ defined on 
 $B^K_{s_0}(I;r)\times {I\times}C^K_{*}(I,\dot{H}^{s_0}(\T,\C))$ 
  which are linear in the variable $ U $ and such that the following holds true. 
  For any $s\geq s_0$ there exist a constant $C>0$ and 
  $r(s)\in]0,r[$ such that for any 
  $V\in B^K_{s_0}(I;r)\cap C^K_{*}(I,\dot{H}^{s}(\T,\C^2))$, 
  any $ U \in C^K_{*}(I,\dot{H}^{s}(\T,\C))$, any $0\leq k\leq K-K'$ and any $t\in I$, we have 
  $\|{\partial_t^k\left(M(V;{t})U\right)(t,\cdot)}\|_{\dot{H}^{s-k-m}}$ is bounded by the right hand side  of \eqref{piove}.
\item[(iii)]{\bf Maps.}
We denote by $\Sigma\mathcal{M}^{m}_{K,K',p}[r,N]$
the space of maps $(V,t,U)\to M(V,t)U$
that may be written as 
\[
M(V;t)U=\sum_{q=p}^{N-1}M_{q}(V,\ldots,V)U+M_{N}(V;t)U
\]
for some $M_{q} $ in $ \widetilde{\mathcal{M}}^{m}_{q}$, $q=p,\ldots, N-1$ and $M_{N}$  in  
$\mathcal{M}^{m}_{K,K',N}[r]$.
Finally we set $\widetilde{\mathcal{M}}_{p}:=\cup_{m\geq0}\widetilde{\mathcal{M}}_{p}^{m}$,
$\mathcal{M}_{K,K',p}[r]:=\cup_{m\geq0}\mathcal{M}^{m}_{K,K',p}[r]$ and 
$\Sigma\mathcal{M}_{K,K',p}[r,N]:=\cup_{m\geq0}\Sigma\mathcal{M}^{m}_{K,K',p}[r]$.
\end{itemize}

We denote by  $\Sigma\mathcal{M}_{K,K',p}^{m}[r,N]\otimes\mathcal{M}_2(\C)$
 the space of $2\times 2$ matrices whose entries are maps in
the class $\Sigma\mathcal{M}^{m}_{K,K',p}[r,N]$.
We also set $\Sigma\mathcal{M}_{K,K',p}[r,N]\otimes\mathcal{M}_2(\C)=\cup_{m\in \R}
\Sigma\mathcal{M}_{K,K',p}^{m}[r,N]\otimes\mathcal{M}_2(\C)$.
\end{definition}



\vspace{0.3em}
\noindent
$\bullet$
If  $M$ is in $\widetilde{\mathcal{M}}^{m}_{p}$, $p\geq N$, then
$(V,U)\to M(V,\ldots,V)U$ is in $\mathcal{M}^{m}_{K,0,N}[r]$.

\vspace{0.3em}
\noindent
$\bullet$
If $a\in \Sigma\Gamma^{m}_{K,K',p}[r,N]$ for $p\geq1$, then {the map $(V,U)\to$}
$ \opbw(a(V;t,\cdot))U$ 
is in $\Sigma\mathcal{M}^{m'}_{K,K',p}[r,N]$ for some $m'\geq m$.

\vspace{0.3em}
\noindent
$\bullet$
 Any  
$R\in \Sigma\mathcal{R}^{-\rho}_{K,K',p}[r,N]$
defines an element of $\Sigma\mathcal{M}^{m}_{K,K',p}[r,N]$ for some $m\geq0$.

\vspace{0.3em}
\noindent
$\bullet$
 If $ M \in \Sigma\mathcal{M}_{K,K'_1,p}[r,N]$ and $\tilde{M}\in \Sigma\mathcal{M}_{K,K'_2,1}[r,N-p]$,
then  
$(V,t,U)\to M(V+\tilde{M}(V;t)V ;t)[U]$ is in $\Sigma\mathcal{M}_{K,K_1'+K'_2,p}[r,N]$. 

\vspace{0.3em}
\noindent
$\bullet$
If $ M \in \Sigma\mathcal{M}^{m}_{K,K',p}[r,N]$ and $\tilde{M}\in\Sigma\mathcal{M}^{m'}_{K,K',q}[r,N]$,
then 
$M(U;t)\circ \tilde{M}(U;t)$ is in $\Sigma\mathcal{M}^{m+m'}_{K,K',p+q}[r,N]$.

Note that, given $ M_1\in \widetilde{\mathcal{M}}_1 $,   
 the property \eqref{def:R-trin} implies that 
\begin{equation}\label{quadraticTerms200}
M_1 (U)U = \frac{1}{2\pi}\!\!\sum_{\substack{n_1, n_2 \in \Z\setminus\{0\}, \\ \s=\pm}} \!\!(M_{2})^{\s\s}_{n_1, n_2} u^{\s}_{n_1} u^{\s}_{n_2} 
e^{\ii \s ( n_1 + n_2) x} 
+\frac{1}{2\pi}\!\!\sum_{n_1, n_2 \in \Z\setminus\{0\}}\!\! (M_{2})^{+-}_{n_1, n_2} u_{n_1} \ov{{u}_{n_2}} e^{\ii ( n_1 - n_2) x }
\end{equation}
for some coefficients $(M_2)^{\s\s'}_{n_1,n_2}\in \C$ with $\s,\s'=\pm$ and $n_1,n_2\in \Z\backslash\{0\}$.
\\[1mm]
{ \bf Composition theorems.}
Let 
$$ 
\varsigma (D_{x},D_{\x},D_{y},D_{\eta}) := D_{\x}D_{y}-D_{x}D_{\eta} 
$$
where $D_{x}:=\frac{1}{\ii}\pa_{x}$ and $D_{\x},D_{y},D_{\eta}$ are similarly defined. 

\begin{definition}{\bf (Asymptotic expansion of composition symbol)}\label{asyexpexp}
Let $K'\leq K, \rho,p,q$ be in $\N$, $m,m'\in \R$, $r>0$.
Consider $a\in \Sigma\Gamma_{K,K',p}^{m}[r,N]$ and $b\in \Sigma \Gamma^{m'}_{K,K',q}[r,N]$. For $U$ in $B_{\s}^{K}(I;r)$ 
we define, for $\rho< \s- s_0$, the symbol
\begin{equation}\label{espansione2}
(a\#_{\rho} b)(U;t,x,\x):=\sum_{k=0}^{\rho}\frac{1}{k!}
\left(
\frac{\ii}{2} \varsigma(D_{x},D_{\x},D_{y},D_{\eta})\right)^{k}
\Big[a(U;t,x,\x)b(U;t,y,\eta)\Big]_{|_{\substack{x=y, \x=\eta}}}
\end{equation}
modulo symbols in $ \Sigma\Gamma^{m+m' - \rho }_{K,K',p+q}[r,N] $.
\end{definition}

\noindent
$ \bullet $
By  \eqref{prodottodisimboli2} the symbol $ a\#_{\rho} b $ belongs  to $\Sigma\Gamma^{m+m'}_{K,K',p+q}[r,N]$.

\noindent
$ \bullet $
We have  the expansion  $ a\#_{\rho}b =ab+\frac{1}{2 \ii }\{a,b\} + \cdots $, 
up to a symbol in $\Sigma\Gamma^{m+m'-2}_{K,K',p+q}[r,N]$,     
where 
$$
\{a,b\}:=\pa_{\x}a\pa_{x}b-\pa_{x}a\pa_{\x}b
$$ 
denotes the Poisson bracket. 

\noindent
$\bullet$ Note that the terms of even (resp. odd) rank in the asymptotic expansion \eqref{espansione2}
in the Weyl quantization are symmetric (resp. antisymmetric) in $(a,b)$.
Consequently the terms of even rank vanish in the symbol of the commutator $[\opbw(a),\opbw(b)]$.

\begin{proposition}{\bf (Composition of Bony-Weyl operators)} \label{teoremadicomposizione}
Let $K'\leq K, \rho,p,q$ be in $\N$, $m,m'\in \R$, $r>0$. 
Consider 
$a\in \Sigma {\Gamma}^{m}_{K,K',p}[r,N] $ and $b\in \Sigma {\Gamma}^{m'}_{K,K',q}[r, N]$. 
Then
$$
R(U):=\opbw(a(U;t,x,\x))\circ\opbw(b(U;t,x,\x))-\opbw\big(
(a\#_{\rho} b)(U;t,x,\x)
\big)
$$
is a non-homogeneous smoothing remainder in $ \Sigma {\mathcal{R}}^{-\rho+m+m'}_{K,K',p+q}[r,N]$.
\end{proposition}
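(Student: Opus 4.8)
The statement is a standard result of Weyl symbolic calculus adapted to the multilinear paradifferential framework, and I would prove it along the following lines. First I would decompose $a$ and $b$ according to \eqref{simbotot1}, $a=\sum_{q'=p}^{N-1}a_{q'}+a_N$, $b=\sum_{q''=q}^{N-1}b_{q''}+b_N$, expand $\opbw(a)\circ\opbw(b)$ into the corresponding sum of compositions, and absorb every product of homogeneous pieces with $q'+q''\geq N$ into the non-homogeneous tail. Since, by its very definition \eqref{espansione2}, the symbol $a\#_\rho b$ splits in the same bigraded fashion modulo symbols of order $m+m'-\rho$, it suffices to treat separately $(a)$ two homogeneous symbols $a\in\widetilde{\Gamma}^m_{p}$, $b\in\widetilde{\Gamma}^{m'}_{q}$, producing a remainder in $\widetilde{\mathcal{R}}^{-\rho+m+m'}_{p+q}$, and $(b)$ the cases in which $a_N$ or $b_N$ appears, producing a remainder in $\mathcal{R}^{-\rho+m+m'}_{K,K',p+q}[r]$. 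One may also assume $\rho\geq m+m'$ and $p+q\geq 1$, the remaining cases being either trivial (for $p=q=0$ one has $a\#_\rho b=ab$ and $R=0$) or reducible to these by monotonicity of the symbol classes.

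For case $(a)$: since $\opbw(a(\mathcal{U};\cdot))=\opw(a_{\chi_p}(\mathcal{U};\cdot))$ is a genuine Weyl operator with a frequency-truncated symbol, the composition equals $\opw(a_{\chi_p}\,\#\,b_{\chi_q})$, where $\#$ is the exact Weyl (Moyal) product, realized on $\T$ as a convolution in the dual variable. I would then check two points. First, replacing the product cut-off carried by $a_{\chi_p}\#b_{\chi_q}$ by the single cut-off $\chi_{p+q}$ built into $\opbw(a\#_\rho b)$ changes the operator by a piece supported where at least one symbol-argument frequency is comparable to the running frequency $\xi$, hence by an operator that gains arbitrarily many derivatives and is already in the target class. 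Second, one Taylor-expands the Weyl product in the offset frequencies to order $\rho$: the polynomial part is exactly $\sum_{k=0}^{\rho}\frac{1}{k!}\bigl(\frac{\ii}{2}\sigma(D_x,D_\xi,D_y,D_\eta)\bigr)^k[a b]$ evaluated on the diagonal, i.e. $a\#_\rho b$ up to a symbol of order $m+m'-\rho$, while the integral remainder involves $\rho+1$ derivatives in $\xi$ distributed between $a$ and $b$. These $\xi$-derivatives lower the total order from $m+m'$ to $m+m'-\rho-1$ by \eqref{pomosimbo1}, and the accompanying $\rho+1$ derivatives in $x$ act on the homogeneous symbols $a(\Pi_{\vec n}\mathcal{U})$, $b(\Pi_{\vec m}\mathcal{V})$, producing by \eqref{pomosimbo1} only extra powers of $|\vec n|$, $|\vec m|$ that are absorbed into the exponent $\mu$ of the smoothing class, with the second largest frequency playing the role of the ``low'' frequency fed to the symbols. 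One then verifies that this remainder still satisfies the estimates of $\widetilde{\Gamma}^{m+m'-\rho}_{p+q}$ — the spectral support constraint $\sum_j\sigma_j n_j=0$ of \eqref{pomosimbo2} being inherited from $a$ and $b$, and the translation invariance \eqref{def:tr-in} being preserved by the Weyl product — so its Bony--Weyl quantization lies in $\Sigma\mathcal{R}^{-\rho+m+m'}_{K,K',p+q}[r,N]$ by the mapping property of $\opbw$ on negative-order symbols stated after Proposition \ref{azionepara}; the translation invariance \eqref{def:R-trin} of the resulting operator follows likewise.

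Case $(b)$ is the same computation, with the non-homogeneous symbol estimates \eqref{simbo} and the definition of $\Gamma^m_{K,K',N}[r]$ replacing \eqref{pomosimbo1}: the $\xi$-derivatives again drop the order by $\rho+1$, the $x$-derivatives produce the Sobolev factors $\|U\|_{k+K',\sigma}$ appearing in \eqref{simbo}, and the time derivatives are handled by the Leibniz bookkeeping that matches exactly the right-hand side of \eqref{piove}, yielding a remainder in $\mathcal{R}^{-\rho+m+m'}_{K,K',p+q}[r]$. Assembling $(a)$ and $(b)$ gives $R(U)\in\Sigma\mathcal{R}^{-\rho+m+m'}_{K,K',p+q}[r,N]$. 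The genuinely delicate point is not the order count — that is the routine $\xi$-derivative loss of the Weyl product — but the bookkeeping of the frequency cut-offs on the torus: one must show that the product localization of $a_{\chi_p}\#b_{\chi_q}$ can be traded for the single localization $\chi_{p+q}$ of $\opbw(a\#_\rho b)$ modulo operators genuinely smoothing of order at least $-\rho+m+m'$ (and not merely bounded), while keeping the homogeneity degree fixed so that the homogeneous contribution lands in $\widetilde{\mathcal{R}}^{-\rho+m+m'}_{p+q}$ and not in a class with more arguments. This analysis is carried out in detail in \cite{BD}, which we follow.
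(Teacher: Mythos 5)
Your proposal is correct and follows essentially the same route as the paper: the paper's own proof simply invokes Propositions 3.12 and 3.15 of \cite{BD} for the symbolic-calculus content you reconstruct, and then adds only the verification that the homogeneous components of $R(U)$ satisfy the translation-invariance property \eqref{def:R-trin}, obtained by difference from \eqref{tautheta} for the composed operator and for $\opbw(a\#_\rho b)$ — a point you also address, if more briefly, when noting that \eqref{def:tr-in} is preserved by the Weyl product.
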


\begin{proof}
See Propositions 3.12 and 3.15 in \cite{BD}.
Let us justify that the homogeneous components of 
$R(U)$ satisfy the translation invariance property \eqref{def:R-trin}. 
The homogeneous components of the 
symbols $ a $ and $ b $ (that for simplicity we still denote by $ a, b $) satisfy \eqref{def:tr-in}. 
Then, by \eqref{tautheta}, the homogeneous components of the composed operator 
$ \opbw(a(U; \cdot ,\x))\circ\opbw(b(U; \cdot ,\x))$ satisfies \eqref{tautheta} as well.
In addition also the symbol 
 $ a\#_{\rho}b $ defined in \eqref{espansione2} satisfies \eqref{def:tr-in}, and therefore
 $\opbw((a\#_{\rho}b)(U; \cdot ,\x))$
satisfies  \eqref{tautheta}. Thus  the homogeneous components of  $R(U) $ 
satisfy \eqref{def:R-trin} by difference.
\end{proof}

\noindent
$ \bullet $ As proved in the remark after the proof of Proposition 3.12 in \cite{BD}, 
the remainder obtained by the composition of paradifferential operators in Proposition 
\ref{teoremadicomposizione} has actually better estimates than
\eqref{piove}, i.e. it is bounded from $ \dot H^{s} $  to $ \dot H^{s + \rho - (m + m') } $ for {\it any} $ s $,
with operator norm bounded by $ \| U \|_{K, s_0}^{p+q} $.

\begin{proposition}{\bf (Compositions)} \label{composizioniTOTALI}
Let $m,m',m''\in \R$, $K,K',N,p_1,p_2,p_{3},\rho\in \N$ with $K'\leq K$, $p_1+p_{2}<N$, $\rho\geq0$ and $r>0$.
Let $a\in \Sigma\Gamma^{m}_{K,K',p_1}[r,N]$, 
$R\in\Sigma\mathcal{R}^{-\rho}_{K,K',p_{2}}[r,N]$ and $M\in \Sigma\mathcal{M}^{m''}_{K,K',p_{3}}[r,N]$.
Then

\begin{itemize}
\item[(i)]
$ R(U;t)\circ \opbw(a(U;t,x,\x)) $, $  \opbw(a(U;t,x,\x))\circ R(U;t) $
are   in $\Sigma\mathcal{R}^{-\rho+m}_{K,K',p_1+p_{2}}[r,N]$.

\item[(ii)]
$ R(U;t)\circ M(U;t) $ and $ M(U;t)\circ R(U;t) $ 
are smoothing operators in $\Sigma\mathcal{R}^{-\rho+m''}_{K,K',p_2+p_{3}}[r,N]$.

\item[(iii)]
If  $R_{2} \in \widetilde{\mathcal{R}}_{p_2}^{-\rho}$  
then $R_{2}(U, \ldots, U, M(U;t)U)$ belongs to $\Sigma\mathcal{R}^{-\rho+m''}_{K,K',p_{2}+p_3}[r,N]$.

\item[(iv)] Let $c$ be in $\widetilde{\Gamma}_p^{m}$, $p\in \N$. 
Then 
$$
U \rightarrow c_{M}(U;t,x,\x):= c(U,\ldots,U,M(U;t)U;t,x,\x)
$$
is in $\Sigma\Gamma^{m}_{K,K',p+p_3}[r,N]$. If the symbol $c$ is independent of $\x$ (i.e.
$c$ is in $\widetilde{\mathcal{F}}_p$), so is the symbol $c_{M}$
(thus it is a  function in $\Sigma\mathcal{F}_{K,K',p+p_3}[r,N]$). 
Moreover if $c$ is a symbol in $\Gamma^{m}_{K,K',N}[r]$
then the symbol $c_{M}$
is in $\Gamma^{m}_{K,K',N}[r]$.

\item[(v)]
$ \opbw( c(U,\ldots,U,W;t,x,\x))_{|W=M(U;t)U}=\opbw(b(U;t,x,\x))+R(U;t) $
where  
$$
b(U;t,x,\x):= c(U,\ldots,U,M(U;t)U;t,x,\x)
$$
and $R(U; t) $ is in $\Sigma\mathcal{R}^{-\rho}_{K,K',p+p_1}[r,N]$.
\end{itemize}
\end{proposition}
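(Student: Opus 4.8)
The plan is to reduce each of the five assertions to its homogeneous and non-homogeneous building blocks: every symbol, smoothing operator and map appearing in the statement is first decomposed as in \eqref{simbotot1} and the analogous decompositions in Definitions \ref{omosmoothing}(iii) and \ref{smoothoperatormaps}(iii), so that it suffices to prove the corresponding statements for the purely $p$-homogeneous pieces — using the frequency-localized bounds \eqref{pomosimbo1}, the $\max_2/\max$ estimate of Definition \ref{omosmoothing}(i), and the polynomial bound of Definition \ref{smoothoperatormaps}(i) — and, separately, for the non-homogeneous pieces, using the operator estimates \eqref{stimapar}, \eqref{simbo} and \eqref{piove}. In every case the translation-invariance property \eqref{def:R-trin} of the resulting homogeneous components is obtained, exactly as in the proof of Proposition \ref{teoremadicomposizione}, by writing the object under consideration as a finite difference of operators already known to satisfy \eqref{tautheta}, so I will not repeat this point below.

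For (i) and (ii) the mechanism is the additivity, under composition, of both the degree of homogeneity in $U$ and the gain/loss of derivatives. For the homogeneous parts I would use that $\opbw(a_q(\mathcal U;\cdot))$ maps $\dot H^s\to\dot H^{s-m}$ by Proposition \ref{azionepara}(i), that the map $M_q$ loses $m''$ derivatives by Definition \ref{smoothoperatormaps}(i), and then pre- or post-compose with the $\rho$-smoothing multilinear map; since composition merely concatenates the underlying multilinear maps, the frequency constraint $\sum_j\sigma_j n_j=0$ and the polynomial frequency weights are preserved, and the output frequency of the inner factor becomes an input frequency of the outer one, so the gain/loss balance produces a smoothing operator of index $-\rho+m$ (resp.\ $-\rho+m''$) and homogeneity $p_1+p_2$ (resp.\ $p_2+p_3$). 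For the non-homogeneous parts one uses only \eqref{piove} for $R$ together with boundedness of the other factor on \emph{every} Sobolev scale, which is precisely what \eqref{simbo} and Definition \ref{smoothoperatormaps}(ii) guarantee.

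Statements (iii), (iv), (v) all concern the substitution $W=M(U;t)U$ into one argument, so I would expand $M(U;t)U=\sum_{q=p_3}^{N-1} M_q(U,\ldots,U)U + M_N(U;t)U$ and treat each summand. For (iv): inserting a $(q+1)$-linear object $M_q(U,\ldots,U)U$ into the last slot of the $p$-linear symbol $c$ yields, after symmetrization, a symbol of homogeneity $(p-1)+(q+1)=p+q\geq p+p_3$; the bound \eqref{pomosimbo1} is inherited because $M$ is $\xi$-independent (so the factor $\langle\xi\rangle^{m-\beta}$ is untouched), while the frequency weight grows only polynomially since $M$ loses at most $m''$ derivatives and $x$-derivatives falling on $c$ cost powers of the total frequency; $\xi$-independence of $c$ is obviously preserved, and the claim for $c\in\Gamma^m_{K,K',N}[r]$ follows from \eqref{simbo} and \eqref{piove}. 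Statement (iii) is the same computation with $c$ replaced by the smoothing kernel $R_2\in\widetilde{\mathcal R}^{-\rho}_{p_2}$: the loss of (at most) $m''$ derivatives coming from $M$ degrades the smoothing index from $-\rho$ to $-\rho+m''$ and the homogeneity becomes $p_2+p_3$. Finally, (v) asserts that $\opbw$ commutes with this substitution up to a smoothing remainder: the operator $\opbw\big(c(U,\ldots,U,W;\cdot)\big)_{|W=M_q(U,\ldots,U)U}$ is quantized with the cut-off $\chi_p$ associated to a symbol depending on $p$ arguments (the last being $W$), whereas $\opbw(c_M(U;\cdot))$ carries the cut-off $\chi_{p+q}$ associated to $p+q$ arguments; by the remark following Definition \ref{quantizationtotale} (independence of the Bony--Weyl quantization of the admissible cut-off, up to arbitrarily smoothing operators), these differ by a homogeneous smoothing operator of the claimed index, and the non-homogeneous contribution of $M_N$ is absorbed into $R$ by \eqref{piove}; summing the pieces gives $\opbw(b(U;t,\cdot))+R(U;t)$ with $b=c_M$.

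The steps I expect to require the most care are (iii) and (v). In (iii) one must check that feeding the \emph{output} of a derivative-losing map into an argument of a smoothing operator still yields a genuinely smoothing — not merely bounded — operator; this hinges on the fact that every frequency produced by $M(U;t)U$ is controlled by a combination of the frequencies of the inputs $U$, so that the distinction between the largest and the second-largest frequency which underlies the $\max_2/\max$ gain of Definition \ref{omosmoothing}(i) is not lost under the substitution. In (v) the delicate point is precisely the change of the paradifferential cut-off from $\chi_p$ to $\chi_{p+q}$ forced by the substitution, which is harmless only because two admissible choices of cut-off produce the same Bony--Weyl operator modulo remainders that are smoothing to every order. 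All of these statements are the natural multilinear analogues of the composition results of Chapter~3 of \cite{BD}, from which the non-homogeneous parts may also be quoted directly.
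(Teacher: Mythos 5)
Your proposal is essentially correct in outline, but it follows a genuinely different route from the paper: the paper does not prove the analytic content of items (i)--(v) at all, it simply quotes Propositions 3.16, 3.17 and 3.18 of \cite{BD}, and the only thing it argues explicitly is the translation-invariance property \eqref{def:R-trin} of the homogeneous components of the composed objects, obtained ``by difference'' exactly as in the proof of Proposition \ref{teoremadicomposizione} --- which is, incidentally, the one point where your argument and the paper's coincide verbatim. What you do instead is sketch the proofs from scratch: decomposition into homogeneous and non-homogeneous pieces, additivity of the homogeneity degree and of the gain/loss of derivatives for (i)--(ii), and a correct identification of the two genuinely delicate points, namely that in (iii) the $\max_2/\max$ structure of Definition \ref{omosmoothing}(i) must survive the substitution $W=M(U;t)U$ because every frequency of $M(U;t)U$ is controlled by the input frequencies, and that in (v) the discrepancy between quantizing with the cut-off $\chi_p$ (applied to the aggregated frequency of $W$) and with $\chi_{p+q}$ (applied to the individual frequencies after substitution) is supported where the second largest frequency is comparable to the largest, hence produces a remainder smoothing to any order --- these are precisely the contents of Propositions 3.17 and 3.18 of \cite{BD}. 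The paper's approach buys brevity and defers all quantitative verifications to \cite{BD}, where they are carried out in full; your approach buys self-containedness and makes transparent where each index $-\rho+m$, $-\rho+m''$, $p_1+p_2$, etc.\ comes from, at the price that a fully rigorous version of your sketch (in particular the frequency bookkeeping in (i) when some inner frequency is comparable to the outer one, and the precise cut-off comparison in (v)) would essentially amount to reproducing Chapter 3 of \cite{BD}. I see no gap in your reasoning that would not be closed by that standard bookkeeping, so the proposal stands as a valid, more explicit alternative to the paper's citation-based proof.
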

\begin{proof}
See Proposition 3.16,  3.17, 3.18 in \cite{BD}.
The translation invariance properties for the composed operators and symbols 
in items (i)-(v) follow as in the proof of Proposition 
\ref{teoremadicomposizione}.
\end{proof}

\noindent
{\bf Real-to-real operators.} 
Given a linear  operator  
$ R(U) [ \cdot ]$ acting on $ \C $ (it may be a 
smoothing operator in  $\Sigma\mathcal{R}^{-\rho}_{K,K',1}$ or
a map  in $\Sigma\mathcal{M}_{K,K',1}$) 
we associate the linear  operator  defined by the relation 
\begin{equation}\label{opeBarrato}
\ov{R}(U)[V] := \ov{R(U)[\ov{V}]} \, ,   \quad \forall V \in \C \, .
\end{equation}
We say that a matrix of operators acting in $ \C^2 $ is   \emph{real-to-real}, if it has the form 
\begin{equation}\label{vinello}
R(U) =
\left(\begin{matrix} R_{1}(U) & R_{2}(U) \\
\ov{R_{2}}(U) & \ov{R_{1}}(U)
\end{matrix}
\right) \, .
\end{equation}
Note that 

\noindent
$ \bullet $
 if  $R(U)$  is a  real-to-real matrix of operators 
then, given $V=\vect{v}{\ov{v}}$, the vector  $Z:=R(U)[V]$ has the form  $Z=\vect{z}{\bar{z}}$, namely the second component is the complex conjugated of the first one.

\noindent
$ \bullet $
If a  matrix of symbols $A(U;x,\x)$, in some class $\Sigma{\Gamma}^{m}_{K,K',1}\otimes\mathcal{M}_2(\C)$,  
has the form 
\begin{equation}\label{prodotto}
A(U;x,\x) =
\left(\begin{matrix} {a}(U;x,\x) & {b}(U;x,\x)\\
{\ov{b(U;x,-\x)}} & {\ov{a(U;x,-\x)}}
\end{matrix}
\right)
\end{equation}
then the matrix of operators $ \opbw(A(U;x,\x))$ is real-to-real. 

\vspace{0.5em}
\noindent
{\bf Notation.}

\setlength{\leftmargini}{2em}
\begin{itemize}
\item 
To simplify the notation,   we will often omit the dependence on the time $ t $ from the symbols,
smoothing remainders and maps, 
writing $a(U;x,\x)$, $ R(U) $, $ M(U) $ instead of 
$a(U;t,x,\x)$, $ R (U; t)$, $ M (U; t) $.
Moreover, given a 
symbol in $\Sigma\Gamma^{m}_{K,K',p}$
we may omit to write its dependence on $U$, writing $b(x,\x)$ instead of $b(U;x,\x)$,
when this does not cause confusion.

\item Since in the rest of the paper we only need to control
expansions in degrees of homogeneity of symbols, smoothing operators and maps, 
up to cubic terms $O(u^{3})$, we fix once and for all $N=3$. 
We will omit the dependence on $r$ and $N=3$ in the class of symbols, 
writing $\Sigma\Gamma^{m}_{K,K',p}$, 
instead of  $\Sigma\Gamma^{m}_{K,K',p}[r,3]$, 
and similarly for smoothing operators and maps.

\item 
$A\lesssim_{s} B$
means $A \leq C(s) B$
 where $C(s) > 0 $ 
 is a  constant depending on $s \in \R $.

\item 
In this paper we will deal with parameters
$$
s \geq s_0 \gg K \gg \rho \gg 1 \, . 
$$
\end{itemize}

The order of regularization $\rho \gg 1 $ will be chosen large enough to control the loss of derivatives 
coming from the small divisors in the  two steps of Birkhoff normal form, see Section \ref{sec:BNF}. 
More precisely $\rho \sim  N_0$ where  $N_0$ is 
the exponent  appearing in \eqref{stima2}. 
This requires to develop para-differenatial calculus for functions $U$ in  $ \dot H^{s_0}$ with $ s_0 \gg \rho$. 
In order to transform the water waves equations \eqref{eq:113} into  a paradifferential system plus a 
$\rho$-smoothing remainder we perform several para-differential changes of variables
for solutions $U(t)$ which are $K$-times differentiable in time with  $ \pa_t^k U \in \dot H^{s-k}$, $ 0 \leq k \leq K $. 
Since each of the conjugations performed in Section \ref{diagonalizzo} consumes one time derivative, 
we need to require  $K \gg \rho$, more precisely $ K \sim 2 \rho $, see Proposition \ref{teodiagonal}.
We then require that the Sobolev exponents satisfy  $s \geq s_0 \gg K $.

\bigskip
\section{ Complex form of the water waves equations} \label{sec:3}

\subsection{Paralinearization and complex variables}\label{sec:good}

Following \cite{ABZ1,AlDe1}, 
we begin by writing the water waves system \eqref{eq:113} using the good-unknown \eqref{omega0}
\[
\omega =   \psi - \opbw{(B(\eta, \psi)) \eta}
\]
where $ B(\eta, \psi) $ is the real valued function introduced in \eqref{form-of-B}. 
The water-waves equations \eqref{eq:113}, written 
in the new coordinates 
\be\label{Alinach-good} 
\vect{\eta}{\omega} =
{\mathcal G} \vect{\eta}{\psi} := 
 \vect{\eta}{ \psi - \opbw{(B(\eta, \psi)) \eta}  } \, , 
\ee
assume the following paralinearized form derived in \cite{BD}. 

\begin{proposition}{\bf (Water-waves equations in $(\eta, \omega)$ variables)}\label{laprimapara}
Let $ I = [-T, T] $ with $ T > 0 $. 
Let  $K \in \N^* $ and $ \rho \gg 1 $. There exists $ s_0 > 0 $ such that, for any $ s \geq s_0 $, for all 
$ 0 < r \leq r_0(s) $ small enough,  
if $ (\eta, \psi) \in B^K_s (I;r) $ solves \eqref{eq:113}, then 
\begin{align}   \label{eq:1n}
\partial_t \eta & = |D| \omega + \opbw \big(- \ii V \xi - \frac{V_x}{2} \big)\eta + \opbw(b_{-1}(\eta;\cdot))\omega
+ R_1(\eta,\omega)\omega + R'_1(\eta,\omega)\eta \\
\partial_t\omega & = - \eta  + \opbw(  - \ii V \xi + \frac{V_x}{2} )\omega
 -  \opbw( \pa_t B + V B_x  )\eta   \label{eq:2n}
   + R'_2(\eta,\omega)\omega + R''_2(\eta,\omega)\eta 
\end{align}
where the functions $ V, B $ defined in \eqref{def:V}-\eqref{form-of-B} are  in $ \Sigma {\cal F}^\R_{K, 0,1}$, 
the symbol  $b_{-1}(\eta;\cdot) $ belongs to $ \Sigma\Gamma^{-1}_{K,0,1}$, 
and the smoothing  operators $R_1$, $R_{1}'$, $R_2$, $R_{2}'$ are in 
$\Sigma\mathcal{R}^{-\rho}_{K,0,1}$.  
The vector field in the right hand side of \eqref{eq:1n}-\eqref{eq:2n}
is $x$-translation invariant, i.e.  \eqref{commutes-tau} holds. 
\end{proposition}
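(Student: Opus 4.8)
This statement is, modulo notation, the paralinearization of the water waves system established in \cite{BD} (in the spirit of Alazard--M\'etivier \cite{AlM} and Alazard--Burq--Zuily \cite{ABZ1}), so the plan is to recover it in three movements: paralinearize the Dirichlet--Neumann operator, paralinearize the nonlinearity in the $\psi$-equation, and then pass to the good unknown $\omega$ of \eqref{omega0}, keeping track of the characteristic Alazard--M\'etivier cancellations. The one nontrivial input I would quote verbatim from \cite{BD} --- and the step I expect to carry all the real weight --- is the paralinearization of $G(\eta)\psi$ \emph{with homogeneous multilinear expansions}: for $(\eta,\psi)\in B^K_s(I;r)$ with $r$ small,
\[
G(\eta)\psi = |D|\,\omega + \opbw\big(-\ii V\xi-\tfrac12 V_x\big)\eta + \opbw\big(b_{-1}(\eta;\cdot)\big)\omega + R(\eta,\omega) \, ,
\]
where $V$ is as in \eqref{def:V}, $b_{-1}\in\Sigma\Gamma^{-1}_{K,0,1}$, $R\in\Sigma\mathcal R^{-\rho}_{K,0,1}$ on each argument, and where crucially the symbol $\lambda(\eta;x,\xi)=|\xi|+\lambda^{(0)}+\lambda^{(-1)}+\cdots$ of the Dirichlet--Neumann operator and the remainder carry expansions into the homogeneous classes of Definitions \ref{pomosimb}--\ref{omosmoothing}. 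The mechanism behind the displayed formula is that, once $\psi$ is re-expressed through $\omega=\psi-\opbw(B)\eta$, the order-$1$ and order-$0$ corrections to $|\xi|$ collapse onto the single transport symbol $-\ii V\xi-\tfrac12 V_x$ acting on $\eta$, leaving only a symbol of order $-1$ acting on $\omega$. Granting this, equation \eqref{eq:1n} is exactly $\partial_t\eta=G(\eta)\psi$, with $R_1,R_1'$ the two pieces of $R$.

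For \eqref{eq:2n} I would first rewrite the nonlinearity of \eqref{eq:113} in terms of $V$ and $B$: using \eqref{def:V}--\eqref{form-of-B}, namely $\eta_x\psi_x+G(\eta)\psi=B(1+\eta_x^2)$ and $\psi_x=V+\eta_x B$, one has the algebraic identity
\[
-\tfrac12\psi_x^2+\tfrac12\frac{(\eta_x\psi_x+G(\eta)\psi)^2}{1+\eta_x^2} = -\tfrac12 V^2+\tfrac12 B^2-\eta_x V B \, ,
\]
so that $\partial_t\psi=-\eta-\tfrac12 V^2+\tfrac12 B^2-\eta_x V B$. Paralinearizing each product by Bony's paraproduct decomposition $fg=\opbw(f)g+\opbw(g)f+R(f,g)$ of \cite{BD} (with $(-\rho)$-smoothing remainders) and using $V,B\in\Sigma\mathcal F^{\R}_{K,0,1}$, the term $-\tfrac12 V^2$ yields the transport operator $\opbw(-\ii V\xi)\psi$ up to lower order. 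Then I would substitute the good unknown: from $\omega=\psi-\opbw(B)\eta$ and $\partial_t\eta=G(\eta)\psi$,
\[
\partial_t\omega=\partial_t\psi-\opbw(\partial_t B)\eta-\opbw(B)\,G(\eta)\psi \, .
\]
The decisive cancellation is that the term $-\opbw(B)G(\eta)\psi$ coming from the good unknown annihilates, at leading order, the term $\tfrac12 B^2$ of the nonlinearity (both being $\simeq\opbw(B)|D|\omega$ up to lower order); then the commutators of $\opbw(B)$ with the transport operator (Proposition \ref{teoremadicomposizione}), the exact relation $\psi_x=V+\eta_x B$, together with the remaining pieces of $-\opbw(B)G(\eta)\psi$, of $-\eta_x VB$ and of $-\opbw(\partial_t B)\eta$, reorganize into the subprincipal symbol $+\tfrac12 V_x$ and the explicit potential term $-\opbw(\partial_t B+VB_x)\eta$ of \eqref{eq:2n}. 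Everything remaining is of order $\le-1$ with positive homogeneity, hence a smoothing remainder $R_2',R_2''\in\Sigma\mathcal R^{-\rho}_{K,0,1}$, using the bullet after Proposition \ref{azionepara} and truncating the expansions at cubic order ($N=3$). That $V,B$ and $b_{-1}$ land in the stated classes follows from the corresponding statements for $G(\eta)\psi$ and from the calculus rules \eqref{prodottodisimboli2} and Proposition \ref{composizioniTOTALI}.

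It remains to check the $x$-translation invariance \eqref{commutes-tau}. The system \eqref{eq:113} has no explicit $x$-dependence and $G(\tau_\theta\eta)(\tau_\theta\psi)=\tau_\theta(G(\eta)\psi)$ --- in infinite depth, translating the Dirichlet datum translates the harmonic extension --- so the water waves vector field commutes with $\tau_\theta$. The good-unknown map $\mathcal G$ of \eqref{Alinach-good} commutes with $\tau_\theta$ as well, since $B(\tau_\theta\eta,\tau_\theta\psi)=\tau_\theta B(\eta,\psi)$ and $\opbw$ intertwines translations, cf. \eqref{tautheta}. Hence the conjugated vector field in \eqref{eq:1n}--\eqref{eq:2n} commutes with $\tau_\theta$, which expressed on homogeneous components is precisely the translation-invariance properties \eqref{def:tr-in} and \eqref{def:R-trin} of the symbols $V,B,b_{-1}$ and of the smoothing operators $R_1,R_1',R_2',R_2''$. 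The only genuinely hard point in all of this is the first one --- obtaining the paralinearization of $G(\eta)\psi$ with quantitative control of the homogeneous multilinear expansion and of the $(-\rho)$-smoothing remainder; once that is imported from \cite{BD}, everything else is symbolic calculus from Section \ref{sec:funcsett}.
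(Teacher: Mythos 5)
Your proposal follows essentially the same route as the paper: the paper's proof simply imports the paralinearization computations of \cite{BD} (Propositions 7.5, 7.6 and Chapter 8.2, specialized to zero surface tension and infinite depth, in agreement with Theorem 2.12 of \cite{AlM}) and then verifies the $x$-translation invariance exactly as you do, via $G(\tau_\theta\eta)[\tau_\theta\psi]=\tau_\theta G(\eta)[\psi]$, the invariance of $V,B,b_{-1}$, the commutation of the good-unknown map $\mathcal{G}$ with $\tau_\theta$, and the invariance of the remainders by difference. Your extra detail on the algebraic identity for the nonlinearity and the good-unknown cancellation is a correct unpacking of what \cite{BD} does, not a different argument.
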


\begin{proof}
The proof of this proposition follows from the computations in \cite{BD} in 
the absence of capillarity and specified in the case of infinite depth.   
The right hand side in \eqref{eq:1n}
is the paralinearization of the Dirichlet-Neumann operator in  \cite{BD}. 
The approach in \cite{BD} does not make use of a variational method to study the 
Dirichlet-Neumann boundary value problem as in \cite{AlM, ABZ1}, but 
used a paradifferential parametrix \`a la Boutet de
Monvel, 
introducing classes of para-Poisson operators whose symbols have a decomposition in
multilinear terms. The applications of these results to the construction of the good unknown 
and the paralinearization of the water waves 
system provide the expansions \eqref{eq:1n}-\eqref{eq:2n}.
In particular, the explicit expression of the symbols in \eqref{eq:1n} 
follows by developing the computations in Proposition 7.5 and Chapter 8.2 in \cite{BD}.
Note that this expansion agrees 
with the paralinearization of  the Dirichlet-Neumann operator in Theorem 2.12 in \cite{AlM},
in the case of dimension $1$ and using the Bony-Weyl quantization. 
The equation \eqref{eq:2n} follows by 
developing the computations in Proposition 7.6 in \cite{BD}.

The Dirichlet-Neumann operator satisfies the translation invariance property 
$G( \tau_\theta \eta)[\tau_\theta \psi] = \tau_\theta G(\eta)[\psi] $. 
Hence the functions $ V, B $ defined in \eqref{def:V}-\eqref{form-of-B} 
satisfy the $ x $-invariance property \eqref{def:tr-in} as well, 
and so do $ V_x, B_x, \pa_t B $. 
The symbol $b_{-1}(\eta;\cdot) $ satisfies the $ x $-invariance property \eqref{def:tr-in} 
checking  the construction in \cite{BD} (its $x$-dependence enters only through $ (\eta,\omega)$).
Moreover, since $  B (\tau_\teta\eta, \tau_\teta\psi) (x)  = B (\eta, \psi) (x+ \theta) $,  we get 
$$ 
 \opbw{( B(\tau_\teta \eta, \tau_\teta \psi)) } [\tau_\teta\eta] (x)  = \opbw{( B(\eta, \psi)) } [\eta]  (x + \theta ) \, , \quad \forall \theta \in \R \, , 
$$ 
and therefore the good-unknown transformation  $ {\mathcal G} $ defined in \eqref{Alinach-good} satisfies 
 $ {\cal G} \circ \tau_\theta = \tau_\theta \circ {\cal G } $, 
where  $ \tau_\theta  $ is the translation operator
in \eqref{tau-theta}.
This implies that the  whole vector field in the right hand side of \eqref{eq:1n}-\eqref{eq:2n} satisfies the 
$ x $-invariance property  and therefore the smoothing remainders satisfy \eqref{def:R-trin} by difference. 
\end{proof}

In Subsection \ref{expHomogen} we will provide 
explicit expansions for the symbols of non-negative order in \eqref{eq:1n}-\eqref{eq:2n}
in linear and quadratic degrees of homogeneity.

\begin{remark} {\bf (Expansion of the Dirichlet-Neumann operator)}
\label{rem:DN}
\begin{itemize}
\item[(i)] Substituting 
\eqref{Alinach-good} in the right hand side of \eqref{eq:1n}, which is equal to $G(\eta)\psi$, 
we have, using  the remarks under Definition \ref{smoothoperatormaps} 
and the fact that $B(\eta,\psi)\in\Sigma {\cal F}^\R_{K, 0,1}$ is linear in $\psi$, 
that $G(\eta)-|D|$ is a map in  $\Sigma\mathcal{M}_{K,0,1}$
and 
\begin{equation}\label{DNexp}
G(\eta)\psi=|D|\psi+\widetilde{M}_1(\eta)\psi+\widetilde{M}_2(\eta)\psi
+\widetilde{M}_{\geq 3}(\eta)\psi  
\end{equation}
for some maps $\widetilde{M_1}\in \widetilde{\mathcal{M}}_1$, 
$\widetilde{M_2}\in \widetilde{\mathcal{M}}_2$ and $\widetilde{M}_{\geq3}\in\mathcal{M}_{K,0,3}$. 

\item[(ii)] The Dirichlet-Neumann operator admits 
a Taylor expansion (see e.g.  formula (2.5) of \cite{CS}) 
of the form
\begin{equation}\label{DNexp2}
 G(\eta)\psi = |D| \psi+ G_1(\eta) \psi + G_2 (\eta)\psi + G_{\geq3}(\eta)\psi 
 \end{equation}
where, $ D := \frac{1}{\ii} \pa_x $,  
\begin{equation}\label{DNexp3}
\begin{aligned}
G_1(\eta)&:= - \pa_x \eta \pa_x  - |D|  \eta |D| \\
G_2 (\eta) & :=  - \frac12 \Big(  D^2 \eta^2 |D| + |D| \eta^2 D^2 - 2 |D| \eta |D| \eta |D| \Big)  
\end{aligned}
\end{equation}
where $G_{\geq3}$ collects all the terms with homogeneity in $\eta$ greater than $2$.
The notation above $|D|\eta|D|$, resp. $|D| \eta |D| \eta |D| $, means  the composition operator 
$|D| \circ \eta \circ |D|$, resp. $ |D| \circ \eta \circ |D| \circ \eta \circ |D|$, 
of the Fourier multiplier $ |D|$ and the multiplication operator for the function $ \eta $.
We then see that the quadratic and cubic components of the expansions
\eqref{DNexp2} and \eqref{DNexp} coincide, 
namely $G_1=\widetilde{M}_1$ and $G_2=\widetilde{M}_2$.
It follows that $G_{\geq3}$ is in $\mathcal{M}_{K,0,3}$.

\item[(iii)] Performing the paralinearization of $G_1(\eta)\psi$ and $G_2(\eta)\psi$ in  \eqref{DNexp3},
one obtains the expansion 
\[
G(\eta) \psi = |D| \omega  
+ \opbw{\big(  - \ii {\mathtt V}_{\leq 2}   \xi  
- \frac12  ({\mathtt V}_{\leq 2})_x  \big) } \eta
+ {\rm \ quartic \ terms}
\]  
up to smoothing operators, 
where  $ {\mathtt V}_{\leq 2} = \psi_x - \eta_x (|D| \psi) $ contains the linear and quadratic components of the function $ V $
in \eqref{def:V}. 
This formula agrees with \eqref{eq:1n} showing 
that the symbol $b_{-1}$ is zero (at least) at cubic degree of homogeneity.
\end{itemize}

\end{remark}

We now write the equations \eqref{eq:1n}-\eqref{eq:2n} 
in terms of the complex variable $u$ defined by, see \eqref{u0},
\begin{align}
\label{u0'} 
u := \frac{1}{\sqrt{2}}|D|^{-\frac{1}{4}}\eta+\frac{\ii}{\sqrt{2}}|D|^{\frac{1}{4}}\omega \, .
\end{align}

\begin{proposition} {\bf (Water-waves equations in complex variables)}\label{WWcomplexVar}
Let $ K \in \N^* $ and $ \rho \gg 1  $. 
There exists $ s_0 > 0 $ such that, for any $ s \geq s_0 $, for all $ 0 < r \leq r_0(s) $ small enough, 
 if  $ (\eta,\omega)$ solves \eqref{eq:1n}-\eqref{eq:2n} and  
  $U:=\vect{u}{\bar{u}}$ with $ u $ defined in \eqref{u0'}  belongs to $B_{s}^{K}(I;r)$,
then $U$ solves
\begin{equation}\label{eq:415}
\begin{aligned}
\pa_t U=\opbw\big(
\ii A_1(U;x)\x+\ii A_{1/2}(U;x)|\x|^{\frac{1}{2}}+A_0(U;x)+A_{-1}(U;x,\x)
\big)U+R(U)U
\end{aligned}
\end{equation}
where 
\begin{align}\label{matriciinizio}
& A_{1}(U;x) := 
\left(\begin{matrix} -V(U;x) & 0 \\
0 & -V(U;x) \end{matrix}\right) 
\\
& A_{1/2}(U;x)  := 
\left(\begin{matrix}-(1+a(U;x)) & - a(U;x) \\
 a(U;x) & 1+a(U;x) \end{matrix}\right) \, , 
 \qquad a:= 
 \frac12 (  \pa_t B + V B_x ) \, ,  \label{function:a}
 \\
&  A_{0}(U;x) :=  -\frac{1}{4}\left(\begin{matrix}0 & 1\\1&0\end{matrix}\right)V_x (U; x) \, ,
 \label{function:c}
\end{align} 
 $A_{-1} $ is a matrix of symbols in $ \Sigma \Gamma^{-1}_{K,1,1} \otimes {\cal M}_2(\mathbb{C})$, 
 and $R(U) $ is a matrix of smoothing operators belonging to $ \Sigma\mathcal{R}^{-\rho}_{K,1,1}\otimes\mathcal{M}_2(\C) $.
 The vector field in the right hand side of \eqref{eq:415}
is $x$-\emph{invariant}
   and it is \emph{real-to-real} according to  \eqref{vinello}, i.e. the second equation for $ \bar u $ 
   is the complex conjugated of the first equation for $ u $. 
 \end{proposition}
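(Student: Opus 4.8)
The plan is to substitute the inverse of the change of variables \eqref{u0'} into the system \eqref{eq:1n}--\eqref{eq:2n} and then expand the resulting operators by the paradifferential calculus. Since $\eta,\omega$ are real valued, \eqref{u0'} is equivalent to $\eta=\tfrac{1}{\sqrt2}|D|^{\frac14}(u+\bar u)$ and $\omega=-\tfrac{\ii}{\sqrt2}|D|^{-\frac14}(u-\bar u)$, so that $\partial_t u=\tfrac{1}{\sqrt2}|D|^{-\frac14}\partial_t\eta+\tfrac{\ii}{\sqrt2}|D|^{\frac14}\partial_t\omega$. First I would insert the right hand sides of \eqref{eq:1n}--\eqref{eq:2n} into this identity. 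The purely linear terms $|D|\omega$ and $-\eta$ recombine, after using the above expressions of $\eta,\omega$, into $-\ii|D|^{\frac12}u$, which is the contribution $\opbw(\ii\,\mathrm{diag}(-1,1)|\xi|^{\frac12})U$ to \eqref{eq:415}, i.e. the $\mathrm{diag}(-1,1)$ part of the matrix $A_{1/2}$ in \eqref{function:a}.

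Every remaining paradifferential term of \eqref{eq:1n}--\eqref{eq:2n} ends up sandwiched between two Fourier multipliers $|D|^{\pm\frac14}$, and the core of the argument is to expand these conjugations via the composition Theorem \ref{teoremadicomposizione} together with the calculus rules \eqref{prodottodisimboli2}. For a symbol $c(U;x,\xi)$ of order $m$ one has $|D|^{\alpha}\opbw(c)|D|^{\beta}=\opbw\big((\chi(\xi)|\xi|^{\alpha})\#_{\rho}c\,\#_{\rho}(\chi(\xi)|\xi|^{\beta})\big)$ up to a $\rho$-smoothing remainder, and for $|\xi|$ large this symbol equals $|\xi|^{\alpha+\beta}c$ plus Poisson-bracket corrections of strictly lower order. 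I would apply this with $(\alpha,\beta)$ equal to $(-\tfrac14,\tfrac14)$ for terms of \eqref{eq:1n} acting on $\eta$, to $(\tfrac14,-\tfrac14)$ for terms of \eqref{eq:2n} acting on $\omega$, and to $(\tfrac14,\tfrac14)$ for the term of \eqref{eq:2n} acting on $\eta$. Carrying this out: the transport symbol $-\ii V\xi$, which appears in both \eqref{eq:1n} and \eqref{eq:2n}, keeps its principal part under the $(\pm\tfrac14)$-conjugations, and recombining its action on $u+\bar u$ and on $u-\bar u$ produces exactly $\opbw(-\ii V\xi)u$, i.e. $A_1=\mathrm{diag}(-V,-V)$ as in \eqref{matriciinizio}; its order-zero Poisson-bracket corrections, added to the explicit symbols $\mp\tfrac12 V_x$ of \eqref{eq:1n}--\eqref{eq:2n}, sum precisely to $A_0=-\tfrac14\sm{0}{1}{1}{0}V_x$ of \eqref{function:c}. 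The term $-\opbw(\partial_t B+V B_x)\eta=-\opbw(2a)\eta$ is conjugated by $|D|^{\frac14}\cdot|D|^{\frac14}$; since $a$ has order $0$ its principal symbol becomes $a|\xi|^{\frac12}$, which, acting on $u+\bar u$, supplies the off-diagonal $-a$ entries of $A_{1/2}$ and the $-a$ summand of its diagonal, in agreement with \eqref{function:a}. The symbol $b_{-1}\in\Sigma\Gamma^{-1}_{K,0,1}$ acting on $\omega$ stays of order $\le-1$, and the smoothing remainders $R_1,R_1',R_2',R_2''\in\Sigma\mathcal{R}^{-\rho}_{K,0,1}$ stay $\rho$-smoothing under conjugation (by Proposition \ref{composizioniTOTALI}(i) the orders $\pm\tfrac14$ cancel); together with all the lower-order corrections generated above, these are collected into $A_{-1}\in\Sigma\Gamma^{-1}_{K,1,1}\otimes\mathcal{M}_2(\C)$ and $R(U)\in\Sigma\mathcal{R}^{-\rho}_{K,1,1}\otimes\mathcal{M}_2(\C)$. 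The bump of $K'$ from $0$ to $1$ is forced by the presence of $\partial_t B$ in the function $a$ of \eqref{function:a}, recalling from \eqref{prodottodisimboli2} that $\partial_t$ sends $\Gamma^{m}_{K,K',p}$ into $\Gamma^{m}_{K,K'+1,p}$.

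Finally, the $x$-translation invariance of the new vector field is inherited from Proposition \ref{laprimapara}: the operators $|D|^{\pm\frac14}$ are Fourier multipliers, hence commute with $\tau_\theta$, so all homogeneous symbols and smoothing operators produced in the reduction satisfy \eqref{def:tr-in} and \eqref{def:R-trin} (this identity is preserved by the composition Theorem \ref{teoremadicomposizione} and by Proposition \ref{composizioniTOTALI}, as noted in their proofs). The real-to-real structure \eqref{vinello}--\eqref{prodotto} holds because $\eta,\omega$ are real, so the equation for $\bar u$ is literally the complex conjugate of the equation for $u$; using the conjugation rule for Bony--Weyl operators (cf. \eqref{opeBarrato}) one checks that the matrix of symbols in \eqref{eq:415} has the form \eqref{prodotto}, and likewise for $R(U)$. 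I expect the main obstacle to be the bookkeeping in the second paragraph: one must verify that the $|D|^{\pm\frac14}$-conjugations generate no spurious terms at the intermediate orders $\tfrac14$ or $-\tfrac12$ --- in particular that the a priori order $-\tfrac12$ symbol produced by conjugating $\opbw(2a)$ cancels identically in the symbolic calculus --- so that the decomposition of the symbol in \eqref{eq:415} into the orders $1$, $\tfrac12$, $0$, $-1$ is exactly the one displayed, with the explicit matrices \eqref{matriciinizio}--\eqref{function:c}.
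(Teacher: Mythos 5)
Your proposal is correct and follows essentially the same route as the paper's proof: the paper also conjugates \eqref{eq:1n}-\eqref{eq:2n} by the $|D|^{\pm\frac14}$ multipliers (there split into a symmetrization step followed by the rotation to $(u,\bar u)$), applies Proposition \ref{teoremadicomposizione} to compute the symbols order by order, and collects the rest in $A_{-1}$ and $R(U)$, with $K'=1$ coming from $\pa_t B$. The cancellation you flag at the intermediate orders is precisely what the paper records in \eqref{rf2}-\eqref{rf4}, and it holds because the symmetric Weyl conjugations $|\xi|^{\pm\frac14}\#\,\cdot\,\#|\xi|^{\pm\frac14}$ have vanishing first Poisson-bracket correction.
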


\begin{proof}
We first rewrite  \eqref{eq:1n}-\eqref{eq:2n} as the system
\be\label{sis1}
\begin{aligned}
\partial_t \vect{\eta}{\omega} 
& = \opbw{\left(   \sm{- \ii V \xi - \frac{V_x}{2} }{| \xi | + b_{-1}}{ - (1 +  a_0 ) }{- \ii V \xi + \frac{V_x}{2} }   \right)} \vect{\eta}{\omega} + R(\eta, \omega)  \vect{\eta}{\omega}  , \qquad 
R\in \Sigma\mathcal{R}^{-\rho}_{K,0,1}\otimes\mathcal{M}_2(\C) \, , 
\end{aligned}
\ee
where the function $ a_0 :=  \pa_t B + V B_x $ is in $ \Sigma\mathcal{F}^{\R}_{K,1,1}$.  
We now symmetrize \eqref{sis1} at the highest order, applying  the change of variable
\be\label{FM}
\vect{\eta}{\omega} :=  \sm{| D|^{1/4}}{0}{0}{|D|^{-1/4}} \vect{\tilde \eta}{\tilde \omega}  \, . 
\ee
The conjugated system is, by 
Propositions \ref{teoremadicomposizione} and \ref{composizioniTOTALI}, 
\begin{align}\label{IGU}
\partial_t \vect{\tilde \eta}{ \tilde \omega} 
& = 
\opbw{\left(  
\sm{|  \xi |^{-1/4}}{0}{0}{|  \xi  |^{1/4}}  \#_{\rho} 
\sm{- \ii V  \xi - \frac{V_x}{2} }{| \xi | + b_{-1}}{ - (1 +  a_0 ) }{- \ii V \xi + \frac{V_x}{2}}  
\#_\rho 
\sm{| \xi  |^{1/4}}{0}{0}{| \xi |^{-1/4}} \right)}    
\vect{\tilde \eta}{ \tilde \omega} 
+ R(\tilde \eta, \tilde \omega)  \vect{\tilde \eta}{\tilde \omega}  
\end{align}
for a new smoothing remainder  $ R $ in $ \Sigma\mathcal{R}^{-\rho + 1 }_{K,1,1}\otimes\mathcal{M}_2(\C) $. 
Recalling \eqref{espansione2}  
we expand in decreasing orders the  symbols in \eqref{IGU}.
\\[1mm]
{\sc Diagonal symbols.}
Up to a symbol in $\Sigma\Gamma^{-1}_{K,0,1}$ we have (using Proposition \ref{teoremadicomposizione} and formula \eqref{espansione2})
\be \label{rf2}
\begin{aligned}
& | \xi |^{-1/4 }  \#_\rho  (- \ii V \xi - \frac{V_x}{2} )  \#_\rho |\xi  |^{1/4} 
 =    - \ii V \xi - \frac{V_x}{4} \, , \\
&   | \xi |^{1/4 }  \#_\rho  (- \ii V \xi + \frac{V_x}{2} )  \#_\rho |\xi  |^{- 1/4} 
 =   - \ii V \xi + \frac{V_x}{4}     \, . 
\end{aligned}
\ee
\noindent
{\sc Off-diagonal symbols.}
Up to a symbol in  $\Sigma\Gamma^{-3/2}_{K,0,1}$ we get (using Proposition \ref{teoremadicomposizione} and formula \eqref{espansione2})
\begin{align}\label{rf3}
 | \xi  |^{-1/4} \#_\rho ( | \xi | + b_{-1}) \#_\rho |\xi  |^{-1/4} & = | \xi |^{1/2} 
\end{align}
(recall that $  b_{-1} $ is  in $ \Sigma\Gamma^{-1}_{K,0,1} $)
and, up to a symbol in  $\Sigma\Gamma^{-3/2}_{K,1,1}$, we have 
\begin{align}
- | \xi  |^{1/4} \#_\rho (1 +  a_0 ) \#_\rho | \xi  |^{1/4} 
=  -  (1 +  a_0 ) | \xi  |^{1/2}  . \label{rf4}
\end{align}
The expansion  \eqref{rf2},  \eqref{rf3}, \eqref{rf4} imply that 
the system  \eqref{IGU} has the form 
\be\label{dit2-new}
\partial_t \vect{\tilde \eta}{ \tilde \omega} 
 = 
\opbw{\left(  
\sm{- \ii V \xi - \frac{V_x}{4} }{| \xi |^{1/2} }{-  (1 + a_0 ) | \xi  |^{1/2} }{- \ii V \xi + \frac{V_x}{4} } 
+ A_{-1}  \right)}\vect{\tilde \eta}{ \tilde \omega}  + R( \tilde \eta, \tilde \omega )\vect{\tilde \eta}{ \tilde \omega} 
\ee
where  $ A_{-1} $ is a matrix of symbols in $ \Sigma\Gamma^{-1}_{K,1,1} \otimes {\mathcal M}_2(\C)$
and $ R  $ is in $ \Sigma\mathcal{R}^{-\rho + 1 }_{K,1,1}\otimes\mathcal{M}_2(\C) $. 

Finally  we write  \eqref{dit2-new} in the complex variable \eqref{u0'}, i.e. recalling \eqref{FM},  
\[
\vect{u}{\bar u}   := \frac{1}{\sqrt{2}} \sm{1}{\ii}{1}{-\ii}  \vect{\tilde \eta}{\tilde \omega} \, ,  \quad
{\rm with \ inverse} 
\quad \vect{\tilde \eta}{\tilde \omega} := \frac{1}{\sqrt{2}} \sm{1}{1}{-\ii}{\ii} \vect{u}{\bar u} \, , 
\]
and we deduce \eqref{eq:415} with matrices as in 
\eqref{matriciinizio}, \eqref{function:a}, \eqref{function:c} and a new matrix of symbols 
 $ A_{-1} $ in $ \Sigma\Gamma^{-1}_{K,1,1} \otimes {\mathcal M}_2(\C)$ and a new smoothing 
  operator   $R(U) $ in $ \Sigma\mathcal{R}^{-\rho}_{K,1,1}\otimes\mathcal{M}_2(\C) $, renaming 
  $ \rho - 1 $ as $ \rho $.
   Finally, since the Fourier multiplier transformation \eqref{u0'} trivially 
   commutes with the translation operators $ \tau_\teta $, 
    the water waves vector field in  \eqref{eq:415} is $ x$-invariant 
    as the water waves 
   vector field \eqref{eq:1n}-\eqref{eq:2n}. 
\end{proof}
In some instances we will write the water waves system \eqref{eq:415} as 
 \begin{equation}\label{eq:415tris}
 \pa_t U  =-\ii \Omega U+  {\bf M}(U)[U], \quad 
 \Omega:=\sm{|D|^{\frac{1}{2}}}{0}{0}{-|D|^{\frac{1}{2}}} \, , 
 \end{equation}
where 
$  {\bf M}(U) $ is a real-to-real matrix of maps in  $\Sigma\mathcal{M}^{m_1}_{K,1,1} \otimes {\mathcal M}_2 (\C)$
for some $m_1>0$,
see the Remarks after Definition \ref{smoothoperatormaps}.
We will also write system \eqref{eq:415tris} in Fourier basis as 
\begin{equation}\label{eq:415bis}
\dot u_n = - \ii \omega_nu_n+ \ii ( F_2 (U) + F_{\geq 3}(U) )_{n} \, , \quad n \in \Z \setminus \{0\} \, , 
\end{equation}
where $\omega_n = \sqrt{|n|}$ and 
$F_2 (U) = M_1 (U)[U]$  is the quadratic component of the water-waves vector field  
and $ F_{\geq 3 }(U)$  collects all the cubic terms
(the second equation of \eqref{eq:415tris} for $ \bar u  $ is just the complex conjugated of the  one for $ u $).
Using the $x$-invariance property, the vector field $ F_2 (U) $ can be expanded as
\begin{equation}\label{quadraticTerms2}
F_2 (U)  = \sum_{n_1, n_2 \in \Z\setminus\{0\},\s=\pm} (F_{2})^{\s\s}_{n_1, n_2} u^{\s}_{n_1} u^{\s}_{n_2} 
\frac{e^{\ii \s ( n_1 + n_2) x }}{2\pi}
+\sum_{n_1, n_2 \in \Z\setminus\{0\},} (F_{2})^{+-}_{n_1, n_2} u_{n_1} \ov{{u}_{n_2}} \frac{e^{\ii ( n_1 - n_2) x }}{2\pi} 
\end{equation}
with coefficients $ (F_{2})^{\s \s'}_{n_1, n_2} $ in $ \C $. 
We provide the explicit expression of 
$ \ii F_2 (U) $ in \eqref{goodOmega3}.

\subsection{Homogeneity expansions}\label{expHomogen}

By the expansion of the Dirichlet-Neumann operator in  Remark \ref{rem:DN}, we get
the quadratic approximation of the water waves equations \eqref{eq:113}, 
\begin{equation} \label{WW12}
\begin{cases}
   \partial_t \eta = 
   \dps |D| \psi - \pa_x (\eta \pa_x \psi ) - |D| ( \eta |D|\psi) 
   \, , 
   \cr
   \partial_t \psi = \dps - \eta  -\frac{1}{2} \psi_x^2 +   
    \frac{1}{2} (|D|\psi)^2 \,,
    \end{cases}
\end{equation}
up to functions in  $\mathcal{F}^{\R}_{K,1,3}$.
In this section, using this expansion, 
we compute explicitly the quadratic vector field $ \ii F_2 (U) $ in \eqref{eq:415bis}, and  
the homogeneous expansions up to cubic terms of the functions $ V $ and $ a $ 
appearing in \eqref{matriciinizio}-\eqref{function:c}. We write 
\begin{align}
V&=\mathtt{V}_1+\mathtt{V}_2+\mathtt{V}_{\geq3} \, ,\qquad \mathtt{V}_j\in \widetilde{\mathcal{F}}^{\R}_{j}, \;j=1,2, \;\; 
\mathtt{V}_{\geq3}\in  \mathcal{F}^{\R}_{K,0,3} \, ,\label{busy}\\ 
a&=\mathtt{a}_1+\mathtt{a}_2+\mathtt{a}_{\geq3} \, \qquad  \ \mathtt{a}_j\in \widetilde{\mathcal{F}}^{\R}_{j}, \;j=1,2, \;\; 
\mathtt{a}_{\geq3}\in \mathcal{F}^{\R}_{K,1,3} \label{busy2}  \, . 
\end{align}
In the following it is useful to note that the relation \eqref{u0'} has inverse
\begin{equation}\label{varfin1}
\eta := \frac{1}{\sqrt{2}} |D|^{\frac14}( u + \bar u ) \, , \quad 
\omega = \frac{1}{\ii \sqrt{2} } |D|^{- \frac14}(  u - \bar u ) \, . 
\end{equation} 
We have the following Lemma.

\begin{lemma}{\bf (Expansion of $ V $)}
The function $ V $ defined in \eqref{def:V} admits the expansion 
\be\label{expV}
V = \omega_x + \pa_x \big( \opbw (|D| \omega) \eta \big)  - (|D| \omega) \eta_x  + \mathtt{V}_{\geq 3}
\ee
where  $ \mathtt{V}_{\geq 3} $ is a function in $ \mathcal{F}^{\R}_{K,0,3} $.
Thus, in the complex variable $u$  in \eqref{u0'},\eqref{varfin1},  we have 
\begin{align}
\mathtt{V}_1 & = \frac{1}{\ii \sqrt{2}} \pa_x |D|^{- \frac14} (u - \bar u )  \label{coeV1}\\
 \mathtt{V}_2 & = \frac{1}{ 2\ii } \pa_x  \Big( \opbw \big( |D|^{\frac34} (u- \bar u) \big)  \big[ |D|^{\frac14} (u + \bar u) \big] \Big) 
 - \frac{1}{ 2\ii }  \big( |D|^{\frac34} (u- \bar u) \big) \big( \pa_x |D|^{\frac14} (u+  \bar u) \big) \, .  \label{coeV2}
\end{align}
\end{lemma}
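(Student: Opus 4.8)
The plan is to extract the first two homogeneous components of $V$ directly from the definition \eqref{def:V}, the formula \eqref{form-of-B} for $B$, and the good-unknown relation \eqref{omega0}. Since Proposition \ref{laprimapara} already gives $V \in \Sigma\mathcal{F}^{\R}_{K,0,1}$, it decomposes as in \eqref{busy}, $V = \mathtt{V}_1 + \mathtt{V}_2 + \mathtt{V}_{\geq 3}$ with $\mathtt{V}_{\geq 3} \in \mathcal{F}^{\R}_{K,0,3}$; the real task is only to compute $\mathtt{V}_1$ and $\mathtt{V}_2$, after which \eqref{coeV1}--\eqref{coeV2} follow by plugging in \eqref{varfin1}.

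First I would invert \eqref{omega0} to write $\psi = \omega + \opbw(B(\eta,\psi))\eta$, hence $\psi_x = \omega_x + \pa_x(\opbw(B)\eta)$, and substitute this into \eqref{def:V}:
\[
V = \psi_x - \eta_x B = \omega_x + \pa_x\big(\opbw(B)\eta\big) - \eta_x B \, .
\]
Since $B \in \Sigma\mathcal{F}^{\R}_{K,0,1}$ vanishes at degree $\geq 1$ in $U$ and $\eta$ is $1$-homogeneous, both $\opbw(B)\eta$ and $\eta_x B$ vanish at degree $\geq 2$, so $\mathtt{V}_1 = \omega_x$. For the quadratic part I only need the $1$-homogeneous component $\mathtt{B}_1$ of $B$: from \eqref{form-of-B}, $B = (G(\eta)\psi + \eta_x\psi_x)(1+\eta_x^2)^{-1}$, where $\eta_x\psi_x$ and $(1+\eta_x^2)^{-1}-1$ are of degree $\geq 2$ and, by Remark \ref{rem:DN}(i), $G(\eta)\psi - |D|\psi$ is of degree $\geq 2$; since in addition $\psi = \omega + \opbw(B)\eta$ coincides with $\omega$ up to degree $\geq 2$, this yields $\mathtt{B}_1 = |D|\omega$. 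Extracting the $2$-homogeneous parts of $\pa_x(\opbw(B)\eta)$ and $\eta_x B$ --- namely $\pa_x(\opbw(\mathtt{B}_1)\eta)$ and $\eta_x\mathtt{B}_1$ --- gives $\mathtt{V}_2 = \pa_x(\opbw(|D|\omega)\eta) - (|D|\omega)\eta_x$. Together with the previous step this proves \eqref{expV}, the remainder $\mathtt{V}_{\geq 3} := V - \mathtt{V}_1 - \mathtt{V}_2$ lying in $\mathcal{F}^{\R}_{K,0,3}$ by Proposition \ref{laprimapara}.

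Finally I would pass to the complex variable $u$ by inserting the inverse formulas \eqref{varfin1}, i.e. $\eta = \frac{1}{\sqrt{2}}|D|^{\frac14}(u+\bar u)$, $\eta_x = \frac{1}{\sqrt{2}}\pa_x|D|^{\frac14}(u+\bar u)$, $\omega = \frac{1}{\ii\sqrt{2}}|D|^{-\frac14}(u-\bar u)$, and consequently $|D|\omega = \frac{1}{\ii\sqrt{2}}|D|^{\frac34}(u-\bar u)$, into the expressions for $\mathtt{V}_1$ and $\mathtt{V}_2$ found above; collecting the scalar factor $\frac{1}{\ii\sqrt2}\cdot\frac{1}{\sqrt2} = \frac{1}{2\ii}$ then produces exactly \eqref{coeV1} and \eqref{coeV2}.

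The argument is essentially bookkeeping of degrees of homogeneity; the one step that requires a moment of thought rather than mere computation is the identification $\mathtt{B}_1 = |D|\omega$, which uses that the good unknown $\omega$ and $\psi$ agree at first order together with the leading expansion $G(\eta)\psi = |D|\psi + \cdots$ of the Dirichlet--Neumann operator recalled in Remark \ref{rem:DN}. I do not anticipate any genuine obstacle.
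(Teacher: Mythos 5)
Your proposal is correct and follows essentially the same route as the paper: invert the good-unknown relation to write $\psi_x=\omega_x+\pa_x(\opbw(B)\eta)$, use the Dirichlet--Neumann expansion of Remark \ref{rem:DN} to identify $B=|D|\psi=|D|\omega$ up to quadratic terms, extract the homogeneous components to obtain \eqref{expV}, and substitute \eqref{varfin1} to get \eqref{coeV1}--\eqref{coeV2}. The only difference is cosmetic (you replace $\psi$ by $\omega$ at the start rather than at the end), so no further comment is needed.
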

\begin{proof}
By \eqref{form-of-B} and using the expansion \eqref{DNexp2}, we deduce  $  B = |D| \psi  $ up to 
a quadratic function in 
$\mathcal{F}^{\R}_{K,0,2}$. As a consequence, 
by \eqref{def:V} and \eqref{Alinach-good}, we have
$$
V = 	\psi_x - B \eta_x = ( \omega + \opbw (B) \eta )_x - B \eta_x  = 
\omega_x + \pa_x \big( \opbw (|D| \psi) \eta \big) - (|D| \psi) \eta_x  
$$
up to a function in   $\mathcal{F}^{\R}_{K,0,3}$.
Since $ \psi = \omega $ plus a  quadratic function in  $\mathcal{F}^{\R}_{K,0,2}$ (see \eqref{Alinach-good}) we get \eqref{expV}. 
\end{proof}

\begin{lemma}\label{patB}
{\bf (Expansion of $ \pa_t B $)} Let $ B $ the function defined in \eqref{form-of-B}. Then  
\be\label{exppatB}
\pa_t B = - |D| \eta - \eta | D|^2 \eta  + |D| ( \eta |D| \eta  ) + 
|D| \Big( - \frac{1}{2} \omega_x^2 - \frac12 (|D| \omega)^2  \Big)   +
 (|D| \omega)(|D|^2  \omega)
\ee
plus a cubic function in  $ \mathcal{F}^{\R}_{K,1,3} $. 
\end{lemma}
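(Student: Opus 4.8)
The plan is to compute $\partial_t B$ by the chain rule, using the equations \eqref{eq:113} (equivalently their quadratic truncation \eqref{WW12}) to eliminate $\partial_t\eta$ and $\partial_t\psi$, and then to pass to the good-unknown variable $\omega$. First I would record the quadratic expansion of $B$ itself: by \eqref{form-of-B}, the Taylor expansion \eqref{DNexp2}--\eqref{DNexp3} of the Dirichlet--Neumann operator, and $\frac{1}{1+\eta_x^2}=1+O(\eta_x^2)$ (so that the correction of the denominator only contributes at cubic order after multiplying the numerator), one gets
\[
B = B(\eta,\psi) = |D|\psi - \eta\psi_{xx} - |D|(\eta|D|\psi) + B_{\geq 3}(\eta,\psi) ,
\]
where $B_{\geq 3}$ collects all terms of homogeneity $\geq 3$ in $(\eta,\psi)$ and, by the remarks after Definition \ref{smoothoperatormaps}, the composition rules of Proposition \ref{composizioniTOTALI}, and $G_{\geq 3}\in\mathcal{M}_{K,0,3}$ (Remark \ref{rem:DN}(ii)), belongs to the appropriate non-homogeneous class.

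Next I differentiate in time. Writing $B=B_1+B_2+B_{\geq 3}$ with $B_1:=|D|\psi$, $B_2:=-\eta\psi_{xx}-|D|(\eta|D|\psi)$, and using $\partial_t\eta=|D|\psi+O(2)$, $\partial_t\psi=-\eta-\tfrac12\psi_x^2+\tfrac12(|D|\psi)^2+O(3)$ from \eqref{WW12}, I collect: (i) the linear term $\partial_t B_1|_{\mathrm{lin}}=|D|\,\partial_t\psi|_{\mathrm{lin}}=-|D|\eta$; (ii) from $B_1$, keeping the quadratic part of $\partial_t\psi$, the term $|D|\big(-\tfrac12\psi_x^2+\tfrac12(|D|\psi)^2\big)$; (iii) from $B_2$, keeping the linear parts $\partial_t\eta=|D|\psi$, $\partial_t\psi=-\eta$,
\[
(\partial_\eta B_2)[|D|\psi]+(\partial_\psi B_2)[-\eta] = -(|D|\psi)\psi_{xx}-|D|\big((|D|\psi)^2\big)+\eta\eta_{xx}+|D|(\eta|D|\eta).
\]
Summing (i)--(iii) and using $\tfrac12|D|\big((|D|\psi)^2\big)-|D|\big((|D|\psi)^2\big)=-\tfrac12|D|\big((|D|\psi)^2\big)$, one obtains, up to a function of homogeneity $\geq 3$,
\[
\partial_t B = -|D|\eta-\tfrac12|D|\psi_x^2-\tfrac12|D|\big((|D|\psi)^2\big)-(|D|\psi)\psi_{xx}+\eta\eta_{xx}+|D|(\eta|D|\eta).
\]

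Finally I substitute $\psi=\omega+\opbw(B)\eta=\omega+O(2)$ (from \eqref{omega0}, \eqref{Alinach-good}): since all the $\psi$-dependent terms above are already quadratic, replacing $\psi$ by $\omega$ changes them only by cubic terms that join the remainder, while the linear term $-|D|\eta$ is unchanged; using $\partial_x^2=-|D|^2$ to rewrite $\eta\eta_{xx}=-\eta|D|^2\eta$ and $-(|D|\omega)\omega_{xx}=(|D|\omega)|D|^2\omega$, this is exactly \eqref{exppatB}. It then remains to check that the collected remainder is real-valued, cubic, and $(K-1)$-times $t$-differentiable, i.e.\ lies in $\mathcal{F}^{\R}_{K,1,3}$; this is routine from the composition rules, the expansions of Remark \ref{rem:DN}, and the loss of one time derivative caused by differentiating $B$.

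The only mildly delicate point — and the one easiest to miss — is contribution (ii), i.e.\ feeding the quadratic part of $\partial_t\psi$ into $B_1=|D|\psi$: it produces the combination $-\tfrac12|D|\big(\omega_x^2+(|D|\omega)^2\big)$ of \eqref{exppatB}, and in particular the (somewhat unexpected) minus sign in front of $(|D|\omega)^2$, which arises from the cancellation between (ii) and the term $-|D|\big((|D|\psi)^2\big)$ in (iii). Everything else is a straightforward bookkeeping computation.
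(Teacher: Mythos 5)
Your computation is correct: your intermediate identity for $\partial_t B$ in the variables $(\eta,\psi)$ coincides exactly with the formula the paper arrives at (equation \eqref{francia}), and the passage to $\omega$ and the rewriting $\partial_{xx}=-|D|^2$ are as in the paper. The route, however, is genuinely different in one respect. The paper differentiates the exact expression \eqref{form-of-B} (keeping the denominator $1+\eta_x^2$ and discarding its contribution as cubic), and then computes $\partial_t\big(G(\eta)\psi\big)$ via the shape-derivative formula \eqref{formula shape der}, $G'(\eta)[\eta_t]\psi=-G(\eta)(B\eta_t)-\partial_x(V\eta_t)$, substituting the quadratic truncation \eqref{WW12} and the linear approximations $B=|D|\psi$, $V=\psi_x$. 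You instead never invoke the shape derivative: you first record the quadratic Taylor expansion $B=|D|\psi-\eta\psi_{xx}-|D|(\eta|D|\psi)+O(3)$ (which is correct, the two $\eta_x\psi_x$ terms cancelling), and then differentiate this expansion term by term, feeding in the linear and quadratic parts of $\partial_t\eta,\partial_t\psi$ from \eqref{WW12}. What each approach buys: yours stays entirely inside material the paper has already set up (the expansion \eqref{DNexp2}--\eqref{DNexp3} of Remark \ref{rem:DN} and the composition rules), at the cost of the extra preliminary step of expanding $B$ itself; the paper's avoids that preliminary expansion but imports the shape-derivative identity from \cite{LannesLivre}. Since both collapse to the same expression before the substitution $\psi=\omega+O(2)$, the bookkeeping of the cubic remainder in $\mathcal{F}^{\R}_{K,1,3}$ (including the loss of one time derivative, i.e.\ $K'=1$) is handled identically, and your closing remark about the sign of the $(|D|\omega)^2$ term correctly identifies the only cancellation that requires care.
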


\begin{proof}
Recalling \eqref{form-of-B}, and using \eqref{DNexp2}, we have to compute the expansion of
\be \label{pez1}
\pa_t B  
= \frac{\pa_t(G(\eta) \psi + \eta_x \psi_x )}{1+ \eta_x^2} - \frac{(G(\eta) \psi + \eta_x \psi_x) 2 \eta_x (\eta_t)_x }{(1+ 
\eta_x^2)^2}
 = \pa_t(G(\eta) \psi ) + (\pa_t \eta)_x \psi_x  + \eta_x (\pa_t \psi)_x  
\ee
plus  a cubic function in $\mathcal{F}_{K,1,3}^{\R}$. By \eqref{WW12}
the second plus the third  terms in \eqref{pez1} have the expansion 
\begin{align}
   (\pa_t \eta)_x \psi_x  + \eta_x (\pa_t \psi)_x  = ( |D| \psi_x ) \psi_x  
  -  \eta_x^2 
  \label{pez2} \, ,
\end{align}
up to a function in $\mathcal{F}_{K,1,3}^{\R}$.
For the  first  term in \eqref{pez1} we  use the ``shape derivative''  formula 
(see e.g. \cite{LannesLivre})
\begin{equation} \label{formula shape der}
G'(\eta) [\hat {\eta} ] \psi 
= \lim_{\epsilon \rightarrow 0} \frac{1}{\epsilon} \{ G (\eta+\epsilon \hat \eta ) \psi - G(\eta) \psi \}
= - G(\eta) (B \hat \eta ) -\partial_x (V \hat \eta ) 
\end{equation}
where $ V = \psi_x - B \eta_x $ is defined in \eqref{def:V}.
Then, up to functions in $\mathcal{F}_{K,1,3}^{\R}$,  we have
\be\label{pezz1}
\begin{aligned}
\pa_t(G(\eta) \psi ) & 
=  G'(\eta)[ \eta_t]\psi + G(\eta) (\pa_t \psi )   
 \stackrel{\eqref{formula shape der}} = - G(\eta) (B  \eta_t ) -\partial_x (V \eta_t )+ G(\eta) (\pa_t \psi )  \\
 & \stackrel{\eqref{def:V}, \eqref{form-of-B}, \eqref{WW12}} = - |D|  ((|D| \psi)^2 ) -\partial_x (\psi_x (|D| \psi) ) \\ 
 & \qquad \qquad + 
 |D| \big(- \eta  -\frac{1}{2} \psi_x^2 +    \frac{1}{2} (|D|\psi)^2  \big) 
 + \pa_x (\eta \eta_x ) + |D| ( \eta |D| \eta)  \, . 
\end{aligned}
\ee
Finally, by \eqref{pez1}, \eqref{pez2}, \eqref{pezz1}, we obtain, after simplification, 
\begin{equation}\label{francia}
\pa_t B   
 = - |D|\eta - \frac{1}{2}  |D|  ((|D| \psi)^2 )  -   
\frac{1}{2} |D| \psi_x^2 +  |D| ( \eta |D| \eta)  + \eta \eta_{xx}  -  \psi_{xx} |D| \psi  
\end{equation}
plus a cubic function $\mathcal{F}_{K,1,3}^{\R}$.
Since $ \pa_{xx} = - |D|^2 $ and  $ \psi = \omega$ plus a quadratic function in  $\mathcal{F}_{K,1,2}^{\R}$, we have that 
 \eqref{francia}
implies \eqref{exppatB}.  
\end{proof}

We now expand the function $ a = \frac12 (  \pa_t B + V B_x ) $ which appears in  \eqref{function:a}. 

\begin{lemma}{\bf (Expansion of $ a $)} \label{lem:expa}
We have 
\begin{align*}
a 
& = - \frac12 |D| \eta  
- \frac{\eta}{2} (| D|^2 \eta ) + \frac12 |D| ( \eta |D| \eta  ) - \frac14
 |D| \Big(  \omega_x^2 +  (|D| \omega)^2  \Big)   +
\frac12  (|D| \omega)(|D|^2  \omega)  + \frac12 \om_x    (\pa_x |D| \omega) 
\end{align*}
plus a cubic function in $ \mathcal{F}^{\R}_{K,0,3} $. 
\end{lemma}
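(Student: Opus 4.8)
The approach is to combine the two preceding lemmas. Since $a=\frac12(\pa_t B + V B_x)$ and $\pa_t B$ has already been expanded up to cubic order in \eqref{exppatB}, the only new computation required is the quadratic component of the product $V B_x$; one then adds the two expansions and divides by two.

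First I would identify the linear part of $B_x$. By \eqref{form-of-B} and the Dirichlet--Neumann expansion \eqref{DNexp2} one has $B=|D|\psi$ up to a function in $\mathcal{F}^{\R}_{K,0,2}$ (the factor $(1+\eta_x^2)^{-1}=1-\eta_x^2+\cdots$ and the term $\eta_x\psi_x$ contribute only at quadratic order and higher, and $G(\eta)\psi=|D|\psi$ plus quadratic); moreover $\psi=\omega$ up to a function in $\mathcal{F}^{\R}_{K,0,2}$ by \eqref{Alinach-good}. Hence $B=|D|\omega$ up to a quadratic function, so $B_x=\pa_x|D|\omega$ up to a function in $\mathcal{F}^{\R}_{K,0,2}$; in particular the linear component of $B_x$ is $\pa_x|D|\omega$.

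Next, since $V\in\Sigma\mathcal{F}^{\R}_{K,0,1}$ vanishes at degree one in $U$ and, by \eqref{expV} (and \eqref{busy}), its linear component is $\mathtt{V}_1=\omega_x$, the product rule for the function classes $\widetilde{\mathcal{F}}_p$, $\mathcal{F}_{K,K',p}$ (the $\xi$-independent case of \eqref{prodottodisimboli2}) gives
\[
V B_x = \mathtt{V}_1\,(\pa_x|D|\omega) + R_{\geq 3} = \omega_x\,(\pa_x|D|\omega) + R_{\geq 3} \, , \qquad R_{\geq 3}\in\mathcal{F}^{\R}_{K,0,3} \, ,
\]
because the remaining contributions --- $\mathtt{V}_1$ times the quadratic part of $B_x$, $\mathtt{V}_2$ times $\pa_x|D|\omega$, and $\mathtt{V}_{\geq3}$ times $B_x$ --- are all of homogeneity at least three. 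Inserting \eqref{exppatB} and multiplying by $\frac12$ yields
\[
a = \frac12\pa_t B + \frac12 V B_x = -\frac12|D|\eta - \frac{\eta}{2}(|D|^2\eta) + \frac12|D|(\eta|D|\eta) - \frac14|D|\big(\omega_x^2+(|D|\omega)^2\big) + \frac12(|D|\omega)(|D|^2\omega) + \frac12\omega_x(\pa_x|D|\omega)
\]
up to a cubic function in $\mathcal{F}^{\R}_{K,0,3}$, which is the claim. I do not expect any genuine obstacle: the whole argument is bookkeeping of homogeneity degrees and of the (finitely many) derivative losses in the function classes, so as to check that every quadratic $\times$ quadratic or quadratic $\times$ linear leftover truly lands in $\mathcal{F}^{\R}_{K,0,3}$; this is routine given the composition and product rules recalled in Section \ref{sec:funcsett}, the substantive part of the computation having already been carried out in Lemma \ref{patB} and in the proof of \eqref{expV}.
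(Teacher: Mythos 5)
Your proposal is correct and follows essentially the same route as the paper: the paper's proof likewise observes that, by \eqref{expV} and \eqref{form-of-B}, $a=\frac12\pa_t B+\frac12\,\omega_x(|D|\psi_x)$ up to a cubic function (i.e.\ only the linear parts of $V$ and $B_x$ matter at quadratic order, with $\psi=\omega$ modulo quadratics), and then concludes from the expansion \eqref{exppatB} of $\pa_t B$. The only cosmetic difference is that the paper records the intermediate cubic remainder in $\mathcal{F}^{\R}_{K,1,3}$ (coming from the time derivative in $\pa_t B$), a bookkeeping point that does not affect the argument.
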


\begin{proof}
By
\eqref{expV}  and  \eqref{form-of-B} we have that
$a  = \frac12 (  \pa_t B + V B_x ) = 
\frac12 \pa_t B  + \frac12 \omega_x   ( |D| \psi_x) $ 
plus  a cubic  function in $\mathcal{F}_{K,1,3}^{\R}$.
Hence \eqref{exppatB} implies the lemma. 
\end{proof}

We Fourier develop the functions  $\mathtt{a}_{1},\mathtt{V}_{1}$, $\mathtt{a}_{2},\mathtt{V}_{2}$, 
as in \eqref{eq:F1}, \eqref{eq:F2}.

\begin{lemma}{\bf (Coefficients of $ \mathtt{V}_1 $ and $  \mathtt{V}_2 $)}\label{lem:V1} 
The coefficients of the functions $ \mathtt{V}_1 $ and $ \mathtt{V}_2 $ in \eqref{coeV1}-\eqref{coeV2} are, 
for all $ n \in \Z \setminus \{0\} $,  
\be\label{coV1} 
(\mathtt{V}_1)^{+}_n = (\mathtt{V}_1)^{-}_n = \frac{1}{\sqrt{2}} n |n|^{-1/4} \, , 
\qquad 
(\mathtt{V}_2)^{+-}_{n, n} = n | n | \, , 
\quad (\mathtt{V}_2)^{+-}_{n,-n}=0 \, . 
\ee
\end{lemma}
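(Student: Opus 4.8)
The statement is a direct computation: we must extract the Fourier coefficients of the explicit expressions \eqref{coeV1}--\eqref{coeV2} and compare with the expansions \eqref{eq:F1}--\eqref{eq:F2}. First I would handle $\mathtt{V}_1$. By \eqref{varfin1} the complex variable $u$ has Fourier expansion $u(x)=\sum_{n\neq 0}u_n e^{\ii n x}/\sqrt{2\pi}$, and $|D|^{-1/4}$ acts as multiplication by $|n|^{-1/4}$ while $\pa_x$ multiplies by $\ii n$. Hence
\[
\mathtt{V}_1 = \frac{1}{\ii\sqrt{2}}\pa_x|D|^{-1/4}(u-\bar u)
= \frac{1}{\ii\sqrt 2}\sum_{n\neq 0}\ii n |n|^{-1/4}\Big(u_n\frac{e^{\ii nx}}{\sqrt{2\pi}} - \ov{u_n}\frac{e^{-\ii nx}}{\sqrt{2\pi}}\Big) .
\]
Matching this with the template \eqref{eq:F1}, namely $\mathtt{V}_1(U;x)=\frac{1}{\sqrt{2\pi}}\sum_{n\neq 0,\s=\pm}(\mathtt{V}_1)^\s_n u_n^\s e^{\ii\s n x}$, and using $\ov{u_n}=u_n^-$ with the index relabeling $n\mapsto -n$ in the antiholomorphic sum (so that the phase reads $e^{\ii\s n x}$ with $\s=-$ at frequency $n$), one reads off $(\mathtt V_1)^+_n = \frac{1}{\ii\sqrt 2}\cdot\ii n|n|^{-1/4} = \frac{1}{\sqrt 2}n|n|^{-1/4}$, and for the minus component the two sign factors $(1/\ii)\cdot(-1)$ combined with $n\mapsto -n$ in $\ii n|n|^{-1/4}$ give again $\frac{1}{\sqrt 2}n|n|^{-1/4}$. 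This yields the first identity in \eqref{coV1}. (The equality $(\mathtt V_1)^+_n=(\mathtt V_1)^-_n$ is of course forced by reality of $V$, via \eqref{condreal2}.)

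The coefficients of $\mathtt{V}_2$ require only slightly more care because of the paraproduct $\opbw$. The claim concerns only the $(+-)$--type coefficients, i.e. the terms in \eqref{eq:F2} carrying a phase $e^{\ii(n_1-n_2)x}$ built from one factor $u_{n_1}$ and one factor $\ov{u_{n_2}}$. I would expand \eqref{coeV2} by writing $|D|^{3/4}(u-\bar u) = \sum_m m\text{-dependent}$ and $|D|^{1/4}(u+\bar u)$ in Fourier series, keep only the cross terms $u_{n_1}\ov{u_{n_2}}$ (all four sign combinations appear, but only $(+,-)$ and $(-,+)$ contribute to the $(+-)$ block, the latter giving the symmetrized contribution), and use that $\opbw(a)$ applied to a pure exponential $e^{\ii j x}$ produces (up to the cutoff, which equals $1$ here since we evaluate at $|\xi|\geq 1/2$ for the relevant frequencies and the homogeneous symbol is a monomial) the symbol evaluated at $\xi=j$ times $e^{\ii(\text{freq of }a + j)x}$. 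Concretely, for the term $u_{n_1}\ov{u_{n_2}}$ with output frequency $n_1-n_2$, the first summand $\frac{1}{2\ii}\pa_x\big(\opbw(|D|^{3/4}(u-\bar u))[|D|^{1/4}(u+\bar u)]\big)$ contributes $\frac{1}{2\ii}\cdot\ii(n_1-n_2)\cdot |n_1|^{3/4}\cdot|n_2|^{1/4}$ from $u_{n_1}$ in the symbol and $\ov{u_{n_2}}$ in the argument, and $\frac{1}{2\ii}\cdot\ii(n_1-n_2)\cdot(-|n_2|^{3/4})\cdot|n_1|^{1/4}$ from $\ov{u_{n_2}}$ in the symbol and $u_{n_1}$ in the argument; the second summand $-\frac{1}{2\ii}(|D|^{3/4}(u-\bar u))(\pa_x|D|^{1/4}(u+\bar u))$ contributes $-\frac{1}{2\ii}\cdot|n_1|^{3/4}\cdot\ii n_2|n_2|^{1/4}$ and $-\frac{1}{2\ii}\cdot(-|n_2|^{3/4})\cdot\ii n_1|n_1|^{1/4}$. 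Adding and dividing by $2\pi$-normalization consistently with \eqref{eq:F2}, after symmetrization in the two ways of producing $u_{n_1}\ov{u_{n_2}}$ one obtains $(\mathtt V_2)^{+-}_{n_1,n_2}$ as an explicit function of $n_1,n_2$.

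Finally I would specialize to the two cases in the statement. Setting $n_1=n_2=n$: the output frequency $n_1-n_2=0$, so the $\pa_x$ in the first summand kills it, and one is left only with the second summand, which at $n_1=n_2=n$ reads $-\frac{1}{2\ii}\big(|n|^{3/4}\cdot\ii n|n|^{1/4} + (-|n|^{3/4})\cdot\ii n|n|^{1/4}\big)=0$? — here I must be careful: the contribution with $u_n$ in the symbol and $\ov{u_n}$ in the argument gives $-\tfrac{1}{2\ii}|n|^{3/4}\,\ii n|n|^{1/4}=-\tfrac{n|n|}{2}$, while the one with $\ov{u_n}$ in the symbol and $u_n$ in the argument gives $-\tfrac{1}{2\ii}(-|n|^{3/4})\,\ii n|n|^{1/4}=+\tfrac{n|n|}{2}$, and these are the two \emph{distinct} ways the monomial $u_n\ov{u_n}$ arises, so they are \emph{not} to be averaged but summed — and their sum together with the (vanishing) first-summand contribution must be reconciled with the normalization convention; I expect the bookkeeping of the $\tfrac{1}{2\pi}$ factors and of the symmetrization convention in \eqref{eq:F2} to produce $(\mathtt V_2)^{+-}_{n,n}=n|n|$. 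The case $n_2=-n_1=-n$: output frequency $n_1-n_2=2n\neq 0$, and one checks that the four contributions above (with $n_1=n$, $n_2=-n$) cancel in pairs: the first summand gives $\tfrac{1}{2\ii}\cdot 2\ii n\cdot(|n|^{3/4}|n|^{1/4}-|n|^{3/4}|n|^{1/4})=0$ and the second gives $-\tfrac{1}{2\ii}\big(|n|^{3/4}\ii(-n)|n|^{1/4}+(-|n|^{3/4})\ii n|n|^{1/4}\big)=-\tfrac{1}{2\ii}(-2\ii n|n|)= n|n|$? — so here too the signs need scrutiny, and the asserted cancellation $(\mathtt V_2)^{+-}_{n,-n}=0$ will come out only after correctly tracking that $|D|^{3/4}(u-\bar u)$ carries a relative minus sign on the $\bar u$ component while $|D|^{1/4}(u+\bar u)$ carries a plus, and that $\pa_x$ acts on the output frequency. \textbf{The main obstacle is precisely this sign/normalization bookkeeping}: getting the symmetrization convention of \eqref{eq:F2}, the placement of the $\frac{1}{2\pi}$ factors, and the relative signs from $(u-\bar u)$ versus $(u+\bar u)$ all mutually consistent, so that the two delicate cancellations ($(\mathtt V_2)^{+-}_{n,-n}=0$, and the clean value $n|n|$ for $(\mathtt V_2)^{+-}_{n,n}$) fall out. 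Everything else is a routine substitution. The identity $(\mathtt V_2)^{+-}_{n,-n}=0$ is, incidentally, the ``essential cancellation'' announced in the strategy discussion, and it can be cross-checked against the reversibility/even-to-even structure $(i)$--$(ii)$ mentioned in the Remark following \eqref{stravdot}.
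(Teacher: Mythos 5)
Your overall route is exactly the paper's: expand \eqref{coeV1}--\eqref{coeV2} in Fourier modes, use the Bony--Weyl quantization \eqref{Weil-Q} on the paraproduct term, and read off the coefficients against \eqref{eq:F1}--\eqref{eq:F2}. The $\mathtt{V}_1$ part is correct. But for $\mathtt{V}_2$ you do not actually establish either of the two identities that constitute the lemma, and the reason is a concrete sign slip, not the ``normalization/symmetrization bookkeeping'' you defer to. The mode $\ov{u_{n_2}}$ oscillates as $e^{-\ii n_2 x}$, so $\pa_x$ acting on it produces the factor $-\ii n_2$, whereas you systematically use $+\ii n_2$. With the correct sign, the second summand of \eqref{coeV2} contributes
\begin{equation*}
\tfrac12\Big(n_2|n_1|^{\frac34}|n_2|^{\frac14}+n_1|n_2|^{\frac34}|n_1|^{\frac14}\Big)
\end{equation*}
to $(\mathtt V_2)^{+-}_{n_1,n_2}$ (this is the second term of the general formula in the paper's proof): at $n_1=n_2=n$ the two contributions are \emph{both} $+\tfrac{n|n|}{2}$ and sum to $n|n|$ (no averaging and no hidden $2\pi$ factors are involved), while at $n_1=n$, $n_2=-n$ they are $\mp\tfrac{n|n|}{2}$ and cancel. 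Your computation yields the values swapped ($0$ on the diagonal, $n|n|$ on the antidiagonal), you flag both with question marks, and you leave precisely the two claimed equalities --- including the essential cancellation $(\mathtt V_2)^{+-}_{n,-n}=0$ --- unverified. That is the genuine gap.

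A secondary inaccuracy: the paradifferential cutoff in the first summand is not $\equiv 1$ at the relevant frequencies (for comparable symbol and argument frequencies $\chi_1$ is typically $0$, not $1$); the paper keeps it as $\chi_1(n_1,n_2)$ in its general formula. This happens to be harmless for the lemma, because the first summand's contribution to both coefficients vanishes for independent reasons: the outer $\pa_x$ gives the factor $(n_1-n_2)=0$ when $n_1=n_2$, and the bracket $|n_1|^{3/4}|n_2|^{1/4}-|n_2|^{3/4}|n_1|^{1/4}$ vanishes when $|n_1|=|n_2|$. Stating this explicitly, rather than asserting $\chi_1=1$, would make that part of your argument sound.
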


\begin{proof}
By \eqref{coeV1} (recalling \eqref{complex-uU}) we have
\begin{align*}
{\mathtt V}_1   
& = \frac{1}{\ii \sqrt{2}} \pa_x | D |^{-1/4} (u - \bar u )
= \frac{1}{\sqrt{2\pi}}\frac{1}{ \sqrt{2}}   \sum_{n \neq 0} n | n |^{-1/4} u_n e^{\ii n x} +  n | n |^{-1/4} \ov{u_n} e^{- \ii n x}  
\end{align*}
which implies the expressions  for $ (\mathtt{V}_1)^{\pm}_n $  in  \eqref{coV1}. 
By \eqref{coeV2} an explicit computation using  
Definition \ref{quantizationtotale} of the Bony-Weyl quantitation (and \eqref{Weil-Q}) shows that
\[
(\mathtt{V}_2)_{n_1,n_2}^{+-}=\frac{1}{2}
(n_1-n_2) \chi_1 (n_1, n_2) \Big(|n_1|^{\frac{3}{4}}|n_2|^{\frac{1}{4}}- |n_2|^{\frac{3}{4}}|n_1|^{\frac{1}{4}}\Big)
+\frac{1}{2}\Big(n_2|n_1|^{\frac{3}{4}}|n_2|^{\frac{1}{4}}+n_1 |n_2|^{\frac{3}{4}}|n_1|^{\frac{1}{4}} \Big)
\]
where $ \chi_1 $ is a cut-off function as in \eqref{cutoff1} with $ p = 1 $ (even in all its arguments). 
This formula implies the expressions for $ (\mathtt{V}_2)^{\pm}_n $ in \eqref{coV1}.
\end{proof}

We now compute the coefficients of the linear and quadratic
component of the 
function  $ a $  in \eqref{function:a}.
\begin{lemma}{\bf (Coefficients of $ \mathtt{a}_1 $ and $ \mathtt{a}_2 $)}\label{coefA} 
The coefficients of the functions $ \mathtt{a}_1 $ and $ \mathtt{a}_2 $ in \eqref{busy2} satisfy
\be\label{coea1}
(\mathtt{a}_1)^{+}_n = (\mathtt{a}_1)^{-}_n  = - \frac{1}{2 \sqrt{2}} |n|^{5/4} \, , 
\qquad  ({\mathtt a}_2)_{n, n}^{+-} =  \frac12 |n|^{5/2} \, , \quad \forall n \in \Z \setminus \{0\} \, . 
\ee
\end{lemma}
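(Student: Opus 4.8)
The plan is to read off both families of coefficients directly from the explicit homogeneity expansion of the function $a$ provided by Lemma \ref{lem:expa}: one substitutes the inverse relations \eqref{varfin1}, which express $\eta$ and $\omega$ in terms of $u,\bar u$, expands in Fourier series via \eqref{complex-uU}, and matches against the model expansions \eqref{eq:F1}--\eqref{eq:F2}. The cubic remainder appearing in Lemma \ref{lem:expa} belongs to $\mathcal{F}^{\R}_{K,0,3}$, hence does not affect the homogeneous components of degree $\le 2$ and will be discarded from the outset.

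For the coefficients of $\mathtt{a}_1$, the linear part of $a$ in Lemma \ref{lem:expa} is $-\tfrac12|D|\eta$; substituting $\eta=\tfrac{1}{\sqrt 2}|D|^{1/4}(u+\bar u)$ gives $\mathtt{a}_1=-\tfrac{1}{2\sqrt 2}|D|^{5/4}(u+\bar u)$, and expanding in Fourier series via \eqref{complex-uU} and comparing with \eqref{eq:F1} yields at once $(\mathtt{a}_1)^+_n=(\mathtt{a}_1)^-_n=-\tfrac{1}{2\sqrt 2}|n|^{5/4}$. This step is immediate.

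The computation of $(\mathtt{a}_2)^{+-}_{n,n}$ is the substantial one, although still elementary. By \eqref{eq:F2} this is the coefficient of the non-oscillating monomial $u_n\bar u_n$ inside the quadratic part $\mathtt{a}_2$. The observation that organizes the work is that a quadratic monomial $u_{n_1}\bar u_{n_2}$ carries the exponential $e^{\ii(n_1-n_2)x}$, which becomes constant at $n_1=n_2$ and is therefore annihilated by the Fourier multipliers $|D|$ and $\partial_x$. Consequently, of the quadratic terms listed in Lemma \ref{lem:expa}, the two of the form $|D|$ of a product, namely $\tfrac12|D|(\eta|D|\eta)$ and $-\tfrac14|D|\big(\omega_x^2+(|D|\omega)^2\big)$, do not contribute to $(\mathtt{a}_2)^{+-}_{n,n}$, and only the three ``balanced'' pointwise products
\[
-\frac{\eta}{2}\,(|D|^2\eta)\,,\qquad \frac12\,(|D|\omega)(|D|^2\omega)\,,\qquad \frac12\,\omega_x(\partial_x|D|\omega)
\]
need to be examined. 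For each I substitute \eqref{varfin1}, expand both factors via \eqref{complex-uU}, extract the coefficient of $u_{n_1}\bar u_{n_2}e^{\ii(n_1-n_2)x}$ (which receives the two ``mixed'' contributions coming from $u\cdot\bar u$ and $\bar u\cdot u$ in the symmetrized product of the two factors), and then set $n_1=n_2=n$. Using $(\tfrac{1}{\sqrt 2})^2=\tfrac12$ and $(\tfrac{1}{\ii\sqrt 2})^2=-\tfrac12$ for the normalizations of $\eta$ and $\omega$, these three terms yield respectively $-\tfrac12|n|^{5/2}$, $+\tfrac12|n|^{5/2}$ and $+\tfrac12|n|^{5/2}$, whose sum is $(\mathtt{a}_2)^{+-}_{n,n}=\tfrac12|n|^{5/2}$. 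As a consistency check, this value is real, in accordance with the reality relation $\overline{(\mathtt{a}_2)^{+-}_{n_1,n_2}}=(\mathtt{a}_2)^{+-}_{n_2,n_1}$ of \eqref{condreal2}, valid since $a\in\widetilde{\mathcal{F}}^{\R}_2$.

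I expect no genuine conceptual obstacle: the entire difficulty is bookkeeping. The delicate points are to keep track of the several normalization constants -- the $\tfrac{1}{\sqrt{2\pi}}$ of the Fourier convention \eqref{complex-uU}, the $\tfrac{1}{2\pi}$ displayed in \eqref{eq:F2}, and the factors $\tfrac{1}{\sqrt 2}$, $\tfrac{1}{\ii\sqrt 2}$ in \eqref{varfin1} -- and the sign coming from $\tfrac{1}{\ii\sqrt 2}$, which is exactly what makes the negative contribution of $-\tfrac{\eta}{2}|D|^2\eta$ combine with the two positive $\omega$-contributions to give the stated value; one must also be careful that, in each symmetrized product $(u\pm\bar u)(u\pm\bar u)$, only the two cross terms feed into the $(+-)$-component. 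Finally, one should recall that $\psi$ may be replaced by $\omega$ only up to quadratic corrections, but this has already been accounted for in deriving Lemma \ref{lem:expa} from Lemma \ref{patB}, so it does not require further attention at this stage.
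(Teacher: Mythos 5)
Your proposal is correct and follows essentially the same route as the paper: read $\mathtt{a}_1=-\tfrac12|D|\eta$ off Lemma \ref{lem:expa}, note that the quadratic terms with an outer $|D|$ cannot contribute at $n_1=n_2$, and compute the $(+-)$ diagonal coefficients of the three remaining pointwise products $-\tfrac{\eta}{2}|D|^2\eta$, $\tfrac12(|D|\omega)(|D|^2\omega)$, $\tfrac12\omega_x(\partial_x|D|\omega)$ via \eqref{varfin1}. Your itemized contributions $-\tfrac12|n|^{5/2}+\tfrac12|n|^{5/2}+\tfrac12|n|^{5/2}$ check out and reproduce the paper's result.
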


\begin{proof}
By Lemma \ref{lem:expa} we have 
$
{\mathtt a}_1 = - \frac12 |D| \eta \stackrel{\eqref{varfin1}} =  - \frac{1}{2 \sqrt{2}} |D|^{5/4} (u + \bar u) 
$
and the formulas for $ (\mathtt{a}_1)^{\pm}_n $
in \eqref{coea1} follow. We next compute the coefficients of $ ({\mathtt a}_2)_{n, n}^{+-} $.
We remark that the terms with the operator $ |D| $ in front do not contribute to $ ({\mathtt a}_2)_{n,n}^{+-} $, because
$$
|D| \Big( \sum_{n_1, n_2 }  m_{n_1,n_2}^{+-} u_{n_1} \ov{u_{n_2}} e^{\ii ( n_1 - n_2 ) x }  \Big) = 
\sum_{n_1, n_2 }  |n_1 - n_2 |m_{n_1,n_2}^{+-} u_{n_1} \ov{u_{n_2}} e^{\ii ( n_1 - n_2 ) x }  \, , 
$$ 
whose coefficients vanish for $ n_1 = n_2 $. 
Thus we have to consider the bilinear contribution in the variables $u, \bar u $ defined in 
 \eqref{u0'}, coming from 
the terms 
\begin{align}
- \frac12 \eta (| D|^2 \eta ) & = -  \frac{1}{4} \big(  |D|^{1/4} ( u + \bar u ) \big) \big(  |D|^{9/4}( u + \bar u ) \big) \label{ter1} \\
\frac12 (|D| \omega)(|D|^2  \omega) & = - \frac14 
\big(  |D|^{3/4}(  u - \bar u ) \big) \big( |D|^{7/4}(  u - \bar u ) \big) \label{ter2} \\
\frac12 \om_x    (\pa_x |D| \omega) & = 
- \frac14 \big( \pa_x |D|^{-1/4}(  u - \bar u ) \big)    \big( \pa_x |D|^{3/4}(  u - \bar u ) \big) \, . \label{ter3}
\end{align}
The contribution from \eqref{ter1}-\eqref{ter3}
is
$  ({\mathtt a}_2)_{n,n}^{+-} =  \frac12 |n|^{5/2} $
proving the second formula in \eqref{coea1}. 
\end{proof}

It turns out that $ ({\mathtt a}_2)_{n,-n}^{+-} = |n|^{5/2} $ but we do not use this information in the paper. 

\begin{lemma}{\bf (Quadratic water waves vector field $ \ii F_2 (U) $)} \label{espansioneF2}
The quadratic water waves vector field $ \ii F_2 (U) $ in \eqref{eq:415bis} is 
\be \label{goodOmega3}
\begin{aligned}
\ii F_2 (U) & =  \frac{1}{\sqrt{2}}|D|^{-\frac{1}{4}}\Big(
|D| \opbw(|D|\omega)\eta -\pa_{x}(\eta\pa_{x}\omega)-|D|(\eta|D|\omega)
\Big) \\
&+\frac{\ii}{\sqrt{2}}|D|^{\frac{1}{4}}\Big(
-\frac{1}{2}\omega_{x}^{2} + \frac{1}{2}(|D|\omega)^{2}+\opbw(|D|\eta)\eta-\opbw(|D|\omega)|D|\omega
\Big) 
\end{aligned}
\ee
expressing $ (\eta, \omega) $ in terms of $  (u, \bar u ) $ as in \eqref{varfin1}.  
The coefficients $(F_{2})^{+-}_{n_1,n_2} $ defined 
in  \eqref{quadraticTerms2}  satisfy 
\begin{equation}\label{coeffF2}
(F_{2})^{+-}_{n,-n}=(F_{2})^{+-}_{-n,n}=2^{-\frac{1}{4}}|n|^{\frac{7}{4}} \, .
\end{equation} 
\end{lemma}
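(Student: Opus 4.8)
The plan is to compute the degree--two homogeneous part of $\partial_t u+\ii|D|^{1/2}u$ starting from the quadratic approximation \eqref{WW12} of the water waves system, then to substitute the inverse relations \eqref{varfin1} and read off the Fourier coefficients $(F_2)^{+-}_{n,-n}$ from the expansion \eqref{quadraticTerms2}.

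First I would compute the quadratic parts of $\partial_t\eta$ and of $\partial_t\omega$ in the good--unknown variables $(\eta,\omega)$. From the definition \eqref{omega0} of the good unknown, together with $B=|D|\psi+O(\eta^2)$ obtained from \eqref{form-of-B} and \eqref{DNexp2}, one has $\psi=\omega+\opbw(|D|\omega)\eta+O(u^3)$. Inserting this relation into the first equation of \eqref{WW12}, and noticing that replacing $\psi$ by $\omega$ inside the already--quadratic terms $-\partial_x(\eta\partial_x\psi)-|D|(\eta|D|\psi)$ only affects cubic contributions, gives that the quadratic part of $\partial_t\eta$ equals $|D|\opbw(|D|\omega)\eta-\partial_x(\eta\partial_x\omega)-|D|(\eta|D|\omega)$; the first summand arises precisely from substituting the good--unknown relation inside the linear term $|D|\psi$. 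For $\partial_t\omega=\partial_t\psi-\partial_t(\opbw(B)\eta)$ I would expand $\partial_t(\opbw(|D|\psi)\eta)=\opbw(|D|\partial_t\psi)\eta+\opbw(|D|\psi)\partial_t\eta$, insert the linear parts $\partial_t\psi=-\eta$, $\partial_t\eta=|D|\psi$, use the second equation of \eqref{WW12} for the quadratic part of $\partial_t\psi$, and again replace $\psi$ by $\omega$; this produces the quadratic part $-\tfrac12\omega_x^2+\tfrac12(|D|\omega)^2+\opbw(|D|\eta)\eta-\opbw(|D|\omega)|D|\omega$. Combining through $\partial_t u=\tfrac1{\sqrt2}|D|^{-1/4}\partial_t\eta+\tfrac{\ii}{\sqrt2}|D|^{1/4}\partial_t\omega$ --- after checking that the linear part reproduces $-\ii|D|^{1/2}u$ --- yields exactly \eqref{goodOmega3}.

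Then I would prove \eqref{coeffF2} by substituting \eqref{varfin1} into \eqref{goodOmega3} and extracting the coefficient of $u_n\ov{u_{-n}}\,e^{\ii 2nx}/(2\pi)$, i.e. of $u_n^{+}u_{-n}^{-}$. The key simplification is that none of the three paraproduct terms in \eqref{goodOmega3} --- namely $|D|\opbw(|D|\omega)\eta$, $\opbw(|D|\eta)\eta$ and $\opbw(|D|\omega)|D|\omega$ --- contributes to this particular coefficient: in each of them the Fourier interaction producing $u_n\ov{u_{-n}}$ forces the cutoff $\chi_1$ (see \eqref{cutoff1}) to be evaluated at a point of the form $(n,\tfrac32 n)$, which lies outside its support once $\delta\ll1$. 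Only the genuine bilinear products $-\partial_x(\eta\partial_x\omega)$, $-|D|(\eta|D|\omega)$, $-\tfrac12\omega_x^2$ and $\tfrac12(|D|\omega)^2$ then survive, and a direct convolution computation using the scalings $\eta=\tfrac1{\sqrt2}|D|^{1/4}(u+\bar u)$, $\omega=\tfrac1{\ii\sqrt2}|D|^{-1/4}(u-\bar u)$ from \eqref{varfin1} together with the prefactors $|D|^{\mp1/4}$ gives $(F_2)^{+-}_{n,-n}=2^{-1/4}|n|^{7/4}$; the identity $(F_2)^{+-}_{n,-n}=(F_2)^{+-}_{-n,n}$ follows either from the same computation or from the even--to--even property of the water waves vector field.

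The only delicate point is the last step: keeping track of the Bony--Weyl normalization and of the $\sqrt{2\pi}$ factors in the Fourier bookkeeping, and in particular spotting that the paradifferential terms drop out of $(F_2)^{+-}_{n,-n}$; everything else is a routine substitution.
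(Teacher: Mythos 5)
Your proposal is correct and follows essentially the same route as the paper: expand the good unknown to quadratic order, insert it into the quadratic system \eqref{WW12} to get the $(\eta,\omega)$ equations whose complex combination is \eqref{goodOmega3}, and then extract $(F_2)^{+-}_{n,-n}$ by a direct Fourier computation after substituting \eqref{varfin1}. Your additional observation that the three paradifferential terms cannot contribute to the coefficient of $u_n\ov{u_{-n}}e^{2\ii nx}$, because the Bony--Weyl cutoff is evaluated at $(n,\tfrac32 n)$ outside its support, is a correct refinement of the ``direct calculus'' the paper invokes (and indeed the surviving genuine products reproduce $2^{-1/4}|n|^{7/4}$).
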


\begin{proof}
By  \eqref{form-of-B} (recalling \eqref{DNexp2}) we have  the expansion $ \omega=\psi-\opbw(|D|\psi) \eta$,
up to a cubic function in $\mathcal{F}^{\R}_{K,0,3}$,
of the good unknown in \eqref{omega0}. 
Then the equations in \eqref{WW12} reads
\begin{align*}
  \partial_t \eta &=  \dps |D| \omega  + | D|  \opbw{(|D| \psi)} \eta     - \pa_x (\eta \pa_x \omega ) - |D| ( \eta |D|\omega ) \,,
  \\
 \partial_t \omega & = \psi_t - \opbw( |D|\psi_t   ) \eta -  \opbw (  |D|\psi ) \eta_t \\
&  = \dps - \eta  -\frac{1}{2} \omega_x^2 +     \frac{1}{2} (|D|\omega)^2 
+ \opbw ( |D| \eta ) \eta -  \opbw (  |D| \omega ) [ |D| \omega  ] 
    \end{align*}
    up to cubic functions in $\mathcal{F}^{\R}_{K,1,3}$.
In the  complex  variable $u$ defined  in \eqref{u0'} we obtain the equation  
$ u_t =  - \ii |D|^{\frac12} u + \ii F_2 (U) $ with $ \ii F_2 (U)  $ defined in \eqref{goodOmega3}. 
Expressing in \eqref{goodOmega3} the variables $ (\eta, \omega ) $ in terms of $ (u, \bar u )$ as in 
\eqref{varfin1}
and passing to the Fourier coordinates, we derive  
\eqref{coeffF2} by a direct calculus similar to those in Lemmata \ref{lem:V1} and \ref{coefA}.
\end{proof}

\begin{remark}
The only property of the coefficients $ (F_{2})^{+-}_{n,-n} $ in \eqref{coeffF2} that we are going to use 
is that $ (F_{2})^{+-}_{n,-n}  = \ov{(F_{2})^{+-}_{-n,n}}$, see  the proof 
of Lemma \ref{lem:5.5}. 
This property could be also derived by the 
reversibility and even-to-even property  of the water waves system \eqref{eq:415}, or \eqref{eq:415bis}, 
which are preserved by the good unknown transformation.
The involution $ S $  in \eqref{involution} reads $ u(x) \mapsto \bar u(-x) $,  and in the Fourier basis
$ (u_j) \mapsto ( \ov{u_j}) $.
\end{remark}

\bigskip
\section{ Block-diagonalization}\label{diagonalizzo}

The goal of this section is to transform the water waves system \eqref{eq:415} into the system \eqref{sistemaDiag} below
which is block-diagonal in the variables $ (u, \bar u )$, modulo a smoothing operator $ R (U) $. 

  \begin{proposition}{\bf (Block-Diagonalization)}\label{teodiagonal}
Let  $ \rho \gg 1   $  and  $ K \geq  K':=2\rho+2  $.  There exists $s_0>0$ such that, 
for any $s\geq s_0$, for all $0<r \leq r_0(s)$ small enough,  
and  any solution $U\in B^K_s(I;r)$ of  \eqref{eq:415},  the following holds:

\begin{enumerate}
\item[(i)] 
there is a map 
$ {\bf \Psi}_{diag}^{\theta}(U) $, $ \theta\in [0,1] $,  satisfying, for some $C=C(s,r,K)>0$,
\begin{equation}\label{stime-diagonal}
\|\pa_{t}^{k}{\bf \Psi}_{diag}^{\theta}(U)[V]\|_{\dot{H}^{s-k}}
+\|\pa_{t}^{k}({\bf \Psi}_{diag}^{\theta}(U))^{-1}[V]\|_{\dot{H}^{s-k}}
\leq \big(1+C \|{U}\|_{K,s_0}\big)\|V\|_{k,s} \,, 
\end{equation} 
for any $  0\leq k\leq K-K'$ and any 
$V=\vect{v}{\bar{v}}$ in $C^{K-K'}_{*\R}(I,\dot{H}^s(\T;\C^{2}))$, $\theta\in[0,1]$;

\item[(ii)] the function $W:=({\bf \Psi}_{diag}^{\theta}(U)U)_{|_{\theta=1}}$ solves the system 
\begin{equation}\label{sistemaDiag}
\begin{aligned}
 \pa_{t}W&=
\opbw \left(
\begin{matrix}
d(U;x,\xi) + r_{-1/2}(U;x,\x)&0 \\ 0 & \overline{d(U;x,-\xi)}+\ov{r_{-1/2}(U;x,-\x)} 
\end{matrix}
\right)W +R(U)[W]
 \end{aligned}
\end{equation}
where $d(U;x,\x)$  is a  symbol of the form
\begin{equation}\label{sistemainiziale3}
d(U;x,\xi):=- \ii V(U;x) \xi  -  \ii (1+ a^{(0)}(U;x) ) |\xi|^{1/2} 
\end{equation}
where
 $  a^{(0)}  $ is a function in $  \Sigma {\cal F}^{\R}_{K,1,1}  $, 
$ r_{-1/2}(U;x,\x) $ is a symbol in $  \Sigma {\Gamma}^{-1/2}_{K,2\rho+2,1} $, and 
$R(U) $ is a real-to-real matrix of smoothing operators in $ \Sigma\mathcal{R}^{-\rho}_{K,2\rho+2,1}\otimes\mathcal{M}_2(\mathbb{C})$.
 The function   $a^{(0)} $ has the expansion 
 \be\label{def:a0}
 a^{(0)}=\mathtt{a}_1+ \mathtt{a}^{(0)}_2+\mathtt{a}^{(0)}_{\geq3} \, , \quad 
 \mathtt{a}^{(0)}_2 := \mathtt{a}_2 - \frac12 \mathtt{a}_1^2 \in \widetilde{\mathcal{F}}_2^{\R} \, , 
 \ee
 where
$ \mathtt{a}_1 $ and $\mathtt{a}_2$ are defined in \eqref{busy2}. 
\end{enumerate}
  \end{proposition}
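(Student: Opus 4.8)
The plan is to bring \eqref{eq:415} to the block-diagonal form \eqref{sistemaDiag} by a \emph{finite} sequence of conjugations through paradifferential flows, peeling off the off-diagonal symbols order by order, from order $1/2$ down to order $-\rho$, and finally absorbing whatever is left off-diagonal into the smoothing remainder $R(U)$. The algebraic fact that makes this work is that the order-one transport symbol $\ii A_1(U;x)\xi$ in \eqref{eq:415}, \eqref{matriciinizio} is a scalar multiple of the identity, so it never obstructs the diagonalization; the ``spectral gap'' between the two diagonal entries is created by the order-$1/2$ diagonal symbol $\mp\ii(1+a)|\xi|^{1/2}$ coming from \eqref{function:a}, and it is precisely this gap that one inverts in the homological equations. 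Throughout, the generators will be chosen off-diagonal and of the form \eqref{prodotto}, so that the real-to-real structure \eqref{vinello} and the $x$-translation invariance propagate at each step.

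First I would handle the order $1/2$. I look for a flow $\Phi_1^\theta(U)$ with $\partial_\theta\Phi_1^\theta=\opbw(G_1(U;x,\xi))\Phi_1^\theta$, $\Phi_1^0=\mathrm{Id}$, where $G_1$ is an off-diagonal matrix of symbols of order $0$ vanishing at $U=0$. The leading (matrix-product) term in the composition $\#_\rho$ of Proposition \ref{teoremadicomposizione} shows that the commutator of $\opbw(G_1)$ with the diagonal order-$1/2$ symbol $\ii|\xi|^{1/2}\,\mathrm{diag}(-(1+a),1+a)$ is off-diagonal of order $1/2$; choosing the entries of $G_1$ to solve the resulting homological equation — i.e. dividing the off-diagonal symbols $\mp a|\xi|^{1/2}$ by $\pm 2(1+a)|\xi|^{1/2}$, which is admissible since $1+a$ is bounded away from $0$ for small $U$ and $|\xi|^{1/2}$ is smoothed near $\xi=0$ — gives $(G_1)_{12}\sim a/(2(1+a))\in\Sigma\Gamma^0_{K,1,1}$. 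Conjugating \eqref{eq:415} by $\Phi_1^1$ using the transformation rules of Appendix \ref{sez:A2} together with Propositions \ref{teoremadicomposizione}--\ref{composizioniTOTALI}, the order-$1/2$ part becomes purely diagonal, and the quadratic-in-$U$ contributions produced in this step (a single and a double matrix commutator) add up to the diagonal correction $-\tfrac12\mathtt{a}_1^2$ recorded in \eqref{def:a0}. The commutator with the transport term, the conjugation of $\partial_t$ (which here only costs symbols of order $\le 0$, since $G_1$ has order $0$ — the sublinearity of the dispersion is harmless at this stage, in contrast with the later reduction to constant coefficients), and the conjugation of $A_0$, $A_{-1}$ and of the smoothing remainder all yield symbols of order $\le 0$ and a smoothing operator of the same order. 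Since $\opbw(G_1)$ is bounded on $\dot H^s$ with norm $\lesssim\|U\|_{K,s_0}$ (Proposition \ref{azionepara}), the flow $\Phi_1^\theta$ exists, is invertible, and satisfies \eqref{stime-diagonal}.

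Then I would iterate this descent: at the step that removes the off-diagonal symbol of order $(2-k)/2$ I use a flow generated by an off-diagonal symbol of order $(1-k)/2$, obtained again by dividing by $2(1+a)|\xi|^{1/2}$; its matrix commutator with the remaining off-diagonal symbols is off-diagonal of strictly lower order, while its matrix commutator with the surviving off-diagonal symbols of order $\ge 0$ (present only at the step $k=2$, killing the order-$0$ part $-\tfrac14\sm{0}{1}{1}{0}V_x$ of \eqref{function:c}) produces \emph{diagonal} corrections of order $\le -1/2$ — which is why every accumulated diagonal correction below order $1/2$ can be collected into a single symbol $r_{-1/2}\in\Sigma\Gamma^{-1/2}_{K,2\rho+2,1}$. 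By Proposition \ref{composizioniTOTALI} each conjugation also acts on $A_{-1}$, on the earlier corrections and on the smoothing remainder without changing their nature. After a number of steps comparable to $2\rho+2$ the off-diagonal symbol has order $\le-\rho$, hence (remark after Proposition \ref{azionepara}) quantizes to a smoothing operator in $\Sigma\mathcal{R}^{-\rho}_{K,2\rho+2,1}\otimes\mathcal{M}_2(\C)$, which I absorb into $R(U)$ together with all the conjugated smoothing remainders. Setting $\mathbf{\Psi}_{diag}^\theta(U)$ equal to the composition $\Phi_J^\theta\circ\cdots\circ\Phi_1^\theta$ of the finitely many flows, the bounds \eqref{stime-diagonal} follow by composing the individual bounds, and since each conjugation consumes a bounded number of time derivatives (each $\partial_t$-differentiation of a generator raising the second index of its symbol class by one, cf. \eqref{prodottodisimboli2}), the hypothesis $K\ge K'=2\rho+2$ is exactly what the descent requires.

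The structure of the argument is standard, so the hard part is not conceptual but combinatorial: verifying at each of the $\sim 2\rho$ steps that the generators and the induced corrections land in the correct symbol and smoothing classes — both in order \emph{and} in degree of homogeneity — that the homological divisions stay admissible, that the real-to-real and translation-invariance properties propagate, and that the count of lost time derivatives never exceeds $K-(2\rho+2)$. The one point that calls for a genuine explicit computation, rather than the abstract calculus, is the identification of the quadratic diagonal correction $\mathtt{a}^{(0)}_2=\mathtt{a}_2-\tfrac12\mathtt{a}_1^2$ in \eqref{def:a0}, which comes from tracking the quadratic-in-$U$ terms generated in the first step.
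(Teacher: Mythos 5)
Your iterative descent at negative orders is essentially the paper's scheme (Lemma \ref{lem:indud}: generators $m_j=-\chi(\xi)b_j/(2\ii(1+a^{(0)})|\xi|^{1/2})$, order strictly decreasing, derivative count matching $K'=2\rho+2$), but your first step at order $1/2$ has a genuine gap. You choose an order-zero off-diagonal generator $G_1$ with $(G_1)_{12}=a/(2(1+a))$ so that the \emph{first} commutator with the diagonal symbol $\mp\ii(1+a)|\xi|^{1/2}$ cancels the off-diagonal part $\mp\ii a|\xi|^{1/2}$, and you then assert that ``the order-$1/2$ part becomes purely diagonal''. It does not: because the generator has order $0$, the higher terms of the Lie series do not lose order, only homogeneity. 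The triple commutator with the diagonal order-$1/2$ symbol and the double commutator with the off-diagonal order-$1/2$ symbol are again off-diagonal of order $1/2$, cubic in $U$, and they do not cancel: exact diagonalization would require $\exp(G_1)$ to equal the eigenvector matrix $C^{-1}$ of $A_{1/2}$, i.e. $\tanh\big((G_1)_{12}\big)=a/(1+a+\lambda_+)$ with $\lambda_+=\sqrt{1+2a}$, and $a/(2(1+a))$ agrees with this only through quadratic order (the Taylor coefficients differ at cubic order). Your subsequent descent is strictly decreasing in order and never revisits order $1/2$, so these off-diagonal symbols in $\Gamma^{1/2}_{K,K',3}$ survive to the end. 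They cannot be dumped into $R(U)$, whose non-homogeneous part must still be $\rho$-smoothing, nor tolerated later, since the quintic energy estimate only accepts \emph{diagonal} admissible symbols of positive order (Definition \ref{def:admissible}, Lemma \ref{tempo}); so the conclusion \eqref{sistemaDiag} is not reached.

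The paper avoids this by making the first step exact rather than perturbative: in Lemma \ref{lem:step12} the generator is a function $m_{-1}(U;x)$ (no $\xi$-dependence) chosen, via Appendix \ref{Psiasflow}, so that the time-one flow equals $\opbw(C^{-1})$ up to a $\rho$-smoothing operator, where $C$ \emph{exactly} diagonalizes the matrix $A_{1/2}$ at all homogeneities; then $C^{-1}A_{1/2}C=\mathrm{diag}(-\lambda_+,\lambda_+)$ with $a^{(0)}=\lambda_+-1=\sqrt{1+2a}-1$, and the expansion \eqref{def:a0}, $\mathtt{a}^{(0)}_2=\mathtt{a}_2-\tfrac12\mathtt{a}_1^2$, is just the Taylor expansion of $\lambda_+$, not a commutator bookkeeping (your computation of the quadratic correction is numerically right, but it is the cubic and higher off-diagonal leftovers that break the argument). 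To repair your proof you must either adopt this exact conjugation, or choose the generator as the exact ``hyperbolic angle'' $(G_1)_{12}=\mathrm{artanh}\big(a/(1+a+\lambda_+)\big)$ so that the time-one flow is the exact diagonalizer; a homological truncation at order $1/2$ cannot work because within the paper's symbol classes the degree-$\geq3$ remainder cannot be reduced further in homogeneity.
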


\noindent
Proposition \ref{teodiagonal} is proved 
applying a sequence a  transformations 
which iteratively block-diagonalize \eqref{eq:415} in decreasing orders. 
In Subsection \ref{diagoparaprod} we block-diagonalize \eqref{eq:415} at the order $ 1/ 2 $ and in Subsection
\ref{lowOffdiag} we perform the block-diagonalization until the negative order $ - \rho $.

\subsection{Block-Diagonalization at order $1/2$}\label{diagoparaprod}

The aim of this subsection is to diagonalize the matrix of symbols 
$A_{1/2}(U; x)  |\xi|^{1/2} $ 
in \eqref{eq:415}, up to a matrix of symbols of order $ 0 $. 
We apply a parametrix argument conjugating the system \eqref{eq:415} 
with a paradifferential operator  whose 
principal matrix symbol is 
\begin{equation}\label{TRA}
C:=\left(\begin{matrix}
f & g \\
g & f
\end{matrix}\right) \, , \quad  
f(U; x) :=\frac{1+a+\lambda_{+}}{\sqrt{(1+a+\lambda_{+})^2-a^2}}, \quad g(U; x) :=\frac{-a}{\sqrt{(1+a+\lambda_{+})^2-a^2}} \, , 
\end{equation}
where
\begin{equation}\label{eigenvalues}
\lambda_{\pm}=\lambda_{\pm}(U;x):=\pm \sqrt{(1+a)^{2}-a^{2}} 
\end{equation}
are the eigenvalues of $ A_{1/2} $. We have 
\be\label{invC}
{\rm det}(C)=f^2-g^2=1  \, , \quad 
C^{-1} =
 \left(\begin{matrix}
f & - g \\
- g & f
\end{matrix}\right)  \, , 
\ee
and 
\begin{equation}\label{nuovocoef}
C^{-1}A_{1/2}C=  
\left(\begin{matrix}
- \l_+ & 0 \\
0 & \l_+
\end{matrix}\right)
= 
\left(\begin{matrix}
- (1+a^{(0)}) & 0 \\
0 & 1+a^{(0)}
\end{matrix}\right) \, , \quad 
a^{(0)}:=\lambda_{+}-1\in \Sigma\mathcal{F}^{\R}_{K,1,1} \, . 
\end{equation}

\begin{lemma}\label{lem:step12}
There exists a function $ m_{-1} (U; x) $ in $ \Sigma\mathcal{F}_{K,1,1}$ such that, 
the flow 
\begin{equation}\label{generatore20}
\pa_{\theta}{\bf \Psi}_{-1}^{\theta}(U) =\opbw(M_{-1}){\bf \Psi}_{-1}^{\theta}(U), \ 
{\bf \Psi}_{-1}^{0}(U) = {\rm Id} \, , \quad
M_{-1} :=\left(\begin{matrix}0 & m_{-1}(U;x)\\  \ov{m_{-1}(U;x)} & 0\end{matrix}\right) \, , 
\end{equation}
has the form 
\begin{align}
 ({\bf \Psi}_{-1}^{\theta}(U))_{|_{\theta=1}}& =\opbw(C^{-1}) + R(U), \quad 
R (U) \in \Sigma\mathcal{R}^{-\rho}_{K,1,1}\otimes\mathcal{M}_2(\C) \, ,\label{uguale}\\
 ({\bf \Psi}_{-1}^{\theta}(U))^{-1}_{|_{\theta=1}} &= \opbw(C) + Q(U), \qquad 
Q (U) \in \Sigma\mathcal{R}^{-\rho}_{K,1,1}\otimes\mathcal{M}_2(\C) \, .\label{uguale10}
\end{align}
Moreover,  if $U$ solves \eqref{eq:415},  then the function 
\begin{equation}\label{TRA5}
W_0:=({\bf \Psi}_{-1}^{\theta}(U))_{|_{\theta=1}}U 
\end{equation}
 solves the system
\begin{equation}\label{NuovoParaprod}
\begin{aligned}
\pa_t W_0=\opbw\Big( 
\left(
\begin{matrix}
d(U;x,\xi) &0 \\ 0 & \overline{d(U;x,-\xi)}
\end{matrix}
\right)  +A^{(0)} 
\Big)W_0+R^{(0)}(U)W_0
\end{aligned}
\end{equation}
where $ d(U;x,\xi)  $ is the symbol in  \eqref{sistemainiziale3} with  
 $ a^{(0)}(U;x) $ defined in \eqref{nuovocoef},  
a matrix of symbols 
\begin{equation}\label{lambdazero}
\begin{aligned}
A^{(0)} :=\left(\begin{matrix}  c_0(U;x, \xi)  & b_0(U;x, \xi) \\
 \ov{b_0(U;x, - \xi)} & \ov{c_0(U;x, - \xi)}   \end{matrix}\right)  \, , \ 
c_0 \in \Sigma\Gamma^{-\frac{1}{2}}_{K,2,1},\;\;  b_0 \in \Sigma\Gamma^{0}_{K,2,1}  \, , 
\end{aligned}
\end{equation}
and a real-to-real matrix of smoothing operators $R^{(0)}(U)$ in  
$ \Sigma\mathcal{R}^{-\rho}_{K,2,1}\otimes\mathcal{M}_2(\mathbb{C})$.
Moreover the function  $ a^{(0)} $  has the expansion  \eqref{def:a0}. 
\end{lemma}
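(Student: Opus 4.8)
The plan is to construct the generator $m_{-1}$ by a parametrix argument: we want a paradifferential conjugation that replaces the matrix $A_{1/2}|\xi|^{1/2}$ by its diagonalization $\mathrm{diag}(-\lambda_+,\lambda_+)|\xi|^{1/2}$. Since $\opbw(C)\opbw(C^{-1}) = \mathrm{Id} + (\text{smoothing})$ by Proposition \ref{teoremadicomposizione} (using $\det C = 1$ from \eqref{invC}), the operator $\opbw(C^{-1})$ is bounded and invertible on $\dot H^s$ up to smoothing remainders; but it is not generated by a flow, which is what we need to control commutators with $\partial_t$ and to retain the $x$-translation invariance and real-to-real structure cleanly. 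So instead I would realize the conjugation as the time-$1$ flow of \eqref{generatore20}. First I would compute the principal symbol of $({\bf \Psi}_{-1}^{\theta})_{|\theta=1}$: expanding the flow, $({\bf \Psi}_{-1}^1)_{|\theta=1} = \mathrm{Id} + \opbw(M_{-1}) + \frac12 \opbw(M_{-1})^2 + \cdots$, and since $M_{-1}$ is off-diagonal of order $-1$ (wait — we actually need $M_{-1}$ to produce $C^{-1}$, which is order $0$, so $m_{-1}$ must be order $0$; the subscript ``$-1$'' is a label, and one reads off from \eqref{TRA}–\eqref{invC} that $C^{-1} - \mathrm{Id}$ has off-diagonal part $-g$ and a diagonal correction $f-1$, both of which vanish at degree $1$ in $U$ since $a = \mathtt a_1 + \cdots$ vanishes there). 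The point is that $f - 1 = O(a^2)$ while $g = O(a)$, so to leading (linear) order $C^{-1} = \mathrm{Id} + \sm{0}{-\mathtt a_1}{-\mathtt a_1}{0} + O(u^2)$; matching this against $\mathrm{Id} + \opbw(M_{-1}) + \tfrac12\opbw(M_{-1})^2$ forces $m_{-1} = -\mathtt a_1 + (\text{quadratic})$ with the quadratic part determined algebraically so that the full symbol expansion of the time-$1$ flow agrees with $C^{-1}$ modulo $\Sigma\Gamma^{-1}_{K,1,1}$, hence modulo smoothing after quantization. This is a finite (degree $\le 3 = N$) computation using the composition formula \eqref{espansione2} and the class arithmetic \eqref{prodottodisimboli2}; it yields \eqref{uguale}, and \eqref{uguale10} follows because the inverse flow $({\bf \Psi}_{-1}^{\theta})^{-1}$ is generated by $-\opbw(M_{-1})$, whose time-$1$ symbol is then $C$ up to smoothing by the same argument (or directly by inverting, using $\opbw(C)\opbw(C^{-1}) = \mathrm{Id} + \text{smoothing}$).

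Next I would derive the conjugated equation. With $W_0 = {\bf \Psi}_{-1}^1(U) U$, differentiating in $t$ gives
\[
\partial_t W_0 = {\bf \Psi}_{-1}^1(U)\, \partial_t U + (\partial_t {\bf \Psi}_{-1}^1(U))\, U
= {\bf \Psi}_{-1}^1 \,\mathcal L(U)\, ({\bf \Psi}_{-1}^1)^{-1} W_0 + (\partial_t {\bf \Psi}_{-1}^1)({\bf \Psi}_{-1}^1)^{-1} W_0,
\]
where $\mathcal L(U)$ is the operator in the right-hand side of \eqref{eq:415}. For the first term I use \eqref{uguale}–\eqref{uguale10} and the composition theorem \ref{teoremadicomposizione} to compute $\opbw(C^{-1})\,\opbw(\ii A_1\xi + \ii A_{1/2}|\xi|^{1/2} + A_0 + A_{-1})\,\opbw(C)$ via the $\#_\rho$ expansion. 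The order-$1$ transport part $-\ii V\xi$ is scalar (proportional to the identity matrix, see \eqref{matriciinizio}) hence commutes with $C$ up to lower order, so it survives unchanged at the principal level, producing the $-\ii V\xi$ in $d$; the symmetrization of $A_{1/2}|\xi|^{1/2}$ produces $\mathrm{diag}(-(1+a^{(0)}),\,1+a^{(0)})|\xi|^{1/2}$ by \eqref{nuovocoef}, which is the $-\ii(1+a^{(0)})|\xi|^{1/2}$ in $d$ (after the $\ii$); everything else — the sub-principal pieces generated by $\#_\rho$ corrections, the conjugation of $A_0$, the leftover of $A_{-1}$, the contribution $[\opbw(M_{-1}),\cdot]$ type terms — is collected into the matrix $A^{(0)}$ of \eqref{lambdazero}, with the diagonal correction having order $-1/2$ (here one uses that the principal order-$0$ diagonal terms coming from the transport-conjugation and from $A_0$ cancel or are absorbed, a cancellation I would need to check explicitly; this is where the structure $A_0 = -\tfrac14\sm{0}{1}{1}{0}V_x$ being purely off-diagonal helps) and the off-diagonal part order $0$. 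The class tracking shows all new symbols lie in $\Sigma\Gamma^{\bullet}_{K,2,1}$ (one time-derivative consumed, degree $\ge 1$) and the smoothing remainders in $\Sigma\mathcal R^{-\rho}_{K,2,1}\otimes\mathcal M_2(\mathbb C)$, using Proposition \ref{composizioniTOTALI} for the composition of smoothing operators with paradifferential ones. The second term, $(\partial_t {\bf \Psi}_{-1}^1)({\bf \Psi}_{-1}^1)^{-1}$, is $\opbw(\partial_t C^{-1})\opbw(C) + \text{smoothing}$; since $\partial_t C^{-1}$ is again a matrix of order-$0$ symbols vanishing at degree $1$, this is a bounded operator of order $0$ vanishing at degree $1$, hence contributes to $A^{(0)}$ (and to the smoothing remainder) in the right class — crucially it contributes no order-$1$ term because $C^{-1}$ has order $0$, which is the sublinear-dispersion phenomenon emphasized in the introduction.

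Finally I would record the expansion \eqref{def:a0} of $a^{(0)}$: from \eqref{nuovocoef}, $a^{(0)} = \lambda_+ - 1 = \sqrt{(1+a)^2 - a^2} - 1 = \sqrt{1 + 2a} - 1$, and Taylor-expanding $\sqrt{1+2a} = 1 + a - \tfrac12 a^2 + O(a^3)$ with $a = \mathtt a_1 + \mathtt a_2 + \mathtt a_{\ge 3}$ gives $a^{(0)} = \mathtt a_1 + (\mathtt a_2 - \tfrac12 \mathtt a_1^2) + \mathtt a^{(0)}_{\ge 3}$, which is \eqref{def:a0}; membership $\mathtt a_2 - \tfrac12 \mathtt a_1^2 \in \widetilde{\mathcal F}_2^{\R}$ follows from \eqref{prodottodisimboli2} and the reality conditions \eqref{condreal2}. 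The estimates \eqref{stime-diagonal} for the flow and its inverse follow from Proposition \ref{azionepara} applied to the generator $\opbw(M_{-1})$ (a bounded operator of order $0$), a Gronwall argument in $\theta$, and differentiating the flow equation in $t$ up to $K - K'$ times — each $t$-derivative costing one unit, consistent with $M_{-1} \in \Sigma\mathcal F_{K,1,1}$ so that the flow is well-defined and estimable for $0 \le k \le K - K'$. The $x$-translation invariance of all new symbols and operators, and the real-to-real structure \eqref{vinello}, are preserved because $C$ and $M_{-1}$ have the structured form \eqref{prodotto}, \eqref{vinello} (as $a$, $V$ are $x$-invariant and $m_{-1}$ is built algorithmically from $a$), and these properties pass through composition and flows as in the proof of Proposition \ref{teoremadicomposizione}. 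I expect the main obstacle to be the bookkeeping of the order-$0$ diagonal terms generated along the way: one must verify that after the conjugation the diagonal part of $A^{(0)}$ genuinely has order $-1/2$ rather than $0$, which requires identifying and cancelling the order-$0$ diagonal contributions from the $\#_\rho$-correction of $\opbw(C^{-1})\opbw(A_{1/2}|\xi|^{1/2})\opbw(C)$ against those from the conjugated $A_0$ and from $\partial_t C^{-1}$ — essentially the standard ``second symmetrization'' step familiar from Alazard–Métivier–type energy estimates, here carried out in the homogeneous/para-differential framework of \cite{BD}.
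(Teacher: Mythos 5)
Your overall route is the paper's: realize the conjugator as the flow of \eqref{generatore20} with an off-diagonal matrix of functions, identify its time-one map with $\opbw(C^{-1})$ up to smoothing, and then conjugate \eqref{eq:415} by symbolic calculus. But there is a genuine gap in how you produce $m_{-1}$. You propose to match the symbol expansion of the time-one flow against $C^{-1}$ ``modulo $\Sigma\Gamma^{-1}_{K,1,1}$, hence modulo smoothing after quantization''. That implication is false: an operator with symbol of order $-1$ lies only in $\Sigma\mathcal{R}^{-1}$, not in $\Sigma\mathcal{R}^{-\rho}$ for the large $\rho$ required in \eqref{uguale}. Worse, all symbols in play here are $\xi$-independent functions, so there are no lower orders to push errors into: the symbol of the flow is the matrix $\exp\{M_{-1}\}$ (for $\xi$-independent symbols $a\#_\rho b=ab$ exactly), and if $\exp\{M_{-1}\}\neq C^{-1}$ the difference is an order-zero function matrix whose Bony--Weyl quantization is merely bounded, not $\rho$-smoothing; in particular matching only the homogeneity-$1$ and $2$ components leaves a degree-$\geq 3$, order-zero error incompatible with $R\in\Sigma\mathcal{R}^{-\rho}_{K,1,1}$. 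The fix, and what the paper does, is to solve $\exp\{M_{-1}\}=C^{-1}$ \emph{exactly}: writing $\exp\{M_{-1}\}=\sm{1+\Psi_1(|m_{-1}|^2)}{m_{-1}\Psi_2(|m_{-1}|^2)}{\ov{m_{-1}}\Psi_2}{1+\Psi_1}$, the equations $\Psi_1(|m_{-1}|^2)=f-1$, $m_{-1}\Psi_2(|m_{-1}|^2)=-g$ are solvable because $f-1\geq 0$ (a consequence of $f^2-g^2=1$) and $\Psi_1$ is locally invertible, giving $m_{-1}\in\Sigma\mathcal{F}_{K,1,1}$ by analyticity; the identity $\exp\{\opbw(M_{-1})\}=\opbw(\exp\{M_{-1}\})+\mathrm{smoothing}$ then needs, for the infinite tail of the series, a result like Proposition 3.6 of \cite{FI1}, not just the finite composition theorem you invoke. (Also your leading order is off: since the off-diagonal of $C^{-1}$ is $-g=\mathtt{a}_1/2+O(u^2)$, one has $m_{-1}=\tfrac12\mathtt{a}_1+O(u^2)$, not $-\mathtt{a}_1$.)

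On the conjugation step your outline is right, but the point you leave open --- that the diagonal part of $A^{(0)}$ is of order $-1/2$ rather than $0$ --- is not a cross-cancellation between the transport corrections, the conjugated $A_0$ and $\partial_t C^{-1}$, as you suggest; each contribution separately has no order-zero diagonal part, again because $f^2-g^2=1$. Indeed $(\pa_t C^{-1})\#_\rho C$ is exactly off-diagonal (differentiate $f^2-g^2=1$ to get $f f_t-g g_t=0$), $C^{-1}\#_\rho(\ii A_1\xi)\#_\rho C$ has diagonal exactly $-\ii V\xi$ with only an order-zero off-diagonal leftover $V(f_xg-g_xf)$, $C^{-1}\#_\rho A_0\#_\rho C=A_0$ exactly (all factors are $\xi$-independent and $C^{-1}\sm{0}{1}{1}{0}C=\sm{0}{1}{1}{0}$), and the algebraic diagonalization \eqref{nuovocoef} makes the order-$1/2$ diagonal exactly $-\ii(1+a^{(0)})|\xi|^{1/2}$ with corrections of order $-1/2$. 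So the verification you postpone is easier than you fear, but as written it is still missing from your argument, while the identification of the exact generator $m_{-1}$ is the step that genuinely fails in your version.
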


\begin{proof}
We prove \eqref{uguale}-\eqref{uguale10} in  Appendix \ref{Psiasflow}.
We conjugate
 \eqref{eq:415} with the flow $({\bf \Psi}_{-1}^{\theta}(U))_{|_{\theta=1}}$ 
 using  formula \eqref{nuovosist} in Lemma \ref{lem:tra.Vec}. 
By
Proposition \ref{composizioniTOTALI} we deduce that, if $U$ solves \eqref{eq:415}, then 
\begin{align*}
\pa_t W_0 &\stackrel{\eqref{uguale},\eqref{uguale10}}{=} 
\pa_{t}\opbw(C^{-1})\opbw(C) W_0 \\
&+ 
\opbw(C^{-1}) \opbw\big( 
\ii A_1 \x+\ii A_{1/2} |\x|^{\frac{1}{2}}+A_0+A_{-1}
\big) \opbw(C) W_0
\end{align*}
up to a matrix of smoothing operators  in 
$\Sigma\mathcal{R}^{-\rho+1}_{K,2,1}\otimes\mathcal{M}_2(\C)$ acting on $W_0$.
Moreover Proposition \ref{teoremadicomposizione} imply that
\begin{equation}\label{eq:41555}
\pa_{t}W_0=\opbw{ \Big( \pa_{t}{C^{-1}} \#_\rho C + 
C^{-1} \#_\rho \big( 
\ii A_1 \x+\ii A_{1/2} |\x|^{\frac{1}{2}}+A_0+A_{-1}
\big) \#_\rho C\Big) }  W_0
\end{equation}
up to terms in  $\Sigma\mathcal{R}^{-\rho+1}_{K,2,1}\otimes\mathcal{M}_2(\C)$.
We now prove that \eqref{eq:41555} has the form \eqref{NuovoParaprod}. 
By \eqref{TRA}, \eqref{invC} we have 
 \begin{equation}\label{TRA9}
\!(\pa_t C^{-1}) \#_\rho C = \left(
\begin{matrix}
(\pa_tf)f-(\pa_tg)g & (\pa_tf)g-(\pa_t g)f\\ (\pa_tf)g-(\pa_tg)f & (\pa_tf)f-(\pa_tg)g 
\end{matrix}
\right) 
= \left(
\begin{matrix}
0 & \!\! \!\!  (\pa_tf)g-(\pa_t g)f\\     (\pa_tf)g-(\pa_tg)f & \!\! \!\!  0 
\end{matrix}
\right) 
\end{equation}
because  differentiating $ f^2-g^2=1 $ we get 
$  (\pa_tf)f-(\pa_tg) g = 0 $.

By \eqref{matriciinizio},  using symbolic calculus and $ f^2-g^2=1 $ (see \eqref{invC}),  we obtain the exact expansion  
\begin{equation}\label{TRA10}
\begin{aligned}
& 
 C^{-1} \#_{\rho}( \ii A_{1} \x)\#_{\rho} C 
&= \left(\begin{matrix}
  -\ii V\x  & V(f_x g-g_x f)  \\
V(f_x g-g_x f) &  -\ii V\x  
\end{matrix}\right).
\end{aligned}
\end{equation}
By \eqref{nuovocoef}  we have
\begin{equation}\label{TRA6}
\begin{aligned}
C^{-1} \#_{\rho} (\ii A_{1/2}|\x|^{\frac{1}{2}}) \#_{\rho}  C
=\ii \left( \begin{matrix} 
-(1+a^{(0)})|\x|^{\frac{1}{2}}  & 0 \\
0 & (1+a^{(0)})|\x|^{\frac{1}{2}}  \end{matrix}\right)
\end{aligned}
\end{equation}
modulo a matrix of symbols $\Sigma\Gamma^{-\frac12}_{K,1,1}\otimes \mathcal{M}_2(\mathbb{C})$. 
Moreover, recalling \eqref{function:c}, we have the paraproduct expansion
\begin{equation}\label{TRA7}
\begin{aligned}
& C^{-1} \#_{\rho}   A_{0} \#_{\rho} C = A_0 = -\frac{1}{4}\sm{0}{1}{1}{0}V_{x}
\end{aligned}
\end{equation}
and finally, since  $ A_{-1} $ is in $ \Sigma\Gamma^{-1}_{K,1,1}\otimes \mathcal{M}_2(\mathbb{C}) $ we deduce
\begin{equation}\label{TRA8}
C^{-1} \#_\rho A_{-1}  \#_\rho C \in \Sigma\Gamma^{-1}_{K,1,1}\otimes \mathcal{M}_2(\mathbb{C}) \, . 
\end{equation}
Formulas \eqref{eq:41555}-\eqref{TRA8}
imply  \eqref{NuovoParaprod},  \eqref{lambdazero}, with a remainder $R^{(0)}(U)$ in  
$ \Sigma\mathcal{R}^{-\rho}_{K,2,1}\otimes\mathcal{M}_2(\mathbb{C})$, renaming $ \rho - 1 $ as $ \rho $. 
Finally, 
by \eqref{nuovocoef}, \eqref{eigenvalues} and \eqref{busy2} 
we get the expansion \eqref{def:a0}.
\end{proof}

\subsection{Block-Diagonalization at negative orders}\label{lowOffdiag}
 The aim of  this subsection is to iteratively 
 block-diagonalize the 
 system \eqref{NuovoParaprod} (which is yet block-diagonal at the orders $ 1$ and $ 1/ 2 $) into  \eqref{sistemaDiag}. 

\begin{lemma}\label{lem:indud}
For $ j = 0, \ldots, 2 \rho $, there are 

$ \bullet $ 
paradifferential operators of the form  
 \begin{align}
\mathcal{Y}^{(j)}(U) :=\opbw \left(
\begin{matrix}
d(U;x,\xi) &0 \\ 0 & \overline{d(U;x,-\xi)} 
\end{matrix}
\right)+\opbw (A^{(j)} )\label{sist2 j-th}
\end{align}
where  $d(U;x,\x)$ is the symbol  defined in Lemma \ref{lem:step12},
$A^{(j)}$ is a  matrix of symbols of the form
\begin{equation}\label{bjbjbj}
A^{(j)} =\left(
\begin{matrix}
 c_j(U;x,\x)& b_j(U;x,\x)\\
 \ov{b_j(U;x,-\x)} & \ov{c_j(U;x,-\x)}
\end{matrix}
\right), \;\; c_j\in \Sigma\Gamma^{-\frac{1}{2}}_{K,j+2,1},\;\;  b_j\in \Sigma\Gamma^{-\frac{j}{2}}_{K,j+2,1} \, , 
\end{equation}

$ \bullet  $  
a real-to-real  matrix of smoothing operators $R^{(j)} (U) $  in  
$\Sigma\mathcal{R}^{-\rho}_{K,j+2,1}\otimes\mathcal{M}_2(\mathbb{C})$,

such that, if $ W_j $, $ j = 0, \ldots, 2 \rho - 1 $, solves 
\be\label{sist1 j-th}
\pa_{t}W_j=\big(\mathcal{Y}^{(j)}(U)+R^{(j)}(U)\big)W_j, \quad  W_j:=\vect{w_j}{\bar w_j} \, , 
\ee
then 
\begin{equation}\label{nuovavarjth}
W_{j+1}:=({\bf \Psi}_{j}^{\theta}(U)W_{j})_{|_{\theta=1}}
\end{equation}
where ${\bf \Psi}_{j}^{\theta}(U)$ is the flow at time $\theta\in [0,1]$ of 
\be\label{flow-jth}
\partial_{\theta} {\bf \Psi}^{\theta}_{j} (U) = \ii  \opbw{( M_j (U; x,\xi) )} {\bf \Psi}_{j}^{\theta}(U) \, ,
 \quad \Psi_{j}^{0}(U) = {\rm Id} \, ,  
\ee
with
\begin{equation}\label{generatore-jth}
\!\! 
M_j(U;x,\xi):=\left(\begin{matrix}
0 & \!\! \!\!  -\ii m_j(U;x,\xi) \\ 
-\ii \ov{m_{j}(U;x,-\xi)}
& \!\! \!\! 0
\end{matrix}\right), \,
m_j =\frac{-\chi(\x)b_{j}(U;x,\x)}{2\ii(1+a^{(0)}(U;x))|\x|^{\frac{1}{2}}}\in 
\Sigma\Gamma^{-\frac{j+1}{2}}_{K,j+2,1} \, ,
\end{equation}
and  $\chi$ defined in \eqref{cutoff11},
satisfies a system of the form \eqref{sist1 j-th} with $ j + 1 $ instead of $ j $.
\end{lemma}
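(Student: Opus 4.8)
The plan is to argue by induction on $j\in\{0,\dots,2\rho\}$. The base case $j=0$ is nothing but Lemma \ref{lem:step12}: there the operator $\mathcal{Y}^{(0)}(U)$ in \eqref{sist2 j-th} is the one appearing in \eqref{NuovoParaprod}, the matrix $A^{(0)}$ of \eqref{lambdazero} has the form \eqref{bjbjbj} with $j=0$ (since $c_0\in\Sigma\Gamma^{-1/2}_{K,2,1}$ and $b_0\in\Sigma\Gamma^{0}_{K,2,1}$), and $W_0=({\bf\Psi}_{-1}^\theta(U)U)_{|\theta=1}$ solves \eqref{sist1 j-th} with $j=0$. Since the final $W$ of \eqref{sistemaDiag} is obtained as $W_{2\rho+1}$ after composing all of the flows ${\bf\Psi}_{-1},{\bf\Psi}_0,\dots,{\bf\Psi}_{2\rho}$, the whole content of the lemma is that one inductive conjugation step lowers the order of the off-diagonal block by $1/2$ while leaving the diagonal block at orders $1$ and $1/2$ untouched.

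For the inductive step I would assume that $W_j$ solves \eqref{sist1 j-th} with $A^{(j)}$ as in \eqref{bjbjbj} and remove the leading, order $-j/2$, off-diagonal symbol $b_j$ by conjugating through a paradifferential flow. I would take the generator $m_j$ exactly as in \eqref{generatore-jth}; this is well defined because, for $r$ small, $1+a^{(0)}(U;x)$ is close to $1$ (recall $a^{(0)}\in\Sigma\mathcal{F}^{\R}_{K,1,1}$ vanishes at $U=0$, see \eqref{def:a0}), hence invertible with real-valued reciprocal in the same class, while the cut-off $\chi$ of \eqref{cutoff11} turns $\chi(\xi)|\xi|^{-1/2}$ into a genuine symbol of order $-1/2$; thus $m_j\in\Sigma\Gamma^{-(j+1)/2}_{K,j+2,1}$ and $M_j$ in \eqref{generatore-jth} has the real-to-real form \eqref{prodotto}. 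Since $\mathrm{ord}(M_j)=-(j+1)/2\le-1/2<0$, by Proposition \ref{azionepara} $\opbw(M_j(U))$ is bounded on each $\dot{H}^s$ with norm $\lesssim_s\|U\|_{K,s_0}$, so the linear ODE \eqref{flow-jth} in $\mathcal{L}(\dot{H}^s)$ has a unique flow ${\bf\Psi}_j^\theta(U)$, $\theta\in[0,1]$, real-to-real as in \eqref{vinello}, bounded and invertible with operator norms $\le1+C\|U\|_{K,s_0}$ (Gronwall, performed on the norms $\|\cdot\|_{k,s}$ using $\partial_t M_j\in\Sigma\Gamma^{-(j+1)/2}_{K,j+3,1}$, see \eqref{prodottodisimboli2}); its detailed properties, including the identification of ${\bf\Psi}_j^{\theta=1}(U)$ with $\opbw$ of a matrix symbol of order $0$ plus a remainder in $\Sigma\mathcal{R}^{-\rho}_{K,j+3,1}\otimes\mathcal{M}_2(\C)$, are those of Appendix \ref{sez:A2} and Appendix \ref{Psiasflow}.

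Next I would conjugate \eqref{sist1 j-th} through ${\bf\Psi}_j^{\theta=1}(U)$ with the vector-field conjugation formula of Lemma \ref{lem:tra.Vec} (as in the proof of Lemma \ref{lem:step12}): the new variable $W_{j+1}$ of \eqref{nuovavarjth} solves $\partial_t W_{j+1}=\big({\bf\Psi}_j^{1}\mathcal{Y}^{(j)}({\bf\Psi}_j^{1})^{-1}+(\partial_t{\bf\Psi}_j^{1})({\bf\Psi}_j^{1})^{-1}+{\bf\Psi}_j^{1}R^{(j)}({\bf\Psi}_j^{1})^{-1}\big)W_{j+1}$, which I would expand to order $-\rho$ via Propositions \ref{teoremadicomposizione} and \ref{composizioniTOTALI}. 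The last summand stays in $\Sigma\mathcal{R}^{-\rho}_{K,j+3,1}\otimes\mathcal{M}_2(\C)$, and the first two are treated through the Lie series ${\bf\Psi}_j^1\mathcal{Y}^{(j)}({\bf\Psi}_j^1)^{-1}=\mathcal{Y}^{(j)}+[\ii\opbw(M_j),\mathcal{Y}^{(j)}]+\cdots$ and $(\partial_t{\bf\Psi}_j^1)({\bf\Psi}_j^1)^{-1}=\ii\opbw(\partial_t M_j)+\cdots$. Because $M_j$ is off-diagonal, every bracket with the diagonal part $\mathrm{diag}(d,\overline{d(\cdot,-\xi)})$ is off-diagonal and every bracket with the off-diagonal part is diagonal of order $\le-1/2$, so the orders $1$ and $1/2$ of $d$ in \eqref{sistemainiziale3} are untouched and the new diagonal subprincipal symbol $c_{j+1}$ lies in $\Sigma\Gamma^{-1/2}_{K,j+3,1}$. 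In the off-diagonal position the only contributions of the top order $-j/2$ are $b_j$ itself and the leading symbol of the $(1,2)$ entry of $[\ii\opbw(M_j),\mathrm{diag}(d,\overline{d(\cdot,-\xi)})]$, which equals $2\ii(1+a^{(0)})|\xi|^{1/2}m_j$ since $\overline{d(x,-\xi)}-d(x,\xi)=2\ii(1+a^{(0)}(U;x))|\xi|^{1/2}$; the choice \eqref{generatore-jth} makes this sum vanish, while all the remaining off-diagonal terms (commutators with $-\ii\opbw(V\xi)$ and with $\opbw(c_j)$, subprincipal corrections, the term $\ii\opbw(\partial_t M_j)$, the higher brackets) have order $\le-(j+1)/2$ and give $b_{j+1}\in\Sigma\Gamma^{-(j+1)/2}_{K,j+3,1}$. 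The smoothing remainders generated by the symbolic compositions all land in $\Sigma\mathcal{R}^{-\rho}_{K,j+3,1}\otimes\mathcal{M}_2(\C)$ once $\rho\gg1$ is fixed and $K'=2\rho+2$ is taken large enough to absorb the $2\rho$ consumed time derivatives, and real-to-realness is preserved throughout; this closes the induction.

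The hard part will be the precise bookkeeping of the matrix and order structure through the Lie expansion, and in particular the verification that $[\ii\opbw(M_j),\mathrm{diag}(d,\overline{d(\cdot,-\xi)})]$ contributes, in the off-diagonal position and at order exactly $-j/2$, precisely $2\ii(1+a^{(0)})|\xi|^{1/2}m_j$ — which forces the formula \eqref{generatore-jth} and produces the cancellation of $b_j$ — while nothing of order $\ge-j/2$ pollutes the diagonal symbol $d$; the rest is routine but lengthy, the only genuine constraint being that $\rho$ and $K'=2\rho+2$ be chosen large enough that the $(-\rho)$-smoothing remainders and the $\partial_t$-differentiability budget survive all $2\rho$ iterations.
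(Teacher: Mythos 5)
Your proof is correct and follows essentially the same route as the paper: induction initialized by Lemma \ref{lem:step12}, conjugation through the flow \eqref{flow-jth} using the Lie expansions of Lemma \ref{lem:tra.Vec}, identification of the only order $-j/2$ off-diagonal contributions as $b_j$ plus the symbol $2\ii m_j(1+a^{(0)})|\xi|^{1/2}$ from the commutator with the diagonal part, the exact cancellation $\chi(\xi)b_j+2\ii m_j(1+a^{(0)})|\xi|^{\frac12}=0$ forced by \eqref{generatore-jth}, the absorption of $\opbw((1-\chi(\xi))b_j)$ into the $\rho$-smoothing remainder, and control of all higher brackets and of $\ii\opbw(\pa_t M_j)$ at order $\leq-(j+1)/2$. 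Only a peripheral remark is off: the variable $W$ of \eqref{sistemaDiag} is $W_{2\rho}$, obtained by composing ${\bf \Psi}_{-1},{\bf \Psi}_0,\dots,{\bf \Psi}_{2\rho-1}$, not $W_{2\rho+1}$.
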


\begin{proof}
The proof proceeds by induction. 
\\[1mm]
{\bf Inizialization.} System \eqref{NuovoParaprod} is \eqref{sist1 j-th} for $ j = 0 $ where  the 
paradifferential operator $ \mathcal{Y}^{(0)}(U)  $ has the form   \eqref{sist2 j-th} with  the matrix of symbols 
$ A^{(0)}$ defined in Lemma \ref{lem:step12}. 
\\[1mm]
{\bf Iteration.} 
We now argue by induction. Suppose that $ W_j $ solves system \eqref{sist1 j-th} with operators
$ \mathcal{Y}^{(j)}(U)  $ of  the form   \eqref{sist2 j-th}-\eqref{bjbjbj} and smoothing operators 
$R^{(j)} (U) $  in   $\Sigma\mathcal{R}^{-\rho}_{K,j+2,1}\otimes\mathcal{M}_2(\mathbb{C})$. 
Let us study the system solved by the function $ W_{j+1} $ defined in  \eqref{nuovavarjth}. 
Note that the symbols of the matrix $ M_j $ defined 
in \eqref{generatore-jth} have negative order for any $ j \geq 0 $.
By 
formula \eqref{nuovosist} the conjugated system has the form 
\be\label{sis:j+1}
\pa_{t}W_{j+1} =\opbw \big( (\pa_t  {\bf \Psi}_{j}^{1}(U)) {\bf \Psi}_{j}^{-1}(U) +
 {\bf \Psi}_{j}^{1}(U)  \mathcal{Y}^{(j)} (U)   {\bf \Psi}_{j}^{-1}(U)  \big) W_{j+1}
\ee
up to a smoothing operator in  $\Sigma\mathcal{R}^{-\rho}_{K,j+2,1}\otimes\mathcal{M}_2(\mathbb{C})$. 
Moreover the operator $(\pa_t  {\bf \Psi}_{j}^{1}(U)) {\bf \Psi}_{j}^{-1}(U)$ admits the Lie expansion
in \eqref{Lie2Vec} specified for $ {\bf A}:=\opbw(M_j (U) )$.
We recall  (see \eqref{espansione2})  that
$$
M_{j}\#_{\rho}\pa_{t}M_{j}-\pa_{t}M_{j}\#_{\rho}M_{j}=\{M_{j},\pa_{t}M_{j}\}\in \Sigma\Gamma^{-({j+1})-1}_{K,j+3,2}
$$ 
up to a symbol in 
$\Sigma\Gamma^{-({j+1})-3}_{K,j+3,2}$. By Proposition \ref{teoremadicomposizione} we have that 
${\rm Ad}_{\ii \opbw(M_{j})}[\ii \opbw(\pa_{t}M_{j})]$ is a paradifferential operator 
with symbol in $\Sigma\Gamma^{-({j+1})-1}_{K,j+3,2}\otimes\mathcal{M}_2(\C)$ plus a smoothing remainder in $\Sigma\mathcal{R}^{-\rho}_{K,j+3,2}\otimes\mathcal{M}_2(\C)$.
As a consequence we deduce, for $k\geq2$,
\[
{\rm Ad}^{k}_{\ii \opbw(M_{j})}[\ii \opbw(\pa_{t}M_{j})]=\opbw(B_{k})+R_{k}, \qquad 
B_{k}\in \Gamma_{K,j+3,k+1}^{-\frac{j+1}{2}(k+1)-k}\otimes\mathcal{M}_2(\C) \, ,
\]
and $R_{k}\in \mathcal{R}^{-\rho}_{K,j+3,k+1}\otimes\mathcal{M}_2(\C)$.
By  taking   $ L $ large enough with respect to $ \rho $, we obtain that 
$(\pa_t  {\bf \Psi}_{j}^{1}(U)) {\bf \Psi}_{j}^{-1}(U)$
is a 
 paradifferential operator with symbol in $ \Sigma\Gamma^{-\frac{j+1}{2}}_{K,j+3,1}\otimes\mathcal{M}_2(\C)$ plus a 
 smoothing operator in $\Sigma\mathcal{R}^{-\rho}_{K,j+3,1}\otimes\mathcal{M}_2(\mathbb{C})$. 
 We now want to apply the expansion \eqref{Lie1Vec} with 
  $ {\bf A}:=\opbw(M_j (U) )$
 and $X:=\mathcal{Y}^{(j)}$ in order to study the second summand in \eqref{sis:j+1}. 
 We claim that
 \begin{align}  \label{eq:428}
&  {\bf \Psi}_{j}^{1}(U) \mathcal{Y}^{(j)}(U)  {\bf \Psi}_{j}^{-1}(U) 
 = \mathcal{Y}^{(j)}(U) + 
[ \opbw(\ii M_j (U) ) , {\mathcal{Y}^{(j)}(U) } ] 
\end{align} 
plus a paradifferential operator with symbol in $ \Sigma\Gamma^{-\frac{j+1}{2}}_{K,j+2,1} \otimes\mathcal{M}_2(\C) $ 
and a 
 smoothing operator in $\Sigma\mathcal{R}^{-\rho}_{K,j+2,1}\otimes\mathcal{M}_2(\mathbb{C})$.
We first give the expansion of 
$ [ \opbw(\ii M_j (U) ) , {\mathcal{Y}^{(j)}(U) } ] $
using the expression of $\mathcal{Y}^{(j)}(U)   $ in  \eqref{sist2 j-th}. We have 
\begin{equation}\label{esp2-jth}
\begin{aligned}
&\Big[
\opbw(\ii M_j(U)), \opbw\Big(\sm{d(U;x,\x)}{0}{0}{\ov{d(U;x,-\x)}}\Big)
\Big]:=\opbw\left(\sm{0}{p_{j}(U;x,\x)}{\ov{p_{j}(U;x,-\x)}}{0}\right)
\\
&\qquad p_{j}:=2\ii m_{j}(U;x,\xi)(1+a^{(0)}(U;x))|\xi|^{\frac{1}{2}}
\end{aligned}
\end{equation}
up to a symbol in $\Sigma\Gamma^{-\frac{j+1}{2}}_{K,j+2,1} \otimes {\cal M}_2(\C)$.
Moreover, since $ A^{(j)} $, is a matrix of symbols of order $ - 1/ 2 $, for $ j \geq 1 $,  respectively $ 0 $ for $ j = 0 $
(see \eqref{bjbjbj}), we have that 
\begin{equation}\label{esp1-jth}
\big[
\opbw(\ii M_j ) , \opbw(A^{(j)}) 
\big] \in 
\left\{
\begin{aligned}
& \Sigma\Gamma^{-\frac{j+2}{2}}_{K,j+2,1}\otimes\mathcal{M}_{2}(\mathbb{C}) \quad {\rm for } \ j \geq 1 \\
& \Sigma\Gamma^{-\frac{1}{2}}_{K, 2,1}\otimes\mathcal{M}_{2}(\mathbb{C}) \qquad {\rm for } \ j =0 
\end{aligned}
\right.
\end{equation}
up to a smoothing operator in $\Sigma\mathcal{R}^{-\rho}_{K,j+2,1}\otimes\mathcal{M}_2(\mathbb{C})$. 
It follows that the off-diagonal symbols of order $ - j / 2 $ in \eqref{eq:428} are 
of the form  $\sm{0}{q_{j}(U;x,\x)}{\ov{q_{j}(U;x,-\x)}}{0}$ with
\begin{equation}\label{equa-jthquatuor}
q_{j}(U;x,\x)\stackrel{\eqref{esp2-jth}}{:=}b_{j}(U;x,\x)+2\ii m_{j}(U;x,\xi)(1+a^{(0)})|\xi|^{\frac{1}{2}} \, . 
\end{equation}
By the definition of $\chi$ in \eqref{cutoff11} and the remark under Definition \ref{azionepara},
the operator $\opbw((1-\chi(\x))b_{j}(U;x,\x))$ is in 
$\Sigma\mathcal{R}^{-\rho}_{K,j+2,1}\otimes\mathcal{M}_2(\mathbb{C})$ for any 
$\rho\geq0$. Moreover, by  
  the choice of $m_{j}(U;x,\x)$ in \eqref{generatore-jth} we have that
  \[
 \chi(\x) b_{j}(U;x,\x)+2\ii m_{j}(U;x,\xi)(1+a^{(0)})|\xi|^{\frac{1}{2}}=0 \, .
  \]
  This implies that $[\ii \opbw(M_{j}) ,\mathcal{Y}^{(j)}(U)]$ is a paradifferential operator with symbol in
  $\Sigma\Gamma_{K,j+2,1}^{-\frac{j+1}{2}}\otimes \mathcal{M}_2(\C)$ plus a remainder in 
  $\Sigma\mathcal{R}^{-\rho}_{K,j+2,1}\otimes\mathcal{M}_2(\mathbb{C})$.
  Now, using Proposition \ref{teoremadicomposizione}, we deduce, for $k\geq2$, 
  \[
{\rm Ad}^{k}_{\ii \opbw(M_{j})}[\mathcal{Y}^{(j)}(U)]=\opbw(\widetilde{B}_{k})+\widetilde{R}_{k}, \qquad 
\widetilde{B}_{k}\in \Gamma_{K,j+2,k+1}^{-\frac{j+1}{2}k}\otimes\mathcal{M}_2(\C) \, , 
\]
where $\widetilde{R}_{k} $ is in $ \mathcal{R}^{-\rho}_{K,j+2,k+1}\otimes\mathcal{M}_2(\C)$.
  Using formula \eqref{Lie1Vec} with $L$ large enough an the estimates of flow in \eqref{flow-jth} (see Lemma \ref{buonflusso}) one obtains the claim  in  \eqref{eq:428}.
We conclude that \eqref{nuovavarjth} solves a system of the form 
\[
\begin{aligned}
\pa_{t}W_{j+1}&=\opbw(\sm{d(U;x,\x)}{0}{0}{\ov{d(U;x,-\x)}})W_{j+1}
+\opbw(A^{(j+1)} )W_{j+1}+R^{(j+1)}(U)W_{j+1}
\end{aligned}
\]
for some matrix of symbol $A^{(j+1)}$ of the form \eqref{bjbjbj} with $j\rightsquigarrow j+1$ 
and smoothing operators $R^{(j+1)}(U) $ in $ \Sigma\mathcal{R}^{-\rho}_{K,j+3,1}\otimes\mathcal{M}_{2}(\mathbb{C})$. 
\end{proof}

\begin{proof}[{\bf Proof of Proposition \ref{teodiagonal}}]
For  $\theta\in [0,1]$ we define 
\begin{equation}\label{finalPsi}
{\bf \Psi}_{diag}^{\theta}(U):={\bf \Psi}_{{2\rho-1}}^{\theta}(U) \circ\cdots 
\circ{\bf \Psi}_{0}^{\theta}(U)\circ{\bf \Psi}_{-1}^{\theta}(U)
\end{equation}
where 
the maps ${\bf \Psi}_{-1}^{\theta}(U)$ and  ${\bf \Psi}_{j}^{\theta}(U)$, $j=0,1,\ldots,2\rho - 1 $
are defined respectively
in  \eqref{TRA5},  \eqref{nuovavarjth}.  
The bound \eqref{stime-diagonal} follows by Lemma \ref{buonflusso}.
Lemmata \ref{lem:step12}, \ref{lem:indud}  imply that 
if $U$ solves \eqref{eq:415} then 
 the function  $ W := W_{2 \rho} = ({\bf \Psi}_{diag}^{\theta}(U)U)_{|_{\theta=1}} $
solves the system \eqref{sist1 j-th} with $j=2\rho  $
which is  \eqref{sistemaDiag} with $ r_{-1/2} := c_{2 \rho} $ and 
\[ 
R(U) := \opbw\left(\sm{0}{ b_{2 \rho}(U;x,\x)}{\ov{ b_{2 \rho}(U;x,-\x)}}{0}\right)
+ R^{(2 \rho) }(U) \, ,\quad  b_{2 \rho} \in \Sigma\Gamma^{-\rho}_{K, 2 \rho +2,1} \, , 
\]
which is a smoothing operator in $ \Sigma {\mathcal R}^{-\rho}_{K, 2 \rho +2,1} \otimes {\mathcal M}_2 (\C) $ by the
remark below Proposition \ref{azionepara}. 
The expansion \eqref{def:a0} is proved in Lemma  \ref{lem:step12}.
\end{proof}

\bigskip
\section{ Reductions to constant coefficients}\label{riduco}
The aim of this section is to conjugate \eqref{sistemaDiag} to a system in which the symbols
of the paradifferential operators are constant in the spatial variable $x$ 
and are ``{\it integrable}" according to Definition \ref{defiintegro} below, 
up to symbols which are ``{\it admissible}" according to Definition \ref{def:admissible}.

\begin{definition}{\bf (Integrable symbol)}\label{defiintegro}
 A homogeneous  symbol $f$ in  $\widetilde{\Gamma}_{2}^{m}$ 
is \emph{integrable}
if it is independent of $x$ and it has the form
\begin{equation}\label{simbointegro}
f(U;x,\x)=f(U;\x):=\frac{1}{2\pi}\sum_{n\in\Z\setminus\{0\}}f^{+-}_{n,n}(\x)|u_n|^{2},\qquad f^{+-}_{n,n}(\x)\in \C \, , \quad n\in\Z\setminus\{0\} \, .
\end{equation}
\end{definition}

\begin{definition}{\bf (Admissible symbol)}\label{def:admissible}
  A non-homogeneous symbol $H_{\geq 3} $ in $  \Gamma^{1}_{K,K',3}$ 
is \emph{admissible} if it has the form
\begin{equation}\label{highordersfinali}
H_{\geq 3}(U;x,\x):=\ii \alpha_{\geq3}(U;x)\x+\ii \beta_{\geq 3}(U;x)|\x|^{\frac{1}{2}}
+ \gamma_{\geq3}(U;x,\x)
\end{equation}
with real valued functions $ \alpha_{\geq3}(U;x), \beta_{\geq3}(U;x) $ in $  \mathcal{F}^\R_{K,K',3}$ 
and a symbol $ \gamma_{\geq3}(U;x,\x) $ in $  \Gamma^{0}_{K,K',3}$.
A matrix of symbols $ {\bf H}_{\geq 3} $ in $ \Gamma^{1}_{K,K',3}\otimes\mathcal{M}_2(\C)$ is admissible 
if it has the form
\begin{equation}\label{def:admissible2}
{\bf H}_{\geq 3}(U;x,\x)=\sm{H_{\geq3}(U;x,\x)}{0}{0}{\ov{H_{\geq3}(U;x,-\x)}}
\end{equation}
for a scalar admissible symbol  $H_{\geq3}$.
\end{definition}

The relevance of Definition \ref{def:admissible} is explained in the next remark. 

\begin{remark}
An equation of the form 
$ \pa_{t} v =\opbw( H_{\geq 3}(U;x,\x))[v] $, 
where  $ H_{\geq 3 }(U;x,\x) $ is an admissible symbol in $ \Gamma^{1}_{K,K',3} $,  admits an 
 energy estimate of the form
$$ 
\pa_{t} \|v(t,\cdot)\|_{\dot H^{s}}^2 
\lesssim_{s} \|U(t,\cdot)\|^{3}_{K,{s_0}}\|v (t,\cdot)\|^{2}_{\dot H^{s}} 
$$
for $s \geq s_0\gg1$, see  Lemma \ref{tempo}.
For this reason 
vector fields of this form 
are ``admissible'' to prove existence of solutions up to times $ O(\e^{-3}) $. 
\end{remark}

The main result of this section is the following.

\begin{proposition}{\bf (Integrability of water waves at cubic degree up to smoothing remainders)}
\label{teodiagonal3}
Fix $ \rho > 0  $ arbitrary and  $ K \geq  K':=2\rho+2  $.  There exists $s_0>0$ such that, 
for any $s\geq s_0$, for all $0<r \leq r_0(s)$ small enough,  
and  any solution $U\in B^K_s(I;r)$ of  \eqref{eq:415},  
there is a family of nonlinear maps ${\bf F}^{\theta} (U)$, $\theta\in [0,1]$, 
such that the function $Z:=  {\bf F}^{1}(U)$ solves the system
\begin{equation}\label{finalsyst}
 \begin{aligned}
 \pa_{t}Z&= -\ii\Omega Z+ \opbw \big(-\ii \mathtt{D}(U;\x)  +  \mathtt{H}_{\geq 3} \big) Z +  \mathtt{R}(U)[Z]
 \end{aligned}
 \end{equation}
 where $\Omega$ is defined in \eqref{eq:415tris} and 
 
 \begin{itemize}
  \item the symbol $\mathtt{D}(U;\x)$ has the form
  \begin{equation}\label{diag-part}
\!\!\! {\mathtt D}(U;\x):=\sm{\zeta(U)\x+\mathcal{D}_{-1/2}(U;\x)}{0}{0}{{\zeta(U)\x}-\ov{\mathcal{D}_{-1/2}(U;-\x)}} \, ,
\;\;\;\;\; \zeta (U) := 
\frac{1}{\pi} \sum_{n \in \Z\setminus\{0\}} n |n | |u_{n}|^2 \, ,
 \end{equation}
with an integrable symbol  $\mathcal{D}_{-1/2}(U;\x)\in \widetilde{\Gamma}^{-\frac{1}{2}}_2$ 
(see Definition \ref{defiintegro});

  \item the matrix of symbols $ \mathtt{H}_{\geq 3} \in \Gamma^{1}_{K,K',3}\otimes \mathcal{M}_2(\C)$ 
is admissible (see Definition \ref{def:admissible});

  \item $\mathtt{R}(U)$ is a real-to-real matrix of smoothing  operators  
    in  $\Sigma\mathcal{R}^{-\rho+4m}_{K,K',1}\otimes\mathcal{M}_2(\C) $ for some $m>0$.

\item The family of transformations has the form
\begin{equation}\label{FTvectPARA}
{\bf F}^{\theta}(U):=\mathfrak{F}^{\theta}(U)[U]
\end{equation}
with $\mathfrak{F}^{\theta}(U)$ real-to-real, bounded and invertible,
and there is a constant $C =C(s,r,K)$, such that, $ \forall \,0\leq k\leq K-K' $,  for any 
$V\in C^{K-K'}_{*\R}(I;\dot{H}^{s}(\T;\C^2))$, 
\begin{equation}\label{stimFINFRAK}
\| \pa_{t}^{k}\mathfrak{F}^{\theta}(U)[V]\|_{\dot{H}^{s-k}}
+\| \pa_{t}^{k}(\mathfrak{F}^{\theta}(U))^{-1}[V]\|_{\dot{H}^{s-k}}\leq \|V\|_{k,s}(1+C\|U\|_{K,s_0}) \, ,
\end{equation}
uniformly in $\theta\in [0,1]$. 
\end{itemize}
\end{proposition}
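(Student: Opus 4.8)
The plan is to conjugate the block-diagonal system \eqref{sistemaDiag} to the integrable form \eqref{finalsyst} by a finite sequence of paradifferential flow conjugations, treating the symbols in decreasing order of magnitude and, within each order, decreasing degree of homogeneity in $U$. Since \eqref{sistemaDiag} is already block-diagonal up to a $\rho$-smoothing remainder, it suffices to track the scalar symbol $d(U;x,\xi)+r_{-1/2}(U;x,\xi)$ on the diagonal, the matrix structure being automatically preserved by the real-to-real property (using \eqref{prodotto}). First I would handle the transport term $-\ii V(U;x)\xi$, which is of order $1$: following the scheme sketched in Subsection \ref{secZDstra} around \eqref{straflow}--\eqref{stravdot}, I conjugate by the flow $\Phi^\theta$ generated by $\mathcal{A}=\ii\opbw(\beta(U)\xi)$ with $\beta$ real-valued, so that $\Phi^1$ is bounded and invertible; by Lemma \ref{lem:tra.Vec} and Appendix \ref{sez:A2} the new transport symbol becomes $-\ii(V+\partial_t\beta)\xi$ up to lower order. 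Because the dispersion relation $|\xi|^{1/2}$ is sublinear, the order-$1$ correction only comes from $\partial_t$ hitting $\mathcal{A}$, so the homological equation to solve is \eqref{betatV}: choose $\beta$ so that $\partial_t\beta+V=\zeta(U)+O(U^3)$. This is carried out separately at homogeneity one and two in $U$ (Subsection \ref{inteord1}); the degree-one part is solved directly via \eqref{coeV1}, and at degree two one reduces to checking the cancellation $(\mathtt{V}_2^{(1)})^{+-}_{n,-n}\equiv 0$, which follows from Lemma \ref{lem:V1} (i.e. $(\mathtt{V}_2)^{+-}_{n,-n}=0$) combined with the contribution of the degree-one $\beta$ substituted into $V$. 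The residual constant-in-$x$ symbol is exactly $\zeta(U)\xi$ with $\zeta$ as in \eqref{diag-part}, and substituting $u_j=z_j+\cdots$ shows the associated cubic vector field is Birkhoff-resonant.

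Next I would reduce the dispersive term $-\ii(1+a^{(0)}(U;x))|\xi|^{1/2}$ to constant coefficients. Here I use the flow generated by $\mathcal{A}=\ii\opbw(\beta(U)|\xi|^{1/2})$, a paradifferential ``semi-Fourier integral operator'', again bounded and invertible since $\beta$ is real. Conjugation produces a correction of order $1/2$, and the relevant homological equation asks to eliminate the $x$-dependence of $a^{(0)}$ at degrees one and two in $U$; using the explicit coefficients from Lemmata \ref{coefA} and \ref{lem:expa} (in particular $(\mathtt a_2)^{+-}_{n,n}=\frac12|n|^{5/2}$ together with $\mathtt a^{(0)}_2=\mathtt a_2-\frac12\mathtt a_1^2$ from \eqref{def:a0}) one checks the intrinsic cancellation that makes the new order-$1/2$ symbol exactly $-\ii|\xi|^{1/2}$, with no surviving constant-in-$x$ correction at that order — this is the statement in the text that ``the new dispersive term is exactly $-\ii|D|^{1/2}$ up to lower order symbols''. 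At each step the commutator/Lie expansions of Appendix \ref{sez:A2} (as in the proof of Lemma \ref{lem:indud}) guarantee that all newly generated pieces are either paradifferential of strictly lower order, smoothing in $\Sigma\mathcal{R}^{-\rho+\cdots}_{K,K',1}$, or admissible symbols of homogeneity $\geq 3$ in the sense of Definition \ref{def:admissible}, absorbed into $\mathtt H_{\geq3}$. The loss of time derivatives is tracked as in Proposition \ref{teodiagonal}: each conjugation consumes one time derivative, so starting from $K\geq K'=2\rho+2$ is sufficient.

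Finally I would deal with all the remaining symbols of order $\leq 0$ (starting with $r_{-1/2}$ of order $-1/2$, then order $-1$, and so on down to order $-\rho$), reducing each to an $x$-independent integrable symbol in the sense of Definition \ref{defiintegro}. Since these symbols already act boundedly, the conjugating transformations are flows of Banach-space ODEs (not genuine paradifferential flows), and the relevant homological equations at homogeneity $\leq 2$ are solved by dividing by $\sigma_1\omega(n_1)+\cdots-\omega(n)$; the near-resonant small divisors cause a loss of derivatives, but this is harmless because we are regularizing symbols of negative order and the lost derivatives are compensated by choosing $\rho$ large (here is where the requirement $\rho\gg1$, and correspondingly $K\sim 2\rho$, enters). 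The three-wave non-resonance condition \eqref{torires2} for $\omega(k)=\sqrt{|k|}$ ensures no division by zero at homogeneity-two (quadratic-in-$U$) level; at the diagonal $n_1=n_2$, resonant terms survive and give precisely the integrable symbol $\mathcal{D}_{-1/2}(U;\xi)$ in \eqref{diag-part}. Composing all the transformations gives $\mathfrak{F}^\theta(U)$; the bounds \eqref{stimFINFRAK} follow by composing the individual flow estimates (each of the type \eqref{stime-diagonal}, from Lemma \ref{buonflusso}), and the real-to-real structure is preserved at every step. The main obstacle is the verification of the two algebraic cancellations $(\mathtt{V}_2^{(1)})^{+-}_{n,-n}\equiv 0$ and the one making the order-$1/2$ dispersive correction vanish: these are not generic and rely on the specific algebra of the water waves system (ultimately traceable to reversibility \eqref{involution} and the even-to-even property), and getting the paradifferential bookkeeping of the conjugations right — correctly separating genuine lower-order symbols from smoothing and admissible-$O(U^3)$ contributions through the Lie expansions — is the delicate part of the proof.
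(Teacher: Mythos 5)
Your overall scheme follows the paper's route (transport reduction at degrees one and two giving $\zeta(U)\xi$, a semi-Fourier-integral-operator flow at order $1/2$, then an iterative descent at negative orders via Banach-space ODE flows, and composition of the estimates), but there is a genuine gap in how you treat the quadratic resonances. When you eliminate the time/$x$-dependence of a quadratic symbol by solving a homological equation through $\partial_t\beta$, the divisor is $\sigma_1\omega_{n_1}+\sigma_2\omega_{n_2}$, and it vanishes not only on the integrable monomials $|u_n|^2$ but also on the monomials $u_n\ov{u_{-n}}e^{2\ii nx}$ (since $\omega_n=\omega_{-n}$), which are \emph{not} constant in $x$. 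Your claim that ``at the diagonal $n_1=n_2$ resonant terms survive and give precisely $\mathcal{D}_{-1/2}$'' ignores the $n_1=-n_2$ resonances; as described, your single flow at order $1/2$ would leave an untreated non-integrable symbol $\sum_n(\mathtt{a}_2^{(2)})^{+-}_{n,-n}u_n\ov{u_{-n}}e^{2\ii nx}|\xi|^{1/2}$, and the same obstruction recurs at order $0$ and at every negative order. Such leftovers cannot be absorbed into $\mathtt{D}(U;\xi)$ (not integrable), into $\mathtt{H}_{\geq3}$ (only quadratic), nor into $\mathtt{R}(U)$ (not smoothing at non-negative orders), so the statement would not be reached. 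The paper handles these terms with a separate device: since $u_n\ov{u_{-n}}$ is a prime integral of the equation up to cubic corrections, one conjugates by an auxiliary flow whose generator is itself supported on these monomials (the $\beta_4$-flow of Subsection \ref{scarlatto}, generator $\ii\opbw(b_4\xi)$ with $\beta_4$ as in \eqref{choice-beta2}, and the third flow $\gamma^{(3)}_{j+1}$ in \eqref{quasiconstjth}--\eqref{GAMMA3JTH} at each negative order); its effect enters through the spatial bracket $\{\cdot,-\ii|\xi|^{1/2}\}$, which produces exactly the term to be cancelled, while its time derivative is cubic and hence admissible. You also do not account for the order-zero symbol $\ii\mathtt{b}_2^{(3)}\,\sgn(\xi)$ generated by the semi-FIO conjugation, which the paper removes using the cancellation \eqref{condcoeffB400}.

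A secondary inaccuracy: you attribute the key cancellations ultimately to reversibility and the even-to-even property. This is correct for $(\mathtt{V}_2^{(1)})^{+-}_{n,-n}=0$ (Remark \ref{rem:conc}), but the paper points out (Remark \ref{vedizeta}) that the vanishing $(\mathtt{a}_2^{(2)})^{+-}_{n,n}=0$, which is what makes the dispersive part exactly $-\ii|D|^{1/2}$ with no surviving integrable order-$1/2$ piece, does \emph{not} follow from reality, parity or reversibility; it must be (and in the paper is) checked by the explicit computation you cite from Lemmata \ref{coefA} and \ref{lem:expa} together with \eqref{def:a0} and \eqref{A22bis}. Since you do invoke those computations, this is a misstatement of provenance rather than a further gap, but it matters for understanding which parts of the reduction are structural and which are specific to gravity water waves.
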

 
The proof of Proposition \ref{teodiagonal3} above is divided into several steps in Subsections 
\ref{inteord1}-\ref{sec:nega} below. 
We combine these steps in Subsection \ref{Proofteodiagonal3}.

\subsection{Integrability at  order $1$}\label{inteord1}
By  Proposition \ref{teodiagonal} we have obtained, writing only the first line of the
system \eqref{sistemaDiag}-\eqref{sistemainiziale3}, 
\be\label{WWC}
\pa_t w =  \opbw{\big(-\ii V(U; x) \xi-\ii (1+ a^{(0)}(U; x)) |\xi|^{1/2}  +r_{-1/2}
\big)}w  +  R(U)[W] 
\ee
where $ R(U)$ is a  $1\times2$ matrix of smoothing operators in $\Sigma\mathcal{R}^{-\rho}_{K,K',1}$ 
with $K'= 2\rho+2$ and $W=\vect{w}{\bar{w}}$.  The
second component of system \eqref{sistemaDiag} is the complex conjugated of the first one. 
Expanding in degrees of homogeneity the symbol
$$
r_{-1/2} =\mathtt{r}_1 +\mathtt{r}_{2} +\mathtt{r}_{\geq3} \, , \quad 
\mathtt{r}_1\in \widetilde{\Gamma}_{1}^{- \frac{1}{2}} \, ,  \
\mathtt{r}_2\in \widetilde{\Gamma}_{2}^{- \frac{1}{2}} \, , \ \mathtt{r}_{\geq3} \in \Gamma^{- \frac{1}{2}}_{K,K',3} \, ,  
$$
recalling \eqref{busy} and item (ii) in Proposition \ref{teodiagonal}, we rewrite \eqref{WWC} as 
\begin{equation}\label{WWCexp}
\begin{aligned}
\pa_t w &=  \opbw\big(-\ii \big(\mathtt{V}_1+\mathtt{V}_2\big) \xi-\ii \big(1+ \mathtt{a}_1+\mathtt{a}^{(0)}_2\big) 
|\xi|^{1/2} + \mathtt{r}_1+\mathtt{r}_2 + H_{\geq 3} \big)w+ R(U)[W] 
\end{aligned}
\end{equation}
where $ H_{\geq 3}$ is an \emph{admissible} symbol according to Definition \ref{def:admissible}.

\subsubsection{Elimination of the linear symbol of the transport }\label{trasporto1}

The goal of this subsection is to eliminate  the transport operator 
$ \opbw(-\ii \mathtt{V}_1 \xi)  $ in \eqref{WWCexp}.
With this aim we conjugate the equation \eqref{WWCexp} under the flow
\be\label{flow0}
\partial_{\theta} \Phi_{1}^{\theta}(U) = \ii  \opbw{( b (U; \theta, x) \xi )} \Phi_{1}^{\theta}(U)  \, , 
\quad \Phi_{1}^{0}(U) = {\rm Id} \, ,  
\ee
with
\be\label{flow1}
b(U; \theta, x) := \frac{\beta (U; x) }{1 + \theta \beta_x (U; x)} \, , 
\ee
where  $ \beta (U; x) $  is a real valued function 
in $ \widetilde{{\mathcal F}}_1^\R $
of  the same form as $\mathtt V_1 (U; x)$, i.e. 
\be\label{def:beta}
\beta (U; x) = \frac{1}{\sqrt{2\pi}} \sum_{n \in \Z\setminus\{0\}} \beta^+_n u_n e^{\ii n x }
  + \beta^-_n \ov{u_n} e^{- \ii n x } \, .
\ee
The function $ \beta (U; x) $ is real  if a condition like \eqref{condreal2} holds, i.e. 
\be\label{real-beta1}
\overline{\beta^+_n} =  \beta^-_n \, . 
\ee
The flow of the transport equation  \eqref{flow1} is well posed  by Lemma \ref{buonflusso}.
We introduce the new variable
\begin{equation}\label{newcoord}
V_1:=\vect{v_1}{\ov{v_1}}=
\big({\bf \Phi}_{1}^{\theta}(U)[W]\big)_{|_{\theta=1}} = 
\left(\begin{matrix} 
\Phi_{1}^{\theta}(U)[w]\\
\ov{\Phi_{1}^{\theta}(U)}[\overline{w}]
\end{matrix}
\right)_{|_{\theta=1}} \, , 
\qquad W := \vect{w}{\bar{w}} \, ,
\end{equation}
where 
the operator  $\ov{\Phi_{1}^{\theta}(U)}[\cdot]$
is defined as in \eqref{opeBarrato}.

\begin{lemma}\label{lem:5.5}
Define 
$ \beta \in \widetilde{\mathcal{F}}_1^\R  $ in \eqref{def:beta} with coefficients 
\be\label{def:betan}
\beta^+_n := - \frac{({\mathtt V}_1)^+_n}{\ii \omega_n}=\frac{\ii n}{\sqrt{2} |n|^{\frac{3}{4}}}  \, , \quad 
\beta^-_n :=  \frac{({\mathtt V}_1)^-_n}{\ii \omega_n} =-\frac{\ii n}{\sqrt{2} |n|^{\frac{3}{4}}} \, , \quad
n \neq 0 \, ,
\ee
and $(\beta)^{\s}_{0} := 0, \; \s=\pm$.
Then, if $ w $ solves \eqref{WWCexp}, the function  $ v_1 $ defined in \eqref{newcoord} solves 
\begin{equation}\label{WWCexp3}
\pa_t v_1 =   \opbw{ \big( - \ii  \mathtt{V}_{2}^{(1)}} \xi  -
\ii \big(1 +\mathtt{a}_2^{(1)} \big)|\x|^{\frac{1}{2}} 
 +  \mathtt{r}_1^{(1)} +\mathtt{r}^{(1)}_2  + H_{\geq 3}^{(1)} \big)v_1+ R^{(1)}(U)[V_1]
\end{equation}
where:

\begin{itemize}
 \item $ \mathtt{V}_{2}^{(1)} \in \widetilde{\mathcal{F}}_2^\R$ and its coefficients
 (according to the expansion \eqref{eq:F2}) satisfy  
\be\label{nullcoeff211}
(\mathtt{V}_2^{(1)})^{+-}_{n,n} = 2 n |n | \, , \quad (\mathtt{V}^{(1)}_2)^{+-}_{n,-n}=0 \, ;  
\ee

 \item $\mathtt{a}_{2}^{(1)} \in \widetilde{{\mathcal F}}_2^\R$ and its coefficients satisfy
 \be\label{nullcoeff211'}
(\mathtt{a}_2^{(1)})^{+-}_{n,n} = 0 \, ;
\ee

\item $\mathtt{r}_1^{(1)} \in \widetilde{\Gamma}_{1}^{- \frac{1}{2}}$ 
  and $\mathtt{r}_2^{(1)} \in \widetilde{\Gamma}_{2}^{-\frac{1}{2}}$;

\item $H_{\geq 3}^{(1)} \in \Gamma^{1}_{K,K',3} $ is an admissible symbol,
  and $R^{(1)}(U) $ belongs to $ \Sigma\mathcal{R}^{-\rho}_{K,K',1}$.

\end{itemize}

\end{lemma}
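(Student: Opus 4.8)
The idea is to conjugate equation \eqref{WWCexp} by the flow $\Phi_1^\theta(U)$ generated by the transport vector field $\ii\opbw(b(U;\theta,x)\xi)$ and track how each symbol transforms. The key is that, because the dispersion relation $|\xi|^{1/2}$ is sublinear, the highest-order contribution at order $1$ in the conjugated equation comes \emph{not} from conjugating the dispersive term but from the commutator $[\partial_t,\Phi_1^\theta(U)]$, which produces $\opbw(-\ii(\partial_t\beta)\xi)$ at leading order (recall the discussion around \eqref{stratra3}). First I would invoke Lemma \ref{lem:tra.Vec} (the general conjugation formula \eqref{nuovosist}) together with Appendix \ref{sez:A2} to write the conjugated system for $V_1$: the transport symbol becomes $-\ii(\mathtt V_1+\partial_t\beta)\xi$ plus a quadratic remainder, the dispersive term stays $-\ii(1+\mathtt a_1)|\xi|^{1/2}$ modulo a new quadratic symbol, and all the commutators between $\opbw(b\xi)$ and the lower-order symbols produce contributions that are either quadratic symbols of order $\le 1/2$ or admissible cubic symbols, or smoothing remainders in $\Sigma\mathcal R^{-\rho}_{K,K',1}$ (here one uses Proposition \ref{teoremadicomposizione} and Proposition \ref{composizioniTOTALI} repeatedly, plus Lemma \ref{buonflusso} for flow bounds and the fact that $K'=2\rho+2$ absorbs the time derivatives consumed).

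Next I would compute the cancellation at degree one. By construction $\partial_t\beta(U;x)$ is obtained by substituting $\dot u_n=-\ii\omega_n u_n+O(u^2)$ from \eqref{eq:415bis} into \eqref{def:beta}, so at linear order $\partial_t\beta=\frac{1}{\sqrt{2\pi}}\sum_{n\ne 0}(-\ii\omega_n\beta^+_n)u_ne^{\ii nx}+(\ii\omega_n\beta^-_n)\overline{u_n}e^{-\ii nx}$. Choosing $\beta^\pm_n$ as in \eqref{def:betan} makes $-\ii\omega_n\beta^+_n=-(\mathtt V_1)^+_n$ and likewise for the minus sign, so the linear part of $\mathtt V_1+\partial_t\beta$ vanishes and only a quadratic symbol $\mathtt V_2^{(1)}\in\widetilde{\mathcal F}_2^\R$ survives. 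One checks the reality condition \eqref{real-beta1}: $\overline{\beta^+_n}=\overline{\ii n/(\sqrt 2|n|^{3/4})}=-\ii n/(\sqrt2|n|^{3/4})=\beta^-_n$, so $\beta$ is real and the flow is bounded and invertible; moreover $\beta$ has the same homogeneity structure as $\mathtt V_1$, so $V_1=\vect{v_1}{\overline{v_1}}$ retains the real-to-real form.

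The heart of the lemma is the identification of the surviving quadratic coefficients. The quadratic transport symbol $\mathtt V_2^{(1)}$ receives three contributions: the original $\mathtt V_2$, the quadratic part of $\partial_t\beta$ (the term where one plugs the quadratic part $F_2$ of the vector field into the linear $\beta$, and the term where one plugs the linear $\dot u$ into the would-be quadratic correction of $\beta$ — but $\beta$ here is taken purely linear, so this second term is absent), and the commutator term $\frac12\{b\xi,\,\cdot\,\}$-type contributions from conjugating $-\ii\mathtt V_1\xi$ and the dispersive symbol. I would expand each in Fourier using \eqref{eq:F1}--\eqref{eq:F2}, extract the $(+-)$ coefficients, and verify that the diagonal coefficient $(\mathtt V_2^{(1)})^{+-}_{n,n}=2n|n|$ (combining $(\mathtt V_2)^{+-}_{n,n}=n|n|$ from Lemma \ref{lem:V1} with an equal contribution from the quadratic part of $\partial_t\beta$) while the crucial $(\mathtt V_2^{(1)})^{+-}_{n,-n}$ vanishes. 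This last cancellation is the main obstacle: it requires combining $(\mathtt V_2)^{+-}_{n,-n}=0$ (Lemma \ref{lem:V1}), the contribution from $\partial_t\beta$ at the frequency pair $(n,-n)$, and the commutator contributions, and showing they sum to zero; here one uses $(F_2)^{+-}_{n,-n}=\overline{(F_2)^{+-}_{-n,n}}$ (Lemma \ref{espansioneF2} and the remark after it) together with the explicit values of $\beta^\pm_n$. The analogous but easier computation gives $(\mathtt a_2^{(1)})^{+-}_{n,n}=0$, since conjugating $-\ii(1+\mathtt a_1)|\xi|^{1/2}$ by the transport flow generates, at the relevant order, a quadratic symbol whose diagonal $(+-)$ coefficient is designed to cancel $(\mathtt a_2^{(0)})^{+-}_{n,n}$ — this uses the expansion \eqref{def:a0} and the values $(\mathtt a_1)^\pm_n$, $(\mathtt a_2)^{+-}_{n,n}$ from Lemma \ref{coefA}. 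Finally, I would collect everything of cubic or higher degree into $H_{\geq3}^{(1)}$, checking admissibility (i.e.\ that it has the form \eqref{highordersfinali}, with the real structure preserved because all transformations respect the real-to-real and $x$-translation-invariance properties), and absorb the smoothing contributions into $R^{(1)}(U)\in\Sigma\mathcal R^{-\rho}_{K,K',1}$.
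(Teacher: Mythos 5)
Your plan follows the paper's own route: conjugate by the transport flow \eqref{flow0} via the appendix conjugation lemmas, cancel the degree-one transport through the choice \eqref{def:betan}, and then identify the surviving quadratic coefficients by Fourier expansion using Lemmata \ref{lem:V1}, \ref{coefA}, \ref{espansioneF2}. Within that plan, however, three concrete points are wrong or missing as written. (1) Sign bookkeeping at degree one: by Lemma \ref{conjFlowpara2} the conjugation of $\pa_t$ contributes $+\ii\opbw\big((\beta_t-\beta_x\beta_t)\xi\big)$, so the transformed transport coefficient is $\ii(\pa_t\beta-\mathtt V_1)\xi$ at linear order; with \eqref{def:betan} one computes $-\ii\omega_n\beta^+_n=+(\mathtt V_1)^+_n$, not $-(\mathtt V_1)^+_n$ as you assert, and it is $\pa_t\beta-\mathtt V_1$ (not $\mathtt V_1+\pa_t\beta$) that cancels. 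Your two slips compensate, so the conclusion survives, but both intermediate identities are false as stated. (2) You say the dispersive term ``stays $-\ii(1+\mathtt a_1)|\xi|^{1/2}$ modulo a new quadratic symbol'', yet \eqref{WWCexp3} contains no linear term at order $1/2$: one must use that $\{\beta\xi,-\ii|\xi|^{1/2}\}=\tfrac{\ii}{2}\beta_x|\xi|^{1/2}$ and that $\mathtt a_1-\tfrac{\beta_x}{2}=0$ by \eqref{coea1} and \eqref{def:betan}. This automatic cancellation (remarked right after the lemma statement) is part of what the lemma claims and is absent from your argument. (3) Your attribution of the diagonal value $2n|n|$ is off: the quadratic part $\mathtt h_2$ of $\pa_t\beta$ has $(\mathtt h_2)^{+-}_{n,n}=0$ because $\beta^{\pm}_0=0$, so the second copy of $n|n|$ comes from the term $(\mathtt V_1)_x\beta$ generated by conjugating $-\ii\mathtt V_1\xi$, not from $\pa_t\beta$.

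These are all repairable by the explicit coefficient computations of Lemmata \ref{lem:V1} and \ref{coefA}, and the remainder of your plan — the off-diagonal cancellation $(\mathtt V_2^{(1)})^{+-}_{n,-n}=0$ via $(\mathtt V_2)^{+-}_{n,-n}=0$, $(F_2)^{+-}_{n,-n}=\overline{(F_2)^{+-}_{-n,n}}$ and the explicit $\beta^{\pm}_n$; the verification of $(\mathtt a_2^{(1)})^{+-}_{n,n}=0$ from \eqref{def:a0}; the reality check \eqref{real-beta1}; and the bookkeeping of admissible cubic symbols and $\rho$-smoothing remainders via Propositions \ref{teoremadicomposizione}, \ref{composizioniTOTALI} and Lemma \ref{buonflusso} — matches the paper's proof.
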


Note that the procedure that eliminates the linear term of the transport in \eqref{WWCexp3},
that is, the contribution with degree of homogeneity $1$ to the coefficient of $\xi$,
automatically also eliminates the contribution with degree of homogeneity $1$ to the coefficient of
the symbol of order $1/2$.

\begin{proof}[Proof of Lemma \ref{lem:5.5}]
\emph{Conjugation under the flow in \eqref{flow0}.}
We use Lemmata \ref{conjFlowpara} and \ref{conjFlowpara2}. 
\\[1mm]
{\sc Step 1.}  We apply Lemma  \ref{conjFlowpara2} with 
$ \beta $ in $ \widetilde{{\mathcal F}}_1^\R \subset {\mathcal F}^\R_{K,0,1} $ by the fourth remark in \eqref{prodottodisimboli2}. Then Lemma  \ref{conjFlowpara2} implies that 
$$
\pa_t \Phi_1^{1}(U) (\Phi_1^{1} (U))^{-1} =   \opbw{\big( \ii(\beta_t  - \beta_x \beta_t)  \xi 
+ H_{\geq3})} + R(U)
$$
where $ H_{\geq3} := \ii g_{\geq 3} \xi $ is an admissible symbol in $ \Gamma^{1}_{K,1,3}$  and 
$ R(U) $ belongs to $  \Sigma\mathcal{R}^{-\rho}_{K,1,1} $.
\\[1mm]
{\sc Step 2.}  
We apply Lemma \ref{conjFlowpara} with $ a = - \ii V \xi $. Thus  \eqref{coniugato}-\eqref{coniugato3} 
imply that, noting that $ a_\Phi^{(1)} = 0 $, 
\[
\Phi_1^{1}(U)\opbw(-\ii V\x)(\Phi_1^{1}(U))^{-1}=\opbw\big( 
- \ii V(x+ \beta(x)) ( 1+ \tilde \beta_y (y))_{|y=x + \beta (x)} \xi \big)  + R(U)
\] 
where $ x = y + \tilde \beta (y) $ denotes the inverse diffeomorphism of $ y = x + \beta (x) $ 
and $ R(U)$ is a smoothing operator in $  \Sigma {\mathcal R}^{-\rho+1}_{K,K',2} $.
By \eqref{simbLie} we deduce the expansion
\[
\Phi_1^{1}(U)\opbw(-\ii V\x)(\Phi_1^{1}(U))^{-1} = 
  \opbw \big( -\ii (\mathtt{V}_1 + {\mathtt V}_2)\xi    + 
   \ii ( \mathtt{V}_1 \beta_x -  (\mathtt{V}_1)_x \beta )   \xi  + H_{\geq 3} \big)
  + R(U) 
\]
where $ H_{\geq 3} \in \Gamma^{1}_{K,K',3}$ 
is  an admissible symbol 
and $ R(U) $ belongs to $ \Sigma {\mathcal R}^{-\rho+1}_{K,K',2} $.
\\[1mm]
{\sc Step 3.}  
By Lemma \ref{conjFlowpara} we have that, up to a smoothing remainder   in $ \Sigma\mathcal{R}^{-\rho+\frac{1}{2}}_{K,K',1} $, 
$$
- \Phi_1^{1}(U) \opbw\big(\ii \big(1+ \mathtt{a}_1 +
\mathtt{a}^{(0)}_2 \big) |\xi|^{1/2}\big) (\Phi_1^{1}(U))^{-1} = 
- \opbw(  a_\Phi^{(0)} +  a_\Phi^{(1)} ) 
$$
where  $ a_\Phi^{(0)} \in \Sigma\Gamma^{\frac{1}{2}}_{K,K',0}$ is given by \eqref{coniugato3} 
and  $ a_\Phi^{(1)} \in  \Sigma\Gamma^{-\frac{3}{2}}_{K,K',1}$.  
By \eqref{simbLie} we have the expansion 
$$ 
\begin{aligned}
a_\Phi^{(0)} & = - \ii   \big(1+ \mathtt{a}_1+ \mathtt{a}^{(0)}_2 \big) |\xi|^{1/2} +
\big\{ \beta \xi, - \ii \big(1+ \mathtt{a}_1 \big) |\xi|^{1/2} \big\} \\ 
& \quad + 
\frac12 \Big(  \big\{ \beta \xi,  \big\{ \beta \xi, - \ii |\xi|^{1/2} \} \big\} -
\big\{ \beta \beta_x \xi, - \ii |\xi|^{1/2} \big\} \Big) + {\rm admissible \ symbol}
\end{aligned}
$$
and a direct computation gives
$$
\begin{aligned}
  - \Phi_1^{1}(U)& \opbw\big(\ii \big(1+ \mathtt{a}_1+
\mathtt{a}^{(0)}_2\big) |\xi|^{1/2}\big) (\Phi_1^{1}(U))^{-1} = \\
&\quad\opbw\Big( -\ii \big( 1 + \mathtt{a}_1-\frac{\beta_x}{2}
+ \mathtt{a}^{(0)}_2+(\mathtt{a}_1)_x\beta-\frac{1}{2}\beta_x\mathtt{a}_1
+ \frac{3}{8} \beta_x^{2}  \big) |\xi|^{1/2} + r +  H_{\geq 3} \Big) + R(U)
 \end{aligned}
$$
where 
$ r \in \Sigma\Gamma^{-\frac{3}{2}}_{K,K',1}$,
 $ H_{\geq 3} \in \Gamma^{\frac{1}{2}}_{K,K',3}$ is an admissible 
symbol and $ R(U) $ is in $  \Sigma\mathcal{R}^{-\rho+\frac{1}{2}}_{K,K',1} $.

\medskip
\noindent
{\sc Step 4.} 
By  Lemma \ref{conjFlowpara} 
the conjugated operator 
\begin{align*}
& \Phi_1^{1}(U)  \opbw\big( \mathtt{r}_1 +\mathtt{r}_2 + H_{\geq 3} \big)  (\Phi_1^{1}(U))^{-1} 
 = \opbw\big( \mathtt{r}_1^{(1)} +\mathtt{r}^{(1)}_2  +H_{\geq 3}' \big) + R(U)  
\end{align*}
where $\mathtt{r}_1^{(1)}\in \widetilde{\Gamma}_{1}^{- \frac{1}{2}} $,  $\mathtt{r}^{(1)}_2\in \widetilde{\Gamma}_{2}^{- \frac{1}{2}}$,
a new admissible symbol   $ H_{\geq 3}'  \in \Gamma^{1}_{K,K',3} $, and 
  a smoothing remainder $ R(U)$ in $\Sigma\mathcal{R}^{-\rho+1}_{K,K',1}$. 
  \\[1mm]
{\sc Step 5.} By Lemma \ref{buonflusso}-$(ii)$ we  write $\Phi^{1}(U)={\rm Id}+M(U)$ with
$M(U) $ in $  \Sigma\mathcal{M}^{m}_{K,K',1}$ for some $m>0$. Hence, using Proposition \ref{composizioniTOTALI}-(ii),
we have that the conjugated of the operator $R(U)$ in \eqref{WWCexp} is a smoothing remainder in 
$ \Sigma\mathcal{R}^{-\rho+2m}_{K,K',1}$.
\noindent
In conclusion we get that 
if $ w $ solves \eqref{WWCexp} then  $ v_1 $ defined in \eqref{newcoord} satisfies 
\begin{equation}\label{WWCexp2}
\begin{aligned}
\pa_t v_1 &=   \ii \opbw{\big(  \big( - {\mathtt V}_1  + \pa_t \beta  \big) \xi   }  +   
  { \big( -{\mathtt V}_2   + ( \mathtt{V}_1 \beta_x - (\mathtt{V}_1)_x \beta ) 
    - \beta_x \beta_t  \big) \xi  \big)}v_1\\
&+
\ii\opbw\Big(
-|\x|^{\frac{1}{2}} 
- (\mathtt{a}_1-\frac{\beta_x}{2})|\x|^{\frac{1}{2}}
- \big(\mathtt{a}^{(0)}_2+(\mathtt{a}_1)_x\beta- \frac{1}{2}\beta_x\mathtt{a}_1 + \frac{3}{8} \beta_x^{2} \big) |\x|^{\frac{1}{2}}
\Big)v_1
\\
& + \opbw\big(\mathtt{r}_1^{(1)} +\mathtt{r}^{(1)}_2\big)v_1  +\opbw( H_{\geq 3})v_1+ R^{(1)}(U)[V_1]
\end{aligned}
\end{equation}
where $\mathtt{r}_1^{(1)}\in \widetilde{\Gamma}_{1}^{- \frac{1}{2}}$, 
$\mathtt{r}^{(1)}_2\in \widetilde{\Gamma}_{2}^{- \frac{1}{2}}$,
$ H_{\geq 3 }\in \Gamma^{1}_{K,K',3}$ is admissible according to Definition \ref{def:admissible}
and $ R^{(1)}(U) $ is a $1\times 2$ matrix of smoothing operators in $ \Sigma\mathcal{R}^{-\rho}_{K,K',1}$
(renaming  $ \rho - 2 m $ as $ \rho $).

\medskip
\noindent
\emph{Choice of $\beta$}. Recall that the coefficients $\beta^\pm_n $ defined in \eqref{def:betan} satisfy 
 \eqref{real-beta1} and the function $ \beta (U; x) $ is real.  
Using \eqref{eq:415bis}  we get  
 \begin{align}
 \pa_t \beta(U; x) 
 =  
\frac{1}{\sqrt{2\pi}}\sum_{n\in\Z\setminus\{0\}}
(-\ii\omega_n)\beta_n^{+}e^{\ii nx} u_n +\ii\omega_n\beta_n^{-}e^{-\ii nx} \ov{u_n} +
   {\mathtt h}_2 +\mathtt{h}_{\geq3}
 \label{quadra-rem}
\end{align}
 where $  {\mathtt h}_2 $, $\mathtt{h}_{\geq3}$ are defined as
 \be\label{def:h2}
 \begin{aligned}
 {\mathtt h}_2 &:= 
 \frac{1}{\sqrt{2\pi}} \sum_{n \in \Z\setminus\{0\}}  \beta^+_n \ii {[F_2(U)]_{n}}  e^{\ii n x } -  \beta^-_n \ii  
 \ov{[F_2(U)]_{n}}   e^{- \ii n x}\, , 
 \\
  {\mathtt h}_{\geq 3}  &:= 
 \frac{1}{\sqrt{2\pi}} \sum_{n \in \Z\setminus\{0\}}  \beta^+_n \ii {[F_{\geq3}(U)]_{n}}  e^{\ii n x } -  \beta^-_n \ii 
 \ov{[F_{\geq3}(U)]_{n}}   e^{- \ii n x}  \, .
\end{aligned}
\ee
 By \eqref{quadra-rem} and \eqref{def:betan} we deduce 
 that 
 \begin{equation}\label{pianeta4}
- {\mathtt V}_1 + \pa_t \beta = {\mathtt h}_2+\mathtt{h}_{\geq3} .
\end{equation}
By \eqref{real-beta1} the functions  ${\mathtt h}_2 $ and $\mathtt{h}_{\geq3}$ 
are real. 
Moreover
$ {\mathtt h}_2 \in \widetilde{\mathcal{F}}^{\R}_2$ and $\mathtt{h}_{\geq3}\in \mathcal{F}^{\R}_{K,1,3}$
by item (iv) of Proposition \ref{composizioniTOTALI} and the fact that $ F_2 (U)  + F_{\geq 3} (U) = 
M(U) [U] $ for some $ M $ in $ \Sigma \mathcal{M}_{K,1,1} $, see \eqref{eq:415tris}. 
\smallskip
\noindent
\emph{The new equation.}
From \eqref{pianeta4} and the first line of \eqref{WWCexp2} we deduce 
that $\mathtt{V}_2^{(1)}$ in \eqref{WWCexp3} is given by
\be
\label{eq:V21}
 - \mathtt{V}_2^{(1)}:= {\mathtt h}_2 - {\mathtt V}_2 - (\mathtt{V}_1)_x \beta,
\ee
having used $(\mathtt{V}_1 - \pa_t \beta) \beta_x \in \mathcal{F}^{\R}_{K,1,3}$.
From the second line of \eqref{WWCexp2} we deduce that $\mathtt{a}_2^{(1)}$ in \eqref{WWCexp3} is given by
\be
\label{newcoeff100}
    \mathtt{a}_{2}^{(1)} := \mathtt{a}^{(0)}_2
    +(\mathtt{a}_1)_x\beta-\frac{1}{2}\beta_x\mathtt{a}_1 
    + \frac{3}{8}  \beta_x^{2} \in \widetilde{{\mathcal F}}_2^\R
\ee
having noted that the function $ \mathtt{a}_1-\frac{\beta_x}{2} = 0$ by
\eqref{coea1} and \eqref{def:betan}.

Let us prove  \eqref{nullcoeff211}. By \eqref{eq:V21} 
we have
\be
\label{coeffNuoviV}
((\mathtt{V}_2)^{(1)})_{n_1,n_2}^{+-} = - (\mathtt{h}_2)_{n_1,n_2}^{+-}+(\mathtt{V}_2)^{+-}_{n_1,n_2}
+ \ii  \big( (\mathtt{V}_1)^{+}_{n_1}\beta^{-}_{n_2}n_1 -
(\mathtt{V}_1)^{-}_{n_2}\beta_{n_1}^{+}n_2 \big)\,.
\ee
The coefficients $ (\mathtt{h}_2)^{+-}_{n_1,n_2} $ associated to $ \mathtt{h}_2 $ 
defined in \eqref{def:h2}
 are 
$$
(\mathtt{h}_2)^{+-}_{n_1,n_2} =\ii \beta^{+}_{n_1-n_2}(F_2)_{n_1,n_2}^{+-}-\ii \beta_{-(n_1-n_2)}^{-}\ov{(F_2)_{n_2,n_1}^{+-}}
$$
with $(F_2)_{n_1,n_2}^{+-}$ defined by \eqref{eq:415bis}-\eqref{quadraticTerms2}.
We claim that
 \begin{equation}\label{cancellazione}  
(\mathtt{h}_{2})^{+-}_{n,n}=0, \qquad (\mathtt{h}_{2})^{+-}_{n,-n}=0 \, .
\end{equation}
The first identity in \eqref{cancellazione} is trivial since the coefficients  $\beta^{\s}_{n}$ 
in \eqref{def:betan} are zero for $ n = 0 $.
To prove the second identity in \eqref{cancellazione} we compute by \eqref{def:h2}
and \eqref{coeffF2}
$$
(\mathtt{h}_2)^{+-}_{n,-n} = \ii (F_2)_{n,-n}^{+-}
  \big( \beta^+_{2n} - \beta^{-}_{-2n} \big) 
  = 0 \, 
$$
in view of \eqref{def:betan}.
By \eqref{coeffNuoviV}, \eqref{cancellazione}, \eqref{def:betan}, \eqref{coV1} 
we get $ (\mathtt{V}_2^{(1)})^{+-}_{n,n} =  2 n |n |$ and 
$ (\mathtt{V}_2^{(1)})^{+-}_{n,-n}=0$.

To conclude we prove \eqref{nullcoeff211'}. From \eqref{newcoeff100} we calculate 
\begin{align*}
({\mathtt a}_2^{(1)})^{+-}_{n, n} =
  (\mathtt{a}^{(0)}_2)_{n,n}^{+-}+ \ii n(\mathtt{a}_1)^{+}_{n}\beta^{-}_{n}-
  \ii n(\mathtt{a}_1)_{n}^{-}\beta^{+}_{n}-\frac{\ii n }{2}\big(  \beta^{+}_{n}(\mathtt{a}_1)_{n}^{-}-
  \beta^{-}_{n}(\mathtt{a}_1)^{+}_{n} \big)+ \frac{3}{4}\beta^{+}_{n}\beta^{-}_{n} n^2 
\end{align*}
where $\beta_{n}^{\s}$ are defined in \eqref{def:betan}.
By  \eqref{def:a0} we have 
$ (\mathtt{a}_2^{(0)})^{+-}_{n_1,n_2} =(\mathtt{a}_2)_{n_1,n_2}^{+-}
-(\mathtt{a}_1)^{+}_{n_1}(\mathtt{a}_1)^{-}_{n_2} $ and, using \eqref{coea1}, we calculate
$(\mathtt{a}_2^{(0)})^{+-}_{n,n}=\tfrac{3}{8} |n|^{5/2}$.
Futhermore, one can check directly using the formulas \eqref{coea1} and \eqref{def:betan}, 
that $(\mathtt{a}_2^{(1)})^{+-}_{n,n} = 0$. 
\end{proof}

\subsubsection{Reduction of the quadratic symbol of the transport } 

The  aim of this section is to reduce  the  transport operator 
 $ - \ii \opbw{(\mathtt{V}_{2}^{(1)} (U; x) \xi)} $ in \eqref{WWCexp3} into the  ``integrable'' one
 $ - \ii \opbw{( \zeta (U) \xi)} $ where $ \zeta (U) $ is the function, constant in  $ x $, 
 defined in \eqref{diag-part}. 
To do this we conjugate the equation \eqref{WWCexp3} under the flow of the transport equation 
\begin{equation}\label{flusso2}
\partial_{\theta} \Phi_{{2}}^{\theta} (U) = \ii \opbw( b_{2} (U; \theta, x) \xi ) \Phi_{{2}}^{\theta}(U)
\, , \quad \Phi_{{2}}^{0}(U) = {\rm Id} \, ,  
\end{equation}
where $b_{2} $ is defined as in \eqref{flow1}
in terms of a real valued function $ \beta_{2} (U; x) \in \widetilde{\mathcal F}_2^\R $. 
The flow in \eqref{flusso2}
is well-posed by Lemma \ref{buonflusso}.
We then define the new variable
\begin{equation}\label{newcoord2}
V_2:=\vect{v_2}{\ov{v_2}} =
\big({\bf \Phi}_2^{\theta}(U)[V_1]\big)_{|_{\theta=1}}:=
\left(\begin{matrix} 
\Phi_{2}^{\theta}(U)[v_1]\\
\ov{\Phi_{2}^{\theta}(U)}[\ov{v_1}]
\end{matrix}
\right)_{|_{\theta=1}}
\end{equation}
where 
$\ov{\Phi_{2}^{\theta}(U)} $
is defined as in \eqref{opeBarrato}.

\begin{lemma}\label{lem:56}
Define $ \beta_{2} \in \widetilde{\mathcal F}_2^{\R} $ with coefficients  
for $n_1,n_2\in \Z\setminus\{0\}$, 
\be\label{beta-2}
(\beta_{2})^{\s\s}_{n_1, n_2} := \frac{-(\mathtt{V}_2^{(1)})^{\s\s}_{n_1, n_2}}{\ii \s (\omega_{n_1} + \omega_{n_2})} \, , \, 
\, \s=\pm \, ,
\qquad 
(\beta_{2})^{+-}_{n_1, n_2} :=  \frac{-(\mathtt{V}_2^{(1)})^{+-}_{n_1, n_2}}{\ii (\omega_{n_1} - \omega_{n_2})} \, , 
\, n_1 \neq \pm n_2 \, , 
\ee
and $(\beta_{2})_{0,0}^{\s \s} := 0 $, $ (\beta_2)^{+-}_{n,\s n}:=0$,  $ \s = \pm $, 
where $\mathtt{V}_{2}^{(1)} $ is the real-valued function defined  in Lemma \ref{lem:5.5}. 
If $v_1$ solves \eqref{WWCexp3} then the function $ v_2 $ in \eqref{newcoord2} solves
\begin{equation}\label{WWC3}
\begin{aligned}
\pa_t v_2 & =   \opbw{\big( - \ii  \zeta(U)\xi  - \ii  (1 +\mathtt{a}_2^{(2)} )|\x|^{\frac{1}{2}}
+ \mathtt{r}_1^{(1)} +{\mathtt{r}}^{(2)}_2  +  H_{\geq 3}^{(2)} \big)} v_2+ R^{(2)}(U)[V_2]
\end{aligned}
\end{equation}
where:

\begin{itemize}

\item $\zeta(U)\in \wtilde{{\mathcal F}}^\R_2$ is the integrable function defined in \eqref{diag-part};

\item $\mathtt{a}_2^{(2)} \in \wtilde{{\mathcal F}}^\R_2$ satisfies 
\begin{equation}\label{A22bis}
\mathtt{a}_2^{(2)} :=\mathtt{a}_2^{(1)} -\frac{1}{2}(\beta_2 )_{x} \, , 
\qquad (\mathtt{a}_2^{(2)})^{+-}_{n,n} = 0 \, ;
\end{equation}

\item $\mathtt{r}_1^{(1)}\in \widetilde{\Gamma}_{1}^{- \frac{1}{2}}$ is the same symbol in \eqref{WWCexp3},
 and ${\mathtt{r}}^{(2)}_2 \in \widetilde{\Gamma}^{-\frac{1}{2}}_{2}$;
 
\item ${H}^{(2)}_{\geq 3} \in \Gamma^{1}_{K,K',3}$ is an admissible symbol, 
and  ${R}^{(2)}(U) $ is a $1\times2$ matrix of smoothing operators in $\Sigma\mathcal{R}^{-\rho}_{K,K' ,1}$.
\end{itemize}
\end{lemma}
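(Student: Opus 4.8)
The plan is to mimic the proof of Lemma~\ref{lem:5.5} one degree of homogeneity higher: we conjugate equation~\eqref{WWCexp3} by the flow $\Phi_2^\theta(U)$ in \eqref{flusso2}, generated by the paradifferential transport operator $\ii\opbw(b_2(U;\theta,x)\xi)$ with $b_2$ built from a real-valued symbol $\beta_2\in\widetilde{\mathcal F}_2^{\R}$ to be determined, define the new variable $V_2$ as in \eqref{newcoord2}, and read off the transformed equation using the conjugation Lemmas~\ref{conjFlowpara} and \ref{conjFlowpara2} (the flow is well posed and satisfies the quantitative bounds of Lemma~\ref{buonflusso}, $\beta_2$ being real). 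First I would track the order-$1$ transport part. Since the dispersion relation is sublinear, the only order-$1$ contribution to the coefficient of $\ii\xi$ in $\partial_t V_2$ comes from the conjugation of $\partial_t$ itself, which by Lemma~\ref{conjFlowpara2} equals $\ii(\beta_2)_t\xi$ up to lower order and admissible cubic symbols, together with the conjugated transport operator $\opbw(-\ii\mathtt V_2^{(1)}\xi)$, which is unchanged modulo an order-$1$, degree-$4$ (hence admissible) symbol of the form (real function)$\,\cdot\,\xi$ coming from the Poisson bracket $\{b_2\xi,\mathtt V_2^{(1)}\xi\}$. Thus the new transport coefficient is $-\ii(\mathtt V_2^{(1)}-(\beta_2)_t)\xi$ up to admissible cubic terms, and we want to choose $\beta_2$ so that $\mathtt V_2^{(1)}-(\beta_2)_t=\zeta(U)+O(u^3)$, with $\zeta(U)$ as in \eqref{diag-part}.

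This is the homological equation. Expanding $\beta_2$ as in \eqref{eq:F2} and computing $(\beta_2)_t$ through the linear flow $\dot u_n=-\ii\omega_nu_n+O(u^2)$ of \eqref{eq:415bis}, the $\sigma\sigma$-component of $\mathtt V_2^{(1)}-(\beta_2)_t$ carries the divisor $\ii\sigma(\omega_{n_1}+\omega_{n_2})$ and the $+-$-component the divisor $\ii(\omega_{n_1}-\omega_{n_2})$. The crucial point is that the only vanishing divisors occur in the $+-$-component for $n_2=\pm n_1$ (there are no equal-sign two-wave resonances since $\omega>0$), and precisely there the cancellations of Lemma~\ref{lem:5.5} apply: $(\mathtt V_2^{(1)})^{+-}_{n,n}=2n|n|=\zeta^{+-}_{n,n}$ and $(\mathtt V_2^{(1)})^{+-}_{n,-n}=0=\zeta^{+-}_{n,-n}$ by \eqref{nullcoeff211}. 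Hence one can take $\beta_2$ as in \eqref{beta-2}, setting the resonant coefficients $(\beta_2)^{\sigma\sigma}_{0,0}$ and $(\beta_2)^{+-}_{n,\sigma n}$ to zero; the reality of $\beta_2$ follows from the reality conditions \eqref{condreal2} for $\mathtt V_2^{(1)}$ together with $\omega_n=\omega_{-n}$. With this choice the transport coefficient becomes exactly $\zeta(U)\xi$ up to an admissible cubic symbol, which is absorbed into $H_{\geq3}^{(2)}$, yielding the transport part of \eqref{WWC3}.

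It then remains to collect the remaining terms, which is routine bookkeeping of orders and homogeneity degrees through Lemma~\ref{conjFlowpara}. The order-$1/2$ symbol picks up $-\tfrac12(b_2)_x|\xi|^{1/2}$ from the Poisson bracket $\{b_2\xi,-\ii|\xi|^{1/2}\}$, so at quadratic degree $\mathtt a_2^{(2)}=\mathtt a_2^{(1)}-\tfrac12(\beta_2)_x$ as in \eqref{A22bis}; since the Fourier symbol $\ii(n_1-n_2)$ of $(\beta_2)_x$ vanishes on the diagonal, $((\beta_2)_x)^{+-}_{n,n}=0$, whence $(\mathtt a_2^{(2)})^{+-}_{n,n}=(\mathtt a_2^{(1)})^{+-}_{n,n}=0$ by \eqref{nullcoeff211'}. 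The symbol $\mathtt r_1^{(1)}\in\widetilde\Gamma_1^{-1/2}$ is unchanged up to a cubic order-$(-1/2)$ symbol absorbed into $H_{\geq3}^{(2)}$, while the order-$(-1/2)$ quadratic corrections (from conjugating $\mathtt r_2^{(1)}$, the $|\xi|^{1/2}$-term and the transport term) collect into a new symbol $\mathtt r_2^{(2)}\in\widetilde\Gamma_2^{-1/2}$, and all lower-order, cubic or smoothing contributions are either admissible or land in $\Sigma\mathcal R^{-\rho}_{K,K',1}$; since $\Phi_2^1(U)=\mathrm{Id}+M(U)$ with $M\in\Sigma\mathcal M_{K,K',2}$, the conjugate of $R^{(1)}(U)$ stays smoothing in $\Sigma\mathcal R^{-\rho}_{K,K',1}$ after renaming $\rho$. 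The main obstacle — indeed the only genuinely nontrivial point — is the structural input \eqref{nullcoeff211} of Lemma~\ref{lem:5.5}, in particular the identity $(\mathtt V_2^{(1)})^{+-}_{n,-n}=0$, which is exactly what kills the would-be $x$-dependent resonant terms $e^{\pm2\ii nx}$ in the new transport coefficient and makes $\zeta(U)$ integrable in the sense of Definition~\ref{defiintegro}.
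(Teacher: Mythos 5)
Your proposal is correct and follows essentially the same route as the paper's proof: conjugate by the transport flow generated by the quadratic $\beta_2$, observe that the only quadratic contributions are $\{\beta_2\xi,-\ii|\xi|^{1/2}\}$ and $\ii(\pa_t\beta_2)\xi$, then solve the homological equation with $\beta_2$ as in \eqref{beta-2}, using the cancellations \eqref{nullcoeff211} to identify the resonant part with $\zeta(U)$ (and kill the $e^{2\ii nx}$ terms) and \eqref{nullcoeff211'} together with the vanishing of $((\beta_2)_x)^{+-}_{n,n}$ to get $(\mathtt{a}_2^{(2)})^{+-}_{n,n}=0$. No gaps.
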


\begin{proof}
The function $\beta_2$  is real valued since  
the coefficients $(\mathtt{V}_2)_{n_1,n_2}^{\s\s'}$ 
of the real function 
$\mathtt{V}_2^{(1)}$ in \eqref{eq:V21} satisfy \eqref{condreal2}. 
In order to conjugate \eqref{WWCexp3} under the map $ \Phi_2^{\theta} $ in \eqref{newcoord2} 
we apply  Lemmata \ref{conjFlowpara} and \ref{conjFlowpara2}. 
By  \eqref{simbLie} and \eqref{expg}, and since $ \beta_2 $ is quadratic in $ u $, 
the only quadratic contributions  are 
$  \opbw( \{ \beta_2 \xi ,-\ii |\x|^{\frac{1}{2}} \} ) v_2+\ii  \opbw{(\pa_t \beta_2 \xi )} v_2 $, implying  
\be\label{WWCexp4}
\pa_t v_2  = \opbw{\big( \ii  (-  \mathtt{V}_{2}^{(1)} +   \pa_t \beta_2  ) \xi  +
\ii \frac{(\beta_2)_x}{2}   |\x|^{\frac{1}{2}} 
-\ii (1 +\mathtt{a}_2^{(1)} )|\x|^{\frac{1}{2}} 
+  \mathtt{r}_1^{(1)} +\widetilde{\mathtt{r}}^{(1)}_2  +  H_{\geq 3}^{(1)} \big)} v_2  + R(U)[V_2] 
\ee
where $\widetilde{\mathtt{r}}^{(1)}_2 $ is a symbol  in $ \widetilde{\Gamma}_2^{- \frac12}$,  
$H_{\geq 3}^{(1)} \in \Gamma^{1}_{K,K' ,3} $ is a new admissible symbol, 
and  $ R(U)$ is a $1\times 2 $ 
matrix of smoothing operators in  $\Sigma\mathcal{R}^{-\rho}_{K,K',1} $ (by renaming $ \rho - 2m  $ as $ \rho $).

By the choice of $\beta_2$  in \eqref{beta-2}, using \eqref{eq:415bis} and reasoning as in the proof of Lemma 
\ref{lem:5.5} we have 
\[
 -\mathtt{V}_{2}^{(1)} + \pa_t \beta_2 
 =-  \Big(  \frac{1}{2\pi}\sum_{n \in \Z\setminus\{0\}}   
(\mathtt{V}_2^{(1)})^{+-}_{n,n} |u_{n}|^2 + 
(\mathtt{V}_2^{(1)})^{+-}_{n,-n} u_n \ov{u_{- n}} e^{\ii 2 n x }  \Big) +  f_{\geq3}   
\]
where $ f_{\geq3} $ is a function in $ {\mathcal F}^\R_{K, 1, 3} $. 
Therefore, by  \eqref{nullcoeff211}, we get 
\begin{equation}\label{equaomo}
 -\mathtt{V}_{2}^{(1)}  + \pa_t \beta_2   = - \zeta(U)  + f_{\geq 3}  
\end{equation}  
with $ \zeta (U) $ defined in \eqref{diag-part}. 
System \eqref{WWCexp4} and \eqref{equaomo} imply \eqref{WWC3} where
$\mathtt{a}_2^{(2)}$ is the function defined in \eqref{A22bis}. 
Recalling \eqref{nullcoeff211'} we deduce that $ (\mathtt{a}_2^{(2)})^{+-}_{n,n} = 0 $. 
\end{proof}

\begin{remark} \label{rem:conc}
For a cubic vector field of the form $ \opbw (  \ii {\mathtt V}_2 (U; x) \xi ) [u] $ 
with a real valued function $ {\mathtt V}_2 $ in $ {\widetilde {\mathcal F} }^\R_2 $ 
the reversibility and even-to-even properties imply 
$ ({\mathtt V}_2)_{n,-n}^{+-} = 0 $, 
$ ({\mathtt V}_2)_{n,n}^{+-} = - ({\mathtt V}_2)_{-n, -n}^{+-} $,  
$ ({\mathtt V}_2)_{n,n}^{+-} \in \R $, 
in agreement with \eqref{nullcoeff211}.
We remark that the cancellation 
$ ({\mathtt V}_2)_{n,-n}^{+-} = 0 $  is essential for the whole argument to work.
Note also that on the subspace of even functions $u$ one has $\zeta (U) = 0$.
\end{remark}

\subsection{Integrability at order $ 1/2$ and $ 0 $}\label{inteord12}
The first aim of this subsection is to reduce the operator
$ - \ii \opbw \big(\mathtt{a}^{(2)}_2(U; x) |\xi|^{1/2} \big) $ in  \eqref{WWC3}
to an integrable one. 
It actually turns out that, thanks to \eqref{A22bis}, we  reduce it to  the Fourier multiplier
$- \ii  |D|^{1/2} $, see \eqref{WWCexp7}. 
This is done in two steps. 
In \ref{sec:SEMIFIO} we  apply a transformation which is a paradifferential 
``semi-Fourier integral operator", generated as the flow of \eqref{flusso44}. 
Then, in Subsection \ref{scarlatto} 
we apply the para-differential version of a torus diffeomorphism which is ``almost'' time independent, 
see \eqref{quasiconst}-\eqref{choice-beta2}.
Eventually we deal with the operators of order $0$ in Subsection \ref{inteord0}.

\subsubsection{Elimination of the time dependence at order $ 1/2$ up to $O(u^3)$}\label{sec:SEMIFIO}

We conjugate \eqref{WWC3} under the flow
\begin{equation}\label{flusso44}
\partial_{\theta} \Phi_{{3}}^{\theta}(U) = \ii \opbw( \beta_{3} (U; x) |\xi|^{\frac{1}{2}} ) \Phi_{{3}}^{\theta}(U) \, , 
\quad \Phi_{{3}}^{0}(U)  = {\rm Id} \, ,  
\end{equation}
where $ \beta_{3} (U; x) \in \widetilde{\mathcal F}_2^\R $ is a real valued function. 
We set
\begin{equation}\label{newcoord400}
V_3 :=\vect{v_3}{\ov{v_3}}=\big({\bf \Phi}_{3}^{\theta}(U)[V_2]\big)_{|_{\theta=1}}=
\left(\begin{matrix} 
\Phi_{3}^{\theta}(U)[v_2]\\
\ov{\Phi_{3}^{\theta}(U)}[\ov{v_2}]
\end{matrix}
\right)_{|_{\theta=1}} 
\end{equation}
where 
$\ov{\Phi_{3}^{\theta}(U)} $ is defined as in \eqref{opeBarrato}.

\begin{lemma}
Define 
$ \beta_3 \in \widetilde{\mathcal{F}}_2^\R  $  with coefficients 
\be\label{beta-4}
(\beta_{3})^{\s\s}_{n_1, n_2} := \frac{-(\mathtt{a}_{2}^{(2)})^{\s\s}_{n_1, n_2}}{\ii \s(\omega_{n_1} + \omega_{n_2})} \, , \;\;\s=\pm \, ,
\quad 
(\beta_{3})^{+-}_{n_1, n_2} :=  \frac{-(\mathtt{a}_{2}^{(2)})^{+-}_{n_1, n_2}}{\ii (\omega_{n_1} - \omega_{n_2})} \, , \ n_1 \neq \pm n_2 \ , 
\ee
and $(\beta_{3})_{0,0}^{\s \s} := 0 $, $(\beta_3)^{+-}_{n,\s n}:=0$,  $ \s = \pm $, 
where  $\mathtt{a}_2^{(2)} $ is defined in \eqref{A22bis}. 
If $v_2$ solves \eqref{WWC3} then
\begin{equation}\label{WWCexp6}
\pa_t v_3 =   \opbw{\big( - \ii  \zeta(U) \xi   - \ii (1+\mathtt{a}_2^{(3)})|\x|^{\frac{1}{2}}}
 +\ii  \mathtt{b}^{(3)}_2 \sgn (\x)
 + \mathtt{r}_1^{(1)} +{\mathtt{r}}^{(3)}_2 + {H}^{(3)}_{\geq 3} \big)v_3+ R^{(3)}(U)[V_3] 
\end{equation}
where 
\begin{equation}\label{nuovicoeff400}
\mathtt{a}^{(3)}_2 := \frac{1}{2\pi}\sum_{n \in \Z\setminus\{0\}} 
(\mathtt{a}^{(2)}_{2})_{n, - n}^{+-} u_n \ov{u_{-n}} e^{\ii 2 n x } , \qquad 
\mathtt{b}^{(3)}_2 :=\frac{1}{2}(\beta_{3})_x \, ,
\end{equation}
$\mathtt{r}_1^{(1)}\in \widetilde{\Gamma}_{1}^{- \frac{1}{2}}$ is the same symbol in \eqref{WWCexp3},
${\mathtt{r}}^{(3)}_2 \in \widetilde{\Gamma}^{-\frac{1}{2}}_{2}$, 
${H}^{(3)}_{\geq 3} \in \Gamma^{1}_{K,K',3}$ is admissible,
and $ R^{(3)}(U) $ is a $1\times2$ matrix of smoothing operators in  
$\Sigma\mathcal{R}^{-\rho}_{K,K',1}$.
Moreover 
\begin{equation}\label{condcoeffB400}
({\mathtt b}_{2}^{(3)})^{+-}_{n, n}=({\mathtt b}_{2}^{(3)})^{+-}_{n, -n}=0 \, . 
\end{equation}
\end{lemma}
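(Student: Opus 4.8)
The plan is to conjugate \eqref{WWC3} by the flow \eqref{flusso44} and track only the quadratic homogeneous contributions, exactly as in the proofs of Lemmata \ref{lem:5.5} and \ref{lem:56}. First I would check that $\beta_3$ is real valued: the coefficients $(\mathtt{a}_2^{(2)})^{\s\s'}_{n_1,n_2}$ of the real function $\mathtt{a}_2^{(2)}$ in \eqref{A22bis} satisfy the reality conditions \eqref{condreal2}, and since the denominators in \eqref{beta-4} satisfy $\overline{\ii\s(\omega_{n_1}+\omega_{n_2})}=-\ii\s(\omega_{n_1}+\omega_{n_2})$ and $\overline{\ii(\omega_{n_1}-\omega_{n_2})}=\ii(\omega_{n_2}-\omega_{n_1})$, the conditions \eqref{condreal2} are preserved, so $\beta_3\in\widetilde{\mathcal F}_2^{\R}$. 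The flow \eqref{flusso44} is well posed and bounded on Sobolev spaces by Lemma \ref{buonflusso} (its generator has order $1/2<1$). Then I would apply the conjugation Lemmata \ref{conjFlowpara}, \ref{conjFlowpara2} from Appendix \ref{sez:A2}, keeping in mind that since $\beta_3$ is quadratic in $u$, the commutator terms involving two copies of $\beta_3$ are cubic and hence absorbed into the admissible/smoothing remainders: the only new quadratic contributions come from the conjugation of $\pa_t$, giving $\ii\opbw(\pa_t\beta_3\,|\xi|^{1/2})$, and from the Poisson bracket $\{\beta_3|\xi|^{1/2},-\ii|\xi|^{1/2}\}$, which has order $0$ and equals $-\ii\,\tfrac12(\beta_3)_x\,\sgn(\xi)|\xi|$... more precisely $\{\beta_3|\xi|^{1/2},-\ii|\xi|^{1/2}\}=\tfrac{\ii}{2}(\beta_3)_x\sgn(\xi)$ after differentiating $|\xi|^{1/2}$ twice; I would compute this Poisson bracket carefully using $\pa_\xi|\xi|^{1/2}=\tfrac12\sgn(\xi)|\xi|^{-1/2}$ to see it lands at order $0$ with symbol proportional to $\sgn(\xi)$, producing the term $\ii\mathtt{b}_2^{(3)}\sgn(\xi)$ with $\mathtt{b}_2^{(3)}=\tfrac12(\beta_3)_x$.

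Next I would impose the homological equation: by \eqref{eq:415bis} and the same computation as in the proof of Lemma \ref{lem:56},
\[
\pa_t\beta_3(U;x)=\frac{1}{\sqrt{2\pi}}\sum_{n\neq0}(-\ii\omega_n)\beta_{3,n}^+e^{\ii nx}u_n+\cdots
\]
where the coefficients are read off \eqref{beta-4}, so that $-\mathtt{a}_2^{(2)}+\pa_t\beta_3$ equals, up to a cubic function in $\mathcal F^{\R}_{K,1,3}$, minus the part of $\mathtt{a}_2^{(2)}$ supported on the ``non-resonant'' sites, i.e.\ everything except the $n_1=-n_2$ harmonics (the $n_1=n_2$ diagonal part already vanishes by \eqref{A22bis}, $(\mathtt{a}_2^{(2)})^{+-}_{n,n}=0$, and the $\s\s$ and the diagonal $+-$ pieces are killed). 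What survives is exactly $\mathtt{a}_2^{(3)}$ in \eqref{nuovicoeff400}, the $e^{\ii 2nx}$-harmonic piece. Hence $-\ii(1+\mathtt{a}_2^{(2)})|\xi|^{1/2}$ becomes $-\ii(1+\mathtt{a}_2^{(3)})|\xi|^{1/2}$ after conjugation; note that the diagonal part of the order-$1/2$ symbol has been entirely removed, which is why no constant-in-$x$ dispersive correction of degree two appears. The transport term $-\ii\zeta(U)\xi$ is unchanged at quadratic order because $\zeta(U)$ is already constant in $x$ and the relevant Poisson bracket $\{\beta_3|\xi|^{1/2},-\ii\zeta\xi\}$ is cubic. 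The symbols $\mathtt{r}_1^{(1)}$ (order $-1/2$, linear) is untouched at this order while $\mathtt{r}_2^{(2)}$ is conjugated into a new $\mathtt{r}_2^{(3)}\in\widetilde\Gamma_2^{-1/2}$; the admissible symbol and the $1\times2$ smoothing matrix are updated by difference, using Propositions \ref{teoremadicomposizione}, \ref{composizioniTOTALI}, and by renaming $\rho-2m$ as $\rho$ exactly as in the previous lemmata.

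Finally I would verify \eqref{condcoeffB400}. Since $\mathtt{b}_2^{(3)}=\tfrac12(\beta_3)_x$, and in Fourier $(\beta_3)_x$ has coefficients $\ii(n_1+n_2)(\beta_3)^{\s\s}_{n_1,n_2}$ on the $e^{\ii\s(n_1+n_2)x}$ harmonics and $\ii(n_1-n_2)(\beta_3)^{+-}_{n_1,n_2}$ on the $e^{\ii(n_1-n_2)x}$ harmonics, the coefficient $(\mathtt{b}_2^{(3)})^{+-}_{n,n}$ involves $(n-n)=0$ and hence vanishes, while $(\mathtt{b}_2^{(3)})^{+-}_{n,-n}$ is proportional to $(\beta_3)^{+-}_{n,-n}$, which is zero by the convention $(\beta_3)^{+-}_{n,\s n}:=0$ imposed right after \eqref{beta-4}. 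This gives \eqref{condcoeffB400}. The step I expect to be most delicate is the bookkeeping of the order-$0$ symbol produced by the Poisson bracket $\{\beta_3|\xi|^{1/2},-\ii|\xi|^{1/2}\}$ together with any contribution of $\mathtt{a}_1$ coupling into it: one must confirm that this order-$0$ term is precisely of the form $\ii\mathtt{b}_2^{(3)}\sgn(\xi)$ with $\mathtt{b}_2^{(3)}$ as in \eqref{nuovicoeff400} and does not hide an extra integrable or non-integrable piece; this is the analogue of the ``Step 3'' computation in Lemma \ref{lem:5.5} and requires the explicit symbolic expansion \eqref{simbLie} of the conjugated symbol together with the identity $\pa_\xi^2|\xi|^{1/2}=-\tfrac14|\xi|^{-3/2}$, whose contribution must be checked to be of order $-3/2$ and hence absorbed into $\mathtt{r}_2^{(3)}$.
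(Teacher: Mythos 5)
Your proposal follows essentially the same route as the paper: conjugation by the flow \eqref{flusso44}, identification of the only quadratic contributions $\ii\,\opbw(\pa_t\beta_3\,|\xi|^{1/2})$ and $\{\beta_3|\xi|^{1/2},-\ii|\xi|^{1/2}\}=\tfrac{\ii}{2}(\beta_3)_x\sgn(\xi)$, cancellation of the off-$(n_1=\pm n_2)$ part through the choice \eqref{beta-4} together with \eqref{eq:415bis}, the use of $(\mathtt{a}_2^{(2)})^{+-}_{n,n}=0$ from \eqref{A22bis} so that only the $e^{\ii 2nx}$ piece $\mathtt{a}_2^{(3)}$ survives, and the Fourier identity $(\mathtt{b}_2^{(3)})^{+-}_{n_1,n_2}=\tfrac12(\beta_3)^{+-}_{n_1,n_2}\,\ii(n_1-n_2)$ for \eqref{condcoeffB400}. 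One small correction: since the generator $\beta_3(U;x)|\xi|^{1/2}$ has order $1/2$, the conjugation is justified by Lemmata \ref{flusso12basso} and \ref{flusso12basso2} (with $m\rightsquigarrow 1/2$, $m'\rightsquigarrow 1$), not by the transport-flow Lemmata \ref{conjFlowpara}, \ref{conjFlowpara2} you cite, although this does not affect any of your symbol computations.
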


\begin{proof}
By \eqref{beta-4} and  \eqref{condreal2} we deduce that $ \beta_{3} $  is a real function.
To conjugate system  \eqref{WWC3} 
we apply Lemmata \ref{flusso12basso} and \ref{flusso12basso2} with $m\rightsquigarrow 1/2$ and $m'\rightsquigarrow1$.
The only new contribution at quadratic degree of homogeneity and positive order is 
 $ \opbw( \{ \beta_3 |\x|^{\frac{1}{2}} , -\ii |\x|^{\frac12} \} ) $ and 
 $ \ii \opbw( \pa_t \beta_3 |\x|^{\frac{1}{2}} ) $. 
 Then we have 
\begin{equation}\label{WWCexp5}
\pa_t v_3 =    \opbw{ \Big( - \ii \zeta(U) \xi  
-\ii (1+\mathtt{a}_2^{(2)}} -  \pa_t \beta_3  )|\x|^{\frac{1}{2} }  + 
\ii  \frac{(\beta_3)_x}{2}  \sgn(\x) 
+ \mathtt{r}_{1}^{(1)} + \widetilde{\mathtt{r}}^{(3)}_2 
 + H_{\geq 3} \big)v_3+ R(U)[V_3]
\end{equation}
where $\widetilde{\mathtt{r}}^{(3)}_2 \in \widetilde{\Gamma}^{-\frac{1}{2}}_{2}$, the symbol
$ H_{\geq 3} \in \Gamma^{1}_{K,K',3}$ is  admissible 
and $ R(U) $ is a $1\times2$ matrix of smoothing operators in  
$\Sigma\mathcal{R}^{-\rho}_{K,K',1}$. 
By \eqref{beta-4} and \eqref{eq:415bis} we have
\begin{equation}\label{equaomo40}
\begin{aligned}
-\ii \mathtt{a}^{(2)}_2  +\ii \pa_t \beta_3  =
-
\ii \, {  \sum_{n \in \Z\setminus\{0\}} (\mathtt{a}^{(2)}_{2})_{n, n}^{+-} |u_n|^2 + 
(\mathtt{a}^{(2)}_{2})_{n,-n}^{+-} u_n \ov{u_{-n}} e^{\ii 2 n x }  }
\end{aligned}
\end{equation}
up to a function $f_{\geq 3} $ in $ \mathcal{F}_{K,1,3}^\R $. The conjugation of the remainder $R^{(2)}(U)$ in \eqref{WWC3}
is performed as in {\sc Step 5} of Lemma \ref{lem:5.5}. 
In conclusion, \eqref{WWCexp5}-\eqref{equaomo40} and the vanishing of the coefficients 
\eqref{A22bis} 
imply  \eqref{WWCexp6}-\eqref{nuovicoeff400}. Finally, \eqref{condcoeffB400} follows from
$ ({\mathtt b}_{2}^{(3)})^{+-}_{n_1, n_2}= 
\frac{1}{2}(\beta_{3})^{+-}_{n_1,n_2}(\ii n_1-\ii n_2)$.
\end{proof}

\begin{remark}\label{vedizeta}
The cancellation $(\mathtt{a}^{(2)}_{2})_{n, n}^{+-} = 0$ in \eqref{A22bis}
does not follow from the properties of reality, parity and reversibility of the water waves equations.
This appears to be an intrinsic property of the gravity water waves system \eqref{eq:113}
and is, of course, 
in agreement with the normal form identification of Section \ref{sec:NFI}.
We notice however that one would  not need to prove this property 
for the sequel of the proof, since the symbol  
$ \ii \, \sum_{n \in \Z\setminus\{0\}} (\mathtt{a}^{(2)}_{2})_{n,n}^{+-} |u_n|^2 $ is integrable and  
the coefficients $ (\mathtt{a}^{(2)}_{2})_{n,n}^{+-} $ are real 
(by \eqref{condreal2} and since the function $\mathtt{a}_{2}^{(2)}$ is  real).
\end{remark}

\subsubsection{Elimination of the $x$-dependence at order $ 1/2$ up to $ O(u^3)$}\label{scarlatto}

The aim of this section is to  cancel out the operator
\begin{equation}\label{QI1}
- \ii \opbw \Big(\frac{1}{2\pi} \sum_{n\in \mathbb{Z}}(\mathtt{a}^{(2)}_{2})_{n, - n}^{+-} u_n \ov{u_{-n}} e^{\ii 2 n x }\Big) 
\end{equation}
arising by the non-integrable part of the function  $\mathtt{a}^{(3)}_2(U;x)$ in \eqref{nuovicoeff400}.
We argue in a way inspired by Section 12 in \cite{BBHM}, noticing 
that the symbol in \eqref{QI1} is a prime integral up to cubic terms $O(u^3)$.

We conjugate \eqref{WWCexp6} under the flow
\begin{equation}\label{quasiconst}
\partial_{\theta} \Phi_{4}^{\theta}(U) = \ii  \opbw{( b_{4} (U; \theta, x) \xi )} \Phi_{4}^{\theta}(U) \, , \quad 
\Phi_{4}^{0}(U) = {\rm Id} \, ,  
\end{equation}
where $ b_{4} $ is defined as in \eqref{flow1} in terms of a real valued 
function $ \beta_{4} (U; x)  \in \widetilde{\mathcal F}_2^\R $ 
of the same form of the symbol in \eqref{QI1}, i.e.  
\be\label{choice-beta2}
\beta_4 (U; x) = \frac{1}{2\pi}\sum_{n \in \Z\setminus\{0\} } (\beta_4)_{n, - n}^{+-} u_n \ov{u_{-n}} e^{\ii 2 n x } \, . 
\ee
The flow in \eqref{quasiconst} is well-posed by Lemma \ref{buonflusso}. We set
\begin{equation}\label{newcoord500}
V_4 :=\vect{v_4}{\ov{v_4}}=\big({\bf \Phi}_{4}^{\theta}(U)[V_3]\big)_{|_{\theta=1}}=
\left(\begin{matrix} 
\Phi_{4}^{\theta}(U)[v_3]\\
\ov{\Phi_{4}^{\theta}(U)}[\ov{v_3}]
\end{matrix}
\right)_{|_{\theta=1}}
\end{equation}
where $\ov{\Phi_{4}^{\theta}(U)} $ is defined as in \eqref{opeBarrato}. 

\begin{lemma}\label{lem:510}
Define  the function $ \beta_4  \in \widetilde{\cal F}_2^\R $ as in \eqref{choice-beta2}  with coefficients  
\be\label{zero-12}
(\beta_4)^{+-}_{n, - n} := \frac{(\mathtt{a}^{(2)}_2)_{n, - n}^{+-} }{\ii  n } \, , \, \quad n\neq 0 \, , 
\qquad (\beta_4)^{+-}_{0, 0}:=0  \, .
\ee
If  $v_3$ solves  \eqref{WWCexp6} then 
\begin{equation}\label{WWCexp7}
\begin{aligned}
\pa_t v_4 &=   \opbw{\big( -\ii  \zeta(U) \xi  - \ii |\x|^{\frac{1}{2}}
+ \ii  \mathtt{b}^{(3)}_2 \sgn(\x)
+  \mathtt{r}_1^{(1)}+{\mathtt{r}}^{(3)}_2 + {H}^{(4)}_{\geq 3} \big)}v_4+ R^{(4)}(U)[V_4] 
\end{aligned}
\end{equation}
where the symbols  $ \mathtt{b}_2^{(3)}$, $\mathtt{r}_1^{(1)}$, $ \mathtt{r}^{(3)}_2$
are the same of equation \eqref{WWCexp6},  
the symbol ${H}^{(4)}_{\geq 3} \in \Gamma^{1}_{K,K',3}$ is admissible
and $ R^{(4)}(U) $ is a $1\times2$ matrix of smoothing operators in  
$\Sigma\mathcal{R}^{-\rho}_{K,K',1}$.  
\end{lemma}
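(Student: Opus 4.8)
The plan is to mimic the structure already established in Lemmas \ref{lem:5.5}, \ref{lem:56} and the lemma preceding this one: conjugate the equation \eqref{WWCexp6} by the time-$1$ flow $\Phi_4^{1}(U)$ generated by \eqref{quasiconst}, choose the coefficients $(\beta_4)^{+-}_{n,-n}$ so as to cancel the offending operator \eqref{QI1}, and check that all the other terms are either preserved, or fall into the admissible class, or into a smoothing remainder of the required order. First I would record that, since $\beta_4 \in \widetilde{\mathcal F}_2^\R$ is purely quadratic in $u$, the conjugation of any operator by $\Phi_4^{1}(U)$ produces, besides the operator itself, only contributions of homogeneity degree at least $3$ in $u$ plus smoothing remainders; this is exactly the content of Lemmata \ref{conjFlowpara}, \ref{conjFlowpara2} (and the analogous statements for the flows generated by symbols of order $1/2$). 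In particular, the dispersive operator $-\ii|\x|^{1/2}$ (a constant-coefficient Fourier multiplier) commutes with $\opbw(b_4\xi)$ up to terms of order $\le 0$ and homogeneity $\ge 3$, and the transport term $-\ii\zeta(U)\xi$, being already constant in $x$ and quadratic in $u$, contributes only cubic corrections; so neither is altered modulo admissible symbols and smoothing remainders. The reality of $\beta_4$ follows from \eqref{zero-12} together with the relation $\overline{(\mathtt a_2^{(2)})^{+-}_{n,-n}} = (\mathtt a_2^{(2)})^{+-}_{-n,n}$ coming from \eqref{condreal2}, exactly as in the previous lemmas, and the well-posedness and bounds of the flow follow from Lemma \ref{buonflusso}.

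The heart of the argument is the homological computation for $\beta_4$. Conjugating the operator $-\ii\opbw(\mathtt a_2^{(3)}|\x|^{1/2})$ would produce a loss of $1/2$ derivative; instead the relevant cancellation uses the operator $\pa_t$. As in \eqref{quadra-rem}–\eqref{pianeta4}, $\pa_t\beta_4$ contains the linear-in-$\omega_n$ part $\frac{1}{2\pi}\sum_{n}(-\ii(\omega_n-\omega_{-n}))(\beta_4)^{+-}_{n,-n}u_n\ov{u_{-n}}e^{\ii 2nx}$ — wait, more carefully, differentiating $\beta_4$ in \eqref{choice-beta2} along \eqref{eq:415bis} gives the quadratic piece $\frac{1}{2\pi}\sum_n \ii n (\beta_4)^{+-}_{n,-n} u_n\ov{u_{-n}}e^{\ii2nx}$ plus a cubic remainder in $\mathcal F^\R_{K,1,3}$. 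Since the conjugation of $\pa_t$ by $\Phi_4^1$ produces exactly $-\opbw((\pa_t\beta_4)\xi)$ at the highest order (cf. \eqref{stratra3}, \eqref{betatV}), and since the conjugation of the term $-\ii\opbw(\mathtt a_2^{(3)}|\x|^{1/2})$ via the flow \eqref{quasiconst} generated by a symbol of order $1$ would instead give a term of order $1/2$, I need to pair things properly: here the mechanism is that $\Phi_4^1$ is the paradifferential version of the diffeomorphism $x\mapsto x+\beta_4(x)$, which (being quadratic in $u$) turns the $x$-dependent part $\frac{1}{2\pi}\sum_n(\mathtt a_2^{(2)})^{+-}_{n,-n}u_n\ov{u_{-n}}e^{\ii2nx}|\x|^{1/2}$ of $\mathtt a_2^{(3)}|\x|^{1/2}$ into $0$ up to cubic order, by the choice \eqref{zero-12} which solves $-\ii\mathtt a_2^{(3)}+ (\text{correction from }\pa_t\text{ and from the transport }\zeta\xi)=0$. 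Concretely I would isolate, after conjugation, the coefficient of $u_n\ov{u_{-n}}e^{\ii2nx}|\x|^{1/2}$, observe it equals $-\ii(\mathtt a_2^{(2)})^{+-}_{n,-n} + \ii n (\beta_4)^{+-}_{n,-n}$ (the second term coming from the commutator $[\,-\ii\zeta(U)\xi\, ,\,\opbw(\beta_4\xi)\,]$ or directly from $\pa_t$ acting through \eqref{eq:415bis}), and set it to zero — which is precisely \eqref{zero-12}.

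Then I would collect the surviving terms: the transport $-\ii\zeta(U)\xi$ and dispersive $-\ii|\x|^{1/2}$ are unchanged (the integrable part $\frac{1}{2\pi}\sum_n(\mathtt a_2^{(2)})^{+-}_{n,n}|u_n|^2|\x|^{1/2}$ vanishes by \eqref{A22bis}, so no new order-$1/2$ term appears); the term $\ii\mathtt b_2^{(3)}\sgn(\x)$ is preserved up to admissible corrections, since conjugating an order-$0$ operator by $\Phi_4^1$ changes it only by terms of order $\le 0$ and homogeneity $\ge 3$; the symbols $\mathtt r_1^{(1)}\in\widetilde\Gamma_1^{-1/2}$ and $\mathtt r_2^{(3)}\in\widetilde\Gamma_2^{-1/2}$ are preserved modulo a new symbol of the same class (absorbed into $\mathtt r_2^{(3)}$, which I still call the same) plus an admissible/smoothing correction; the admissible symbol $H^{(3)}_{\geq3}$ conjugates to a new admissible $H^{(4)}_{\geq3}\in\Gamma^1_{K,K',3}$ by the composition rules under Definition \ref{def:admissible} and items (iv)–(v) of Proposition \ref{composizioniTOTALI}; and finally the smoothing remainder $R^{(3)}(U)$ conjugates to $R^{(4)}(U)\in\Sigma\mathcal R^{-\rho}_{K,K',1}$ (renaming $\rho-2m$ as $\rho$), exactly as in {\sc Step 5} of the proof of Lemma \ref{lem:5.5}, using Lemma \ref{buonflusso}-(ii) and Proposition \ref{composizioniTOTALI}-(ii). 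The main obstacle I anticipate is bookkeeping: making sure that the flow \eqref{quasiconst}, generated by a \emph{first-order} symbol $b_4\xi$ rather than by an order-$1/2$ symbol, does not reintroduce a net order-$1$ or order-$1/2$ $x$-dependent term — i.e., verifying that the only degree-$2$ effect at positive order is the cancellation of \eqref{QI1} and nothing else, which hinges on the fact that $\beta_4$ is quadratic and that, after the previous reductions, there is no remaining degree-$1$ symbol of order $\ge 1/2$ to interact with.
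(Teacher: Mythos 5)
Your overall scheme (conjugate \eqref{WWCexp6} by the flow of \eqref{quasiconst}, choose $\beta_4$ to kill \eqref{QI1}, absorb everything else into admissible symbols and $\rho$-smoothing remainders as in {\sc Step 5} of Lemma \ref{lem:5.5}) is the paper's scheme, but you misidentify the central cancellation, and as stated your argument would fail. You assert in the first paragraph that $-\ii|D|^{1/2}$ commutes with $\opbw(b_4\xi)$ up to terms of order $\leq 0$ and homogeneity $\geq 3$, and then in the second paragraph that the quadratic term cancelling $-\ii\,\mathtt{a}_2^{(3)}|\xi|^{1/2}$ comes from $\pa_t\beta_4$ (or from $[\,-\ii\opbw(\zeta(U)\xi),\opbw(\beta_4\xi)]$). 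Both claims are wrong. Since $\omega_{-n}=\omega_n=\sqrt{|n|}$, differentiating $u_n\ov{u_{-n}}$ along \eqref{eq:415bis} gives $-\ii(\omega_n-\omega_{-n})u_n\ov{u_{-n}}+O(u^3)=O(u^3)$: your first formula was the correct one, and the ``more careful'' replacement by $\ii n(\beta_4)^{+-}_{n,-n}$ has no source --- time differentiation produces the frequencies $\sqrt{|n|}$, never a factor $n$. Hence $\pa_t\beta_4\in\mathcal{F}^{\R}_{K,1,3}$ and the $\pa_t$-contribution is entirely admissible; this is precisely the ``prime integral up to cubic terms'' feature of \eqref{choice-beta2} that motivates this step, and it is also why the small-divisor device of the previous lemmas cannot remove these resonant monomials. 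Likewise $[\,-\ii\opbw(\zeta(U)\xi),\opbw(\beta_4\xi)]$ is quartic in $u$ (both symbols are quadratic) and of order $1$, so it cannot produce an $|\xi|^{1/2}$ symbol. Under your stated assumptions no quadratic order-$1/2$ term is generated at all, and the non-integrable operator \eqref{QI1} would simply survive the conjugation.

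The actual mechanism, which is what the paper uses, is the commutator of the generator with the dispersive term: by Lemma \ref{conjFlowpara}, conjugating $-\ii|D|^{1/2}$ produces at quadratic order the Poisson bracket $\{\beta_4\xi,-\ii|\xi|^{1/2}\}=\tfrac{\ii}{2}(\beta_4)_x|\xi|^{1/2}$, a symbol of order $1/2$ and homogeneity $2$ (not order $\leq0$, homogeneity $\geq3$). With the choice \eqref{zero-12} this equals $\ii\,\mathtt{a}_2^{(3)}|\xi|^{1/2}$ and cancels the offending term; note that your homological equation $-\ii(\mathtt{a}_2^{(2)})^{+-}_{n,-n}+\ii n(\beta_4)^{+-}_{n,-n}=0$ is off by a factor $\ii$ from \eqref{zero-12}, a symptom of attributing the correction to the wrong operator. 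Once this step is repaired, the rest of your bookkeeping (preservation of $-\ii\zeta(U)\xi$ and of $\ii\,\mathtt{b}_2^{(3)}\sgn(\xi)$ modulo admissible terms, the use of \eqref{A22bis} to exclude a new integrable order-$1/2$ piece, the conjugation of $\mathtt{r}_1^{(1)},\mathtt{r}_2^{(3)}$, of $H^{(3)}_{\geq3}$ and of $R^{(3)}(U)$ via Lemma \ref{buonflusso} and Proposition \ref{composizioniTOTALI}, and the reality of $\beta_4$) is in line with the paper's proof.
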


\begin{proof}
In order  to  conjugate \eqref{WWCexp6} we apply Lemmata \ref{conjFlowpara} and \ref{conjFlowpara2}.
The contribution coming from the conjugation of $ \pa_t $ is $ \ii  \opbw{  \big((\pa_t \beta_{4} ) \xi  \big)} v_4 $
plus a paradifferential operator with  symbol $ \ii (- (\b_4)_x (\b_4)_t + g_{\geq 3}) \xi $ (see \eqref{expg}),
which is admissible,  
 and a smoothing remainder in 
$ \Sigma {\mathcal R}_{K, 1,1}^{- \rho }$.
Recalling \eqref{eq:415bis} we have
\begin{equation} \label{quasi-pi}
\begin{aligned}
\frac{d}{dt} \sum_{n \in \Z\setminus\{0\} } (\beta_4)_{n, - n}^{+-} u_n \ov{u_{-n}} e^{\ii 2 n x }  
&  =  \sum_{n \in \Z\setminus\{0\} } (\beta_4)_{n, - n}^{+-}  
 \big( - \ii \omega_n u_n   \ov{u_{-n}}  +   u_n    \ii \omega_{-n} \ov{ u_{-n}}  \big)  e^{\ii 2 n x }+
 h_{\geq 3} \\
 &  =  h_{\geq 3}
 \end{aligned}
 \end{equation}
 because $ \omega_{-n} = \omega_n $ and where, arguing as in the proof of Lemma \ref{lem:5.5},
$ h_{\geq 3} $ is a   function  in $ \mathcal{F}^{\R}_{K,1, 3} $.
This implies that 
the function $ \pa_t \beta_4  $ is in $ \mathcal{F}^{\R}_{K,1, 3} $
and therefore  $\ii (\pa_t \beta_{4} ) \xi $ is an admissible symbol.

Lemma \ref{conjFlowpara} implies that 
 the conjugation of the spatial  operator in  \eqref{WWCexp6} is a paradifferential operator with  symbol 
\be\label{dallosp62}
- \ii  \zeta(U) \xi   - \ii (1+\mathtt{a}_2^{(3)})|\x|^{\frac{1}{2}} + 
 \{ \beta_4 \xi,- \ii |\xi|^{1/2}\} 
 +\ii  \mathtt{b}^{(3)}_2 \sgn (\x)
 + \mathtt{r}_1^{(1)} +{\mathtt{r}}^{(3)}_2 
\ee 
 plus a symbol in $ \Sigma \Gamma^{-3/2}_{K,K',1} $ an admissible symbol and a smoothing operator in 
 $ \Sigma {\mathcal R}_{K, 1,1}^{- \rho + 1 }$.  
Notice that  $ \{ \beta_4 \xi,- \ii |\xi|^{\frac{1}{2}}\} = \frac{\ii}{2} (\beta_4)_x |\xi |^{\frac12 }$
and that this equals $\mathtt{a}_2^{(3)} \ii |\x|^{1/2}$
 in view of the definitions of $\beta_4$ in \eqref{choice-beta2} and \eqref{zero-12}, 
 and of $\mathtt{a}_2^{(3)} $ in \eqref{nuovicoeff400}.
 It follows that the symbol in \eqref{dallosp62} reduces to  
$$
- \ii  \zeta(U) \xi - \ii
  | \x |^{\frac{1}{2}} + \ii  \mathtt{b}^{(3)}_2 \sgn (\x) + \mathtt{r}_1^{(1)} +{\mathtt{r}}^{(3)}_2 \, . 
$$  
We have therefore obtained \eqref{WWCexp7} (after slightly redefining $\rho$) as desired. 
\end{proof}

\subsubsection{Integrability at order $ 0 $}\label{inteord0}
Our aim here is to  eliminate in \eqref{WWCexp7} the zero-th order paradifferential operator 
$ \opbw\big( \ii  \mathtt{b}^{(3)}_2 \sgn (\x)\big) $.
We conjugate \eqref{WWCexp7} with  the flow 
\begin{equation}\label{flusso668}
\partial_{\theta} \Phi_{5}^{\theta}(U) =   
\opbw{(  \ii \b_5(U;x) \,  \sign (\xi) )} \Phi_5^{\theta}(U) \, , \quad \Phi_5^{0}(U) = {\rm Id} \, ,  
\end{equation}
where $ \b_{5} (U; x)  \in \widetilde{\mathcal F}_2^\R $
is a real valued function. 
We introduce the variable
\begin{equation}\label{newcoord700}
V_5 :=\vect{v_5}{\ov{v_5}}=\big({\bf \Phi}_5^{\theta}(U)[V_4]\big)_{|_{\theta=1}}=
\left(\begin{matrix} 
\Phi_{5}^{\theta}(U)[v_4]\\
\ov{\Phi_{5}^{\theta}(U)}[\ov{v_4}]
\end{matrix}
\right)_{|_{\theta=1}}
\end{equation}
where 
$\ov{\Phi_{5}^{\theta}(U)} $ is defined as in \eqref{opeBarrato}. 

\begin{lemma}
Define $  \b_{5}  \in \widetilde{\mathcal F}_2^\R $ (of the form \eqref{eq:F2}) with 
\begin{align}
& \label{betaquadratico}
(\b_{5})^{\s\s}_{n_1, n_2} := \frac{(\mathtt{b}_{2}^{(3)})^{\s\s}_{n_1, n_2}}{\ii \s(\omega_{n_1} + \omega_{n_2})} \, , 
\;\;\s=\pm \, ,
\quad 
(\b_{5})^{+-}_{n_1, n_2} :=  \frac{(\mathtt{b}_{2}^{(3)})^{+-}_{n_1, n_2}}{\ii (\omega_{n_1} - \omega_{n_2})} \, , \ n_1 \neq \pm n_2 \, ,
\end{align}
and $(\beta_{5})_{0,0}^{\s \s} := 0 $, $(\beta_5)^{+-}_{n,\s n}:=0$,  $ \s = \pm $.
If $v_4 $ solves  \eqref{WWCexp7} then 
\begin{equation}\label{WWCexp9}
\begin{aligned}
\pa_t v_5 &=  
 \opbw{\big( -\ii  \zeta (U) \xi 
 - \ii  |\x|^{\frac{1}{2}} +\mathtt{r}_1^{(5)}+  \mathtt{r}^{(5)}_2 +H^{(5)}_{\geq 3}\big)}v_5 + R^{(5)}(U)[V_5] 
\end{aligned}
\end{equation}
where 
$\mathtt{r}_1^{(5)}\in \widetilde{\Gamma}_{1}^{-\frac{1}{2}}$, $\mathtt{r}^{(5)}_2\in \widetilde{\Gamma}_{2}^{-\frac{1}{2}}$, 
the symbol
$ H_{\geq 3}^{(5)} \in \Gamma^{1}_{K,K',3}$ is admissible, and
$ R^{(5)} (U) $ is a $1\times 2$ matrix of smoothing  operators 
in  $\Sigma\mathcal{R}^{-\rho}_{K,K',1}$.
\end{lemma}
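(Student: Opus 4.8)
The statement to be proved is the lemma immediately following Subsection \ref{inteord0}, which conjugates \eqref{WWCexp7} to \eqref{WWCexp9} by eliminating the zeroth-order term $\opbw(\ii \mathtt{b}_2^{(3)}\sgn(\xi))$. The plan is to mimic exactly the structure used in the preceding lemmas of this section (Lemmas \ref{lem:5.5}, \ref{lem:56}, \ref{lem:510} and the intermediate one in \ref{sec:SEMIFIO}): we choose the generator $\beta_5$ of the flow \eqref{flusso668} so as to solve a homological equation killing the quadratic zeroth-order symbol, and then track how all the other symbols are affected by the conjugation. Concretely, first I would invoke the conjugation lemmas from Appendix \ref{sez:A2} (the analogues of Lemmas \ref{conjFlowpara}, \ref{conjFlowpara2}, \ref{flusso12basso}, \ref{flusso12basso2} for a flow generated by a symbol of order $0$, namely $\ii\beta_5(U;x)\sign(\xi)$) to expand $\Phi_5^1(U)$-conjugation of each term in \eqref{WWCexp7}. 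Since $\beta_5$ is quadratic in $u$ and the generator has order $0$, the Lie-expansion contributions that are quadratic and of nonnegative order come only from: (a) the conjugation of $\partial_t$, which produces $\opbw(\ii(\partial_t\beta_5)\sign(\xi)) + \text{admissible}$; (b) the commutator $[\opbw(\ii\beta_5\sign(\xi)),\, \opbw(-\ii|\xi|^{1/2})]$, whose principal symbol is $\{\beta_5\sign(\xi),-\ii|\xi|^{1/2}\}$, of order $-1/2$; and (c) the commutator with $-\ii\zeta(U)\xi$, which is of order $0$ but $\zeta(U)$ is constant in $x$ so the Poisson bracket $\{\beta_5\sign\xi, -\ii\zeta\xi\} = -\ii\zeta\partial_x\beta_5\sign(\xi)$ is itself quadratic zeroth order — wait, this must be folded into the homological equation as well, or else noted to be cubic; I would handle this by including it with $\partial_t\beta_5$ and observing (using $\partial_t u_n = -\ii\omega_n u_n + O(u^2)$ from \eqref{eq:415bis}) that the combined quadratic zeroth-order contribution is $\opbw\big(\ii(\partial_t\beta_5 - \text{lower order from }\zeta)\sign\xi\big)$ governed by the linearized flow.

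The key step is the \emph{choice of $\beta_5$} in \eqref{betaquadratico}: writing $\mathtt{b}_2^{(3)}$ in the form \eqref{eq:F2} with coefficients $(\mathtt{b}_2^{(3)})^{\sigma\sigma}_{n_1,n_2}$ and $(\mathtt{b}_2^{(3)})^{+-}_{n_1,n_2}$, the homological equation $\partial_t\beta_5 + \mathtt{b}_2^{(3)} = (\text{constant in }x) + O(u^3)$ is solved in Fourier coefficients by dividing by the phases $\ii\sigma(\omega_{n_1}+\omega_{n_2})$ and $\ii(\omega_{n_1}-\omega_{n_2})$ respectively. Here one must check the denominators do not vanish on the support of the relevant coefficients: for the $(\sigma\sigma)$-type this is automatic since $\omega_n>0$; for the $(+-)$-type with $n_1\neq\pm n_2$ it is a standard three-wave non-resonance, $\omega_{n_1}\neq\omega_{n_2} \iff |n_1|\neq|n_2|$ — this is why the cases $n_1=\pm n_2$ are excluded in \eqref{betaquadratico} and set to zero. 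Crucially, the diagonal-in-modulus coefficients that are \emph{not} killed, $(\mathtt{b}_2^{(3)})^{+-}_{n,n}$ and $(\mathtt{b}_2^{(3)})^{+-}_{n,-n}$, are precisely the ones that would obstruct reduction to zero — but by \eqref{condcoeffB400} they both vanish, so there is no leftover integrable zeroth-order term at all, and the symbol of order $0$ disappears entirely (rather than being reduced to a constant). That is the content of the statement that only $-\ii\zeta(U)\xi - \ii|\xi|^{1/2}$ plus lower-order symbols of order $-1/2$ survive. I would also verify that $\beta_5$ is real-valued, which follows from \eqref{betaquadratico} together with the reality conditions \eqref{condreal2} for $\mathtt{b}_2^{(3)}$ (which holds because $\mathtt{b}_2^{(3)} = \frac12(\beta_3)_x$ with $\beta_3$ real).

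Having fixed $\beta_5$, it remains to collect the effect of the conjugation on the remaining terms and verify the bookkeeping of symbol classes. The flow $\Phi_5^\theta(U)$ is well-posed, bounded and invertible on $\dot H^s$ uniformly in $\theta\in[0,1]$ because its generator has order $0$ (a bounded operator), giving estimates of the form \eqref{stime-diagonal}; this is a direct application of Lemma \ref{buonflusso}. Conjugating $\opbw(-\ii\zeta(U)\xi)$ produces itself plus lower-order corrections; conjugating $\opbw(-\ii|\xi|^{1/2})$ produces itself plus the order $-1/2$ commutator symbol above, absorbed into $\mathtt{r}^{(5)}_2\in\widetilde\Gamma_2^{-1/2}$; conjugating $\opbw(\mathtt{r}_1^{(1)}+\mathtt{r}_2^{(3)})$ stays in $\widetilde\Gamma_1^{-1/2}+\widetilde\Gamma_2^{-1/2}$ — so I would rename the linear $-1/2$ symbol $\mathtt{r}_1^{(5)}$; conjugating the admissible symbol $H^{(4)}_{\geq3}$ gives a new admissible $H^{(5)}_{\geq3}\in\Gamma^1_{K,K',3}$ (admissibility being stable under conjugation by such flows, as used repeatedly in this section); and conjugating the smoothing remainder $R^{(4)}(U)$, after writing $\Phi_5^1(U) = \mathrm{Id} + M(U)$ with $M\in\Sigma\mathcal{M}_{K,K',1}$ and using Proposition \ref{composizioniTOTALI}(ii), gives $R^{(5)}(U)\in\Sigma\mathcal{R}^{-\rho+2m}_{K,K',1}$ which we rename $\Sigma\mathcal{R}^{-\rho}_{K,K',1}$ after decreasing $\rho$. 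The main obstacle — really the only non-routine point — is ensuring that the homological equation has no unsolvable part, i.e. that the cancellation \eqref{condcoeffB400} is genuinely available; but this was established in the preceding lemma from the structural identity $(\mathtt{b}_2^{(3)})^{+-}_{n_1,n_2} = \frac12(\beta_3)^{+-}_{n_1,n_2}\ii(n_1-n_2)$ together with the exclusion $(\beta_3)^{+-}_{n,\sigma n}=0$, so we may simply quote it. Everything else is a careful but mechanical transcription of the symbolic calculus already deployed four times in this section.
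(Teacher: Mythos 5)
Your proposal is correct and follows essentially the same route as the paper's proof: conjugate via Lemmata \ref{flusso12basso}--\ref{flusso12basso2}, observe that the choice \eqref{betaquadratico} makes $\mathtt{b}_2^{(3)}+\pa_t\b_5$ reduce (up to cubic terms) to the resonant coefficients $(\mathtt{b}_2^{(3)})^{+-}_{n,n}$, $(\mathtt{b}_2^{(3)})^{+-}_{n,-n}$, which vanish by \eqref{condcoeffB400}, so the zeroth-order operator disappears entirely. Your brief hesitation about the commutator with $-\ii\zeta(U)\x$ resolves exactly as you conclude: since both $\zeta$ and $\b_5$ are quadratic (and $\zeta$ is $x$-independent), that bracket has homogeneity four and order $0$, hence is absorbed into the admissible symbol $H^{(5)}_{\geq3}$, consistently with the paper.
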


\begin{proof}
To conjugate \eqref{WWCexp7}
we apply Lemmata \ref{flusso12basso} and \ref{flusso12basso2}. 
By \eqref{betaquadratico} we have that
$$
\opbw\big(
\ii (\mathtt{b}^{(3)}_2  +  \pa_t \b_5 )  \sign (\xi) \big) = \ii \opbw\big( \mathtt{b}_2^{(5)} \sgn (\x)
\big),
$$
up to symbols with degree of homogeneity greater than $3$, and where
$$
\mathtt{b}_{2}^{(5)}(U;x) := \frac{1}{2\pi}\sum_{n \in \Z\setminus\{0\}} (\mathtt{b}^{(3)}_{2})_{n, n}^{+-} |u_n|^2 + 
(\mathtt{b}^{(3)}_{2})_{n, - n}^{+-} u_n \ov{u_{-n}} e^{\ii 2 n x }\stackrel{\eqref{condcoeffB400}}{=}0 \, .
$$
The lemma is proved.
\end{proof}

\begin{remark}
For a cubic vector field of the form $ \opbw ( {\mathtt c} (U; x) + \ii {\mathtt b} (U; x) \sign (\xi) ) [u] $ 
with real valued functions $ {\mathtt c}, {\mathtt b} $ 
in $ {\widetilde {\mathcal F} }^\R_2 $ the reversibility and even-to-even properties imply 
$ {\mathtt c}_{n,n}^{+-} = $ 
$ {\mathtt c}_{n,-n}^{+-} =  $ $ {\mathtt b}_{n,-n}^{+-} = 0 $, 
$  {\mathtt b}_{n,n}^{+-}  = - {\mathtt b}_{-n, -n}^{+-}  $, $  {\mathtt b}_{n,n}^{+-} \in \R $. 
The fact that actually $ ({\mathtt b}^{(3)}_2)_{n,n}^{+-} =0 $ as stated in \eqref{condcoeffB400} 
follows by other properties of the water waves equations, 
and, once again, is in agreement with the normal form identification of Section \ref{sec:INF}. 
\end{remark}

In the following subsection we will be dealing with negative order operators,
and will not need additional algebraic information about the coefficients and their vanishing.

\subsection{Integrability at negative orders}\label{sec:nega}
In this section we  algorithmically reduce the linear and quadratic symbols 
$ \mathtt{r}_1^{(5)}+  \mathtt{r}^{(5)}_2 $ of order $ - 1/ 2 $ in \eqref{WWCexp9} into an integrable one, 
plus an admissible symbol. 

\begin{proposition}\label{prop:NO}
For any $ j = 0, \ldots, 2 \rho - 1 $, there exist

\begin{itemize}

\item integrable symbols $\mathtt{p}_2^{(j)} \in {\widetilde \Gamma}_2^{- \frac12} $ 
(Definition \ref{defiintegro}), 
symbols 
$$
\mathtt{q}^{(j)}(U;x,\x) \in  \Sigma\Gamma^{-m_j}_{K,K',1} \, , \quad m_{j} :=  \frac{j+1}{2} \, , 
$$
admissible symbols 
$ H_{\geq 3}^{(j)} $ in $ \Gamma^{1}_{K,K',3} $, 
and a $1\times 2$ matrix of smoothing operators $ R^{(j)} (U) $ in  $\Sigma\mathcal{R}^{-\rho}_{K,K',1}$,

\item bounded maps $ {\bf \Upsilon}^{\theta}_{j+1}(U) $, $\theta \in [0,1]$, 
  defined as the compositions of three flows generated by paradifferential 
  operators with symbols of order $ \leq 0 $,   
  (see \eqref{Upsilonjth} and \eqref{newcoordj1}, \eqref{newcoordj1bis} and \eqref{newcoord1000}) 
\end{itemize}

\noindent
such that:  if $z_j$ solves
\begin{equation}\label{WWCexpjth}
\pa_t z_j =   \opbw\big(   - \ii  \zeta (U) \xi -\ii |\x|^{\frac{1}{2}}  +  \mathtt{p}_2^{(j)}(U;\x)  
+  \mathtt{q}^{(j)}(U;x,\x) +  H_{\geq 3}^{(j)}\big)z_{j} + R^{(j)}(U)[Z_j] \, , 
\end{equation}
then the first component  $ z_{j+1} $ of the vector defined by  
\be\label{coordZj}
Z_{j+1} = \vect{z_{j+1}}{\ov{z_{j+1}}} := \big( {\bf \Upsilon}^{\theta}_{j+1}(U) \big)_{\theta=1} Z_j  
\ee
solves an equation of the form \eqref{WWCexpjth} with $ j + 1 $ instead of $ j $. 
\end{proposition}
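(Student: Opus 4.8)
\textbf{Plan of proof (Proposition \ref{prop:NO}).}
The proof is by induction on $j$, following the same iterative block-diagonalization scheme already developed in Subsection \ref{lowOffdiag}, but now applied to the scalar diagonal symbol $\mathtt{p}_2^{(j)}+\mathtt{q}^{(j)}$ of (negative) order $-m_j$ rather than to an off-diagonal matrix symbol. The base case $j=0$ is exactly the equation \eqref{WWCexp9}, with $\mathtt{p}_2^{(0)}:=0$, $\mathtt{q}^{(0)}:=\mathtt{r}_1^{(5)}+\mathtt{r}_2^{(5)} \in \Sigma\Gamma^{-1/2}_{K,K',1}$ (so that $m_0=1/2$), $H_{\geq3}^{(0)}:=H_{\geq3}^{(5)}$ and $R^{(0)}:=R^{(5)}$. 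For the inductive step, assume \eqref{WWCexpjth} holds. First I would expand $\mathtt{q}^{(j)}$ into homogeneous components $\mathtt{q}^{(j)}=\mathtt{q}^{(j)}_1+\mathtt{q}^{(j)}_2+\mathtt{q}^{(j)}_{\geq3}$ with $\mathtt{q}^{(j)}_1\in\widetilde\Gamma_1^{-m_j}$, $\mathtt{q}^{(j)}_2\in\widetilde\Gamma_2^{-m_j}$, and absorb $\mathtt{q}^{(j)}_{\geq3}$ into the admissible symbol $H_{\geq3}^{(j)}$ (a symbol of order $-m_j<1$ that vanishes at cubic degree is admissible, being of the form $\gamma_{\geq3}$ in \eqref{highordersfinali}). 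Then, in three successive substeps, I would (a) eliminate the linear symbol $\mathtt{q}^{(j)}_1$ entirely (it has no resonant component since there are no $2$-waves resonances for $\omega(n)=\sqrt{|n|}$); (b) reduce the quadratic symbol $\mathtt{q}^{(j)}_2$ to its integrable part $\mathtt{p}^{(j+1)}_2 - \mathtt{p}^{(j)}_2$ plus a symbol of strictly lower order $-m_j-\tfrac12 = -m_{j+1}$; (c) absorb all the error terms of order $-m_{j+1}$ into the new symbol $\mathtt{q}^{(j+1)}$ of order $-m_{j+1}$, and collect the new admissible symbol and smoothing remainder.

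For each of the three substeps I would use a flow as in the earlier sections. For substep (a), conjugate \eqref{WWCexpjth} by the flow of $\partial_\theta\Phi = \ii\,\opbw(\beta(U;x)|\xi|^{-m_j}\cdot(\text{smooth cutoff}))\Phi$ for a suitable $\beta\in\widetilde{\mathcal{F}}_1^\R$; by the conjugation Lemmata \ref{flusso12basso}--\ref{flusso12basso2}, the only new contribution at degree one and order $\geq -m_j$ is $\ii\,\opbw(\pa_t\beta\,|\xi|^{-m_j}) + \opbw(\{\beta|\xi|^{-m_j},-\ii|\xi|^{1/2}\})$, so by choosing $\beta$ to solve the homological equation $\partial_t\beta\,\cdot(\text{leading factor}) = -\mathtt{q}^{(j)}_1$ in Fourier coefficients — which is uniquely solvable because $-\ii\omega_n + \tfrac{d}{dt}$ acting on $\beta^\pm_n e^{\pm\ii nx}u^\pm_n$ produces the diagonal phase $\mp\ii\omega_n$, never zero — the linear symbol is killed, up to a cubic admissible correction coming from $\pa_t$ hitting the quadratic part of the water-waves vector field, exactly as in Lemma \ref{lem:5.5}. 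For substep (b), conjugate by the flow of $\partial_\theta\Phi = \ii\,\opbw(\beta_2(U;x)|\xi|^{-m_j})\Phi$ with $\beta_2\in\widetilde{\mathcal{F}}_2^\R$ chosen to solve the quadratic homological equation with coefficients $(\beta_2)^{\sigma\sigma}_{n_1,n_2} = -(\mathtt{q}^{(j)}_2)^{\sigma\sigma}_{n_1,n_2}/(\ii\sigma(\omega_{n_1}+\omega_{n_2}))$ for the $(++),(--)$ components and $(\beta_2)^{+-}_{n_1,n_2}= -(\mathtt{q}^{(j)}_2)^{+-}_{n_1,n_2}/(\ii(\omega_{n_1}-\omega_{n_2}))$ for $n_1\neq\pm n_2$, setting the resonant coefficients to zero; what survives is precisely the integrable part $\tfrac{1}{2\pi}\sum_n (\mathtt{q}^{(j)}_2)^{+-}_{n,n}(\xi)|u_n|^2$, which defines $\mathtt{p}^{(j+1)}_2-\mathtt{p}^{(j)}_2\in\widetilde\Gamma_2^{-m_j}\subset\widetilde\Gamma_2^{-1/2}$, plus a Poisson-bracket correction of order $-m_j-\tfrac12$ and a cubic admissible term. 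Finally, substep (c) is bookkeeping: the conjugation of $\mathtt{p}_2^{(j)}$ and of $-\ii\zeta(U)\xi$, $-\ii|\xi|^{1/2}$ through these low-order flows produces only lower-order or cubic corrections (since $\beta,\beta_2$ are quadratic and the flows have symbols of non-positive order), the smoothing remainders $R^{(j)}$ transform to new smoothing remainders of possibly slightly degraded but still order $\leq -\rho$ (renaming $\rho-2m$ as $\rho$, as in {\sc Step 5} of Lemma \ref{lem:5.5}), and everything of order $-m_{j+1}$ is collected into $\mathtt{q}^{(j+1)}$. The map ${\bf\Upsilon}^\theta_{j+1}(U)$ is the composition of the three real-to-real flows, bounded and invertible by Lemmata \ref{buonflusso}, \ref{flusso12basso}; since all three generating symbols have order $\leq 0$, one gets the uniform bounds \eqref{stimFINFRAK}-type estimates with loss of at most $\|U\|_{K,s_0}$.

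Note that, as remarked at the end of Subsection \ref{inteord0}, for negative-order symbols we do not need any further algebraic cancellation: the resonant part of $\mathtt{q}^{(j)}_2$ is automatically integrable (supported on $n_1=n_2$) because the only solutions of $\sigma_1\omega(n_1)+\sigma_2\omega(n_2)=0$, $\sigma_1 n_1+\sigma_2 n_2=0$ with $\omega(n)=\sqrt{|n|}$ are the trivial ones $\{n_1=n_2,\ \sigma_1=-\sigma_2\}$ — there are no $2$-waves resonances — so the surviving symbol is of the form \eqref{simbointegro} by construction, and its resonant $(+-)_{n,-n}$ coefficients, which could in principle obstruct $x$-independence, do not arise since $(\beta_2)^{+-}_{n,-n}$ can be chosen freely (the phase $\omega_n-\omega_{-n}=0$ is resonant but the corresponding monomial $u_n\overline{u_{-n}}e^{\ii 2nx}$ would be non-integrable — however one checks, as in the proof of \eqref{equaomo}, that the constraint $\sigma_1 n_1+\sigma_2 n_2 = 0$ forces $n_1=n_2$ in the $(+-)$ case, so no such term is present). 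The main obstacle in the argument is the \emph{small divisors}: in substep (b), dividing by $\omega_{n_1}-\omega_{n_2}=\sqrt{|n_1|}-\sqrt{|n_2|}$ with $n_1\neq\pm n_2$ can be as small as $\sim\min(|n_1|,|n_2|)^{-1/2}\cdot|n_1-n_2|^{-1}$ — more precisely, near the near-resonances discussed in Step 3 of the strategy (e.g. $n_1=k$, $n_2=-k+$ large shift), the divisor degenerates like the inverse of a large frequency to the power $1/2$. This forces a loss of $1/2$ derivative at each step, so the generating symbol $\beta_2$ has order $-m_j+\tfrac12$ rather than $-m_j$, meaning the iteration gains only $1/2$ derivative per step and one must run it $2\rho$ times to reach order $-\rho$; this is why the loss is tolerated by the smoothing remainder $R^{(j)}$ (which can absorb losses since it starts with $\rho\gg1$ regularizing derivatives) and is the reason the final remainder in Proposition \ref{teodiagonal3} lands in $\Sigma\mathcal{R}^{-\rho+4m}_{K,K',1}$ rather than $\Sigma\mathcal{R}^{-\rho}$. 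Quantitative control of these divisors — i.e. the lower bound $|\omega_{n_1}-\omega_{n_2}| \gtrsim \max(|n_1|,|n_2|)^{-1/2}$ for $n_1\neq n_2$ of the same sign, together with the analogous bound for the $(++)$ case where $\omega_{n_1}+\omega_{n_2}\geq\sqrt{2}$ is bounded below — is what makes $\beta,\beta_2$ genuine symbols in the stated classes, and verifying these bounds carefully is the technical heart of the step.
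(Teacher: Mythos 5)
Your substep (b) contains the decisive gap. You claim that after solving the quadratic homological equation only the integrable part $\frac{1}{2\pi}\sum_n(\widetilde{\mathtt q}^{(j)}_2)^{+-}_{n,n}(\xi)|u_n|^2$ survives, and you justify discarding the $(+-)_{n,-n}$ coefficients by asserting that momentum conservation forces $n_1=n_2$ in the $(+-)$ case. That is false: for an $x$-dependent quadratic symbol expanded as in \eqref{eq:F2}, the monomial $u_n\ov{u_{-n}}$ carries the spatial oscillation $e^{\ii 2nx}$, and the constraint \eqref{pomosimbo2} (equivalently \eqref{def:tr-in}) is satisfied with the symbol's own spatial frequency $n_0=2n$; nothing forces $n_1=n_2$. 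These terms are resonant ($\omega_n-\omega_{-n}=0$), so they cannot be removed by your time-dependent homological equation; they are not integrable (they depend on $x$), they are quadratic, and they sit at order $-m_j$, not $-m_{j+1}$. Hence with only the two flows you construct, the induction does not close: the output equation is not of the form \eqref{WWCexpjth} with $j+1$. This is precisely why the statement (and the paper's proof) uses \emph{three} flows per step: the third generator $\gamma^{(3)}_{j+1}$ in \eqref{ultimodiffeo}--\eqref{GAMMA3JTH}, of order $-m_j+\frac12$ and supported exactly on the modes $u_n\ov{u_{-n}}e^{\ii 2nx}$, is a prime integral up to cubic terms (since $\omega_{-n}=\omega_n$), and the Poisson bracket of its symbol with $-\ii|\xi|^{1/2}$ produces $\frac{\ii}{2}(\gamma^{(3)}_{j+1})_x|\xi|^{-\frac12}\sgn(\xi)$, which cancels these non-integrable resonant terms (Lemma \ref{def:ln3}, mirroring Subsection \ref{scarlatto}). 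Recall also that at order one the analogous $(n,-n)$ terms vanished only thanks to the special cancellation \eqref{nullcoeff211} of the water waves system (Remark \ref{rem:conc}); at negative orders no such cancellation is available, and the remark preceding Subsection \ref{sec:nega} stresses that none is needed precisely because of this third, $x$-dependence-removing transformation, which your argument omits.

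Two secondary inaccuracies: in your substep (a), $\pa_t\beta$ evaluated on the quadratic part of the vector field produces a \emph{quadratic} symbol of the same order $-m_j$ (the paper's $\mathtt q_{2,j}$, which must be added to $\mathtt q^{(j)}_2$ before the quadratic reduction), not a ``cubic admissible correction''; this is harmless only if substep (b) is applied to the combined symbol $\widetilde{\mathtt q}^{(j)}_2$. Moreover, your small-divisor discussion is misplaced: dividing by $\omega_{n_1}-\omega_{n_2}\gtrsim \max(|n_1|,|n_2|)^{-1/2}$ costs powers of the coefficient frequencies $n_1,n_2$, which are absorbed in the $|\vec n|^{\mu}$ factor of \eqref{pomosimbo1}; it does not lower the order in $\xi$, and indeed the paper's generator $\gamma^{(2)}_{j+1}$ stays in $\widetilde\Gamma^{-m_j}_2$. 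The half-order increase occurs only for $\gamma^{(3)}_{j+1}$, for the structural reason above (cancellation through the bracket with $|\xi|^{1/2}$), not because of near-resonances; similarly, the loss $-\rho+4m$ in Proposition \ref{teodiagonal3} comes from composition with maps of finite loss, not from these divisors, whose genuine cost appears only in the Birkhoff reduction of the smoothing remainders in Section \ref{sec:BNF}.
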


The proof proceeds by induction.

\smallskip
\noindent
{\bf Initialization.} 
Notice that equation \eqref{WWCexp9} has the form \eqref{WWCexpjth} with $j=0$, 
denoting $z_0 := v_5$, 
$\mathtt{p}_2^{(0)} := 0$,
$\mathtt{q}^{(0)} :=\mathtt{r}_1^{(5)} +\mathtt{r}^{(5)}_2 \in \Sigma\Gamma^{- 1/2 }_{K,K',1}  $, 
and renaming $ H_{\geq 3}^{(0)} $ the admissible symbol $ H_{\geq 3}^{(5)}$ in  \eqref{WWCexp9}
and $ R^{(0)}(U)$ the smoothing operator $ R^{(5)}(U) $. 

We remark that the \emph{integrable} 
corrections $\mathtt{p}_2^{(j)}$ in \eqref{WWCexpjth} (initially $\mathtt{p}_2^{(0)} = 0 $) 
are generated by the reductions on quadratic symbols made in Lemma \ref{def:ln2} below.

\smallskip
\noindent
{\bf Iteration.} The aim of the iterative procedure is to  
cancel out the symbol $\mathtt{q}^{(j)}$ up to a symbol of order $-m_j-1/2$. This is done in two steps.

\smallskip
\noindent
{\bf Step 1: Elimination of the linear symbols of negative order.} 
We expand the symbol 
$  {\mathtt q}^{(j)}  =  {\mathtt q}_1^{(j)} + {\mathtt q}_2^{(j)}  + \cdots $
with $ {\mathtt q}_l^{(j)}  \in \Gamma^{-m_j}_l $, $ l = 1,2 $. 
In order to  eliminate the operator  $ \opbw{( {\mathtt q}_1^{(j)}(U; x, \xi)}) $   in \eqref{WWCexpjth}
we conjugate it by the flow 
\begin{equation}\label{flussojth}
\partial_{\theta} \Phi_{\g^{(1)}_{j+1}}^{\theta}(U) =   \opbw{( \g^{(1)}_{j+1}(U;x,\x) )} 
\Phi_{\g^{(1)}_{j+1}}^{\theta}(U) \, , 
\quad \Phi_{\g^{(1)}_{j+1}}^{0}(U) = {\rm Id} \, ,  
\end{equation}
where $ \g^{(1)}_{j+1}(U; x;\x) $ is a symbol in $ \widetilde{\Gamma}^{-m_j}_1 $.  
The flow  \eqref{flussojth} is well posed because the order of $ \g^{(1)}_{j+1} $ is negative.  
  We introduce the new variable 
\begin{equation}\label{newcoordj1}
\widetilde{Z}_{j+1}:=
\vect{\widetilde{z}_{j+1}}{\ov{\widetilde{z}_{j+1}}} =
\big(\mathcal{A}^{\theta}_{j+1,1}(U)[Z_{j}]\big)_{|_{\theta=1}}=
\left(\begin{matrix} 
\Phi_{\g^{(1)}_{j+1}}^{\theta}(U)[{z}_{j}]\\
\ov{\Phi_{\g^{(1)}_{j+1}}^{\theta}(U)}[\ov{{z}_j}]
\end{matrix}
\right)_{|_{\theta=1}}
\end{equation}
where the map $\ov{\Phi_{\gamma^{(1)}_{j+1}}^{\theta}(U)} $
is defined as in \eqref{opeBarrato}. 

\begin{lemma}\label{def:ln1}
Define $ \g^{(1)}_{j+1} \in \widetilde{\Gamma}_1^{-m_j}$  with coefficients  
\be\label{def:gamma}
(\g^{(1)}_{j+1})^+_n :=  \frac{(\mathtt{q}^{(j)}_1)^+_n}{\ii \omega_n}  \, , 
\quad (\g_{j+1})^-_n :=  \frac{-(\mathtt{q}^{(j)}_1)^-_n}{\ii \omega_n}  \, , 
\quad n \neq 0 \, , \quad (\g^{(1)}_{j+1})^{\s}_0 :=0 \, , \;\;\s=\pm \, .
\ee
If $ z_{j} $ solves  \eqref{WWCexpjth} then 
\begin{equation}\label{WWCexpjthfalso2}
\begin{aligned}
\pa_t \widetilde{z}_{j+1} & =   \opbw(   - \ii \zeta(U) \xi  - \ii |\x|^{\frac{1}{2}} + 
 \mathtt{p}_2^{(j)}(U;\x)  
 +  \widetilde{\mathtt{q}}^{(j)}_2(U;x,\x) 
 +  \widetilde{\mathtt{k}}^{(j)}_1(U;x,\x) + \widetilde{\mathtt{k}}^{(j)}_2(U;x,\x)  \big)\widetilde{z}_{j+1}
 \\& \quad   +\opbw\big(  H^{(j)}_{\geq 3} \big)\widetilde{z}_{j+1}
 + R^{(j)}(U)[\widetilde{Z}_{j+1}]
\end{aligned}
\end{equation}
where $  \mathtt{p}_2^{(j)}(U;\x)  \in {\widetilde \Gamma}^{- \frac12}_2 $ is the same of \eqref{WWCexpjth}, 
$$ 
\widetilde{\mathtt{q}}^{(j)}_2   \in \widetilde \Gamma_{2}^{-m_j} \, , 
\quad \widetilde{\mathtt{k}}^{(j)}_1\in\widetilde{\Gamma}_{1}^{-m_j-\frac{1}{2}} \, , \quad
\widetilde{\mathtt{k}}^{(j)}_2\in\widetilde{\Gamma}_{2}^{-m_j-\frac{1}{2}} \, , 
$$ 
the symbol $ H^{(j)}_{\geq 3} \in \Gamma^{1}_{K,K'+1,3}$ 
is admissible 
and $ R^{(j)} (U) $ 
is a $1\times2$ matrix of smoothing operators 
 in  $\Sigma\mathcal{R}^{-\rho}_{K,K',1}$.
\end{lemma}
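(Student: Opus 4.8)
\textbf{Plan of proof of Lemma \ref{def:ln1}.}
The strategy is to conjugate equation \eqref{WWCexpjth} by the time-$1$ flow $\mathcal{A}^{\theta}_{j+1,1}(U)$ in \eqref{newcoordj1}, which is a (vector-valued, real-to-real) paradifferential flow generated by the symbol $\g^{(1)}_{j+1}$ of negative order $-m_j<0$; well-posedness and invertibility of this flow, together with the relevant Sobolev bounds, follow from Lemma \ref{buonflusso} since the generator has negative order. As in the proofs of Lemmata \ref{lem:5.5}--\ref{lem:510}, I would compute the conjugated vector field using the transformation rules of Appendix \ref{sez:A2} (Lemmata \ref{conjFlowpara} and \ref{conjFlowpara2}): the conjugation of the time derivative produces the new term $\opbw(\pa_t\g^{(1)}_{j+1})$ plus paradifferential operators of strictly lower order and smoothing remainders, while the conjugation of the spatial operator $\opbw(-\ii\zeta(U)\x-\ii|\x|^{\frac12}+\mathtt{p}_2^{(j)}+\mathtt{q}^{(j)}+H^{(j)}_{\geq 3})$ produces, at the relevant order, the commutator term. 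The key algebraic point, exactly parallel to the previous subsections, is that the principal order-$1$ transport symbol $-\ii\zeta(U)\x$ is integrable (constant in $x$, cubic in $U$), so it commutes with $\opbw(\g^{(1)}_{j+1})$ up to admissible contributions; the linear-in-$U$ interaction comes only from the dispersive Fourier multiplier $-\ii|\x|^{\frac12}$ and the time derivative, giving at homogeneity $1$ and order $-m_j$ precisely the symbol
\[
\{ \g^{(1)}_{j+1}, -\ii|\x|^{\frac12}\} \ \text{-type term replaced by the multiplier action, i.e.}\quad
-\ii\,\omega(D)\,\g^{(1)}_{j+1} + \ii\,\g^{(1)}_{j+1}\,\omega(D) + \pa_t\g^{(1)}_{j+1} + \mathtt{q}^{(j)}_1\, ,
\]
acting diagonally since $\omega(D)$ is a Fourier multiplier.

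Next I would impose the homological equation that kills $\mathtt{q}^{(j)}_1$: using the expansion \eqref{eq:F1} for the linear symbols and \eqref{eq:415bis} for $\pa_t U$, the $n$-th Fourier coefficient of $\pa_t \g^{(1)}_{j+1}$ contributes $-\ii\omega_n (\g^{(1)}_{j+1})^+_n$ (on the $u$-component) plus quadratic and higher-order terms coming from the nonlinearity $F_2(U)+F_{\geq 3}(U)$. Choosing $(\g^{(1)}_{j+1})^\pm_n$ as in \eqref{def:gamma} — which is admissible since there are no $3$-wave resonances, so $\omega_n\ne 0$ for $n\ne 0$ — cancels exactly the linear order-$(-m_j)$ symbol $\mathtt{q}^{(j)}_1$. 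The reality of $\g^{(1)}_{j+1}$, i.e. the condition $\overline{(\g^{(1)}_{j+1})^+_n}=(\g^{(1)}_{j+1})^-_n$ of \eqref{condreal2}, follows from the reality of $\mathtt{q}^{(j)}$ (it is the symbol of a real-to-real operator) combined with $\omega_n=\omega_{-n}\in\R$. The remaining terms are then sorted by homogeneity and order: the quadratic part of $\pa_t\g^{(1)}_{j+1}$ (coming from $F_2(U)$) together with the quadratic commutator terms gives $\widetilde{\mathtt{q}}^{(j)}_2\in\widetilde\Gamma_2^{-m_j}$; the sub-leading contributions from symbolic calculus (which lose $1/2$ in order) give $\widetilde{\mathtt{k}}^{(j)}_1\in\widetilde\Gamma_1^{-m_j-\frac12}$ and $\widetilde{\mathtt{k}}^{(j)}_2\in\widetilde\Gamma_2^{-m_j-\frac12}$; the cubic part of $\pa_t\g^{(1)}_{j+1}$ (coming from $F_{\geq 3}(U)$ and from the quadratic $\times$ quadratic crossings) together with the conjugation of $H^{(j)}_{\geq 3}$ and of the lower-order spatial symbol contributes to the new admissible symbol, now in $\Gamma^{1}_{K,K'+1,3}$ (one more time-derivative is consumed, hence $K'\rightsquigarrow K'+1$), using that admissible symbols are stable under such conjugations as in the previous lemmata. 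Finally, the smoothing remainder $R^{(j)}(U)$ is conjugated as in {\sc Step 5} of Lemma \ref{lem:5.5}: writing $\Phi_{\g^{(1)}_{j+1}}^{1}(U)=\mathrm{Id}+M(U)$ with $M(U)\in\Sigma\mathcal{M}_{K,K',1}$ by Lemma \ref{buonflusso}, and applying Proposition \ref{composizioniTOTALI}-(ii), one gets a smoothing operator again in $\Sigma\mathcal{R}^{-\rho}_{K,K',1}$ (after renaming $\rho$).

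The main obstacle is bookkeeping rather than a genuine conceptual difficulty: one must carefully track the orders and homogeneities of all terms produced by the Lie-type expansions of Appendix \ref{sez:A2} — in particular verifying that the only order-$(-m_j)$, homogeneity-$1$ output is $\pa_t\g^{(1)}_{j+1}+[\text{multiplier commutator}]+\mathtt{q}^{(j)}_1$, with everything else either of strictly lower order, of higher homogeneity, admissible, or smoothing — and ensure that the real-to-real / $x$-translation-invariance structure \eqref{vinello}, \eqref{def:tr-in} is preserved at every step so that the conjugated system genuinely has the claimed form with $\widetilde z_{j+1}$ on the first component and its conjugate on the second. Since the generator has negative order, there is no loss of derivatives here (unlike in the Birkhoff normal form steps of Section \ref{sec:BNF}), so the flow estimates are straightforward and the only care needed is the algebra of the symbol classes $\widetilde\Gamma^{m}_p$, $\Gamma^m_{K,K',p}$ and $\mathcal{R}^{-\rho}_{K,K',p}$ under composition, which is exactly what Propositions \ref{teoremadicomposizione}--\ref{composizioniTOTALI} provide.
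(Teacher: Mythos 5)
Your proposal is correct and follows essentially the same route as the paper: conjugation by the flow generated by $\opbw(\g^{(1)}_{j+1})$, cancellation of the linear symbol $\mathtt{q}^{(j)}_1$ against $\pa_t\g^{(1)}_{j+1}$ via the choice \eqref{def:gamma} together with \eqref{eq:415bis}, sorting of the remaining contributions by order and homogeneity (with the order $-m_j-\frac12$ terms coming from the conjugation of $-\ii|\x|^{\frac12}$), and treatment of the smoothing remainder as in Step 5 of Lemma \ref{lem:5.5}. The only slips are peripheral: since the generator has negative order, the relevant appendix lemmas are Lemmata \ref{flusso12basso} and \ref{flusso12basso2} rather than \ref{conjFlowpara}--\ref{conjFlowpara2} (which concern the transport flow \eqref{sim1}), and no reality condition on $\g^{(1)}_{j+1}$ is needed for the well-posedness of \eqref{flussojth}.
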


\begin{proof}
In order to conjugate \eqref{WWCexpjth}
we apply Lemmata \ref{flusso12basso} and \ref{flusso12basso2}.
The only contributions at homogeneity degree $ 1 $ and order $ - m_j $ are given by 
$ \opbw{( \mathtt{q}^{(j)}_1  +  \pa_t\g^{(1)}_{j+1}  )}  $
up to smoothing remainders. From the time contribution a symbol which has homogeneity
2 and order less or equal $-m_{j}-1/2$ appears (see the term $r_1$ in \eqref{coniugato1001} of Lemma \ref{flusso12basso2}).
By \eqref{def:gamma} and \eqref{eq:415bis} we have that
$$
\mathtt{q}^{(j)}_1   +  \pa_t \g^{(1)}_{j+1} = {\mathtt q}_{2,j} + {\mathtt q}_{\geq 3} \, , 
\quad {\mathtt q}_{2,j}  \in {\widetilde \Gamma}^{-m_j}_{2} \, , \quad
{\mathtt q}_{\geq 3}  \in \Gamma^{-m_j}_{K,1,3} \, ,
$$
and we set $ \widetilde{\mathtt{q}}^{(j)}_2  := \mathtt{q}^{(j)}_2 + {\mathtt q}_{2,j}$,
and absorb ${\mathtt q}_{\geq 3}$ in the admissible symbol  $H^{(j)}_{\geq 3}$.
The contributions in \eqref{WWCexpjthfalso2} 
at order less or equal  $-m_j-\frac{1}{2}$, and homogeneity $\leq 2$
come from the conjugation of the spatial operator $-\ii|\x|^{1/2}$.
In particular, using formula \eqref{coniugato1000}, we can set $\widetilde{\mathtt{k}}^{(j)}_1 := - 
\frac{\ii}{2} (\gamma_{j+1}^{(1)})_x |\x|^{-\frac{1}{2}}\sign(\x)$  
and obtain \eqref{WWCexpjthfalso2} with some $\widetilde{\mathtt{k}}^{(j)}_2$ in $\Gamma^{-m_{j}-1/2}_2$. 
\end{proof}

\smallskip
\noindent
{\bf Step 2: Reduction of the quadratic symbols of negative order.} 
We now  cancel out
 the symbol $\widetilde{\mathtt{q}}^{(j)}_2$ in \eqref{WWCexpjthfalso2}, up to an integrable one
 and a lower order symbol.
Following Section \ref{inteord12} we use two different
 transformations.

\smallskip
\noindent
{\sc Elimination of the time dependence up to $O(u^{3})$.} 
We consider the flow  generated by
\begin{equation}\label{flussojth3}
\partial_{\theta} \Phi_{\g^{(2)}_{j+1}}^{\theta}(U) =   
\opbw{( \g^{(2)}_{j+1}(U;x,\x) )} \Phi_{\g^{(2)}_{j+1}}^{\theta}(U) \, , 
\quad \Phi_{\g^{(2)}_{j+1}}^{0}(U) = {\rm Id} \, ,  
\end{equation}
where $ \g^{(2)}_{j+1}(U; x;\x) $ 
is a symbol in  $\widetilde{\Gamma}^{-m_j}_{2}$. 
We introduce the new variable
\begin{equation}\label{newcoordj1bis}
\breve{Z}_{j+1}:=\vect{\breve{z}_{j+1}}{\ov{\breve{z}_{j+1}}}=
\big(\mathcal{A}^{\theta}_{j+1,2}(U)[\widetilde{Z}_{j}]\big)_{|_{\theta=1}}=
\left(\begin{matrix} 
\Phi_{\g^{(1)}_{j+1}}^{\theta}(U)[\widetilde{z}_{j}]\\
\ov{\Phi_{\g^{(1)}_{j+1}}^{\theta}(U)}[\ov{\widetilde{z}_j}]
\end{matrix}
\right)_{|_{\theta=1}} 
\end{equation}
where  the map $\ov{\Phi_{\gamma^{(2)}_{j+1}}^{\theta}(U)} $
is defined as in \eqref{opeBarrato}. 

\begin{lemma}\label{def:ln2}
Let $ \g^{(2)}_{j+1}(U; x;\x) $ be a symbol in  $\widetilde{\Gamma}^{-m_j}_{2} $  of the form \eqref{eq:F2} 
with coefficients 
\begin{equation}\label{genegamma}
(\g^{(2)}_{j+1})^{\s\s}_{n_1, n_2} := \frac{(\widetilde{{\mathtt q}}^{(j)}_2)^{\s\s}_{n_1, n_2}}{\ii \s(\omega_{n_1} + \omega_{n_2})} \, , \;\;\s=\pm \, ,
\quad 
(\g^{(2)}_{j+1})^{+-}_{n_1, n_2} := 
 \frac{(\widetilde{{\mathtt q}}^{(j)}_2)^{+-}_{n_1, n_2}}{\ii (\omega_{n_1} - \omega_{n_2})} \, , \;\;n_1 \neq \pm n_2 \, .
\end{equation}
If $\widetilde{z}_{j}$ solves  \eqref{WWCexpjthfalso2} then 
 \begin{equation}\label{WWCexpjthfalso3}
\begin{aligned}
\pa_t \breve{z}_{j+1} &=   \opbw \big( - \ii |\x|^{\frac{1}{2}} - \ii \zeta (U) \xi  +
 \mathtt{p}_2^{(j)}(U;\x) \big) \breve{z}_{j+1}
\\
&
\quad  +\opbw{\Big(  \big( \sum_{n \in \Z\setminus\{0\}} (\widetilde{{\mathtt q}}^{(j)}_2)_{n, n}^{+-}(\x) |u_n|^2 + 
(\widetilde{{\mathtt q}}^{(j)}_2)_{n, - n}^{+-}(\x) u_n \ov{u_{-n}} e^{\ii 2 n x } \big)  \Big)}\breve{z}_{j+1}\\
& \quad +\opbw \big( \breve{\mathtt{k}}_1^{(j)} (U; x, \xi) + \breve{\mathtt{k}}_2^{(j)}(U; x, \xi)  +  H_{\geq 3}^{(j)}\big)\breve{z}_{j+1}+
 R^{(j)}(U)[\breve{Z}_{j+1}]
\end{aligned}
\end{equation}
where  
$\breve{\mathtt{k}}_1^{(j)} \in \widetilde{\Gamma}_1^{- m_j - \frac12} $, 
$\breve{\mathtt{k}}_2^{(j)} \in \widetilde{\Gamma}_2^{- m_j - \frac12} $, 
the symbol  $ H_{\geq 3}^{(j)} \in \Gamma^{1}_{K,K',3}$ is admissible and
$ R^{(j)}(U) $ is a $1\times2$ matrix of smoothing operators  in  $\Sigma\mathcal{R}^{-\rho}_{K,K',1}$.  
\end{lemma}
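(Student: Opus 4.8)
\textbf{Proof plan for Lemma \ref{def:ln2}.}
The strategy follows exactly the same template as the conjugation lemmas used in Subsections \ref{inteord1}--\ref{inteord0}, specialized to the present negative order. First I would verify that $\breve{Z}_{j+1}$ is well defined: since $\g^{(2)}_{j+1}$ is a symbol of strictly negative order $-m_j$ in $\widetilde{\Gamma}^{-m_j}_{2}$ and is quadratic in $U$, the flow $\Phi^{\theta}_{\g^{(2)}_{j+1}}(U)$ in \eqref{flussojth3} is generated by a bounded paradifferential operator, hence is well posed and bounded with bounded inverse on all the relevant $\dot H^s$ spaces, uniformly for $\theta\in[0,1]$; the real-to-real structure is preserved by the definition \eqref{newcoordj1bis} together with \eqref{opeBarrato}, so $\breve{Z}_{j+1}=\vect{\breve z_{j+1}}{\ov{\breve z_{j+1}}}$. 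Next I would apply the general conjugation Lemmata \ref{flusso12basso} and \ref{flusso12basso2} (as in the proof of Lemma \ref{def:ln1}) to transform \eqref{WWCexpjthfalso2} under $\Phi^{\theta}_{\g^{(2)}_{j+1}}(U)$. Because $\g^{(2)}_{j+1}$ has order $-m_j<0$, the only new contribution of homogeneity $2$ and order exactly $-m_j$ produced by the conjugation is the commutator of $\opbw(\g^{(2)}_{j+1})$ with the time derivative, i.e. $\opbw(\pa_t \g^{(2)}_{j+1})$; all commutators with $-\ii|\x|^{1/2}$, with $-\ii\zeta(U)\x$, and with the lower order symbols $\widetilde{\mathtt k}^{(j)}_1,\widetilde{\mathtt k}^{(j)}_2,\mathtt{p}_2^{(j)}$ drop by at least $1/2$ in order, so they get absorbed into the new symbols $\breve{\mathtt k}_1^{(j)}\in\widetilde\Gamma_1^{-m_j-\frac12}$, $\breve{\mathtt k}_2^{(j)}\in\widetilde\Gamma_2^{-m_j-\frac12}$, respectively into the admissible $H^{(j)}_{\geq3}$ (cubic remainders $\Gamma^{-m_j}_{K,1,3}$ being admissible since $-m_j\le 0$), and the smoothing remainder $R^{(j)}(U)$ is conjugated into one of the same class by Proposition \ref{composizioniTOTALI}(ii) after writing $\Phi^1_{\g^{(2)}_{j+1}}(U)=\mathrm{Id}+M(U)$ with $M(U)\in\Sigma\mathcal M_{K,K',1}$.

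The core computation is then the homological step: one has
\[
\widetilde{\mathtt q}^{(j)}_2(U;x,\x)+\pa_t\g^{(2)}_{j+1}(U;x,\x)
=\sum_{n\in\Z\setminus\{0\}}(\widetilde{\mathtt q}^{(j)}_2)^{+-}_{n,n}(\x)|u_n|^2
+\sum_{n\in\Z\setminus\{0\}}(\widetilde{\mathtt q}^{(j)}_2)^{+-}_{n,-n}(\x)u_n\ov{u_{-n}}e^{\ii 2nx}
+f_{\geq3},
\]
with $f_{\geq3}\in\Gamma^{-m_j}_{K,1,3}$ admissible. This is obtained exactly as in the proofs of Lemmata \ref{lem:5.5}, \ref{lem:56}: using $\dot u_n=-\ii\omega_n u_n+\ii(F_2(U)+F_{\geq3}(U))_n$ from \eqref{eq:415bis}, one computes $\pa_t\g^{(2)}_{j+1}$ by differentiating the Fourier expansion \eqref{eq:F2} of $\g^{(2)}_{j+1}$; the linear-in-$\dot u$ part produces, for each $(\s,\s')$, the factor $-\ii\s\omega_{n_1}-\ii\s'\omega_{n_2}$ times $(\g^{(2)}_{j+1})^{\s\s'}_{n_1,n_2}$ applied to the corresponding monomial, while the $F_2,F_{\geq3}$ parts produce symbols of homogeneity $\geq3$ that go into $f_{\geq3}$. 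With the choice \eqref{genegamma}, the coefficient $(\g^{(2)}_{j+1})^{\s\s}_{n_1,n_2}$ cancels the $\s\s$-monomials of $\widetilde{\mathtt q}^{(j)}_2$ for all $n_1,n_2$ (the small divisor $\s(\omega_{n_1}+\omega_{n_2})$ never vanishes for $n_1,n_2\neq0$), and $(\g^{(2)}_{j+1})^{+-}_{n_1,n_2}$ cancels the $+-$ monomials whenever $n_1\neq\pm n_2$; since $\omega_{-n}=\omega_n$ one cannot divide when $n_1=n_2$ (a trivial resonance) nor when $n_1=-n_2$, and precisely these surviving terms are the two sums displayed in \eqref{WWCexpjthfalso3}. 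One must also record that the non-resonant denominators satisfy the lower bound $|\omega_{n_1}-\omega_{n_2}|\gtrsim (\min(|n_1|,|n_2|))^{-1/2}$ of the type discussed after Step 3 of the Introduction, so that $\g^{(2)}_{j+1}$ loses at most $1/2$ derivative and indeed remains a symbol in $\widetilde\Gamma^{-m_j}_2$ — harmless because the symbols being reduced already have negative order and the smoothing remainder tolerates such losses.

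Collecting these three contributions — the unchanged $-\ii|\x|^{1/2}-\ii\zeta(U)\x+\mathtt p_2^{(j)}$, the two surviving resonant sums, and the new lower order symbols $\breve{\mathtt k}^{(j)}_1+\breve{\mathtt k}^{(j)}_2$ together with the admissible $H^{(j)}_{\geq3}$ and smoothing $R^{(j)}(U)$ — gives exactly \eqref{WWCexpjthfalso3}, after slightly redefining $\rho$ to absorb the finitely many losses of derivatives and one time derivative (this is why the index $K'$ in the admissible symbol may increase by one, as in Lemma \ref{def:ln1}). The only mildly delicate point is bookkeeping: one must check that every commutator and every composition produced along the way genuinely drops to order $\le -m_j-\frac12$ in homogeneity degrees $\le 2$ and that the cubic leftovers are all of the admissible form \eqref{highordersfinali}; this is routine given the symbolic calculus of Propositions \ref{teoremadicomposizione}--\ref{composizioniTOTALI} and the structure of the classes $\Sigma\Gamma^m_{K,K',p}$, but it is the step where care is needed. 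I expect no real obstacle beyond this organizational one, since the homological equation here has the same shape as those already solved in the preceding subsections.
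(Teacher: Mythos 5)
Your proposal is correct and follows essentially the same route as the paper: conjugate via Lemmata \ref{flusso12basso} and \ref{flusso12basso2}, observe that the only degree-two contribution at order $-m_j$ is $\widetilde{\mathtt q}^{(j)}_2+\pa_t\g^{(2)}_{j+1}$, and use the choice \eqref{genegamma} (computing $\pa_t\g^{(2)}_{j+1}$ through \eqref{eq:415bis}) to cancel everything except the resonant $+-$ monomials with $n_1=\pm n_2$, pushing all remaining terms into $\breve{\mathtt k}^{(j)}_1,\breve{\mathtt k}^{(j)}_2$, the admissible symbol and the smoothing remainder. Your extra remarks on the flow's well-posedness and the $|\omega_{n_1}-\omega_{n_2}|$ lower bound are consistent with (and slightly more explicit than) the paper's argument.
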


\begin{proof}
In order to conjugate  \eqref{WWCexpjthfalso2}
we apply  Lemmata  \ref{flusso12basso} and \ref{flusso12basso2}. The 
contributions at order $ - m_j $ and degree $ 2 $  are given by
$ \opbw\big(\widetilde{\mathtt{q}}_2^{(j)} + \pa_t\gamma_{j+1}^{(2)} \big) $.
All the other contributions have homogeneity greater or equal $3$ and are admissible.
By the choice of $ \gamma_{j+1}^{(2)} $ in \eqref{genegamma} we have 
$$
\widetilde{\mathtt{q}}_2^{(j)} +
\pa_t\gamma_{j+1}^{(2)} =\frac{ 1}{2\pi}
 \sum_{n \in \Z\setminus\{0\}} (\widetilde{{\mathtt q}}^{(j)}_2)_{n, n}^{+-}(\x) |u_n|^2 + 
(\widetilde{{\mathtt q}}^{(j)}_2)_{n, - n}^{+-}(\x) u_n \ov{u_{-n}} e^{\ii 2 n x } 
$$
up to a symbol  in  $ \Gamma^{-m_j}_{K,1,3}$.
\end{proof}

\smallskip
\noindent
{\sc Elimination of the $x$-dependence up to $O(u^{3})$.}
In order to eliminate the non-integrable symbol 
\be\label{we-to-kill}
\frac{1}{2\pi} \sum_{n\in\Z\setminus\{0\}}(\widetilde{{\mathtt q}}^{(j)}_2)_{n, - n}^{+-}(\x) u_n \ov{u_{-n}} e^{\ii 2 n x }
 \ee
 in \eqref{WWCexpjthfalso3}  we follow the same strategy used in Subsection  \ref{scarlatto}. We
 conjugate \eqref{WWCexpjthfalso3} by  the flow 
 \begin{equation}\label{quasiconstjth}
\partial_{\theta} \Phi_{\g^{(3)}_{j+1}}^{\theta}(U)  = 
\ii  \opbw{( \g^{(3)}_{j+1} (U; x,\x)  )} \Phi_{\g^{(3)}_{j+1}}^{\theta}(U) \, , \quad 
\Phi_{\g^{(3)}_{j+1}}^{0}(U) = {\rm Id} \, ,  
\end{equation}
where $\g^{(3)}_{j+1}(U; x,\x) $ is a symbol in $ \widetilde{\Gamma}_2^{-m_j+\frac{1}{2}}  $ of the same form
\eqref{we-to-kill}, i.e.
\be\label{ultimodiffeo}
\g^{(3)}_{j+1}(U; x,\x)  := 
\frac{1}{2\pi}\sum_{n \in \Z\setminus\{0\} } (\g^{(3)}_{j+1})_{n, - n}^{+-}(\x) u_n \ov{u_{-n}} e^{\ii 2 n x }
\, . 
\ee
We introduce the new variable
\begin{equation}\label{newcoord1000}
Z_{j+1}:=\vect{z_{j+1}}{\bar{z}_{j+1}}=\big(\mathcal{A}^{\theta}_{j+1,3}(U)[\breve{Z}_{j+1}]\big)_{|_{\theta=1}} =
\left(\begin{matrix} 
\Phi_{\g^{(3)}_{j+1}}^{\theta}(U)[\breve{z}_{j+1}]\\
\ov{\Phi_{\g^{(3)}_{j+1}}^{\theta}(U)}[\ov{\breve{z}_{j+1}}]
\end{matrix}
\right)_{|_{\theta=1}}
\end{equation}
where the map $\ov{\Phi_{\gamma^{(3)}_{j+1}}^{\theta}(U)} $
is defined as in \eqref{opeBarrato}.

\begin{lemma}\label{def:ln3}
Define $ \g^{(3)}_{j+1} $ in $ \widetilde{\Gamma}_{2}^{-m_j+\frac{1}{2}} $ 
as in \eqref{ultimodiffeo} with coefficients 
\begin{equation}\label{GAMMA3JTH}
(\g^{(3)}_{j+1})_{n, - n}^{+-}(\x):=|\xi|^{\frac{1}{2}}
  \sgn (\x)\frac{1}{n}(\widetilde{{\mathtt q}}^{(j)}_2)_{n, - n}^{+-}(\x) \, , \, \quad n\neq 0\, .
\end{equation}
If  $\breve{z}_{j}$ solves \eqref{WWCexpjthfalso3} then
\begin{equation}\label{WWCexp(j+1)th}
\pa_t {z}_{j+1} =  \opbw(   -\ii  \zeta(U) \xi - \ii |\x|^{\frac{1}{2}} +  \mathtt{p}_2^{(j+1)}(U;\x)  
 +  \mathtt{q}^{(j+1)}(U;x,\x) +  H_{\geq 3}^{(j+1)}\big){z}_{j+1}+  R^{(j+1)}(U)[{Z}_{j+1}]
\end{equation}
where  $\mathtt{p}_2^{(j+1)}(U;\x) $ is an integrable symbol  in $   {\widetilde \Gamma}^{- \frac12}_2 $,  
 $ \mathtt{q}^{(j+1)}(U;x,\x) $ is in $ \Sigma\Gamma^{-m_{j+1}}_{K,K',1}$,
 the symbol $  H_{\geq 3}^{(j+1)} \in \Gamma^{1}_{K,K',3}$ is admissible, 
and 
$ R^{(j+1)}(U) $ is a $1\times2$ matrix of smoothing operators in  $\Sigma\mathcal{R}^{-\rho}_{K,K',1}$.
\end{lemma}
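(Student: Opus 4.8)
\textbf{Proof proposal for Lemma \ref{def:ln3}.}

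The plan is to follow exactly the scheme of Subsection \ref{scarlatto}, now applied at negative order $-m_j+\tfrac12$ rather than at order $1/2$. First I would note that the flow \eqref{quasiconstjth} is well-posed and that the maps $\mathcal{A}^{\theta}_{j+1,3}(U)$ are bounded and invertible, since the generator $\ii\opbw(\g^{(3)}_{j+1})$ has symbol of negative order (here $-m_j+\tfrac12\le 0$ for $j\ge 1$, and for $j=0$ the order is $1/2$ but the symbol is purely quadratic in $u$, so the flow is still well-posed by Lemma \ref{buonflusso}); this gives the claimed mapping properties with constants as in \eqref{stimFINFRAK}. Then I would conjugate \eqref{WWCexpjthfalso3} under $(\mathcal{A}^{\theta}_{j+1,3}(U))_{|\theta=1}$ using Lemmata \ref{conjFlowpara} and \ref{conjFlowpara2} (or their counterparts \ref{flusso12basso}, \ref{flusso12basso2}), keeping track only of degrees of homogeneity $\le 2$.

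The key computation is the cancellation at order $-m_j$, degree $2$. Conjugating $\pa_t$ produces the term $\ii\opbw\big((\pa_t\g^{(3)}_{j+1})(U;x,\x)\big)$; by \eqref{eq:415bis} and since $\g^{(3)}_{j+1}$ has the special form \eqref{ultimodiffeo} with $\omega_{-n}=\omega_n$, one gets, exactly as in \eqref{quasi-pi},
\[
\pa_t\g^{(3)}_{j+1}(U;x,\x)=\frac{1}{2\pi}\sum_{n\in\Z\setminus\{0\}}(\g^{(3)}_{j+1})^{+-}_{n,-n}(\x)\big(-\ii\omega_n+\ii\omega_{-n}\big)u_n\ov{u_{-n}}e^{\ii 2nx}+h_{\geq 3}=h_{\geq 3}\,,
\]
with $h_{\geq 3}$ a symbol in $\Gamma^{-m_j+\frac12}_{K,1,3}$ of the admissible class; hence $\ii\opbw(\pa_t\g^{(3)}_{j+1}\cdot)$ is absorbed into $H^{(j+1)}_{\geq 3}$. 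Conjugating the spatial operator $-\ii|\x|^{1/2}$ produces, by \eqref{simbLie}, the Poisson bracket contribution $\big\{\g^{(3)}_{j+1}|\x|^{1/2}\cdot\text{(the }\x\text{-slot)}, -\ii|\x|^{1/2}\big\}$ at degree $2$; more precisely the relevant bracket is $\{\g^{(3)}_{j+1}\xi, -\ii|\x|^{1/2}\}=\tfrac{\ii}{2}(\g^{(3)}_{j+1})_x|\x|^{-1/2}\sgn(\x)$ up to lower order, which by the choice of coefficients \eqref{GAMMA3JTH} and differentiation in $x$ equals precisely the non-integrable symbol \eqref{we-to-kill}. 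Thus the two contributions cancel, leaving only the integrable piece $\tfrac{1}{2\pi}\sum_{n}(\widetilde{{\mathtt q}}^{(j)}_2)^{+-}_{n,n}(\x)|u_n|^2$, which we add to $\mathtt{p}_2^{(j)}$ to define $\mathtt{p}_2^{(j+1)}:=\mathtt{p}_2^{(j)}+\tfrac{1}{2\pi}\sum_n(\widetilde{{\mathtt q}}^{(j)}_2)^{+-}_{n,n}(\x)|u_n|^2$, an integrable symbol in $\widetilde{\Gamma}^{-\frac12}_2$ (since $m_j\ge 1/2$ and the integrable symbols of different negative orders sum into $\widetilde{\Gamma}^{-\frac12}_2$ by the convention of Definition \ref{defiintegro} and the ordering of orders).

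Finally I would collect all remaining terms: the symbols $\breve{\mathtt k}^{(j)}_1\in\widetilde\Gamma^{-m_j-\frac12}_1$, $\breve{\mathtt k}^{(j)}_2\in\widetilde\Gamma^{-m_j-\frac12}_2$ of Lemma \ref{def:ln2}, together with the new degree-$2$ symbol of order $-m_j-\tfrac12$ coming from the bracket $\{\g^{(3)}_{j+1}|\x|^{1/2},-\ii|\x|^{1/2}\}$ at the next order and the Lie-expansion tails (all of negative order $\le -m_j-\tfrac12=-m_{j+1}$), go into $\mathtt{q}^{(j+1)}(U;x,\x)\in\Sigma\Gamma^{-m_{j+1}}_{K,K',1}$; the admissible contributions (from $\pa_t$-conjugation, from $(\beta)_x(\beta)_t$-type terms, and from conjugating $H^{(j)}_{\geq 3}$ and $-\ii\zeta(U)\xi$) combine into a new admissible $H^{(j+1)}_{\geq 3}\in\Gamma^1_{K,K',3}$; and the smoothing remainders, conjugated as in {\sc Step 5} of Lemma \ref{lem:5.5} (writing the flow as ${\rm Id}+M(U)$ with $M\in\Sigma\mathcal{M}^{m}_{K,K',1}$ and using Proposition \ref{composizioniTOTALI}), stay in $\Sigma\mathcal{R}^{-\rho}_{K,K',1}$ after the usual renaming $\rho-2m\rightsquigarrow\rho$. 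This yields \eqref{WWCexp(j+1)th}. The only mildly delicate point — the ``main obstacle'' — is bookkeeping the interplay of orders: one must check that every term generated at degree $\ge 3$ is genuinely admissible (i.e. of the structured form \eqref{highordersfinali} with real transport and $|\x|^{1/2}$ coefficients plus a $\Gamma^0$ piece), which follows because $-\ii\zeta(U)\xi$ and $-\ii|\x|^{1/2}$ are the only positive-order symbols present, and conjugation of those by negative-order flows produces cubic corrections of transport-type and $|\x|^{1/2}$-type exactly as in the proof of Lemma \ref{lem:5.5}, together with the reality of the relevant coefficients guaranteed by \eqref{condreal2}.
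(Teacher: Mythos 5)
Your proof is correct and takes essentially the same route as the paper's: the only homogeneity-$2$, order $-m_j$ contributions are the bracket $\tfrac{\ii}{2}(\g^{(3)}_{j+1})_x|\x|^{-\frac12}\sgn(\x)$ from conjugating $-\ii|\x|^{\frac12}$ together with the pre-existing symbol, the choice \eqref{GAMMA3JTH} kills the non-integrable $e^{\ii 2nx}$ part while the $|u_n|^2$ part is added to $\mathtt{p}_2^{(j)}$, and $\pa_t\g^{(3)}_{j+1}$ is cubic because $\omega_{-n}=\omega_n$, exactly as in the paper, with the remaining terms filed into $\mathtt{q}^{(j+1)}$, $H^{(j+1)}_{\geq3}$ and the smoothing remainder in the same way. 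Two cosmetic slips that do not affect the argument: the relevant bracket is $\{\g^{(3)}_{j+1},-\ii|\x|^{\frac12}\}$ without the extra factor $\x$ (the weight $|\x|^{\frac12}\sgn(\x)$ is already built into the coefficients \eqref{GAMMA3JTH}), and it equals \emph{minus} the symbol \eqref{we-to-kill} — which is precisely why the two contributions cancel — while for $j=0$ the generator has order $-m_0+\tfrac12=0$, not $\tfrac12$, so well-posedness of the flow \eqref{quasiconstjth} follows directly from the non-positive-order case of Lemma \ref{buonflusso}.
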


\begin{proof}
Reasoning as in  \eqref{quasi-pi}, we have  
$ \frac{d}{dt}\g^{(3)}_{j+1}(U; x,\x) = 0 $
up to a cubic  symbol in $ \Gamma^{-m_j+\frac{1}{2}}_{K,1,3}$.
In order to conjugate \eqref{WWCexpjthfalso3}
we apply  Lemmata \ref{flusso12basso} and \ref{flusso12basso2}.
The only contributions with homogeneity $2$ and order $-m_j$ are 
\[
\opbw{ \Big(\frac{\ii}{2}(\g^{(3)}_{j+1} )_x|\x|^{-\frac{1}{2}}\sgn (\x)+\frac{1}{2\pi}
 \sum_{n\in\Z\setminus\{0\}}(
 \widetilde{{\mathtt q}}^{(j)}_2)_{n, n}^{+-}(\x) |u_n|^2 
 + (\widetilde{{\mathtt q}}^{(j)}_2)_{n, - n}^{+-}(\x) u_n \ov{u_{-n}} e^{\ii 2 n x } \Big)} \, .
 \]
By the choice of $ \g^{(3)}_{j+1}  $ in \eqref{ultimodiffeo}, \eqref{GAMMA3JTH} we have 
 \[
 \frac{\ii}{2}(\g^{(3)}_{j+1} )_x|\x|^{-\frac{1}{2}}\sgn(\x)+\frac{1}{2\pi}
 \sum_{n\in\Z\setminus\{0\}}(\widetilde{{\mathtt q}}^{(j)}_2)_{n, - n}^{+-}(\x) u_n \ov{u_{-n}} e^{\ii 2 n x }=
 0\,.
 \]
Then \eqref{WWCexp(j+1)th}  follows with the new integrable symbol
\[
\mathtt{p}_2^{(j+1)}(U;\x) := \mathtt{p}_2^{(j)}(U;\x)  + 
 \sum_{n\in\Z\setminus\{0\}}  (\widetilde{{\mathtt q}}^{(j)}_2)_{n, n}^{+-}(\x) |u_n|^2 
 \]
 and a symbol 
  $ \mathtt{q}^{(j+1)}(U;x,\x) $ in $ \Sigma\Gamma^{-m_{j+1}}_{K,K',1}$ where $ m_{j+1} = m_j + \frac12 $.
 \end{proof}

 \noindent
Lemmata \ref{def:ln1}, \ref{def:ln2}, \ref{def:ln3} imply Proposition \ref{prop:NO} by defining the map 
  \begin{equation}\label{Upsilonjth}
{\bf \Upsilon}^{\theta}_{j+1}(U) := \mathcal{A}^{\theta}_{j+1,3}(U)\circ\mathcal{A}^{\theta}_{j+1,2}(U)
 \circ\mathcal{A}^{\theta}_{j+1,1}(U)  
 \end{equation}
 where  $\mathcal{A}^{\theta}_{j+1,k}(U)$, for $k=1,2,3$, 
 are defined respectively in \eqref{newcoordj1}, \eqref{newcoordj1bis}, \eqref{newcoord1000}.

\subsection{Proof of Proposition \ref{teodiagonal3}}\label{Proofteodiagonal3}
We set
\be\label{fr-flow}
\mathfrak{F}^{\theta}(U):=
{\bf \Upsilon}_{fin}^{\theta}(U)\circ {\bf \Phi}_{5}^{\theta}(U)
\circ\cdots {\bf \Phi}_{1}^{\theta}(U)\circ {\bf \Psi}_{diag}^{\theta}(U) 
\ee
 and ${\bf F}^{\theta}(U):=\mathfrak{F}^{\theta}(U)[U]$ as in \eqref{FTvectPARA},
where ${\bf \Psi}_{diag}^{\theta} (U) $ is defined in 
Proposition \ref{teodiagonal}, the maps ${\bf \Phi}_{j}^{\theta} (U) $, $j=1,\ldots,5$
are given  respectively in 
\eqref{newcoord}, \eqref{newcoord2},  \eqref{newcoord400}, \eqref{newcoord500}, 
\eqref{newcoord700}, and 
$ {\bf \Upsilon}^{\theta}_{fin}(U):={\bf \Upsilon}_{2\rho}^{\theta}(U)\circ \cdots \circ{\bf \Upsilon}^{\theta}_{1}(U) $
where  ${\bf \Upsilon}_{j+1}^{\theta}(U) $, $j=0,\ldots,2\rho-1$, are defined
 in \eqref{Upsilonjth}. 
Then, by the construction in Subsections \ref{inteord1}-\ref{sec:nega}, we have that  
$ Z:=(  {\bf F}^{\theta}(U))_{\theta=1} $ solves the system 
\eqref{WWCexpjth} with $j=2\rho-1$ which has the form 
\eqref{finalsyst} with $\mathcal{D}_{-1/2}(U;\x)\rightsquigarrow\mathtt{p}_2^{(2\rho-1)}(U;\x) $, 
$\mathtt{H}_{\geq3}\rightsquigarrow H_{\geq 3}^{(2\rho-1)}$ and $\mathtt{R}(U)\rightsquigarrow R^{(2\rho-1)}(U)$. 
The bounds \eqref{stimFINFRAK} follow since $\mathfrak{F}^{\theta}(U)$ 
is the composition of maps constructed using Lemma \ref{buonflusso} (see bounds  \eqref{est1}).

\bigskip 
\section{ Poincar\'e-Birkhoff Normal Forms}\label{sec:BNF} 

The aim of this section is to eliminate all the terms of the system  \eqref{finalsyst}
up to cubic degree of homogeneity which are not yet in Poincar\'e-Birkhoff normal form.
Such terms appear only in the smoothing remainder 
$ \mathtt{R}(U)[Z] $ that we write  as
 \begin{align}\label{smooth-terms}
\mathtt{R}(U) &=\mathtt{R}_1(U)+\mathtt{R}_2(U)+\mathtt{R}_{\geq3}(U) \, ,\quad 
  \mathtt{R}_{\geq3}(U) \in \mathcal{R}^{-\rho}_{K,K',3}\otimes\mathcal{M}_2(\C)  \, , \\ 
\mathtt{R}_i(U) 
& =\left(
\begin{matrix}
(\mathtt{R}_i(U))_{+}^{+} & (\mathtt{R}_i(U))_{+}^{-}\\
(\mathtt{R}_i(U))_{-}^{+} & (\mathtt{R}_i(U))_{-}^{-}
\end{matrix}
\right)\,,  \quad 
(\mathtt{R}_i(U))_{\s}^{\s'}
\in \widetilde{\mathcal{R}}^{-\rho}_i \, ,  \quad 
 (\mathtt{R}_{i}(U))_{\s}^{\s'}=\ov{ (\mathtt{R}_{i}(U))_{-\s}^{-\s'}} \, , 
\label{smooth-terms2}
 \end{align}
 for $\s,\s'=\pm$ and $i=1,2$.
  The third identity in \eqref{smooth-terms2} means that the matrix of operators
  $\mathtt{R}(U)$ is \emph{real-to-real} (see \eqref{vinello}). 
 For any $\s,\s'=\pm$ we expand
\begin{equation}\label{BNF2}
(\mathtt{R}_1(U))_{\s}^{\s'} =\sum_{\ep=\pm}(\mathtt{R}_{1,\ep}(U))_{\s}^{\s'} \,, \;\;\;\;\;
 (\mathtt{R}_{2}(U))_{\s}^{\s'}=\sum_{\ep =\pm}
 (\mathtt{R}_{2,\ep,\ep}(U))_{\s}^{\s'}+(\mathtt{R}_{2,+,-}(U))_{\s}^{\s'} \, , 
\end{equation}
where $(\mathtt{R}_{1,\ep}(U))_{\s}^{\s'}\in \widetilde{\mathcal{R}}^{-\rho}_{1} $, 
$ (\mathtt{R}_{2,\ep,\ep'}(U))_{\s}^{\s'}\in \widetilde{\mathcal{R}}^{-\rho}_{2} $ with $ \ep,\ep'=\pm$, are the 
homogeneous smoothing operators
\begin{align}\label{espansio1}
(\mathtt{R}_{1,\ep}(U))_{\s}^{\s'} z^{\s'} & = 
 \frac{1}{\sqrt{2\pi}}\sum_{j\in \Z\setminus\{0\}} 
\Big( \sum_{k\in \Z\setminus\{0\}}  (\mathtt{R}_{1,\ep}(U))_{\s,j}^{\s',k} z_{k}^{\s'}\Big) e^{\ii \s jx} 
\end{align}
with entries 
 \begin{align}\label{BNF3}
&  (\mathtt{R}_{1,\ep}(U))_{\s,j}^{\s',k} :=\frac{1}{\sqrt{2\pi}}\sum_{\substack{n\in \Z\setminus\{0\} \\ \ep n+\s'k=\s j}}
 (\mathtt{r}_{1,\ep})^{\s,\s'}_{n,k}u_{n}^{\ep} \, , \quad j,k\in \Z\setminus\{0\} \, , 
 \end{align}
for suitable scalar coefficients $ (\mathtt{r}_{1,\ep})^{\s,\s'}_{n,k} \in \C $, 
 and
\be\label{R2epep'}
(\mathtt{R}_{2,\ep,\ep'}(U))_{\s}^{\s'} z^{\s'} =\frac{1}{\sqrt{2\pi}}
\sum_{j\in \Z\setminus\{0\}} 
\Big( \sum_{k\in \Z\setminus\{0\}}   (\mathtt{R}_{2,\ep,\ep'}(U))_{\s,j}^{\s',k} z_{k}^{\s'} \Big) e^{\ii \s jx} 
\ee
with entries
 \begin{align} \label{BNF5}
 (\mathtt{R}_{2,\ep,\ep'}(U))_{\s,j}^{\s',k} :=\frac{1}{2\pi}\sum_{\substack{n_1,n_2\in\Z\setminus\{0\} \\ \ep n_1+\ep' n_2+\s' k=\s j }}
  (\mathtt{r}_{2,\ep,\ep'})^{\s,\s'}_{n_1,n_2,k}u_{n_1}^{\ep}u_{n_2}^{\ep'}  \, ,   \quad  
  j,k\in \Z\setminus\{0\}   \, , 
 \end{align}
 and suitable scalar coefficients $ (\mathtt{r}_{2,\ep,\ep'})^{\s,\s'}_{n_1,n_2,k} \in \C $.

\begin{definition}{\bf (Poincar\'e-Birkhoff Resonant smoothing operator)}\label{resterm}
Let $\mathtt{R}(U) $ be a real-to-real smoothing operator in $\widetilde{\mathcal{R}}^{-\rho}_2 \otimes\mathcal{M}_2(\C ) $ 
with $\rho\geq0$ and scalar coefficients $  (\mathtt{r}_{\ep,\ep'})^{\s,\s'}_{n_1,n_2,k} \in \C $ defined as in \eqref{BNF5}.
We denote by $\mathtt{R}^{res}(U) $ the real-to-real smoothing operator in 
$\widetilde{\mathcal{R}}^{-\rho}_2 \otimes\mathcal{M}_2(\C ) $ 
with  coefficients 
 \begin{equation}\label{resterm2}
  (\mathtt{R}^{res}_{\ep,\ep'}(U))_{\s,j}^{\s',k} :=
 \sum_{\substack{n_1,n_2\in\Z\setminus\{0\} \\ \ep n_1+\ep' n_2+\s' k -\s j = 0 \\
 \ep \omega({n_1})+\ep' \omega({n_2})+\s' \omega({k})-\s\omega({j}) = 0}}\!\!\!
(\mathtt{r}_{\ep,\ep'})^{\s,\s'}_{n_1,n_2,k}u_{n_1}^{\ep}u_{n_2}^{\ep'} \, , \quad  j, k \in \Z \setminus \{0\} \, , 
 \end{equation}
where we recall that $ \omega({j})=|j|^{\frac{1}{2}} $.
 \end{definition}

In  Subsections \ref{primostepBNF} and \ref{secondostepBNF} we will reduce
the remainder $\mathtt{R}(U) $ in \eqref{smooth-terms} to its Poincar\'e-Birkhoff resonant component.
The key result of this section is the following.

   \begin{proposition}{\bf (Poincar\'e-Birkhoff normal form of the water waves at cubic degree)}\label{cor:BNF}
 There exists $ \rho_0 > 0 $ such that, for all $ \rho \geq \rho_0 $, $ K \geq K' = 2 \rho + 2 $, 
there exists $s_0>0$ such that, 
for any $s\geq s_0$, for all $0<r \leq r_0(s)$ small enough,  
and  any solution $U\in B^K_s(I;r)$ of the water waves system \eqref{eq:415}, there is a 
nonlinear map 
$ {\bf F}_T^{\theta}(U) $, $ \theta \in [0,1] $, of the form
\begin{equation}\label{FTvect}
{\bf F}_{T}^{\theta}(U):=\mathfrak{C}^{\theta}(U)[U]
\end{equation}
where  $\mathfrak{C}^{\theta}(U)$ is a real-to-real, bounded and invertible operator,
such that the function
$Y :=  \vect{y}{\bar{y}} = {\bf F}_T^{1}(U)$ 
solves
\begin{equation}\label{sistemaBNF1000}
 \pa_{t}Y= -\ii\Omega Y- \ii \opbw (\mathtt{D}(Y;\x))[Y]+ \tilde{\mathtt{R}}^{res}(Y)[Y] + \mathcal{X}_{\geq 4 }(U,Y) 
\end{equation}
where:

\begin{itemize}
\item $\Omega $ is the diagonal matrix of Fourier multipliers defined in \eqref{eq:415tris}, 
 and $\mathtt{D}(Y;\x)$ is the diagonal matrix of  integrable symbols  
 $\widetilde{\Gamma}^{1}_2 \otimes {\mathcal M}_2 (\C)$  defined   in \eqref{diag-part};
 
\item the smoothing operator $ \tilde{\mathtt{R}}^{res} (Y) \in 
 \widetilde{\mathcal{R}}^{- (\rho - \rho_0)}_2\otimes\mathcal{M}_2(\C) $ 
 is Poincar\'e-Birkhoff resonant according to Definition \ref{resterm}; 
  
\item $ \mathcal{X}_{\geq 4}(U,Y) $ has the form 
 \begin{equation}\label{Stimaenergy100}
 \mathcal{X}_{\geq 4}(U,Y)=\opbw({\mathfrak{H}}_{\geq3}(U; x, \xi) )[Y]+\mathfrak{R}_{\geq 3}(U)[Y]
 \end{equation}
where 
 ${\mathfrak{H}}_{\geq3}(U; x, \xi ) \in \Gamma^{1}_{K,K',3}\otimes\mathcal{M}_2(\C)$ is an admissible matrix
 of symbols
 (Definition \ref{def:admissible}) 
 and $\mathfrak{R}_{\geq 3}(U)$ is a matrix of real-to-real smoothing operators in 
 $\mathcal{R}^{-(\rho - \rho_0)}_{K,K',3}\otimes\mathcal{M}_2(\C)$. 
\end{itemize}

\smallskip
Furthermore, the map ${\bf F}_{T}^{\theta}(U)$ defined in \eqref{FTvect} satisfies the following properties:
\\[1mm]
(i) There is a constant $C$  depending on $s$, $r$ and $K$, such that, for $s\geq s_0$,
\begin{equation}\label{stimafinaleFINFRAK}
\| \pa_{t}^{k}\mathfrak{C}^{\theta}(U)[V]\|_{\dot{H}^{s-k}}+\| \pa_{t}^k (\mathfrak{C}^{\theta}(U))^{-1}[V]\|_{\dot{H}^{s-k}}\leq \|V\|_{k,s}(1+C\|U\|_{K,s_0}) 
+ C\|V\|_{k,s_0}\|U\|_{K,s} \,,
\end{equation}
for any $0\leq k\leq K-K'$,  $V\in C^{K-K'}_{*\R}(I;\dot{H}^{s}(\T;\C^2))$ and uniformly in $\theta\in [0,1]$;
\\[1mm]
(ii)
The function 
$ Y =  {\bf F}_T^\theta (U)_{|_{\theta=1}} $ 
satisfies
\begin{equation}\label{equiv100}
C^{-1}\|U\|_{{\dot H}^{s}}\leq \|Y\|_{ \dot H^{s}}\leq C \|U\|_{{\dot H}^{s}} \, . 
\end{equation}
(iii) The map ${\bf F}_{T}^{\theta}(U)$ admits an expansion as 
$$
{\bf F}^{\theta}_T (U)=U
+\theta \big(M_{1}(U)[U]+M_2^{(1)}(U)[U]\big) + \theta^{2}{M}^{(2)}_2(U)[U]  +M_{\geq3}(\theta;U)[U] \, , 
$$
where  $ M_{1}(U) $ is  in $ \widetilde{{\mathcal{M}}}_{1}\otimes\mathcal{M}_2(\C)$, 
the maps $ M_{2}^{(1)}(U), M^{(2)}_2(U) $ are in 
$ \widetilde{{\mathcal{M}}}_{2}\otimes\mathcal{M}_2(\C)$, 
 and $ M_{\geq3} (\theta; U) $ 
 is in $ \mathcal{M}_{K,K',3}\otimes\mathcal{M}_2(\C) $ 
 with estimates uniform in $ \theta \in [0,1] $.
\end{proposition}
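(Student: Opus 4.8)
The plan is to prove Proposition \ref{cor:BNF} by performing two successive Poincar\'e-Birkhoff normal form reductions on the smoothing remainder $\mathtt{R}(U)$ appearing in \eqref{finalsyst}, one killing the non-resonant quadratic smoothing terms $\mathtt{R}_1(U)$ and one killing the non-resonant cubic smoothing terms $\mathtt{R}_2(U)$. The essential point, which makes this cleaner than a standard Birkhoff reduction, is that the remainders $\mathtt{R}_1, \mathtt{R}_2$ gain $\rho \gg 1$ derivatives, so the small divisors $\varphi := \s\omega(j)-\ep\omega(n)-\s'\omega(k)$ (and the cubic analogue) — which, by the absence of $3$-waves resonances \eqref{torires2} and the sublinear dispersion $\omega(k)=\sqrt{|k|}$, are bounded below by a negative power of the third-largest frequency — can be tolerated by sacrificing only finitely many, say $\rho_0$, orders of smoothing. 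Thus the generators of the normal form flows will themselves be smoothing operators (of order $-(\rho-\rho_0)$), and the flows they generate will be bounded and invertible on $\dot H^s$ with tame estimates of the form \eqref{stimafinaleFINFRAK}.

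Concretely, in Subsection \ref{primostepBNF} I would look for a generator $G_1(U) \in \widetilde{\mathcal R}^{-(\rho-\rho_0)}_1 \otimes \mathcal M_2(\C)$, solve the homological equation $-\ii[\Omega, G_1(U)] + \mathtt{R}_1(U) = 0$ componentwise in Fourier — which is solvable precisely off the $3$-waves resonant set, empty by \eqref{torires2}, so $\mathtt R_1$ is entirely removed — define the flow $\partial_\theta \Psi_1^\theta(U) = G_1(U)\Psi_1^\theta(U)$, and set the new variable via its time-$1$ map; conjugating \eqref{finalsyst} through $\Psi_1^1$ (using the Lie-type expansions from Appendix \ref{sez:A2}, together with the composition rules of Propositions \ref{teoremadicomposizione}, \ref{composizioniTOTALI}) leaves the paradifferential part $-\ii\Omega - \ii\opbw(\mathtt D(U;\xi))$ unchanged modulo admissible symbols and cubic smoothing remainders, and produces a new cubic smoothing remainder $\mathtt R_2^{(1)}(U) \in \widetilde{\mathcal R}^{-(\rho-\rho_0)}_2$. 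Then in Subsection \ref{secondostepBNF} I repeat the construction with a generator $G_2(U) \in \widetilde{\mathcal R}^{-(\rho-\rho_0-\rho_0)}_2$ solving $-\ii[\Omega, G_2(U)] + (\mathtt R_2^{(1)}(U) - (\mathtt R_2^{(1)})^{res}(U)) = 0$, which is now solvable off the $4$-waves resonant set and thus leaves exactly the Poincar\'e-Birkhoff resonant component $\tilde{\mathtt R}^{res}(Y) := (\mathtt R_2^{(1)})^{res}(Y)$ of Definition \ref{resterm}. Composing the two flows and the diagonalizing/reducing map $\mathfrak F^\theta(U)$ of Proposition \ref{teodiagonal3} gives $\mathfrak C^\theta(U)$; the expansion in (iii) is read off from Lemma \ref{buonflusso}-type estimates for the flows, the bounds (i)–(ii) from the tame estimates of those flows, and the fact that all discarded contributions are either admissible symbols in $\Gamma^1_{K,K',3}$ or smoothing operators in $\mathcal R^{-(\rho-\rho_0)}_{K,K',3}$ — hence absorbed into $\mathcal X_{\geq 4}(U,Y)$ as in \eqref{Stimaenergy100}.

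A subtle bookkeeping point to be careful about: after the first conjugation the dispersive coefficient $\zeta(U)\partial_x$ and the integrable symbol $\mathcal D_{-1/2}(U;\xi)$ have argument $U$, whereas in the target \eqref{sistemaBNF1000} they must have argument $Y$; since $U = Y + O(Y^2)$ and $\zeta$ is quadratic, the difference $\opbw(\mathtt D(U;\xi))[Y] - \opbw(\mathtt D(Y;\xi))[Y]$ is a quartic term of the form \eqref{Stimaenergy100} (an admissible symbol applied to $Y$), so this substitution is legitimate; the same remark applies to the resonant remainder, giving $\tilde{\mathtt R}^{res}(Y)[Y]$ with argument $Y$ up to $\mathcal X_{\geq 4}$. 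One also needs that all the flows preserve the real-to-real structure \eqref{vinello} and the $x$-translation invariance \eqref{commutes-tau}, which follows because the generators are built from $\mathtt R_1, \mathtt R_2$ by a division that respects the momentum constraint $\sum \s_j n_j = 0$ and the conjugation relation \eqref{smooth-terms2}.

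The main obstacle I anticipate is the quantitative control of the small divisors: one must show that on the non-resonant set the reciprocal $1/\varphi_{\s\ep\s'}(j,n,k)$ (resp. its cubic analogue) is bounded by $C(\text{third largest frequency})^{N_0}$ for some fixed $N_0$, so that dividing the coefficients of a $\rho$-smoothing operator produces a $(\rho - N_0)$-smoothing operator — i.e. one must take $\rho_0 \sim N_0$. This is where the sublinearity of $\omega(k)=\sqrt{|k|}$ is used in an essential (and slightly delicate) way: unlike in \cite{BD}, the worst near-resonances here come from configurations like $n_1=k, n_2=-k, n_3 = j$ with $j\gg k$, where $|\varphi| \sim j^{-1/2}$, so the loss is controlled by the \emph{largest} frequency to a negative power, and one must verify that after multiplication by the smoothing gain the net operator is still regularizing. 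The precise form of $N_0$ is the quantity denoted in \eqref{stima2}, and the requirement $\rho \sim N_0$ is exactly the constraint flagged in the notation paragraph ($s \geq s_0 \gg K \gg \rho \gg 1$). Once this divisor estimate is in hand, the rest is a (lengthy but routine) application of the paradifferential composition calculus already assembled in Section \ref{sec:funcsett} and Appendix \ref{sez:A2}.
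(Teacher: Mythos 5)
Your proposal follows essentially the same route as the paper: two Birkhoff steps acting only on the smoothing remainders $\mathtt R_1,\mathtt R_2$ of \eqref{finalsyst}, with generators that are themselves smoothing operators, solvability governed by the absence of $3$-wave resonances and by the quartic lower bound of Proposition \ref{stimedalbasso} (with loss $N_0$ absorbed by taking $\rho_0\sim N_0+m_1$), composition with the map of Proposition \ref{teodiagonal3}, and the substitution $U=Y+O(Y^2)$ in $\mathtt D$ and in the resonant remainder absorbed into $\mathcal X_{\geq 4}$ exactly as in \eqref{DEF1}--\eqref{DEF2}.

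One point needs correcting, though, because as literally written your homological equations would fail. The equation $-\ii[\Omega,G_1(U)]+\mathtt R_1(U)=0$, read as an operator identity, produces only the two-frequency divisor $\s\omega(j)-\s'\omega(k)$, which vanishes on configurations with $|j|=|k|$ that are present in $\mathtt R_1$ (e.g.\ $\s=\s'$, $k=-j$, $n=2\s\ep j$, allowed by the momentum constraint); similarly at the cubic step the pure commutator equation does not have the $4$-wave resonant set as its kernel. The three- and four-frequency divisors you actually divide by arise only after adding the term $\mathtt Q_i(-\ii\Omega U)$ coming from $\partial_t$ acting on the $U$-dependence of the generator when the flow conjugation of $\pa_t$ is computed (equivalently, from taking the full nonlinear vector-field bracket rather than the operator commutator with $-\ii\Omega$). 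This is precisely how the paper writes the homological equations \eqref{omoBNF} and \eqref{omostep2}, and with that term included your divisors, the lower bounds \eqref{stima1}, \eqref{stima2}, and the rest of your argument go through as you describe. A minor additional remark: at the quadratic step no derivative loss is needed at all, since the cubic divisors are bounded below by a constant by \eqref{stima1}; the loss $N_0$ enters only at the cubic step.
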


In the following subsection we provide lower bounds on the ``small divisors'' 
which appear in the Poincar\'e-Birkhoff reduction procedure. 
Then, in Subsection \ref{BirReduco}, we  prove  Proposition \ref{cor:BNF}.

\subsection{Cubic and quartic wave interactions}\label{smalldivis}
We study in this section the cubic and quartic resonances among the linear frequencies
$\omega(n)=|n|^{\frac{1}{2}}$. 
 
\begin{proposition}{\bf (Non-resonance conditions)}\label{stimedalbasso}
There are constants $\mathtt{c}>0$ and $N_0>0$ such that
\begin{itemize}
\item
{\bf (cubic resonances)} for any 
$\s,\s'=\pm$ and $n_1,n_2,n_3\in \Z\setminus\{0\} $ satisfying 
\begin{equation}\label{moment}
 n_1+\s n_2+\s' n_3=0 \,,
 \end{equation}
  we have 
 \begin{equation}\label{stima1}
 |\omega({n_1})+\s \omega({n_2})+\s' \omega({n_3})|\geq\mathtt{c}\,. 
 \end{equation}
 \item 
 {\bf (quartic resonances)}
For any $\s,\s',\s''=\pm$ and $n_1,n_2,n_3,n_4\in \Z\setminus\{0\}$ such that
 \begin{equation}\label{moment2}
n_1+\s n_2+\s' n_3+\s'' n_4=0 \, , \quad 
\omega({n_1})+\s \omega({n_2})+\s' \omega({n_3})+\s'' \omega({n_4})\neq0 \, ,
 \end{equation}
 we have 
 \begin{equation}\label{stima2}
 |\omega({n_1})+\s \omega({n_2})+\s' \omega({n_3})+\s'' \omega({n_4})|\geq
 \mathtt{c}\max\{|n_1|,|n_2|,|n_3|,|n_4|\}^{-N_0} \,.
 \end{equation}
 \end{itemize}
 \end{proposition}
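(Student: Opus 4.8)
\textbf{Proof plan for Proposition \ref{stimedalbasso}.}

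The plan is to reduce both statements to lower bounds for linear combinations of the square roots $|n_j|^{1/2}$ subject to a linear integer constraint on the $n_j$. First I would dispose of the cubic case \eqref{stima1}. Here the absence of $3$-waves resonances is a well-known fact for the dispersion relation $\omega(n)=\sqrt{|n|}$: by the momentum constraint \eqref{moment} one has $n_1 = -\s n_2 - \s' n_3$, so among the three frequencies the largest in modulus, say $|n_1|$, satisfies $|n_1|\le |n_2|+|n_3|$, and (up to relabelling) $|n_1|\ge |n_2|,|n_3|$. A direct case analysis on the signs shows $|\sqrt{|n_1|}+\s\sqrt{|n_2|}+\s'\sqrt{|n_3|}|$ cannot vanish unless two of the $n_j$ are equal with opposite sign in the constraint and the third is forced to be $0$, contradicting $n_j\ne 0$; the cleanest way is to rationalize, multiplying by the conjugate factors to get an expression of the form $P(|n_1|,|n_2|,|n_3|)/Q$ where $P$ is a nonzero integer (hence $\ge 1$ in absolute value) and $Q = \mathcal O(\max|n_j|^{1/2})$ bounded below since $|n_1|\le|n_2|+|n_3|$ forces all frequencies comparable. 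Actually for the cubic case one gets a \emph{uniform} constant $\mathtt c>0$: this is because, by the constraint, no frequency can be much larger than the others, so there is a compactness at infinity and only finitely many ``shapes'' to check, each non-resonant.

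For the quartic case \eqref{stima2} I would argue as follows. Assume WLOG $|n_1|=\max_j |n_j|=:M$. If $M$ is bounded by an absolute constant, then \eqref{moment2} allows only finitely many tuples and the bound is trivial with a suitable $\mathtt c$; so assume $M$ large. The strategy is iterated rationalization: write $\varphi := \sqrt{|n_1|}+\s\sqrt{|n_2|}+\s'\sqrt{|n_3|}+\s''\sqrt{|n_4|}$ and multiply $\varphi$ by the (at most $2^3$) sign-conjugates $\prod(\pm)$ over the signs of the last three terms, to obtain
\[
\varphi \cdot \prod_{(\epsilon',\epsilon'',\epsilon''')\neq(\s,\s',\s'')}
\big(\sqrt{|n_1|}+\epsilon'\sqrt{|n_2|}+\epsilon''\sqrt{|n_3|}+\epsilon'''\sqrt{|n_4|}\big) = \mathcal P(|n_1|,|n_2|,|n_3|,|n_4|),
\]
where $\mathcal P$ is a symmetric \emph{polynomial} with integer coefficients in the four integers $|n_j|$ — all the square roots have been squared away. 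Since $\varphi\neq 0$ by \eqref{moment2}, the left side is nonzero, hence $|\mathcal P|\ge 1$ (it is a nonzero integer). Each of the at most seven conjugate factors is bounded in absolute value by $C M^{1/2}$, so $|\varphi|\ge c\, M^{-7/2}$; this already gives \eqref{stima2} with $N_0 = 7/2$, and a more careful bookkeeping (only factors that are actually small contribute; using the momentum constraint to bound $|n_1|\le |n_2|+|n_3|+|n_4|$ so that the second-largest frequency is $\gtrsim M/3$) would sharpen the exponent, though that is not needed.

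The main obstacle — and the reason \eqref{stima2} is only a polynomial lower bound whereas \eqref{stima1} is a uniform one — is the genuine presence of nontrivial quartic resonances, the Benjamin–Feir family \eqref{straBFR}: these are exactly the integer solutions of $\varphi=0$, so one cannot hope for a uniform bound, and the polynomial degeneration rate must be quantified. The technical care needed is (a) to verify that the integer polynomial $\mathcal P$ is genuinely nonzero whenever $\varphi\neq 0$ — i.e. that no \emph{spurious} vanishing of $\mathcal P$ occurs from a different sign-combination conjugate factor being zero, which is handled by noting such a factor zero would itself be a quartic resonance with the same moduli, and one checks these cannot coexist with $\varphi\neq 0$ under the constraint — and (b) controlling the size of the conjugate product from above using $|n_j|\le M$ and $M\lesssim \max_2|n_j|$ from \eqref{moment2}. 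Everything else is a finite check plus elementary algebra; I do not expect any conceptual difficulty beyond organizing the sign cases.
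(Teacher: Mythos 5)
Your quartic strategy (multiply the phase by all sign--conjugates to produce an integer polynomial, then invoke $|\mathcal P|\ge 1$) is a genuinely different route from the paper, which instead runs a sign-by-sign case analysis and uses the momentum condition \eqref{moment2} to fix the sign of the rationalized numerators, with at most two nested rationalizations in the hardest case; your global Liouville-type argument is in principle simpler and would also give a polynomial $N_0$. However, as written there are two genuine gaps.

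First, the cubic bound. The statement \eqref{stima1} claims a \emph{uniform} constant $\mathtt c$, and your mechanism does not produce one: ``nonzero integer numerator over a denominator $O(\max|n_j|^{1/2})$'' only yields a bound of size $\max|n_j|^{-1/2}$ (and if you really clear all the square roots the denominator is $O(\max|n_j|^{3/2})$), while the appeal to ``compactness at infinity, finitely many shapes'' is not an argument --- there are infinitely many admissible triples with all entries large and comparable, so no finite check is available. The uniformity comes from a different source, and it is exactly what the paper exploits: for, say, $\sigma=+$, $\sigma'=-$, momentum gives $|n_3|\le|n_1|+|n_2|$, and a \emph{single} rationalization produces the numerator $|n_1|+|n_2|-|n_3|+2\sqrt{|n_1||n_2|}\ \ge\ 2\sqrt{|n_1||n_2|}$, which is comparable to the denominator $\sqrt{|n_1|}+\sqrt{|n_2|}+\sqrt{|n_3|}$; hence the bound $2/(2+\sqrt 2)$. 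It is the sign information supplied by momentum, not integrality of the numerator, that gives \eqref{stima1}.

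Second, in the quartic case your resolution of the spurious-vanishing issue is false: a conjugate factor can vanish while the original phase is nonzero and momentum holds. Take $(n_1,n_2,n_3,n_4)=(1,4,-1,-4)$ with $\sigma=\sigma'=\sigma''=+$: then $n_1+n_2+n_3+n_4=0$, the phase equals $1+2+1+2=6\neq 0$, yet the conjugate $\sqrt{1}+\sqrt{4}-\sqrt{1}-\sqrt{4}=0$, so your integer $\mathcal P$ vanishes and the step $|\mathcal P|\ge 1$ collapses. The argument is repairable: if $\mathcal P=0$, some conjugate relation $\sqrt{|n_1|}+\epsilon_2\sqrt{|n_2|}+\epsilon_3\sqrt{|n_3|}+\epsilon_4\sqrt{|n_4|}=0$ holds, and subtracting it from $\varphi$ shows that $\varphi$ equals twice a nonzero signed sum of at most three of the $\sqrt{|n_j|}$ (those where the signs differ); such a sum is bounded below by $c\,\max|n_j|^{-3/2}$ by the same rationalization scheme, which restores \eqref{stima2}. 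Either add this reduction, or follow the paper's case analysis in which momentum is used at each stage to sign the rationalized numerator before dividing.
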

 
 \begin{proof} We first consider the cubic and then the quartic resonances. 
 \\[1mm]
{\sc  Proof of \eqref{stima1}}. If $\s=\s'=+$ then the bound \eqref{stima1} is trivial. Assume $\s=+$ and $\s'=-$.
By  \eqref{moment} we have that
 $ |n_3|\leq |n_1|+|n_2| $
and therefore
 \begin{equation*}
 \begin{aligned}
 |\sqrt{|n_1|}+\sqrt{|n_2|}-\sqrt{|n_3|}| 
 & =\frac{||n_1|+|n_2|-|n_3|+2\sqrt{|n_1||n_2|}|}{\sqrt{|n_1|}+\sqrt{|n_2|}+\sqrt{|n_3|}}\\
 &\geq \frac{2\sqrt{|n_1||n_2|}}{\sqrt{|n_1|}+\sqrt{|n_2|}+\sqrt{|n_1|+|n_2|}} \geq \frac{2 }{2+\sqrt{2}}
 \end{aligned}
 \end{equation*}
 since $|n_1|, |n_2|\geq1$.
The  bound \eqref{stima1} in the case $\s=-$ and $\s'=+$ is the same.
 \\[1mm]
{\sc Proof of \eqref{stima2}}. The case $\s=\s'=\s''=+$ is trivial. Assume $\s=\s'=+$ and $\s''=-$. We have 
$$
 |\omega({n_1})+\omega({n_2})+ \omega({n_3})-\omega({n_4})|=\frac{ | |n_1|+|n_2|+|n_3|-|n_4|  +2\sqrt{|n_1n_2|}+
 2\sqrt{|n_2n_3|}+2\sqrt{|n_1n_3|}  |}{ \omega({n_1})+\omega({n_2})+ \omega({n_3})+\omega({n_4})} \,.
$$
The first (momentum) condition in \eqref{moment2} implies that 
$ |n_1|+|n_2|+|n_3|-|n_4|\geq0 $
and hence \eqref{stima2} follows (actually with $ N_0 = 0 $).
It remains to study the case $\s=\s''=-$ and $\s'=+$, 
i.e. we have to prove that the phase
\begin{align}
\psi(n_1,n_2,n_3,n_4) & := |n_1|^{\frac{1}{2}}-|n_2|^{\frac{1}{2}}
+|n_3|^{\frac{1}{2}}-|n_4|^{\frac{1}{2}} \label{phace}\\ 
& =
\frac{|n_1|-|n_2|+|n_3|-|n_4|+2\sqrt{|n_1n_3|}-2\sqrt{|n_2n_4|}}{|n_1|^{\frac{1}{2}}
+|n_2|^{\frac{1}{2}}+|n_3|^{\frac{1}{2}}+|n_4|^{\frac{1}{2}}} \label{phace2}
\end{align}
satisfies \eqref{stima2}. 
Notice that the first (momentum) equality in \eqref{moment2}
becomes
\begin{equation}\label{moment3}
n_1-n_2+n_3-n_4=0\,.
\end{equation}
Let $|n_1|:=\max\{|n_1|,|n_2|,|n_3|,|n_4|\}$ and assume, without loss of generality, that $n_1>0$ and
$ |n_2| \geq |n_4| $ (the phase \eqref{phace} is symmetric in $ |n_2|, |n_4| $). 
We consider 
different cases.

\vspace{0.5em}
\noindent
\emph{Case a)} Assume that $n_1=|n_2|$. 
Then by \eqref{phace} 
\[
|\psi(n_1,n_2,n_3,n_4)|=\big|{|n_3|}^{\frac{1}{2}}-{|n_4|}^{\frac{1}{2}}\big|= \frac{||n_3|-|n_4||}{|n_3|^{\frac{1}{2}}+|n_4|^{\frac{1}{2}}}.
\]
Since $\psi\neq 0$ then $ |n_3|-|n_4| $ is a non-zero integer and 
we get  \eqref{stima2}.
Thus in the sequel we suppose 
\be\label{n1>n2}
n_1 > |n_2| \geq | n_4 | \, . 
\ee
\vspace{0.5em}
\noindent
\emph{Case b)} Assume that $  |n_3| \geq |n_4| $.
Then by \eqref{phace} we get
$$
\psi(n_1,n_2,n_3,n_4) \geq 
|n_1|^{\frac{1}{2}} -|n_2|^{\frac{1}{2}} = \frac{|n_1|- |n_2|}{ |n_1|^\frac{1}{2} + |n_2|^\frac{1}{2} }
\stackrel{\eqref{n1>n2}} \geq \frac{1}{  |n_1|^\frac{1}{2} + |n_2|^\frac{1}{2}  }
$$
which implies \eqref{stima2}. Thus in the sequel we suppose, in addition to \eqref{n1>n2}, that 
\be\label{caso3}
n_1 > | n_2 | \geq |n_4| > |n_3|  \, . 
\ee
The case
 $n_2<0$ is not possible. Indeed, if $ n_2 < 0 $ then \eqref{moment3} implies
$ n_4 = n_1+|n_2|+n_3  > n_1   $ by \eqref{caso3}
which is in contradiction with $ n_1 > |n_4| $.
Hence from now on we assume that 
\be\label{assump}
n_1>n_2 \geq |n_4| > |n_3| > 0 \, .
\ee
\noindent
\emph{Case c1)} 
Assume that all the frequencies have all the same sign, i.e. $n_1>n_2 \geq n_4 >  n_3>0$. 
In this case, by \eqref{phace2}-\eqref{moment3}, we get
\[
|\psi(n_1,n_2,n_3,n_4)|=\frac{|2\sqrt{n_1n_3}-2\sqrt{n_2n_4}|}{|n_1|^{\frac{1}{2}}
+|n_2|^{\frac{1}{2}}+|n_3|^{\frac{1}{2}}+|n_4|^{\frac{1}{2}}}\geq\frac{2}{\sum_{i=1}^{4}|n_i|^{\frac{1}{2}}}
\frac{|n_1n_3-n_2n_4|}{\sqrt{n_1n_3}+\sqrt{n_2n_4}}\,.
\]
Since $\psi\neq 0$ we have $n_1n_3\neq n_2n_4$, 
and therefore \eqref{stima2} follows.

\noindent
\emph{Case c2)} Assume now that two frequencies are positive and two are negative, i.e.
$ n_4 < n_3 <0<n_2<n_1$. 
The momentum condition \eqref{moment3} 
becomes
$ n_1-n_2=-|n_4|+|n_3| $ and,
since $n_1>n_2$, then  $|n_3|>|n_4|$ contradicting \eqref{assump}. 

\noindent
\emph{Case c3)} Assume that three frequencies have the same sign and one has the opposite sign. 
By \eqref{moment3} and \eqref{assump} we then  have
\begin{equation}\label{assump1}
n_1>n_2>n_4>0>n_3 \, , \quad n_4 > |n_3| \, . 
\end{equation}
By \eqref{phace2} 
we get
\begin{align}
\psi(n_1,n_2,n_3,n_4)&=\frac{n_1-n_2+|n_3|-n_4+2\sqrt{n_1|n_3|}-2\sqrt{n_2n_4}}{\sum_{i=1}^{4}|n_i|^{\frac{1}{2}}}
\nonumber \\
&\stackrel{\eqref{moment3}, \eqref{assump1}}{=}\frac{2}{\sum_{i=1}^{4}|n_i|^{\frac{1}{2}}}\Big(
|n_3|+\sqrt{n_1|n_3|}-\sqrt{n_2n_4}\Big) \nonumber \\
&=\frac{2}{\sum_{i=1}^{4}|n_i|^{\frac{1}{2}}}
\frac{n_3^{2}+n_1|n_3|-n_2 n_4+2|n_3|\sqrt{n_1|n_3|}}{|n_3|+\sqrt{n_1|n_3|}+\sqrt{n_2n_4}}\,. \label{phace100}
\end{align}
If $n_2 n_4\leq n_1|n_3|$ then \eqref{phace100} implies the bound \eqref{stima2}.
If instead $n_2 n_4> n_1|n_3|$
we reason as follows.
Notice that  
$$
B  := n_3^{2}+n_1|n_3|-n_2 n_4-2|n_3|\sqrt{n_1|n_3|}  \leq 
 n_3^{2}-2|n_3|\sqrt{n_1|n_3|}\leq -|n_3|\sqrt{n_1|n_3|}  \leq - 1 \, , 
$$
in particular $B \neq 0 $.
Then we rationalize again \eqref{phace100} to obtain
$$
\psi(n_1,n_2,n_3,n_4)=C\cdot A\cdot B^{-1} 
$$
where 
$$
A  := (n_3^{2}+n_1|n_3|-n_2 n_4)^{2}-4|n_3|^{3}n_1 \, , \quad
C  :=\frac{2}{\sum_{i=1}^{4}|n_i|^{\frac{1}{2}}}\frac{1}{|n_3|+\sqrt{n_1|n_3|}+\sqrt{n_2n_4}} \, .
$$
Since $ \psi \neq 0 $ then $A$ is a non zero integer and so  $|\psi| \geq C |B|^{-1}$. Moreover 
$|B|\leq  c n_1^{2} $, for some constant $c>0$, 
and  \eqref{stima2} follows.
\end{proof}

\subsection{Poincar\'e-Birkhoff reductions}\label{BirReduco}
The proof of Proposition \ref{cor:BNF} is divided into two steps: in the first (Subsection \ref{primostepBNF}) 
we eliminate all the quadratic terms in \eqref{finalsyst}; 
in the second one (Subsection \ref{secondostepBNF}) we eliminate all the non resonant cubic terms.

\subsubsection{Elimination of the  quadratic vector field}\label{primostepBNF}
 
 In this section we cancel out 
 the smoothing term $\mathtt{R}_1(U)$ in \eqref{smooth-terms} of  system \eqref{finalsyst}.
  We conjugate \eqref{finalsyst} with  the flow 
 \be\label{BNFstep1}
\partial_{\theta} \mathcal{B}_1^{\theta}(U)  = \mathtt{Q}_1(U) \mathcal{B}_{1}^{\theta}(U) \, , 
\quad \mathcal{B}_1^{0}(U) = {\rm Id} \, ,  
\ee
with $ \mathtt{Q}_1(U)\in \widetilde{\mathcal{R}}^{-\rho}_1 \otimes\mathcal{M}_2(\C)$ of the same form of $\mathtt{R}_1(U)$ in \eqref{smooth-terms2}-\eqref{BNF3}, to be determined. 
We introduce the new variable
\begin{equation}\label{newcoordBNF}
Y_1:=\vect{y_1}{\ov{y_1}}=
\big(\mathcal{B}_1^{\theta}(U)[Z]\big)_{|_{\theta=1}} \, .  
\end{equation}

 \begin{lemma}{\bf (First Poincar\'e-Birkhoff step)} \label{lem:BNF1}
Assume that $\mathtt{Q}_1(U)\in \widetilde{\mathcal{R}}^{-\rho}_1\otimes\mathcal{M}_{2}(\C)$ solves 
the homological equation
\begin{equation}\label{omoBNF}
\mathtt{Q}_1(-\ii \Omega U)+ \big[  \mathtt{Q}_1( U), -\ii \Omega \big]+\mathtt{R}_1(U)=0 \, .
\end{equation}
Then  
 \begin{equation}\label{BNF12}
 \begin{aligned}
 \pa_{t}Y_1&=-\ii \Omega Y_1+ \opbw( -\ii {\mathtt D}(U;\x) + {\mathtt H}_{\geq3})[Y_1] +
 \big( \mathtt{R}^{+}_2(U)  
 + \mathtt{R}^{+}_{\geq 3}(U) \big)[Y_1] 
 \end{aligned}
 \end{equation}
 where $  \Omega $ is defined in \eqref{eq:415tris}, $ {\mathtt D}(U;\x) $  in \eqref{diag-part}, 
 $  {\mathtt H}_{\geq3} $ is an admissible symbol in $ \Gamma^{1}_{K,K',3}\otimes \mathcal{M}_2(\C)$, 
 and 
 $$ 
 \mathtt{R}^{+}_2 (U) \in \widetilde{\mathcal{R}}^{-\rho + m_1}_{2} \otimes\mathcal{M}_2(\C) \, , 
 \quad 
 \mathtt{R}^{+}_{\geq 3}(U)\in 
\mathcal{R}^{-\rho + m_1}_{K,K',3}\otimes\mathcal{M}_{2}(\C) \, , 
$$
with   $m_1\geq1$ as in \eqref{eq:415tris}. 
 \end{lemma}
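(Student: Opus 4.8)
The statement is a standard first-step Poincar\'e--Birkhoff normal form reduction, and my plan is to carry it out in the following order. First I would solve the homological equation \eqref{omoBNF} for the homogeneous smoothing operator $\mathtt{Q}_1(U)\in\widetilde{\mathcal{R}}^{-\rho}_1\otimes\mathcal{M}_2(\C)$. Writing $\mathtt{Q}_1$ in the same Fourier form as $\mathtt{R}_1$ in \eqref{espansio1}--\eqref{BNF3}, with scalar coefficients $(\mathtt{q}_{1,\ep})^{\s,\s'}_{n,k}$, the operator identity \eqref{omoBNF} becomes, component by component (for each $\s,\s',\ep=\pm$ and $n,k\in\Z\setminus\{0\}$ with $\ep n+\s' k=\s j$), the scalar equation
\begin{equation*}
\ii\big(\ep\,\omega(n)+\s'\,\omega(k)-\s\,\omega(j)\big)(\mathtt{q}_{1,\ep})^{\s,\s'}_{n,k}=-(\mathtt{r}_{1,\ep})^{\s,\s'}_{n,k}\,.
\end{equation*}
Here the term $\mathtt{Q}_1(-\ii\Omega U)$ produces the factor $\ep\,\omega(n)$ coming from the $U$-dependence (one applies $-\ii\Omega$ to the argument $U$, which multiplies the coefficient of $u^\ep_n$ by $\ii\ep\omega(n)$ with a sign), while the commutator $[\mathtt{Q}_1(U),-\ii\Omega]$ produces $\s'\omega(k)-\s\omega(j)$. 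By Proposition \ref{stimedalbasso}, part (cubic resonances), the momentum relation $\ep n+\s'k-\s j=0$ (which holds on the support of $\mathtt{R}_1$ by \eqref{pomosimbo2}/\eqref{omoresti2}) forces $|\ep\omega(n)+\s'\omega(k)-\s\omega(j)|\geq \mathtt{c}>0$, so the divisor is bounded below by a fixed constant. Hence one simply sets
\begin{equation*}
(\mathtt{q}_{1,\ep})^{\s,\s'}_{n,k}:=\frac{\ii\,(\mathtt{r}_{1,\ep})^{\s,\s'}_{n,k}}{\ep\,\omega(n)+\s'\,\omega(k)-\s\,\omega(j)}\,,
\end{equation*}
and one checks that this is again the coefficient list of a smoothing operator in $\widetilde{\mathcal{R}}^{-\rho}_1\otimes\mathcal{M}_2(\C)$: the denominators are uniformly bounded away from zero, so the bound on $\Pi_{n_0}\mathtt{Q}_1(\Pi_{\vec n}\mathcal{U})\Pi_{n_{p+1}}U_{p+1}$ of Definition \ref{omosmoothing} follows directly from that of $\mathtt{R}_1$, with the same loss $\rho$ and an enlarged $\mu$. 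The real-to-real structure \eqref{vinello} of $\mathtt{Q}_1$ follows from that of $\mathtt{R}_1$ because the denominator is invariant (up to the sign conventions) under the substitution $(\s,\s',\ep)\mapsto(-\s,-\s',-\ep)$ together with complex conjugation, matching \eqref{smooth-terms2}.

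\textbf{Conjugation and collection of terms.} With $\mathtt{Q}_1$ fixed, the flow $\mathcal{B}_1^\theta(U)$ in \eqref{BNFstep1} is well posed, bounded and invertible on $\dot H^s$ by the abstract flow lemma (the analogue of Lemma \ref{buonflusso}/\eqref{est1}), since $\mathtt{Q}_1(U)$ is smoothing of order $-\rho$ and hence in particular in $\Sigma\mathcal{M}_{K,0,1}\otimes\mathcal{M}_2(\C)$; moreover $\mathcal{B}_1^1(U)=\mathrm{Id}+\mathtt{Q}_1(U)+(\text{higher homogeneity})$. Next I would conjugate \eqref{finalsyst} through $\mathcal{B}_1^1(U)$ using the vector-field transformation rule (the analogue of Lemma \ref{lem:tra.Vec}, formula \eqref{nuovosist}): the time-derivative contribution gives $(\partial_t\mathcal{B}_1^1(U))(\mathcal{B}_1^1(U))^{-1}$, which via the Lie expansion contributes at leading (quadratic) order $\mathtt{Q}_1(\partial_t U)$; substituting the equation $\partial_t U=-\ii\Omega U+\mathbf{M}(U)[U]$ from \eqref{eq:415tris} this equals $\mathtt{Q}_1(-\ii\Omega U)$ plus cubic-and-higher terms (those involving $\mathbf{M}(U)[U]$, which are smoothing of order $-\rho+m_1$ and of homogeneity $\geq 3$ by Proposition \ref{composizioniTOTALI}). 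The conjugation of $-\ii\Omega$ contributes at quadratic order the commutator $[\mathtt{Q}_1(U),-\ii\Omega]$ plus cubic remainders. The conjugation of $-\ii\opbw(\mathtt{D}(U;\x))$ and of the admissible symbol $\mathtt{H}_{\geq3}$ leaves them unchanged at quadratic order (since $\mathtt{D}$ is already quadratic and $\mathtt{H}_{\geq3}$ is cubic) and produces only cubic-and-higher corrections; crucially, the corrections to the admissible part are again admissible — a composition of a smoothing operator with a paradifferential operator of order $\leq 1$ stays smoothing (Proposition \ref{composizioniTOTALI}(i)), and one absorbs such terms either into $\mathtt{R}^{+}_{\geq3}(U)$ or, when they are genuinely of positive order, verifies by the same bookkeeping as in Section \ref{riduco} that they retain the form \eqref{highordersfinali}. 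Finally the conjugation of the quadratic smoothing term $\mathtt{R}_1(U)[Z]$ itself contributes $\mathtt{R}_1(U)$ at quadratic order plus cubic remainders. Collecting the quadratic terms, the coefficient of the quadratic part of the new equation is exactly $\mathtt{Q}_1(-\ii\Omega U)+[\mathtt{Q}_1(U),-\ii\Omega]+\mathtt{R}_1(U)$, which vanishes by \eqref{omoBNF}. What is left at homogeneity two is the original integrable diagonal symbol $-\ii\opbw(\mathtt{D}(U;\x))$; at homogeneity $\geq 3$ everything is gathered into an admissible matrix symbol $\mathtt{H}_{\geq3}\in\Gamma^{1}_{K,K',3}\otimes\mathcal{M}_2(\C)$, a new purely-quadratic smoothing operator $\mathtt{R}^{+}_2(U)\in\widetilde{\mathcal{R}}^{-\rho+m_1}_2\otimes\mathcal{M}_2(\C)$ (coming from the surviving pieces of $\mathtt{R}_2$ in \eqref{smooth-terms} together with the quadratic-in-$U$, linear-in-$Y_1$ corrections produced by the conjugation acting on $\mathtt{R}_1$), and a non-homogeneous smoothing remainder $\mathtt{R}^{+}_{\geq3}(U)\in\mathcal{R}^{-\rho+m_1}_{K,K',3}\otimes\mathcal{M}_2(\C)$. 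This is precisely \eqref{BNF12}. The real-to-real property is preserved throughout because $\mathcal{B}_1^\theta(U)$ is real-to-real (its generator $\mathtt{Q}_1(U)$ is), and the bounds for $\mathcal{B}_1^\theta$ and its inverse are the flow estimates.

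\textbf{Main obstacle.} The genuinely delicate point is the careful bookkeeping of \emph{which} cubic terms are produced by the conjugation and verifying that each one is either a smoothing operator of the stated order (so it lands in $\mathtt{R}^{+}_2$ or $\mathtt{R}^{+}_{\geq3}$) or an admissible symbol in the precise sense of Definition \ref{def:admissible} (so it lands in $\mathtt{H}_{\geq3}$). In particular one must track the loss $m_1$ coming through \eqref{eq:415tris}: the term $\mathtt{Q}_1(\mathbf{M}(U)[U])$ involves composing the $\rho$-smoothing operator $\mathtt{Q}_1$ with the map $\mathbf{M}(U)$ which loses $m_1$ derivatives, so the net regularization is $-\rho+m_1$, and this is why the hypothesis asks only $m_1\geq 1$ and why $\rho$ will later need to be taken large. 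The conjugation of the order-$1/2$ and order-$1$ parts of $\mathtt{D}$ and of $-\ii\Omega$ with $\mathtt{Q}_1$ is harmless precisely because $\mathtt{Q}_1$ is smoothing of arbitrarily high (negative) order, so commutators and symbolic-calculus remainders only improve; this is the structural reason the ``super-diagonalization'' and reduction of Sections \ref{diagonalizzo}--\ref{riduco} were performed first. There is no small-divisor difficulty here at all — the cubic one appears only in the \emph{second} Birkhoff step of Subsection \ref{secondostepBNF} — so for this lemma the only real work is the algebra of the homological equation and the (somewhat lengthy but entirely routine, given Propositions \ref{teoremadicomposizione}, \ref{composizioniTOTALI} and the flow estimates) organization of the conjugated vector field into the three advertised pieces.
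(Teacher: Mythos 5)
Your conjugation argument is essentially the paper's own proof: conjugate through the flow $\mathcal{B}_1^\theta(U)$ generated by $\mathtt{Q}_1$, use the Lie expansions of Lemma \ref{lem:tra.Vec}, replace $\partial_t\mathtt{Q}_1(U)$ by $\mathtt{Q}_1(-\ii\Omega U)$ up to a $(-\rho+m_1)$-smoothing term of cubic homogeneity via \eqref{eq:415tris}, cancel the quadratic part with the homological equation \eqref{omoBNF}, and absorb every remaining correction into $\mathtt{R}^{+}_2$, $\mathtt{R}^{+}_{\geq3}$ or the admissible symbol exactly as the paper does. Your opening paragraph solving \eqref{omoBNF} is not needed for this lemma (it is the content of the subsequent homological-equation lemma, and your componentwise equation carries a consistent sign flip: the correct divisor is $\ii\big(\s\,\omega(j)-\s'\,\omega(k)-\ep\,\omega(n)\big)$ as in \eqref{omoBNF5}), but this does not affect the proof of the statement as given.
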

 
 \begin{proof}
To conjugate \eqref{finalsyst}  we apply Lemma \ref{lem:tra.Vec} with 
$ \mathtt{Q}_1(U)  =  \ii {\bf A}(U)  $. 
By  \eqref{Lie1Vec} with $ L = 1 $
  we have 
   \begin{align}
  -\ii \mathcal{B}_{1}^{1}(U)\Omega (\mathcal{B}_{1}^{1}(U))^{-1} & =
 -\ii\Omega+ \big[ \mathtt{Q}_1(U),-\ii \Omega \big]  \nonumber \\
 & \ +\int_{0}^{1} (1- \theta) 
\mathcal{B}_{1}^{\theta}(U) \big[ \mathtt{Q}_1(U), \big[ \mathtt{Q}_1(U),-\ii \Omega
\big] \big](\mathcal{B}_{1}^{\theta}(U))^{-1} d \theta \label{core4} \, .
\end{align}
Using that $\mathtt{Q}_1(U)$ belongs to 
 $\widetilde{\mathcal{R}}_{1}^{-\rho} \otimes\mathcal{M}_2(\C) $
 and applying Proposition \ref{composizioniTOTALI}, and Lemma \ref{buonflusso2}, 
 the  term in \eqref{core4}
 is a smoothing operator  in $ \Sigma\mathcal{R}^{-\rho+ \frac12}_{K,K',2}\otimes\mathcal{M}_2(\C)$.
Similarly we obtain
 \begin{equation}\label{core2}
  -\ii \mathcal{B}_{1}^{1}(U)\opbw( {\mathtt D}(U;\x)) (\mathcal{B}_{1}^{1}(U))^{-1}=-\ii \opbw( {\mathtt D}(U;\x))
 \end{equation}
  up to a  term in 
  $ \Sigma\mathcal{R}^{-\rho+1}_{K,K',2}\otimes\mathcal{M}_2(\C)$, and 
 \begin{equation}\label{core} 
 \begin{aligned}
 \mathcal{B}_1^{1}(U)& \opbw({\mathtt H}_{\geq3}) (\mathcal{B}_{1}^{1}(U))^{-1}
=\opbw({\mathtt H}_{\geq3})
 \end{aligned}
 \end{equation}
 up to a matrix of smoothing operators in $\Sigma\mathcal{R}^{-\rho+1}_{K,K',2}\otimes\mathcal{M}_2(\C)$.
Finally 
  \begin{equation}\label{core3}
  \mathcal{B}_{1}^{1}(U)\big( \mathtt{R}_1(U)+\mathtt{R}_2(U)+\mathtt{R}_{\geq3}(U)\big)(\mathcal{B}_{1}^{1}(U))^{-1}=
  \mathtt{R}_1(U) 
 \end{equation}
plus a smoothing operator in $ \Sigma\mathcal{R}^{-\rho+1}_{K,K',2}\otimes\mathcal{M}_2(\C)$.
 
Next we consider the contribution coming from the conjugation of $ \pa_t $. 
Applying formula \eqref{Lie2Vec}  with $L=2$,
 we get
 \begin{align}
  \pa_t \mathcal{B}_{1}^{1} (U) (\mathcal{B}_{1}^{1} (U))^{-1} &=  
  \pa_t \mathtt{Q}_1(U)+\frac{1}{2} \big[ \mathtt{Q}_1(U), \pa_t \mathtt{Q}_1(U) \big] \nonumber 
\\
  &+\frac{1}{2}\int_0^{1} (1-\theta)^{2} \mathcal{B}_{1}^{\theta}(U)
\big[ \mathtt{Q}_1(U), \big[ \mathtt{Q}_1(U) , \pa_t \mathtt{Q}_1(U)  \big] \big] 
 (\mathcal{B}_{1}^{\theta} (U))^{-1} d \theta \, . \label{core5}
\end{align}
Recalling \eqref{eq:415tris} we have 
 \begin{equation}\label{core8}
 \pa_t  \mathtt{Q}_1(U) =  \mathtt{Q}_1(-\ii \Omega U+{\bf M}(U)[U]) =
   \mathtt{Q}_1(-\ii \Omega U) +\mathtt{Q}_1({\bf M}(U)[U])  =  \mathtt{Q}_1(-\ii \Omega U)
 \end{equation}
 up to a term in $ \Sigma\mathcal{R}^{-\rho+m_1}_{K,K',2}\otimes\mathcal{M}_{2}(\C)$, 
 where we used item (iii) of Proposition \ref{composizioniTOTALI}.
 By  \eqref{core8} and the fact that 
$ \mathtt{Q}_1(-\ii \Omega U) $ is in 
$ \widetilde{\mathcal{R}}_{1}^{-\rho + \frac12} \otimes\mathcal{M}_2(\C)   $  we have that the second line \eqref{core5} belongs to 
$ \Sigma\mathcal{R}^{-\rho+m_1}_{K,K',2}\otimes\mathcal{M}_{2}(\C)$.

In conclusion, by  \eqref{core4}, \eqref{core2}, \eqref{core}, \eqref{core3}, \eqref{core5}  
and the assumption that  $ \mathtt{Q}_1 $ solves \eqref{omoBNF}  we deduce \eqref{BNF12}. 
\end{proof}

  \begin{itemize}\item {\bf Notation.}\label{max2}
  Given $p\in \N$ we denote by
  $\max_{2}(|n_1|,|n_2|,\ldots,|n_p|)$ and   $\max(|n_1|,|n_2|,\ldots,|n_p|)$  respectively
  the second largest and the largest among $ | n_1|, \ldots, |n_p| $.
\end{itemize}

\noindent
We have the following lemma.
  \begin{lemma}\label{decadimento}
An operator $ {\mathtt R}_1 (U) $ of the form \eqref{smooth-terms2}-\eqref{BNF3} 
belongs to $\widetilde{\mathcal{R}}^{-\rho}_1\otimes\mathcal{M}_2(\C)$ if and only if, for some $\mu>0$,
  \begin{equation}\label{BNF6}
  | (\mathtt{r}_{1,\ep})^{\s,\s'}_{n,k}|\leq \frac{\max_2(|n|,|k|)^{\rho+\mu}}{\max(|n|,|k|)^{\rho}} \, ,
    \quad \forall\, \ep,\s,\s'=\pm,\;\; n,k\in\Z\setminus\{0\}\,. 
  \end{equation}
  An operator $ {\mathtt R}_2 (U) $ of the form \eqref{smooth-terms2}-\eqref{BNF2} as in 
  \eqref{R2epep'}-\eqref{BNF5} belongs to 
  $\widetilde{\mathcal{R}}^{-\rho}_2\otimes\mathcal{M}_2(\C)$ if and only if, for some $\mu>0$, 
  \begin{equation}\label{BNF7}
  | (\mathtt{r}_{2,\ep,\ep'})^{\s,\s'}_{n_1, n_2, k}|\leq 
  \frac{\max_2(|n_1|, |n_2|, |k|)^{\rho+\mu}}{\max(|n_1|, |n_2|, |k|)^{\rho}}
  \, , 
  \quad \forall\, \ep,\ep',\s,\s'=\pm,\;\; n_1,n_2,k\in\Z\setminus\{0\}\,.
  \end{equation}  
    \end{lemma}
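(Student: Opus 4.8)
\textbf{Proof strategy for Lemma \ref{decadimento}.}
The statement is a translation of the abstract definition of the classes $\widetilde{\mathcal{R}}^{-\rho}_p$ (Definition \ref{omosmoothing}(i)) into explicit bounds on the Fourier coefficients of a $1$-homogeneous, resp.\ $2$-homogeneous, smoothing operator written in the normal form \eqref{smooth-terms2}--\eqref{BNF5}. The plan is to verify the equivalence directly, working one matrix entry $(\mathtt{R}_{1,\ep}(U))^{\s'}_{\s}$ (resp.\ $(\mathtt{R}_{2,\ep,\ep'}(U))^{\s'}_{\s}$) at a time, since the real-to-real structure \eqref{smooth-terms2} and the decompositions \eqref{BNF2} only involve finitely many such entries and the class $\widetilde{\mathcal{R}}^{-\rho}_p\otimes\mathcal{M}_2(\C)$ is, by definition, the space of matrices with entries in $\widetilde{\mathcal{R}}^{-\rho}_p$.

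\textbf{Step 1: reduce to the action on spectrally localized inputs.} Recall from Definition \ref{omosmoothing}(i) that $R\in\widetilde{\mathcal{R}}^{-\rho}_p$ iff $R$ is a $(p+1)$-linear map satisfying, for all $n_0,\dots,n_{p+1}\in\N^*$,
\[
\|\Pi_{n_0}R(\Pi_{\vec n}\mathcal{U})\Pi_{n_{p+1}}U_{p+1}\|_{L^2}\leq C\,\frac{\max_2(n_1,\dots,n_{p+1})^{\rho+\mu}}{\max(n_1,\dots,n_{p+1})^{\rho}}\prod_{j=1}^{p+1}\|\Pi_{n_j}U_j\|_{L^2}\,,
\]
together with the momentum and translation-invariance constraints. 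For $p=1$ I would feed in $\mathcal{U}=U$ localized at a single frequency $n$ (i.e.\ $U=\Pi_n U$, with both Fourier modes $u^{+}_{\pm n}$, $u^{-}_{\pm n}$ allowed) and $U_2=z^{\s'}$ localized at $k$, and read off from \eqref{espansio1}--\eqref{BNF3} that the component of $(\mathtt{R}_{1,\ep}(U))^{\s'}_{\s}z^{\s'}$ at output frequency $j$ with $\ep n+\s' k=\s j$ has $L^2$ norm comparable to $|(\mathtt{r}_{1,\ep})^{\s,\s'}_{n,k}|\,|u^{\ep}_n|\,|z^{\s'}_k|$. Matching this against the displayed bound, and using the momentum relation $\ep n+\s' k=\s j$ to see that $\max(|j|,|k|)$ and $\max(|n|,|k|)$ are comparable and $\max_2$ likewise, yields exactly \eqref{BNF6}; conversely, \eqref{BNF6} plus the triangle inequality for the (locally finite) sums over $n$ in \eqref{BNF3} and over $j$ in \eqref{espansio1} reconstruct the operatorial bound. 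The $p=2$ case is identical, feeding in two localized homogeneity slots $n_1,n_2$ and one input slot $k$, reading off \eqref{R2epep'}--\eqref{BNF5}, and invoking the momentum relation $\ep n_1+\ep' n_2+\s' k=\s j$ to pass between $\max/\max_2$ of $(|n_1|,|n_2|,|j|)$ and of $(|n_1|,|n_2|,|k|)$; this gives \eqref{BNF7}.

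\textbf{Step 2: bookkeeping of the constraints and reassembly.} It remains to check that the momentum constraint $\sum_{j}\s_j n_j=0$ in Definition \ref{omosmoothing}(i) is automatically encoded by the summation conditions $\ep n+\s' k=\s j$ (resp.\ $\ep n_1+\ep' n_2+\s' k=\s j$) appearing in \eqref{BNF3}, \eqref{BNF5} — this is immediate — and that translation invariance \eqref{def:R-trin} holds for operators of the form \eqref{espansio1}, \eqref{R2epep'}, which follows since conjugating by $\tau_\theta$ multiplies $u^{\ep}_n$ by $e^{\ii\ep n\theta}$, $z^{\s'}_k$ by $e^{\ii\s'k\theta}$, and $e^{\ii\s j x}$ shifts by $e^{\ii\s j\theta}$, with total phase $e^{\ii(\ep n+\s'k-\s j)\theta}=1$ on the support. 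Finally the equivalence for the full matrices $\mathtt{R}_1(U)$, $\mathtt{R}_2(U)$ follows entrywise from \eqref{smooth-terms2}, \eqref{BNF2}, since a finite sum of operators in $\widetilde{\mathcal{R}}^{-\rho}_p$ is again in $\widetilde{\mathcal{R}}^{-\rho}_p$ (with $\mu$ the max of the individual $\mu$'s), and the real-to-real relations $(\mathtt{R}_i(U))^{\s'}_{\s}=\ov{(\mathtt{R}_i(U))^{-\s'}_{-\s}}$ relate coefficients with flipped signs without affecting absolute values.

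\textbf{Main obstacle.} There is no serious analytic difficulty here; the only thing requiring a little care is the combinatorial fact that, under the momentum relations, the ratio $\max_2(\cdots)^{\rho+\mu}/\max(\cdots)^{\rho}$ computed with the output index $j$ present is equivalent (up to a constant absorbed into $\mu$) to the same ratio with $j$ replaced by the input index $k$. This is where one uses $|j|\leq|n|+|k|$ etc.\ to control $\max$ from above and below, and a short case analysis (which of $|n|,|k|$ is largest) to control $\max_2$; I would present this comparison once, in the $p=2$ case, and note that $p=1$ is a degenerate instance of it. Everything else is a direct unravelling of definitions.
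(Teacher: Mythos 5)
Your proposal is correct and is essentially the paper's own argument: the paper proves this lemma simply by unravelling Definition \ref{omosmoothing} (testing on spectrally localized inputs and reading off the coefficients), which is exactly what you do in more detail. One small remark: the ratio in Definition \ref{omosmoothing}(i) involves only the homogeneity frequencies $n_1,\dots,n_p$ and the input frequency $n_{p+1}$, not the output frequency $n_0$, so the comparison between $\max/\max_2$ computed with $j$ and with $k$ that you flag as the "main obstacle" is not actually needed — the definitional bound already matches \eqref{BNF6}--\eqref{BNF7} verbatim.
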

  \begin{proof}  
By the definition of smoothing homogeneous operators given in Definition \ref{omosmoothing}. 
\end{proof}

We now solve the homological equation \eqref{omoBNF}.

\begin{lemma}{\bf (First homological equation)} 
The operator $ \mathtt{Q}_{1} $ of the form \eqref{smooth-terms2}-\eqref{BNF3} 
with coefficients 
\begin{equation}\label{omoBNF5}
  (\mathtt{q}_{1,\ep})^{\s,\s'}_{n,k} :=
  \frac{-(\mathtt{r}_{1,\ep})^{\s,\s'}_{n,k}}{\ii \big( \s|j|^{ \frac{1}{2}}-\s'  |k|^{\frac{1}{2}}- \ep |n|^{\frac{1}{2}} \big)} \, , 
  \quad  \s j -\s' k-\ep n=0 \, ,   
\end{equation}
with $\s, \s', \ep  = \pm $, $ j, n, k \in \Z \setminus \{0\} $ 
solves the homological equation \eqref{omoBNF} and $ \mathtt{Q}_{1} $  is in 
$ \widetilde{\mathcal{R}}_1^{-\rho}\otimes\mathcal{M}_2(\C)$. 
\end{lemma}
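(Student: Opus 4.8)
The statement asks for two things: that the prescribed coefficients solve the homological equation \eqref{omoBNF}, and that the resulting operator $\mathtt{Q}_1$ lies in the smoothing class $\widetilde{\mathcal{R}}_1^{-\rho}\otimes\mathcal{M}_2(\C)$. The strategy is to read \eqref{omoBNF} componentwise in the Fourier basis and then invoke the characterization of Lemma \ref{decadimento} together with the cubic non-resonance bound \eqref{stima1} of Proposition \ref{stimedalbasso}.

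First I would make the homological equation \eqref{omoBNF} explicit on Fourier monomials. Writing $\Omega$ as the diagonal Fourier multiplier $\mathrm{diag}(|D|^{1/2},-|D|^{1/2})$ from \eqref{eq:415tris}, and using the expansions \eqref{smooth-terms2}, \eqref{BNF2}, \eqref{BNF3} for the homogeneous operators, one computes the action of each of the three terms $\mathtt{Q}_1(-\ii\Omega U)$, $[\mathtt{Q}_1(U),-\ii\Omega]$, $\mathtt{R}_1(U)$ on a monomial $u_n^\ep z_k^{\s'} e^{\ii\s j x}$ with the momentum constraint $\ep n+\s' k=\s j$ (recall $\omega_m=|m|^{1/2}$). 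The operator $\mathtt{Q}_1(-\ii\Omega U)$ replaces the argument $u_n^\ep$ by its time-derivative along the linear flow, contributing a factor $-\ii\ep\,\omega_n$; the commutator $[\mathtt{Q}_1(U),-\ii\Omega]$ contributes $-\ii\s\,\omega_j$ from the left output factor and $+\ii\s'\,\omega_k$ from the right input factor. Collecting, \eqref{omoBNF} becomes, coefficient by coefficient,
\[
-\ii\big(\ep\,\omega_n+\s'\,\omega_k-\s\,\omega_j\big)\,(\mathtt{q}_{1,\ep})^{\s,\s'}_{n,k}+(\mathtt{r}_{1,\ep})^{\s,\s'}_{n,k}=0 \, ,
\]
which is precisely inverted by \eqref{omoBNF5} (note the sign $\s\omega_j-\s'\omega_k-\ep\omega_n$ in the denominator matches, since we may factor out $-1$; I would double-check the sign bookkeeping against the conventions in \eqref{eq:415tris} and \eqref{smooth-terms2}). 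The denominator never vanishes: by the momentum relation $\s j-\s' k-\ep n=0$ the triple $(j,k,n)$ and signs $(\s,-\s',-\ep)$ satisfy \eqref{moment}, so Proposition \ref{stimedalbasso} gives $|\s\omega_j-\s'\omega_k-\ep\omega_n|\geq \mathtt{c}>0$. Hence $\mathtt{Q}_1$ is well-defined and $x$-translation invariant (the momentum constraint is preserved), and it is real-to-real because dividing a real-to-real operator's coefficients by the real, sign-symmetric phase $\s\omega_j-\s'\omega_k-\ep\omega_n$ preserves the relations in \eqref{smooth-terms2}.

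It remains to check $\mathtt{Q}_1\in\widetilde{\mathcal{R}}_1^{-\rho}\otimes\mathcal{M}_2(\C)$. By Lemma \ref{decadimento} this amounts to the bound \eqref{BNF6} for the coefficients $(\mathtt{q}_{1,\ep})^{\s,\s'}_{n,k}$. Since $\mathtt{R}_1(U)$ is assumed to be in $\widetilde{\mathcal{R}}_1^{-\rho}$ (it came from \eqref{smooth-terms}), its coefficients already satisfy \eqref{BNF6} with some $\mu$; and since the denominator in \eqref{omoBNF5} is bounded below by $\mathtt{c}$ uniformly (by \eqref{stima1}), we get $|(\mathtt{q}_{1,\ep})^{\s,\s'}_{n,k}|\leq \mathtt{c}^{-1}|(\mathtt{r}_{1,\ep})^{\s,\s'}_{n,k}|\leq \mathtt{c}^{-1}\max_2(|n|,|k|)^{\rho+\mu}/\max(|n|,|k|)^{\rho}$, which is again of the form \eqref{BNF6}. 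The verification of \eqref{omoresti2}–\eqref{def:R-trin}, i.e. the momentum relation and translation invariance, is immediate from the constraint $\s j-\s'k-\ep n=0$ carried through the construction.

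\textbf{Main obstacle.} The only real subtlety is not analytic but bookkeeping: getting the signs and the assignment of $\omega_j$ to the \emph{output} index right in the commutator $[\mathtt{Q}_1(U),-\ii\Omega]$, and correctly identifying which argument of $\mathtt{Q}_1$ is differentiated in $\mathtt{Q}_1(-\ii\Omega U)$ versus the diagonal entries of $\Omega$ acting on the $\pm$ components (the lower-right block of $\Omega$ carries the opposite sign). Everything else — nonvanishing of small divisors and the smoothing estimate — follows mechanically from Proposition \ref{stimedalbasso} and Lemma \ref{decadimento}, exactly as in the analogous quadratic homological step of \cite{BD}.
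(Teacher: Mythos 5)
Your proposal is correct and takes essentially the same route as the paper's proof: it reads the homological equation \eqref{omoBNF} componentwise on Fourier monomials under the momentum constraint, reduces it to the scalar coefficient equation $\ii\big(\s|j|^{1/2}-\s'|k|^{1/2}-\ep|n|^{1/2}\big)(\mathtt{q}_{1,\ep})^{\s,\s'}_{n,k}+(\mathtt{r}_{1,\ep})^{\s,\s'}_{n,k}=0$, solves it by dividing by the phase, which is bounded below by \eqref{stima1}, and obtains membership in $\widetilde{\mathcal{R}}_1^{-\rho}\otimes\mathcal{M}_2(\C)$ from Lemma \ref{decadimento}, exactly as in the paper. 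The only blemish is that your prose assigns the commutator contributions as $-\ii\s\omega_j$ and $+\ii\s'\omega_k$, with signs swapped relative to your (correct) displayed equation, an inconsistency you yourself flag; the displayed equation and hence the formula \eqref{omoBNF5} are right.
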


\begin{proof}
First note that the coefficients in \eqref{omoBNF5} are well-defined
since $\s|j|^{ \frac{1}{2}}-\s'  |k|^{\frac{1}{2}}- \ep |n|^{\frac{1}{2}}\neq 0$ for any $\s, \s', \ep  = \pm $, 
$ n, k \in \Z \setminus \{0\} $, 
by Proposition \ref{stimedalbasso}, in particular  \eqref{stima1}.
Moreover,  by  \eqref{stima1} and Lemma \ref{decadimento} we have 
\begin{equation*}
 | (\mathtt{q}_{1,\ep})^{\s,\s'}_{n,k}|\leq  \frac{{\max}_2 (|n|,|k|)^{\rho+\mu}}{\max(|n|,|k|)^{\rho}} \, ,
 \end{equation*}
and therefore the operator $\mathtt{Q}_1$
is in $\widetilde{\mathcal{R}}_1^{-\rho}\otimes\mathcal{M}_2(\C)$.

Next, recalling \eqref{smooth-terms2} and \eqref{eq:415tris},  the homological 
equation \eqref{omoBNF} amounts to the equations
 \begin{equation*}
(\mathtt{Q}_1(-\ii \Omega U))_{\s}^{\s'}-  (\mathtt{Q}_1( U))_{\s}^{\s'}  \s' \ii |D|^\frac12 +
 \s\ii |D|^{\frac{1}{2}}  (\mathtt{Q}_1( U))_{\s}^{\s'} +
(\mathtt{R}_1(U))_{\s}^{\s'}=0\,, \ \forall \s,\s'=\pm \, , 
\end{equation*}
and expanding 
 $(\mathtt{Q}_{1}(U))_{\s}^{\s'}$ as in \eqref{BNF2}-\eqref{BNF3} with entries 
\be\label{expQ1}
(\mathtt{Q}_{1,\ep}(U))_{\s,j}^{\s',k}=\frac{1}{\sqrt{2\pi}}\sum_{\substack{n\in \Z\setminus\{0\} \\ \ep n+\s'k=\s j}}
 (\mathtt{q}_{1,\ep})^{\s,\s'}_{n,k}u_{n}^{\ep} \, , \quad j,k\in \Z\setminus\{0\}\, , 
\ee
to the equations, for any $j,k\in \Z\setminus\{0\}$, $\ep=\pm$, 
\begin{equation}\label{omoBNF3}
(\mathtt{Q}_{1,\ep}(-\ii \Omega U))_{\s,j}^{\s',k}+  (\mathtt{Q}_{1,\ep}( U))_{\s,j}^{\s',k} \big(
\s\ii |j|^{\frac{1}{2}}-\s' \ii |k|^{\frac{1}{2}}
\big)
+
(\mathtt{R}_{1,\ep}(U))_{\s,j}^{\s', k}=0 \, .
\end{equation}
By \eqref{expQ1} and \eqref{eq:415tris} we have 
 \[
 (\mathtt{Q}_{1,\ep}(-\ii \Omega U))_{\s,j}^{\s',k}=\frac{1}{\sqrt{2\pi}}
 \sum_{\substack{n\in \Z\setminus\{0\} \\ \ep n+\s'k=\s j}}
 (\mathtt{q}_{1,\ep})^{\s,\s'}_{n,k} (-\ii \ep |n|^{\frac{1}{2}}) u_{n}^{\ep} \, , 
 \]
 and \eqref{omoBNF3} becomes, 
 for $j,k,n\in \Z\setminus\{0\}$ and  $\s,\s',\ep=\pm$ with $ \ep n+\s'k=\s j$,
 \begin{equation*} 
  (\mathtt{q}_{1,\ep})^{\s,\s'}_{n,k} \ii \big(\s|j|^{\frac{1}{2}}-\s' |k|^{\frac{1}{2}}- \ep |n|^{\frac{1}{2}}\big)+
   (\mathtt{r}_{1,\ep})^{\s,\s'}_{n,k}=0 \, , 
 \end{equation*}
 which is solved by the  coefficients $(\mathtt{q}_{1,\ep})^{\s,\s'}_{n,k} $
defined in \eqref{omoBNF5}.
\end{proof}

\subsubsection{Elimination of the cubic vector field}\label{secondostepBNF}

In this section we reduce to  Poincar\'e-Birkhoff normal form 
the smoothing term $\mathtt{R}_{2}^{+} (U) \in \widetilde{\mathcal{R}}_2^{-\rho+m_1}\otimes\mathcal{M}_2(\C)$
 in  \eqref{BNF12}.
We conjugate \eqref{BNF12} with  the flow 
 \be\label{BNFstep2}
\partial_{\theta} \mathcal{B}_2^{\theta}(U)= \mathtt{Q}_2(U) \mathcal{B}_{2}^{\theta}(U)  \, , 
\quad \mathcal{B}_2^{0}(U) = {\rm Id} \, ,  
\ee
where $ \mathtt{Q}_2(U)$ is a matrix of smoothing operators in $\widetilde{\mathcal{R}}^{-\rho+N_0+m_1}_2\otimes\mathcal{M}_2(\C)$ 
of the same  form
 of $\mathtt{R}_2^{+}(U) $ to be determined.
 We introduce the new variable
\begin{equation}\label{newcoordBNF2}
Y_2:=\vect{y_2}{\ov{y_2}}=
\big(\mathcal{B}_2^{\theta}(U)[Y_1]\big)_{|_{\theta=1}}\,.
\end{equation}
\begin{itemize}
\item 
{\bf Notation.}
Given the operator $\mathtt{Q}_2(U)$ in \eqref{BNFstep2},
we denote by $\mathtt{Q}_2(-\ii\Omega U)$ the operator of the form 
\eqref{smooth-terms2}, \eqref{BNF2}, \eqref{R2epep'}-\eqref{BNF5} with coefficients defined as
 \begin{equation}\label{BNFcoef5}
 (\mathtt{Q}_{2,\ep,\ep'}(-\ii \Omega U))_{\s,j}^{\s',k}=\frac{1}{2\pi} \!\! \!\!\!\!
 \sum_{\substack{n_1, n_2 \in \Z\setminus\{0\} \\ \ep n_1 + \ep' n_2 +\s' k = \s j}} \!\!\!\!\!\!
 (\mathtt{q}_{2,\ep,\ep'})^{\s,\s'}_{n_1,n_2,k} 
 \big( -\ii \ep |n_1|^{\frac{1}{2}}-\ii\ep'|n_2|^{\frac{1}{2}} \big) u_{n_1}^{\ep}u_{n_2}^{\ep'}\, .
\end{equation}
\end{itemize}

\begin{lemma}{\bf (Second Poincar\'e-Birkhoff step)} \label{lem:BNF2}
Assume that  $\mathtt{Q}_2(U) \in \widetilde{\mathcal{R}}^{-\rho+N_0+m_1}_2 \otimes\mathcal{M}_{2}(\C) $ solves the homological equation 
 \begin{equation}\label{omostep2}
 \mathtt{Q}_2(-\ii \Omega U)+ \big[  \mathtt{Q}_2( U), -\ii \Omega \big] +\mathtt{R}_2^{+}(U)=
 (\mathtt{R}_{2}^+)^{res}(U)  \,.
\end{equation}
Then  
  \begin{equation}\label{BNF122}
 \begin{aligned}
 \pa_{t}Y_2&=-\ii \Omega Y_2+ \opbw( -\ii {\mathtt D}(U;\x) + {\mathtt H}_{\geq3} )[Y_2] +
\big( (\mathtt{R}_{2}^+)^{res}(U)  +  \mathtt{R}'_{\geq 3}(U)\big) [Y_2]
 \end{aligned}
 \end{equation}
  where $\Omega$ is defined in \eqref{eq:415tris} and ${\mathtt D}(U;\x)$  in \eqref{diag-part},  
     $ {\mathtt H}_{\geq3} $ is an admissible symbol in $ \Gamma^{1}_{K,K',3}\otimes \mathcal{M}_2(\C)$,  
  $ (\mathtt{R}_{2}^+)^{res}(U) $ 
  is a Poincar\'e-Birkhoff resonant smoothing operator according to Definition \ref{resterm} in 
  $ \widetilde{\mathcal{R}}^{-\rho+m_1}_2 \otimes\mathcal{M}_{2}(\C) $,   
  and $  \mathtt{R}'_{\geq 3}(U)$ is a matrix of smoothing operators in 
  $ \mathcal{R}^{-\rho+N_0+2m_1}_{K,K',3}\otimes\mathcal{M}_2(\C)$
   with $m_1\geq1$ as in \eqref{eq:415tris}. 
   \end{lemma}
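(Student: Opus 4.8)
The plan is to conjugate the system \eqref{BNF12} through the time-one flow $\mathcal{B}_2^1(U)$ generated by \eqref{BNFstep2}, following the same scheme used in Lemma \ref{lem:BNF1}, but now keeping track of the loss of derivatives $N_0$ coming from the quartic small divisors of Proposition \ref{stimedalbasso}. First I would apply Lemma \ref{lem:tra.Vec} with $\mathtt{Q}_2(U) = \ii {\bf A}(U)$, and use the Lie expansions \eqref{Lie1Vec}, \eqref{Lie2Vec} with $L=1$ (resp. $L=2$) to expand each piece of the conjugated vector field. The term $-\ii \mathcal{B}_2^1(U) \Omega (\mathcal{B}_2^1(U))^{-1}$ produces $-\ii\Omega + [\mathtt{Q}_2(U), -\ii\Omega]$ plus a remainder which, by Proposition \ref{composizioniTOTALI}, Lemma \ref{buonflusso2}, and the fact that $\mathtt{Q}_2(U)\in\widetilde{\mathcal{R}}^{-\rho+N_0+m_1}_2\otimes\mathcal{M}_2(\C)$, lies in $\mathcal{R}^{-\rho+N_0+2m_1}_{K,K',3}\otimes\mathcal{M}_2(\C)$ (the double commutator is cubic). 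Conjugating the integrable paradifferential term $\opbw(-\ii{\mathtt D}(U;\x))$ and the admissible term $\opbw({\mathtt H}_{\geq3})$ leaves them unchanged modulo cubic smoothing operators of the same class, exactly as in \eqref{core2}--\eqref{core}; conjugating $\mathtt{R}_2^+(U)$ itself leaves $\mathtt{R}_2^+(U)$ modulo cubic terms, as in \eqref{core3}. The contribution from $\pa_t$ gives $\pa_t \mathtt{Q}_2(U) + \tfrac12[\mathtt{Q}_2(U),\pa_t\mathtt{Q}_2(U)] + \cdots$; using \eqref{eq:415tris} and the remarks under Definition \ref{smoothoperatormaps}, $\pa_t\mathtt{Q}_2(U) = \mathtt{Q}_2(-\ii\Omega U)$ modulo a term in $\mathcal{R}^{-\rho+N_0+2m_1}_{K,K',3}\otimes\mathcal{M}_2(\C)$, and the remaining double commutator is again cubic of the right order.

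Collecting these expansions, the quadratic part of the conjugated system becomes $\mathtt{Q}_2(-\ii\Omega U) + [\mathtt{Q}_2(U),-\ii\Omega] + \mathtt{R}_2^+(U)$, which by the assumed homological equation \eqref{omostep2} equals the Poincar\'e-Birkhoff resonant operator $(\mathtt{R}_2^+)^{res}(U)$. Since $(\mathtt{R}_2^+)^{res}(U)$ is obtained from $\mathtt{R}_2^+(U)$ by restricting to the resonant sites — where no division by small divisors occurs — it inherits the decay of $\mathtt{R}_2^+(U)$ and hence lies in $\widetilde{\mathcal{R}}^{-\rho+m_1}_2\otimes\mathcal{M}_2(\C)$ (no loss of $N_0$). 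All the cubic leftovers are absorbed into $\mathtt{R}'_{\geq3}(U)\in\mathcal{R}^{-\rho+N_0+2m_1}_{K,K',3}\otimes\mathcal{M}_2(\C)$, yielding \eqref{BNF122}. One should also note that the new symbol ${\mathtt H}_{\geq3}$ stays admissible: conjugation by $\mathcal{B}_2^1(U)$ modifies it only by cubic smoothing operators, which can be viewed as admissible symbols of order $\leq 1$ by the remarks after Definition \ref{smoothoperatormaps}.

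The companion part of the argument, which I would treat in a separate lemma (the "second homological equation"), is to exhibit the solution $\mathtt{Q}_2(U)$ of \eqref{omostep2}. Writing $\mathtt{Q}_2(U)$ in the expansion \eqref{smooth-terms2}--\eqref{BNF5} with entries governed by coefficients $(\mathtt{q}_{2,\ep,\ep'})^{\s,\s'}_{n_1,n_2,k}$, and using \eqref{BNFcoef5} to express $\mathtt{Q}_2(-\ii\Omega U)$, the homological equation \eqref{omostep2} decouples into the scalar equations
\begin{equation*}
(\mathtt{q}_{2,\ep,\ep'})^{\s,\s'}_{n_1,n_2,k}\,\ii\big(\s|j|^{\frac12}-\s'|k|^{\frac12}-\ep|n_1|^{\frac12}-\ep'|n_2|^{\frac12}\big) + (\mathtt{r}_{2,\ep,\ep'}^+)^{\s,\s'}_{n_1,n_2,k} = 0
\end{equation*}
over the non-resonant sites (where the phase is nonzero), while on the resonant sites one simply sets $\mathtt{q}_{2,\ep,\ep'}=0$ and keeps the corresponding coefficient of $\mathtt{r}_{2,\ep,\ep'}^+$ in $(\mathtt{R}_2^+)^{res}(U)$. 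The quartic non-resonance bound \eqref{stima2} of Proposition \ref{stimedalbasso} guarantees that the divisor is bounded below by $\mathtt{c}\,\max\{|n_1|,|n_2|,|k|,|j|\}^{-N_0}$, so by the characterization in Lemma \ref{decadimento} the coefficients $(\mathtt{q}_{2,\ep,\ep'})^{\s,\s'}_{n_1,n_2,k}$ satisfy the decay estimate \eqref{BNF7} with $\rho$ replaced by $\rho - N_0$, hence $\mathtt{Q}_2(U)\in\widetilde{\mathcal{R}}^{-\rho+N_0}_2\otimes\mathcal{M}_2(\C)\subset\widetilde{\mathcal{R}}^{-\rho+N_0+m_1}_2\otimes\mathcal{M}_2(\C)$.

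The main obstacle, and the reason $\rho$ must be taken large enough ($\rho\geq\rho_0\sim N_0$, consistently with the hypotheses of Proposition \ref{cor:BNF}), is precisely the loss of $N_0$ derivatives in the small divisors: the generator $\mathtt{Q}_2$ is only $(\rho-N_0)$-smoothing, so all the cubic remainders it produces — in particular the double commutators and the $\pa_t$-contribution — regularize by $\rho-N_0-2m_1$ rather than $\rho$, and one must verify that this is still $\geq \rho - \rho_0$ so that the output fits the statement of Proposition \ref{cor:BNF}. The bookkeeping of these orders, together with checking that the resonant operator genuinely escapes the division and therefore keeps the full $-\rho+m_1$ smoothing, is the delicate point; the algebra of the homological equation itself is routine once the lower bound \eqref{stima2} is in hand. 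I would close the proof of Proposition \ref{cor:BNF} by composing $\mathcal{B}_1^\theta$ and $\mathcal{B}_2^\theta$ with the earlier transformations, setting $\mathfrak{C}^\theta(U) = \mathcal{B}_2^\theta(U)\circ\mathcal{B}_1^\theta(U)$ acting on the output of Proposition \ref{teodiagonal3}, and deriving the quantitative bounds \eqref{stimafinaleFINFRAK}, the norm equivalence \eqref{equiv100}, and the expansion in (iii) from Lemma \ref{buonflusso2} and the homogeneity structure of $\mathtt{Q}_1,\mathtt{Q}_2$.
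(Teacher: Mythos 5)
Your proposal is correct and follows essentially the same route as the paper: conjugation by the flow of \eqref{BNFstep2} via Lemma \ref{lem:tra.Vec}, Lie expansions \eqref{Lie1Vec}--\eqref{Lie2Vec} to isolate $\mathtt{Q}_2(-\ii\Omega U)+[\mathtt{Q}_2(U),-\ii\Omega]+\mathtt{R}_2^+(U)$ as the only quadratic non-resonant contribution, and the homological equation \eqref{omostep2} to replace it by $(\mathtt{R}_2^+)^{res}(U)$, with the same bookkeeping of the cubic remainders in $\mathcal{R}^{-\rho+N_0+2m_1}_{K,K',3}\otimes\mathcal{M}_2(\C)$. (Only a minor slip in the companion homological-equation discussion, which is not part of the statement: since the coefficients of $\mathtt{R}_2^+$ obey \eqref{BNF7} with $\rho\rightsquigarrow\rho-m_1$, the divided coefficients give $\mathtt{Q}_2\in\widetilde{\mathcal{R}}^{-\rho+N_0+m_1}_2\otimes\mathcal{M}_2(\C)$ rather than $\widetilde{\mathcal{R}}^{-\rho+N_0}_2\otimes\mathcal{M}_2(\C)$, which is anyway the class you ultimately use.)
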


\begin{proof}
To conjugate system \eqref{BNF12} we  apply Lemma \ref{lem:tra.Vec} with $ \mathtt{Q}_2(U) = \ii {\bf A}(U)   $.
Applying formula \eqref{Lie1Vec} with $ L = 1 $, 
the fact that $\mathtt{Q}_2(U)$ is a smoothing operator in 
$\widetilde{\mathcal{R}}^{-\rho+N_0+m_1}_2\otimes\mathcal{M}_2(\C) $,  
 Proposition \ref{composizioniTOTALI} and Lemma \ref{buonflusso2},
we have that 
 \[
 \mathcal{B}_2^{1}(U) ( -\ii \Omega )   ( \mathcal{B}_2^{1}(U))^{-1}=-\ii \Omega+ \big[ \mathtt{Q}_2(U),-\ii \Omega \big]
 \]
 plus a smoothing operator in $ \mathcal{R}^{-\rho+N_0+m_1+1}_{K,K',3}\otimes\mathcal{M}_2(\C)$.
Similarly 
 \[
 \begin{aligned}
&  \mathcal{B}_2^{1}(U) \big(
 \opbw( -\ii  {\mathtt D}(U;\x) + {\mathtt H}_{\geq3} )+ \mathtt{R}^{+}_2(U)
 + \mathtt{R}^{+}_{\geq 3}(U)\big)
   ( \mathcal{B}_2^{1}(U))^{-1} \\
& =  \opbw \big(  -\ii  {\mathtt D}(U;\x) + {\mathtt H}_{\geq3} \big)  + \mathtt{R}^{+}_2(U)
 \end{aligned}
 \]
 up to a smoothing operator in $ \mathcal{R}^{-\rho+N_0+m_1+1}_{K,K',3}\otimes\mathcal{M}_2(\C)$.

Next we consider the contribution coming from the conjugation of $ \pa_t $. 
First, note that, using equation \eqref{eq:415tris},  
\be\label{Q2t}
 \pa_{t}\mathtt{Q}_{2}(U) =\mathtt{Q}_{2}(\pa_t U) = \mathtt{Q}_{2}(-\ii \Omega U) 
 \ee
(defined in \eqref{BNFcoef5})
up to a smoothing operator in $ \mathcal{R}^{-\rho+N_0+2m_1}_{K,K',3}\otimes\mathcal{M}_2(\C)$.
The operator $ \mathtt{Q}_{2}(-\ii \Omega U) $ is in 
$ \widetilde{\mathcal{R}}^{-\rho+N_0+m_1 + \frac12}_2 \otimes\mathcal{M}_{2}(\C)  $. 
Then, applying formula \eqref{Lie2Vec} with $ L = 2 $ we have 
 \begin{align*}
  \pa_t \mathcal{B}_{2}^{1} (U) (\mathcal{B}_{2}^{1} (U))^{-1} &=  
  \pa_t \mathtt{Q}_2(U)+\frac{1}{2} \big[ \mathtt{Q}_2(U), \pa_t \mathtt{Q}_2(U) \big] \\
  & \ \ +\frac{1}{2}\int_0^{1} (1-\theta)^{2} 
   \mathcal{B}_{2}^{\theta} (U) \big[ \mathtt{Q}_2(U), \big[ \mathtt{Q}_2(U) , \pa_t\mathtt{Q}_{2}(U)
  \big] \big](\mathcal{B}_{2}^{\theta} (U))^{-1} d\theta
\stackrel{\eqref{Q2t}} = \mathtt{Q}_{2}(-\ii \Omega U)  \nonumber 
\end{align*}
 up to a  smoothing operator  in $ \mathcal{R}^{-\rho+N_0+2m_1}_{K,K',3}\otimes\mathcal{M}_2(\C)$.

In conclusion 
 $ \mathtt{Q}_2(-\ii \Omega U)+[  \mathtt{Q}_2( U), -\ii \Omega ]$ 
 $+\mathtt{R}_2^{+}(U)$ 
collects all the non integrable terms quadratic in $ U $ in the transformed system.
Since $\mathtt{Q}_2$ solves \eqref{omostep2} 
we conclude that $ Y_2 $ solves \eqref{BNF122}.
 \end{proof}
 
 We  now solve the homological equation \eqref{omostep2}.

 \begin{lemma}{\bf (Second homological equation)} 
The operator $ \mathtt{Q}_{2} $  of the form \eqref{smooth-terms2}-\eqref{BNF2}, \eqref{R2epep'}-\eqref{BNF5}
with coefficients
 \begin{equation}\label{omoBNF5step2}
\!\!  (\mathtt{q}_{2,\ep,\ep'})^{\s,\s'}_{n_1,n_2,k}
:=
\left\{
\begin{aligned}
& \frac{-(\mathtt{r}^{+}_{2,\ep,\ep'})^{\s,\s'}_{n_1,n_2,k}}{\ii(\s|j|^{\frac{1}{2}}-\s' |k|^{\frac{1}{2}}- \ep |n_1|^{\frac{1}{2}}- \ep' |n_2|^{\frac{1}{2}})} \, , & \!\! \!\!  \s|j|^{\frac{1}{2}}-\s' |k|^{\frac{1}{2}}- \ep |n_1|^{\frac{1}{2}}- \ep |n_2|^{\frac{1}{2}}\neq 0 \\
& 0 & \!\!  \!\! \s|j|^{\frac{1}{2}}-\s' |k|^{\frac{1}{2}}- \ep |n_1|^{\frac{1}{2}}- \ep |n_2|^{\frac{1}{2}} = 0  
\end{aligned}
\right.
\end{equation}
with  $\s,\s',\ep,\ep'=\pm$, $ n_1, n_2, k \in \Z \setminus \{0\} $, satisfying  $\s j-\s' k-\ep n_1-\ep' n_2=0 $, 
solves the homological equation \eqref{omostep2}.  We have that $ \mathtt{Q}_{2} $  is in
$ \widetilde{\mathcal{R}}^{-\rho+N_0+m_1}_2\otimes\mathcal{M}_2(\C) $. 
\end{lemma}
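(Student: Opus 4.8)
The plan is to verify directly that the operator $\mathtt{Q}_2$ with coefficients \eqref{omoBNF5step2} solves the homological equation \eqref{omostep2} and belongs to the right smoothing class, mirroring the proof of the first homological equation. First I would check that the coefficients in \eqref{omoBNF5step2} are well-defined: the denominators that are allowed to be nonzero are bounded below using the quartic non-resonance estimate \eqref{stima2} of Proposition \ref{stimedalbasso}, which gives $|\s|j|^{\frac12}-\s'|k|^{\frac12}-\ep|n_1|^{\frac12}-\ep'|n_2|^{\frac12}|\geq \mathtt{c}\max(|n_1|,|n_2|,|k|,|j|)^{-N_0}$ whenever this expression does not vanish (here $j$ is determined by the momentum relation $\s j-\s' k-\ep n_1-\ep' n_2=0$, so $|j|\lesssim |n_1|+|n_2|+|k|$). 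Combining this with the decay bound \eqref{BNF7} for the coefficients $(\mathtt{r}^+_{2,\ep,\ep'})^{\s,\s'}_{n_1,n_2,k}$ of $\mathtt{R}_2^+\in\widetilde{\mathcal{R}}^{-\rho+m_1}_2\otimes\mathcal{M}_2(\C)$, one gets
\[
|(\mathtt{q}_{2,\ep,\ep'})^{\s,\s'}_{n_1,n_2,k}|\leq \mathtt{c}^{-1}\,\frac{\max_2(|n_1|,|n_2|,|k|)^{(\rho-m_1)+\mu}}{\max(|n_1|,|n_2|,|k|)^{\rho-m_1}}\,\max(|n_1|,|n_2|,|k|,|j|)^{N_0},
\]
and since $\max(|n_1|,|n_2|,|k|,|j|)\lesssim \max(|n_1|,|n_2|,|k|)$ this is of the form \eqref{BNF7} with $\rho$ replaced by $\rho-N_0-m_1$ and an enlarged $\mu$, so by Lemma \ref{decadimento} the operator $\mathtt{Q}_2$ is in $\widetilde{\mathcal{R}}^{-\rho+N_0+m_1}_2\otimes\mathcal{M}_2(\C)$ as claimed.

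Next I would reduce the homological equation \eqref{omostep2} to its scalar form. Using the structure \eqref{smooth-terms2} and $\Omega=\mathrm{diag}(|D|^{1/2},-|D|^{1/2})$ as in \eqref{eq:415tris}, the matrix equation \eqref{omostep2} splits into scalar equations for each entry $(\s,\s')$; expanding $(\mathtt{Q}_{2}(U))_{\s}^{\s'}$ as in \eqref{BNF2}, \eqref{R2epep'}-\eqref{BNF5} and using the explicit formula \eqref{BNFcoef5} for $\mathtt{Q}_2(-\ii\Omega U)$, the equation becomes, for all $\s,\s',\ep,\ep'=\pm$, $j,k\in\Z\setminus\{0\}$ and each $(n_1,n_2)$ with $\ep n_1+\ep' n_2+\s' k=\s j$,
\[
(\mathtt{q}_{2,\ep,\ep'})^{\s,\s'}_{n_1,n_2,k}\,\ii\big(\s|j|^{\tfrac12}-\s'|k|^{\tfrac12}-\ep|n_1|^{\tfrac12}-\ep'|n_2|^{\tfrac12}\big)+(\mathtt{r}^{+}_{2,\ep,\ep'})^{\s,\s'}_{n_1,n_2,k}=\big(\mathtt{r}^{+,res}_{2,\ep,\ep'}\big)^{\s,\s'}_{n_1,n_2,k},
\]
where the right-hand side is, by Definition \ref{resterm}, exactly $(\mathtt{r}^{+}_{2,\ep,\ep'})^{\s,\s'}_{n_1,n_2,k}$ when the phase $\s|j|^{1/2}-\s'|k|^{1/2}-\ep|n_1|^{1/2}-\ep'|n_2|^{1/2}$ vanishes and $0$ otherwise. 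When the phase vanishes both sides cancel trivially; when it does not vanish we solve by dividing, obtaining precisely \eqref{omoBNF5step2}. This shows $\mathtt{Q}_2$ solves \eqref{omostep2}.

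I do not expect any serious obstacle here: the main subtlety, and the only place where real input is needed, is the loss of $N_0$ derivatives coming from the quartic small divisors, which is absorbed into the smoothing class precisely because $\mathtt{R}_2^+$ is $\rho$-smoothing with $\rho$ chosen large (as stipulated in Proposition \ref{cor:BNF}, $\rho\geq\rho_0$ with $\rho_0\sim N_0$); this is why the conclusion is stated with the degraded order $-\rho+N_0+m_1$. One should also record that $\mathtt{Q}_2$ is real-to-real and of the same block form as $\mathtt{R}_2^+$, which follows because the reality relations \eqref{smooth-terms2} are preserved by the division in \eqref{omoBNF5step2} (the phase $\s|j|^{1/2}-\s'|k|^{1/2}-\ep|n_1|^{1/2}-\ep'|n_2|^{1/2}$ changes sign under $(\s,\s',\ep,\ep')\mapsto(-\s,-\s',-\ep,-\ep')$, matching the conjugation relation for $(\mathtt{r}^+_{2,\ep,\ep'})^{\s,\s'}_{n_1,n_2,k}$), so $\mathtt{Q}_2\in\widetilde{\mathcal{R}}^{-\rho+N_0+m_1}_2\otimes\mathcal{M}_2(\C)$ in the real-to-real sense, completing the proof.
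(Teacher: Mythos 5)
Your proposal is correct and follows essentially the same route as the paper's proof: you verify well-definedness and the decay of the coefficients via the quartic non-resonance estimate \eqref{stima2} together with \eqref{BNF7} and the momentum relation $|j|\leq |n_1|+|n_2|+|k|$, conclude membership in $\widetilde{\mathcal{R}}^{-\rho+N_0+m_1}_2\otimes\mathcal{M}_2(\C)$ via Lemma \ref{decadimento}, and then check the homological equation \eqref{omostep2} coefficientwise, with the resonant projection handling exactly the vanishing-phase terms. The only additions are the (correct, harmless) remarks on the real-to-real structure, which the paper leaves implicit.
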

 
 \begin{proof}
 First note that the 
 coefficients in \eqref{omoBNF5step2} are well-defined thanks to Proposition \ref{stimedalbasso}, in 
 particular \eqref{stima2}, and satisfy, using also $ |j| \leq |k| + |n_1| + |n_2| $,   
\be\label{decadimento2}
|(\mathtt{q}_{2,\ep,\ep'})^{\s,\s'}_{n_1,n_2,k}|\leq 
C |(\mathtt{r}^{+}_{2,\ep,\ep'})^{\s,\s'}_{n_1,n_2,k}| \max(|n_1|,|n_2|,|k|)^{N_0} \leq
C   \frac{\max_2(|n_1|, |n_2|, |k|)^{\rho - m_1- N_0 + \mu'}}{\max(|n_1|, |n_2|, |k|)^{\rho- m_1 - N_0}}
\ee
with $ \mu' = \mu + N_0 $, because $(\mathtt{r}^{+}_{2,\ep,\ep'})^{\s,\s'}_{n_1,n_2,k}$ are the coefficients of a remainder in 
$\widetilde{\mathcal{R}}_2^{-\rho+m_1}\otimes\mathcal{M}_2(\C)$,
and so they satisfy the bound \eqref{BNF7} with $\rho \rightsquigarrow \rho-m_1$. 
 The estimate  \eqref{decadimento2} and Lemma \ref{decadimento} 
imply  that $\mathtt{Q}_2(U)$ 
belongs to the class $\widetilde{\mathcal{R}}^{-\rho+m_1+N_0}_2\otimes\mathcal{M}_2(\C)$. 
 
 Next, the homological equation \eqref{omostep2} amounts to,  for any $\s,\s',\ep,\ep'=\pm$,
   \begin{equation}\label{omoBNF3step2}
(\mathtt{Q}_{2,\ep,\ep'}(-\ii \Omega U))_{\s,j}^{\s',k}+  (\mathtt{Q}_{2,\ep,\ep'}( U))_{\s,j}^{\s',k} 
\big( \s\ii |j|^{\frac{1}{2}}-\s' \ii |k|^{\frac{1}{2}} \big)
+
(\mathtt{R}^{+}_{2,\ep,\ep'}(U))_{\s,j}^{\s', k}= \big((\mathtt{R}^+_{2,\ep,\ep'} \big)^{res}(U) \big)_{\s,j}^{\s', k}
\end{equation}
for any $j,k\in\Z\setminus\{0\}$. 
Recalling  \eqref{BNFcoef5} and \eqref{resterm2}, 
the left hand side of  \eqref{omoBNF3step2} is given by  
 \begin{equation*}
 (\mathtt{q}_{2,\ep,\ep'})^{\s,\s'}_{n_1,n_2,k} \ii \big(\s|j|^{\frac{1}{2}}-\s' |k|^{\frac{1}{2}}
 - \ep |n_1|^{\frac{1}{2}}- \ep |n_2|^{\frac{1}{2}}\big)+
   (\mathtt{r}^{+}_{2,\ep,\ep'})^{\s,\s'}_{n_1,n_2,k}   \, , 
 \end{equation*}
  for $j,k,n_1,n_2\in \Z\setminus\{0\}$,  $\s,\s',\ep,\ep'=\pm$ and  $ \ep n_1+\ep' n_2 +\s'k=\s j$.
We deduce,  recalling Definition \ref{resterm}, 
that the operator 
$ \mathtt{Q}_{2} $  with coefficients $ (\mathtt{q}_{2,\ep,\ep'})^{\s,\s'}_{n_1,n_2,k} $ defined 
in \eqref{omoBNF5step2} solves the homological equation 
\eqref{omostep2}. 
 \end{proof}
 
We can now prove the main result of this section.
 
 \begin{proof}[{\bf Proof of Proposition \ref{cor:BNF}}]
 Let $Z$ be the function given by Proposition \ref{teodiagonal3} which solves \eqref{finalsyst}. We set 
 \be\label{def:Btheta}
Y:=(\mathcal{B}^{\theta}(U))_{\theta=1}[Z] \, , \quad {\rm where} \quad 
\mathcal{B}^{\theta}(U) :=\mathcal{B}_2^{\theta}(U)\circ\mathcal{B}^{\theta}_1(U) \, , \quad \theta\in [0,1]  \, , 
\ee
and  $\mathcal{B}_i^{\theta}(U)$, $i=1,2$, are the flow maps 
defined  respectively in \eqref{BNFstep1},  
\eqref{BNFstep2}  
with generators $  \mathtt{Q}_1(U) $, $  \mathtt{Q}_2(U) $
defined respectively in Lemmata \ref{lem:BNF1} and \ref{lem:BNF2}. 
Therefore 
 the function $Y$ defined in \eqref{def:Btheta} solves the system (recall  \eqref{BNF122})
  \begin{equation}\label{sistemaBNF}
 \pa_t Y = -\ii \Omega Y+ \opbw(-\ii {\mathtt D}(U;\x) + {\mathtt H}_{\geq3})[Y]
 + \tilde{\mathtt{R}}^{res} (U)[Y]+  \mathtt{R}'_{\geq 3}(U)[Y]
\end{equation}
where $\Omega$ and ${\mathtt D}(U;\x) $ are defined respectively in \eqref{eq:415tris} and \eqref{diag-part}, 
the smoothing operator $ \tilde{\mathtt{R}}^{res} (U) :=(\mathtt{R}_{2}^+)^{res} (U) $ in 
$\widetilde{\mathcal{R}}^{-\rho+m_1}_2\otimes\mathcal{M}_2(\C) $
(where  $m_1\geq1$ is the loss in \eqref{eq:415tris})  
is  Poincar\'e-Birkhoff resonant according to Definition \ref{resterm}, 
the symbol ${\mathtt H}_{\geq3}\in \Gamma^{1}_{K,K',3}\otimes \mathcal{M}_2(\C) $ is admissible, and 
$ \mathtt{R}'_{\geq 3}(U)$ is in
$ \mathcal{R}^{-\rho+N_0+2m_1}_{K,K',3}\otimes\mathcal{M}_2(\C)$ where
the constant $N_0$  is defined by Proposition \ref{stimedalbasso}.

 We set $\mathfrak{C}^{\theta}(U):=\mathcal{B}^{\theta}(U)\circ\mathfrak{F}^{\theta}(U)$
 where $ \mathcal{B}^{\theta}(U) $ is the map defined in \eqref{def:Btheta} and $ \mathfrak{F}^{\theta}(U) $ 
 in \eqref{FTvectPARA}. Then  
we define ${\bf F}_T^{\theta}(U):=\mathfrak{C}^{\theta}(U)[U]$
as in \eqref{FTvect}.
The maps $\mathcal{B}_i^{\theta}$, $i=1,2$ are constructed as flows of smoothing remainders,
hence, by Lemma \ref{buonflusso2}, they are well-defined and 
satisfy the bounds \eqref{est1VEC}, \eqref{est1quatris}.
Then, since the map $\mathfrak{F}^{\theta}(U)$ satisfies \eqref{stimFINFRAK},
the composition map $\mathfrak{C}^{\theta}(U)$  satisfies 
\eqref{stimafinaleFINFRAK} and \eqref{equiv100}.
Moreover the map 
$\mathfrak{F}^{\theta}(U)$ 
is the composition of flows of para-differential operators (see its definition in 
\eqref{fr-flow}), hence, by Lemma \ref{buonflusso},
it  admits  multilinear expansions as in \eqref{est2}. In the same way, 
by Lemma \ref{buonflusso2}
the map $\mathcal{B}^{\theta}(U)$
admits a multilinear expansion as in \eqref{est2}, and therefore
${\bf F}_{T}^{\theta}(U)$ admits an expansion like \eqref{est2} as well, 
implying item $(iii)$ of Proposition \ref{cor:BNF}. 
Moreover
\be\label{YMU}
Y = ({\bf F}_{T}^{\theta}(U))_{|_{\theta=1}} =U+\mathtt{M}(U)[U]  \qquad {\rm where} \qquad 
\mathtt{M}(U) \in  \Sigma\mathcal{M}_{K,K',1}\otimes\mathcal{M}_{2}(\C) \, . 
\ee 
Then, substituting \eqref{YMU} in \eqref{sistemaBNF}, 
we obtain \eqref{sistemaBNF1000}-\eqref{Stimaenergy100}  with 
\begin{align}
{\mathfrak H}_{\geq 3} (U; x, \xi) &  := -\ii \big( {\mathtt D}(U;\xi) - {\mathtt D}(U+\mathtt{M}(U)[U];\xi)\big) + 
{\mathtt H}_{\geq 3} (U; x, \xi) \,, \label{DEF1}\\
{\mathfrak R}_{\geq 3} (U) & := 
  \tilde{\mathtt{R}}^{res} (U) - \tilde{\mathtt{R}}^{res} (U+\mathtt{M}(U)[U]) + 
 \mathtt{R}'_{\geq 3}(U) \, .\label{DEF2}
\end{align}
Since the integrable symbol $\mathtt{D}(U;\x) $  in \eqref{diag-part} 
is   homogeneous of degree $ 2 $, 
the quadratic terms in the right hand side of \eqref{DEF1} cancel out
and, by \eqref{YMU} and item $(iv)$ of Proposition \ref{composizioniTOTALI}, we deduce that 
${\mathfrak H}_{\geq 3} (U; x, \xi)\in\Gamma^{1}_{K,K',3}\otimes\mathcal{M}_2(\C)$ is an 
admissible symbol. 
Similarly, since 
$ \tilde{\mathtt{R}}^{res} (U) $ is  a smoothing operator  in 
$\widetilde{\mathcal{R}}^{-\rho+m_1}_2\otimes\mathcal{M}_2(\C) $, 
we deduce,  by \eqref{YMU} and 
item $(iii)$ of Proposition \ref{composizioniTOTALI},   
that  ${\mathfrak R}_{\geq 3} (U)$ defined in \eqref{DEF2} is a smoothing operator in 
$ \Sigma\mathcal{R}^{- (\rho - \rho_0)}_{K,K',3}\otimes\mathcal{M}_{2}(\C)$
where  $ \rho_0 := N_0 + 2 m_1  $.
\end{proof}

\bigskip
\section{ Long time existence}\label{sec:NFI}

The system 
\begin{equation}\label{sistemaBNFleq3}
\pa_{t}Y= -\ii\Omega Y- \ii \opbw\big(\mathtt{D}(Y;\x)\big)[Y]+ \tilde{\mathtt{R}}^{res}(Y)[Y]  \, , 
\end{equation}
obtained retaining only the vector fields in \eqref{sistemaBNF1000} up to degree $3$ of homogeneity,  
is in Poincar\'e-Birkhoff normal form. 
In Section \ref{sec:INF} we will actually prove that this is uniquely determined 
and that \eqref{sistemaBNFleq3} coincides with the Hamiltonian system 
generated by the  fourth order 
Birkhoff normal form Hamiltonian $H_{ZD} $ computed by a formal expansion in 
\cite{Zak2, CW, DZ, CS}, see Section \ref{sec:formal}.
 Such normal form is integrable and its corresponding Hamiltonian system preserves all Sobolev norms,
 see Theorem \ref{Zakintegro}. 
 The key new relevant information 
in Proposition \ref{cor:BNF} is that the quartic remainder in \eqref{Stimaenergy100} 
satisfies  energy estimates (see Lemma \ref{tempo}). 
This allows us to prove in Section \ref{sec:EE} 
energy estimates for the whole system \eqref{sistemaBNF1000} and thus 
the long time existence result of Theorem \ref{thm:main}.

\subsection{The formal Birkhoff normal form}\label{sec:formal}
We introduce, as in formula (2.7) of \cite{CS}, the complex symplectic variable 
\be\label{CVWW}
\left(\begin{matrix}
w  \\
\overline{w}
\end{matrix} 
\right) 
= 
\Lambda 
\left(\begin{matrix}
\eta \\
\psi
\end{matrix}
\right)
 : =
\frac{1}{\sqrt{2}} 
\left(\begin{matrix}
|D|^{-\frac{1}{4}}\eta+ \ii |D|^{\frac{1}{4}}\psi   \\
|D|^{-\frac{1}{4}}\eta - \ii |D|^{\frac{1}{4}}\psi  
\end{matrix} 
\right), \,    
\left(\begin{matrix}
\eta \\
\psi
\end{matrix}
\right) = \Lambda^{-1} 
\left(\begin{matrix}
w \\
 \bar w 
\end{matrix}
\right) = 
\frac{1}{\sqrt{2}} 
\left(\begin{matrix}
|D|^{\frac14}( w + \bar w ) \\
 - \ii  |D|^{- \frac14}(  w - \bar w )
\end{matrix}
\right).
\ee
Compare this formula with \eqref{u0} and recall that, in view of \eqref{uave0},  we may disregard the
zero frequency in what follows.
In the new complex variables $(w, \bar w ) $, a vector field $ X (\eta, \psi ) $ becomes 
\be\label{notationC} 
X^\C  :=
 \Lambda^\star X := \Lambda X \Lambda^{-1} \, . 
\ee
 The push-forward acts naturally on the commutator of nonlinear vector fields \eqref{exp:XY}, namely 
 \[
\Lambda^\star \bral X, Y\brar =  \bral\Lambda^\star X, \Lambda^\star Y\brar = \bral X^\C, Y^\C \brar  \, . 
\]
The Poisson bracket in \eqref{poissonBra} assumes the form 
$$
\{F,H\}=\frac{1}{\ii}\sum_{k\in \Z\setminus\{0\}}
  \big(\pa_{w_{k}}H\pa_{\bar{w_k}}F - \pa_{\bar{w_k}}H\pa_{w_k}F\big) \, .
$$
Given a Hamiltonian $ F (\eta, \psi )$ we denote by $ F_{\C}:= F \circ\Lambda^{-1}$
the same Hamiltonian expressed in terms of the complex variables $ (w, \bar w )$. The associated Hamiltonian 
vector field 
$ X_{F_{\C}} $ is 
\be\label{HSFC}
X_{F_{\C}} 
= \frac{1}{\sqrt{2 \pi}} \sum_{k\in\Z\setminus\{0\}} 
  \left(\begin{matrix}
- \ii  \pa_{ \ov{w_k}} F_{\C} \, e^{\ii k x}  \\
 \ii  \pa_{w_k} F_{\C}   \, e^{- \ii k x} 
\end{matrix} 
\right) \, ,
\ee
that we also identify, using the  standard vector field notation, with 
\be\label{HSuk}
X_{F_{\C}} 
=\sum_{k\in\Z\setminus\{0\}, \sigma=\pm} -\ii \s \pa_{w^{-\s}_{k}} F_\C  \, \pa_{w^{\s}_{k}} \, . 
\ee
Note that, if $X_{F}$ is the Hamiltonian vector field of $F$ in the real variables, then, using \eqref{notationC}, we have 
\begin{equation}\label{compreal}
X_{F}^{\C} := \Lambda^\star X_F =  X_{F_{\C}}  \, , \quad   F_{\C}:= F \circ\Lambda^{-1} \, , 
\end{equation}
and
\be\label{Poi-Bra}
\bral X^\C_H , X^\C_K \brar = X^\C_{ \{ H, K \} } = X_{ \{ H_\C, K_\C \} }  \, .
\ee
We now describe the formal Birkhoff normal form procedure performed in \cite{Zak2, DZ,CW,CS}.
One first expands the water waves Hamiltonian \eqref{Hamiltonian}, written in the complex variables $ (w, \bar w ) $,
 in degrees of homogeneity 
\begin{equation}\label{HVF2ham}
H_\C := H  \circ\Lambda^{-1} = H^{(2)}_\C +H^{(3)}_\C +H^{(4)}_\C + H^{(\geq5)}_\C,
\qquad  H^{(2)}_\C = \sum_{j \in \Z\setminus\{0\}} \om_j  w_j \bar{w_j}  \, , \quad \om_j  := \sqrt{|j|} \, ,
\end{equation}
 where 
\begin{align}
H^{(3)}_\C &= \sum_{\s_1 j_1 + \s_2 j_2 + \s_3 j_3 = 0 } H_{j_1, j_2, j_3}^{\s_1, \s_2, \s_3} w_{j_1}^{\s_1} w_{j_2}^{\s_2} w_{j_3}^{\s_3} \, , \label{cubic3} \\
H^{(4)}_\C &= \sum_{\s_1 j_1 + \s_2 j_2 + \s_3 j_3 + \s_4 j_4 = 0 } H_{j_1, j_2, j_3, j_4}^{\s_1, \s_2, \s_3, \s_4} 
w_{j_1}^{\s_1} w_{j_2}^{\s_2} w_{j_3}^{\s_3} w_{j_4}^{\s_4} \, , \label{vectHam}
\end{align}
can be explicitly computed. 
The Hamiltonian  
$H^{(\geq 5)}_\C $ collects all the monomials
of homogeneity greater or equal $5 $. 
The  Hamiltonians $H^{(3)}_\C$, $H^{(4)}_\C$ are real valued if and only if
their  coefficients satisfy 
\begin{equation}\label{real1000}
\ov{H_{j_1, j_2, j_3}^{\s_1, \s_2, \s_3}}=H_{j_1, j_2, j_3}^{-\s_1, -\s_2, -\s_3}, \, \quad 
\ov{H_{j_1, j_2, j_3,j_4}^{\s_1, \s_2, \s_3,\s_4}}=H_{j_1, j_2, j_3,j_4}^{-\s_1, -\s_2, -\s_3,-\s_4}.
\end{equation}
{\sc Step 1. Elimination of cubic Hamiltonian.}
One looks for a  symplectic transformation $ \Phi^{(3)} $ 
as the (formal) time $1$ flow  generated by a cubic real Hamiltonian $ F^{(3)}_\C $ of the form  \eqref{cubic3}.
Then a Lie  expansion gives 
\be\label{Hnew30}
H_\C  \circ  \Phi^{(3)}  = 
H^{(2)}_\C + \{  F^{(3)}_\C , H^{(2)}_\C  \} + H^{(3)}_\C + H^{(4)}_\C + 
\frac12 \{F^{(3)}_\C, \{ F^{(3)}_\C, H^{(2)}_\C \} \} + \{ F^{(3)}_\C, H^{(3)}_\C \} + \cdots 
\ee 
up to terms of quintic degree. 
The cohomological equation
\begin{equation}\label{formalomo}
H^{(3)}_\C +\{F^{(3)}_\C, H^{(2)}_\C \}=0  
\end{equation}
has a unique solution since
$$ 
\{  w_{j_1}^{\s_1} w_{j_2}^{\s_2} w^{\s_3}_{j_3}, H_{\C}^{(2)} \} = 
\ii (\s_1 \omega({j_1}) + \s_2 \omega({j_2}) + \s_3 \omega({j_3})  ) 
w_{j_1}^{\s_1} w_{j_2}^{\s_2} w^{\s_3}_{j_3}  \, ,
$$
and the system
\begin{equation}\label{ord3res}
 \s_1 j_1 + \s_2 j_2 + \s_3 j_3 = 0 \, , \qquad  \s_1 \omega({j_1}) + \s_2 \omega({j_2}) + \s_3 \omega({j_3})= 0 \, , 
\end{equation}
has no integer solutions, see Proposition \ref{stimedalbasso}. 
Hence, defining the cubic real valued Hamiltonian (see \eqref{real1000})
\[
F_{\C}^{(3)}= \sum_{\s_1 j_1 + \s_2 j_2 + \s_3 j_3 = 0 } 
\frac{-H_{j_1, j_2, j_3}^{\s_1, \s_2, \s_3}}{\ii ( \s_1 \omega({j_1}) + \s_2 \omega({j_2}) + \s_3 \omega({j_3}))} w_{j_1}^{\s_1} w_{j_2}^{\s_2} w_{j_3}^{\s_3} \, ,
\]
the Hamiltonian in \eqref{Hnew30} reduces to 
\be\label{Hnew3}
H_\C  \circ  \Phi^{(3)}  = H^{(2)}_\C + H^{(4)}_\C + \frac12 \{F^{(3)}_\C, H^{(3)}_\C  \} + {\rm quintic \ terms} \, . 
\ee 
\noindent
{\sc Step 2. Normalization of the quartic Hamiltonian.}
Similarly, one  can finds a  symplectic transformation $ \Phi^{(4)} $, 
defined  as the (formal) time $1$ flow generated by a real quartic Hamiltonian $ F^{(4)}_\C $ of the form \eqref{vectHam}, such that
\be\label{H4BNF}
H_{\C} \circ  \Phi^{(3)} \circ  \Phi^{(4)}  = H_{\C}^{(2)} +  {\it \Pi}_{\ker} \big(
 H^{(4)}_\C + \frac12 \{ F^{(3)}_\C, H^{(3)}_\C\} \big) + {\rm quintic \ terms} \, ,  
\ee
where, given a quartic monomial 
$ w_{j_1}^{\s_1} w_{j_2}^{\s_2} w_{j_3}^{\s_3} w_{j_4}^{\s_4} $
satisfying $\s_1 j_1+\s_2 j_2+\s_3 j_3 +\s_4 j_4 = 0$,
we define 
\begin{equation}\label{piker2}
{\it \Pi}_{\ker} \Big(  w_{j_1}^{\s_1} w_{j_2}^{\s_2} w_{j_3}^{\s_3} w_{j_4}^{\s_4}  \Big) := 
\begin{cases}
 w_{j_1}^{\s_1} w_{j_2}^{\s_2} w_{j_3}^{\s_3} w_{j_4}^{\s_4} \quad {\rm if} \ 
  \s_1 \omega({j_1}) + \s_2 \omega({j_2}) + \s_3 \omega({j_3}) +  \s_4 \omega({j_4})  = 0 \\
0 \quad \qquad\qquad \quad \ {\rm otherwise} \, . 
\end{cases}
\end{equation}
The fourth order (formal) Birkhoff normal form Hamiltonian in \eqref{H4BNF}, that is,
\be\label{H4tilde}
H_{ZD} = H^{(2)}_{ZD} + H^{(4)}_{ZD} \, , \qquad H^{(2)}_{ZD}:=H^{(2)}_{\C} \, , \quad
 H_{ZD}^{(4)} :=
  {\it \Pi}_{\ker} \big( H^{(4)}_\C + \frac12 \{F^{(3)}_\C, H^{(3)}_\C \} \big) \, ,
\ee
has been computed explicitly  in \cite{Zak2,CW, DZ,CS}, and it is completely integrable.
In \cite{CW}  this is expressed as
\be\label{new}
H_{ZD} = 
\sum_{k>0} \Big(  2 \omega_k I_1 (k) - \frac{k^3}{2 \pi}( I_1^2 (k) - 3 I_2^2 (k) ) \Big) +
\frac{4}{\pi}  \sum_{0 < k < l} k^2 l I_2 (k) I_2 (l)  
\ee
with actions 
\begin{equation}\label{ActionCW}
 \!\!I_1 (k) := \frac{z_k \ov{z_k} + z_{-k} \ov{ z_{-k}} }{2} \, , \quad 
 I_2 (k) := \frac{z_k \ov{ z_k} - \ov{ z_{-k}}  z_{-k}}{2  } \, ,  
\end{equation}
where 
 $z_{k}$  denote the Fourier coefficients of $ z = \frac{1}{\sqrt{2}}|D|^{-\frac{1}{4}}\eta+\frac{\ii}{\sqrt{2}}|D|^{\frac{1}{4}}\psi $ defined in \eqref{CVWW}.
 When expressed in terms of the complex variables $ (z_k, \bar{z_k}) $, the Hamiltonian $ H_{ZD} $ is given by
$H^{(2)}_{ZD}+H^{(4)}_{ZD} $ as in \eqref{theoBirHfull}-\eqref{theoBirH}.

The associated Hamiltonian system 
is (see \eqref{HSuk}) 
\be\label{HCS1}
\begin{aligned}
\dot z_n 
 = - \ii \omega_n z_n  & + \frac{\ii}{\pi}\!\!\! \sum_{\substack{|k_4| < |n|, \\ - \sign(n) = \sign( k_4 )}} \!\!\! |n| |k_4|^2  |z_{k_4}|^2 z_n 
  - \frac{\ii}{\pi} \!\!\!\sum_{\substack{|k_4| < |n|, \\ \sign(n) = \sign( k_4 )}} \!\!\! |n| |k_4|^2  |z_{k_4}|^2 z_n + [R(z)]_n
\end{aligned}
\ee
where
\begin{align}\label{primRn}
[R(z)]_n := -  \frac{\ii}{ 2 \pi}  |n|^3 \big( | z_n |^2 - 2  | z_{-n} |^2 \big) z_n 
 & +  \frac{ \ii }{\pi}  \!\!\!  \sum_{\substack{|n| < |k_1|, \\ \sign(k_1) = \sign( n )}} \!\!\! |k_1| |n|^2 z_{n} \big(|z_{-k_1}|^2- |z_{k_1}|^2\big) \, .
   \end{align}
   Note in particular that $ | z_n |^2 $ are prime integrals, as stated in Theorem \ref{Zakintegro}.

 Although it is not necessary for the paper,  for completeness
we compare explicitly the structure of the normal form vector field 
\eqref{HCS1}-\eqref{primRn} with \eqref{sistemaBNFleq3}. 

\begin{lemma}\label{sec:CS}
The   Hamiltonian system \eqref{HCS1}-\eqref{primRn} has the form 
\be\label{CSFIN}
\dot z_n = - \ii \omega_n z_n 
- \frac{\ii}{\pi} \Big( \sum_{|j| < \epsilon |n|}  j |j|  |z_{j}|^2 \Big)   n z_n 
+ [\mathfrak R (z)]_n
\ee
where $0<\epsilon<1$ and  $ {\mathfrak R}(z) $ is a smoothing vector field in the sense that, 
for any $0\leq \rho\leq 2 s-3 $, 
\be \label{mfRs}
\| \mathfrak R (z)\|_{s+\rho} \leq C(s) \| z \|^{3}_{s}  \, ,
\ee
where for a sequence $a=\{a_j\}_{j\in\Z}$ we define
$\|a\|_{s}^{2}:=\sum_{j\in\Z}\langle j\rangle^{2s}|a_j|^{2}$.
Note that 
\be\label{paratra}
- \sum_{n} \frac{\ii}{\pi} \Big( \sum_{|j| < \epsilon |n|}  j |j|  |z_{j}|^2 \Big)   n z_n \frac{1}{\sqrt{2 \pi}} e^{\ii nx}=
\opbw{  (- \ii  \zeta (Z) \xi ) } z 
\ee
where $ \zeta (Z)  $ is defined in \eqref{diag-part}. 
\end{lemma}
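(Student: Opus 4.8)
\textbf{Plan of proof for Lemma \ref{sec:CS}.} The goal is to rewrite the Hamiltonian system \eqref{HCS1}--\eqref{primRn} in the form \eqref{CSFIN} by isolating, among the cubic terms, the unique piece that is an unbounded (paradifferential, order one) transport vector field, and showing that everything else is a bounded, in fact smoothing, cubic vector field in the sense of \eqref{mfRs}. First I would collect all cubic terms in $\dot z_n$ from \eqref{HCS1}--\eqref{primRn}: there are the two sums $\pm\frac{\ii}{\pi}\sum_{|k_4|<|n|}|n||k_4|^2|z_{k_4}|^2 z_n$ with the sign constraints $\mp\sgn(n)=\sgn(k_4)$ and $\sgn(n)=\sgn(k_4)$, the diagonal term $-\frac{\ii}{2\pi}|n|^3(|z_n|^2-2|z_{-n}|^2)z_n$, and the sum $\frac{\ii}{\pi}\sum_{|n|<|k_1|,\,\sgn(k_1)=\sgn(n)}|k_1||n|^2 z_n(|z_{-k_1}|^2-|z_{k_1}|^2)$. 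The first step is an algebraic regrouping: combine the two sums $\sum_{|k_4|<|n|}$ over opposite-sign and same-sign $k_4$ with the fixed cutoff $\epsilon$ in \eqref{CSFIN}. Using $\sgn(k_4)=\sgn(k_4)$ one checks $-\sum_{-\sgn(n)=\sgn(k_4)}|k_4|^2|z_{k_4}|^2-\sum_{\sgn(n)=\sgn(k_4)}|k_4|^2|z_{k_4}|^2$ differs from $-\frac{1}{n}\sum_{j}j|j||z_j|^2$, restricted to $|j|<\epsilon|n|$, only by terms where $|k_4|$ is comparable to $|n|$, i.e. with the two highest indices of the same size.

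The heart of the argument is then the following dichotomy for each cubic monomial $z_{j_1}^{\sigma_1}z_{j_2}^{\sigma_2}z_{j_3}^{\sigma_3}$ contributing to $[\mathfrak R(z)]_n$ (with $n=\sigma_1 j_1+\sigma_2 j_2+\sigma_3 j_3$): either the largest two among $|j_1|,|j_2|,|j_3|,|n|$ are of comparable size, in which case the prefactor, which grows at most like a fixed power of the largest frequency, is controlled by a fixed power of the \emph{second} largest frequency, hence the monomial defines a smoothing operator of arbitrarily large order (estimate \eqref{mfRs} with any $\rho$, limited only by the Sobolev exponent through $2s-3$); or exactly one frequency dominates. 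In the dominant case one must track \emph{which} variable carries the large index. If the large index is the one that is ``conjugated away'' (appearing in $|z_{k_1}|^2$ with $|k_1|\gg|n|$, as in the last sum of \eqref{primRn}), the monomial is again smoothing because the two surviving indices $n$ and $k_1$ satisfy $|n|\sim|k_1|$ by the momentum relation forced by $|n|<|k_1|$ together with $n$ being the output; so both of the two largest indices are large, contradiction — these terms fall in the first case. The only genuinely unbounded contribution is $-\frac{\ii}{\pi}(\sum_{|j|<\epsilon|n|}j|j||z_j|^2)\,n z_n$, which is linear in $z_n$ with a coefficient that is a fixed (time-dependent, but here just a number depending on $z$) multiple of $n$; this is exactly a constant-coefficient transport term of order one, and all the remaining rearrangement terms are, by the above, smoothing.

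For the smoothing estimate \eqref{mfRs} I would argue monomial by monomial: for a cubic term with coefficient bounded by $\max(|j_1|,|j_2|,|j_3|)^{q}$ for some fixed $q$ and with the two largest indices comparable, one has $\max(\cdot)^{q}\lesssim \max_2(\cdot)^{q}$, and then the standard convolution/Young estimate on weighted $\ell^2$ sequences (using that the output index $n$ is bounded by the sum of the input indices and that two indices are comparable to $\max$) gives $\|\mathfrak R(z)\|_{s+\rho}\lesssim_s \|z\|_s^3$ for any $0\le\rho\le 2s-3-q$; absorbing $q$ into the constant and adjusting $s_0$ yields the stated range $0\le\rho\le 2s-3$. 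Finally \eqref{paratra} is a direct identification: by Definition \ref{quantizationtotale}, $\opbw(-\ii\zeta(Z)\xi)$ acts on the Fourier mode $e^{\ii nx}/\sqrt{2\pi}$ by multiplication by $-\ii\zeta(Z)\cdot n$ up to the cutoff $\chi_p$, which, because $\zeta(Z)$ is the quadratic function $\frac1\pi\sum_j j|j||z_j|^2$ supported on the ``zero-momentum'' pair $(j,-j)$, restricts the inner sum precisely to $|j|\lesssim|n|$ — matching the cutoff $|j|<\epsilon|n|$ in \eqref{CSFIN} after relabeling, and the discrepancy between the sharp cutoff $\epsilon|n|$ and the smooth paradifferential cutoff is again a smoothing cubic operator, absorbed into $\mathfrak R$.

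\textbf{Main obstacle.} The delicate point is the bookkeeping in the dominant-frequency analysis: one must verify that in \eqref{HCS1}--\eqref{primRn} \emph{every} cubic monomial in which a single index is much larger than the others either has that index attached to $z_n$ itself with the other two forming a comparable pair (giving the transport term or a smoothing remainder), or is outright smoothing; in particular the terms from $\{F^{(3)}_\C,H^{(3)}_\C\}$ hidden inside \eqref{primRn} via \eqref{H4tilde} must be checked not to produce an unbounded term other than the transport one. This is where the explicit integrable structure of $H_{ZD}$ — the fact that the quartic Hamiltonian depends only on the actions $|z_n|^2$ — is used: it forces the cubic vector field to be, modulo smoothing, a function of the actions times $n z_n$, which is exactly the transport structure. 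The rest is routine symbol/convolution estimation.
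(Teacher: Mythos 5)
Your proposal follows the same route as the paper's proof: merge the two sums in \eqref{HCS1} using the sign of $k_4$ relative to $n$, split the resulting coefficient at the threshold $|j|<\epsilon|n|$, and collect the band $\epsilon|n|\leq|j|<|n|$ together with $[R(z)]_n$ from \eqref{primRn} into $\mathfrak R$, which is smoothing because in every such monomial the two largest frequencies are comparable; this is exactly the paper's (much shorter) argument. Two local corrections to your write-up. First, in the combination step the two sums enter with opposite signs (as in \eqref{HCS1}), and it is precisely this, via $|n|=n\,\sign(n)$ and the sign constraints on $k_4$, that turns $|n||k_4|^2$ into $-n\,k_4|k_4|$ in both cases; the combined coefficient is $-n\sum_{|j|<|n|} j|j|\,|z_j|^2$, not $-\tfrac1n\sum_j j|j|\,|z_j|^2$ as you wrote. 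Second, your justification that the sum over $|n|<|k_1|$ in \eqref{primRn} is smoothing should not invoke the claim that the momentum relation forces $|n|\sim|k_1|$ — it does not, since $n$ may be far smaller than $k_1$; the correct (and simpler) reason, which you already have at hand, is that $k_1$ appears twice through $|z_{\pm k_1}|^2$, so the largest and second largest input frequencies both equal $|k_1|$, the coefficient $|k_1||n|^2$ is bounded by $\max_2(\cdot)^3$, and \eqref{mfRs} follows for $0\leq\rho\leq 2s-3$. Your treatment of \eqref{paratra} (sharp versus smooth cutoff, with the discrepancy absorbed into $\mathfrak R$) is fine and indeed more explicit than the paper, while your worry about terms ``hidden'' in $\{F^{(3)}_\C,H^{(3)}_\C\}$ is moot: \eqref{HCS1}-\eqref{primRn} is already the fully explicit vector field.
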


\begin{proof}
Note that 
\begin{align}
&   \sum_{|k_4| < |n|, - \sign(n) = \sign( k_4 )} |n| |k_4|^2  |z_{k_4}|^2 z_n 
-  \sum_{|k_4| < |n|, \sign(n) = \sign( k_4 )} |n| |k_4|^2  |z_{k_4}|^2 z_n \nonumber \\
& \qquad = 
-  \sum_{|k_4| < |n|, - \sign(n) = \sign( k_4 )} n \,  \sign (k_4)  |k_4|^2  |z_{k_4}|^2 z_n 
-  \sum_{|k_4| < |n|, \sign(n) = \sign( k_4 )} n \sign (k_4) |k_4|^2  |z_{k_4}|^2 z_n \nonumber  \\
& \qquad = - n \Big( \sum_{|k_4| < |n|}   \sign (k_4)  |k_4|^2  |z_{k_4}|^2 \Big) z_n = - n \Big( \sum_{|j| < |n|}   j |j|  |z_j|^2 \Big) z_n  \, . \label{finale}
\end{align}
Then \eqref{finale} allows to write \eqref{HCS1} as \eqref{CSFIN} 
where
$$
[\mathfrak R (z)]_n  :=  -  \frac{\ii}{\pi}  \Big( \sum_{ \epsilon |n| \leq |j| < |n|}  j |j|  |z_{j}|^2 \Big)   n z_n + [R(z)]_n \, . 
$$
The vector $ \mathfrak R (z) $ satisfies the (super) smoothing estimate \eqref{mfRs} since it contains three high comparable
frequencies.
\end{proof}

Note that \eqref{paratra}
is the transport paradifferential operator of order $ 1 $   in \eqref{sistemaBNF1000} and \eqref{diag-part}.
Note also that $ \zeta (Z) $  in \eqref{diag-part} vanishes on the subspace of functions even in $ x $, coherently with the fact that
the Hamiltonian in the second line of \eqref{theoBirH}
vanishes as well on even functions.  
We also remark that \eqref{CSFIN} does not contain paradifferential operators 
at non-negative orders, in agreement with the form of 
the cubic terms in the Poincar\'e-Birkhoff normal form  in \eqref{sistemaBNF1000} and \eqref{diag-part}.  
In the next section we actually prove that these have to coincide.  

\subsection{Normal form identification}\label{sec:INF}
In Sections \ref{sec:3}-\ref{sec:BNF} we have
transformed the water waves system \eqref{eq:113} into \eqref{sistemaBNF1000}, 
whose cubic component \eqref{sistemaBNFleq3} is in Poincar\'e-Birkhoff normal form.  
All the conjugation maps that we have used
have an expansion in homogeneous components up to degree $ 4 $.  
In this section we identify the cubic monomials left in
the Poincar\'e-Birkhoff normal form \eqref{sistemaBNFleq3}. 
The main result is the following.

\begin{proposition}{\bf (Identification of normal forms)} \label{teosez9}
The cubic vector field component in \eqref{sistemaBNF1000}, i.e.
\begin{equation}\label{finalcubic}
\mathcal{X}_{Res}(Y):=- \ii \opbw\big(\mathtt{D}(Y;\x)\big)[Y]+ \tilde{\mathtt{R}}^{res}(Y)[Y] \, , 
\end{equation}
coincides with the Hamiltonian vector field 
\begin{equation}\label{ident}
\mathcal{X}_{Res} 
=  X_{ {\it \Pi}_{\ker} ( H_{\C}^{(4)} + \frac12 \{ F_{\C}^{(3)}, H_{\C}^{(3)}\})} = X_{H^{(4)}_{ZD}}
\end{equation}
where the Hamiltonians $  H^{(l)}_\C $, $ l = 3, 4 $, are  defined in \eqref{HVF2ham},  
$ F^{(3)}_\C $ is the unique solution of \eqref{formalomo},  and 
$ {\it \Pi}_{\ker} $ is defined in \eqref{piker2}. 
\end{proposition}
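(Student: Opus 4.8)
The plan is to prove Proposition \ref{teosez9} by a \emph{normal form uniqueness} argument rather than by directly computing the coefficients of $\tilde{\mathtt{R}}^{res}$ after all the reductions of Sections \ref{sec:3}--\ref{sec:BNF}. The key observation is that the entire chain of transformations producing \eqref{sistemaBNF1000} from the water waves system \eqref{eq:113} is, at each stage, the time-one flow of a (para)differential or smoothing generator, hence admits a homogeneity expansion up to degree $4$ (this is recorded in item $(iii)$ of Proposition \ref{cor:BNF} and in the analogous expansions of Proposition \ref{teodiagonal3}). Therefore there is a single nonlinear map $\Phi_{\leq 3}$, equal to the identity plus quadratic plus cubic terms, conjugating the complex water waves equation $\partial_t u + \ii\omega(D)u = M_{\geq 2}(u,\bar u)$ to $\partial_t Y = -\ii\Omega Y + \mathcal{X}_{Res}(Y) + \mathcal{O}(Y^4)$. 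On the other hand, the \emph{formal} Birkhoff procedure of Subsection \ref{sec:formal} produces, via the symplectic maps $\Phi^{(3)}\circ\Phi^{(4)}$, another map with the same homogeneity structure conjugating the same equation to $\partial_t z = X_{H_{ZD}}(z) + (\text{formal higher order})$. The cubic truncation of both target vector fields is resonant (supported on the zero sets of $\sigma_1\omega(n_1)+\sigma_2\omega(n_2)+\sigma_3\omega(n_3)-\omega(n)$), and the claim is that resonant cubic vector fields are \emph{invariant} under the difference of the two conjugations, so the two resonant cubic vector fields must coincide.

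\textbf{Key steps, in order.} First I would set up the comparison: write both our map and the composition $\Lambda^\star\circ(\Phi^{(3)}\circ\Phi^{(4)})$ (pushed to complex coordinates) as $\mathrm{Id} + $ (homogeneous degree $2$) $+$ (homogeneous degree $3$) $+$ higher, and observe that the composition $\Psi$ of one with the inverse of the other conjugates the truncated-at-cubic system $\partial_t z = -\ii\Omega z + X_{H^{(3)}_{\C},\,\mathrm{cubic\ part}}(z) + \cdots$ --- more precisely, conjugates $X_{H_{ZD}}$ (cubic vector field) to $\mathcal{X}_{Res}$, modulo quartic terms. Here I use that both procedures kill the quadratic (cubic Hamiltonian) terms by solving the \emph{same} homological equation \eqref{formalomo}/\eqref{omoBNF}, whose solution is unique by the absence of three-wave resonances (Proposition \ref{stimedalbasso}, bound \eqref{stima1}); this forces the degree-$2$ components of the two maps to agree, so $\Psi = \mathrm{Id} + (\text{degree}\geq 3)$. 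Second, I would expand the conjugation $\Psi_\star X_{H_{ZD}}$ in homogeneity: since $\Psi = \mathrm{Id} + R_{\geq 3}$ with $R_{\geq 3}$ of homogeneity $\geq 3$, we get $\Psi_\star X_{H_{ZD}} = X_{H_{ZD}} + \bral R_{\geq 3}, -\ii\Omega\brar + (\text{homogeneity}\geq 4)$, where $\bral\cdot,\cdot\brar$ is the commutator of vector fields \eqref{exp:XY}. Hence the cubic components satisfy $\mathcal{X}_{Res}^{(3)} = X_{H_{ZD}}^{(3)} + \bral R_3, -\ii\Omega\brar$ where $R_3$ is the degree-$3$ part of $\Psi$. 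Third --- and this is the crucial algebraic point --- both $\mathcal{X}_{Res}$ and $X_{H_{ZD}}$ are supported only on resonant cubic monomials (ours because $\tilde{\mathtt{R}}^{res}$ is Birkhoff-resonant by Definition \ref{resterm} and the transport term $-\ii\opbw(\mathtt{D}(Y;\xi))$ is integrable hence resonant; the formal one by construction via $\Pi_{\ker}$ in \eqref{piker2}). But $\bral R_3, -\ii\Omega\brar$ applied to a monomial $z_{n_1}^{\sigma_1}z_{n_2}^{\sigma_2}z_{n_3}^{\sigma_3}e^{\ii n x}$ produces the same monomial multiplied by $\ii(\sigma_1\omega(n_1)+\sigma_2\omega(n_2)+\sigma_3\omega(n_3)-\omega(n))$, which \emph{vanishes precisely on resonant sites}. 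Thus the resonant projection of $\bral R_3,-\ii\Omega\brar$ is zero, and projecting the identity $\mathcal{X}_{Res}^{(3)} = X_{H_{ZD}}^{(3)} + \bral R_3,-\ii\Omega\brar$ onto resonant monomials gives $\mathcal{X}_{Res}^{(3)} = X_{H_{ZD}}^{(3)}$, i.e. \eqref{ident}. The equality $X_{H^{(4)}_{ZD}} = X_{\Pi_{\ker}(H^{(4)}_\C + \frac12\{F^{(3)}_\C,H^{(3)}_\C\})}$ is then just the definition \eqref{H4tilde}.

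\textbf{Main obstacle.} The delicate point is \emph{justifying that the two partially-normalized systems are genuinely conjugate up to quartic order by a map of homogeneity $\geq 3$}, given that our transformations are non-symplectic, act on the quasilinear paradifferential system, and involve smoothing remainders and admissible symbols at order $4$ that the formal scheme does not see. I would address this by working entirely at the level of the \emph{homogeneous multilinear components} of the vector fields and maps: the paradifferential operators $\opbw(\cdot)$, the smoothing remainders, and the flows all have well-defined homogeneous components (in the classes $\widetilde\Gamma_p$, $\widetilde{\mathcal R}_p$, $\widetilde{\mathcal M}_p$), and the quartic-and-higher pieces --- whether they come as $\mathcal{X}_{\geq 4}$, admissible symbols, or $\mathcal{R}^{-\rho}_{K,K',4}$ --- simply do not contribute to the degree-$\leq 3$ truncation. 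So the identification is an identity between the finite-dimensional (per fixed output frequency) homogeneous jets of two vector fields, and the paradifferential machinery is irrelevant for it. A secondary technical point is that $\opbw$ and the genuine Fourier multiplier $\mathrm{Op}$ differ by a smoothing operator on homogeneous spaces (noted after Definition \ref{quantizationtotale}), so when comparing $\opbw$-symbols with the explicit Fourier-coefficient formulas of $H^{(3)}_\C, H^{(4)}_\C$ one must check this discrepancy only affects homogeneity $\geq 4$ or else is itself resonant-invariant; this is routine but should be stated. Finally, I would invoke the uniqueness of the solution of the quadratic homological equation \eqref{Poisson3} (equivalently \eqref{formalomo}) to pin down that the degree-$2$ parts of the two conjugations agree, which is what makes $\Psi$ start at degree $3$ — without this one could not conclude.
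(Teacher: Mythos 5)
Your argument is correct and rests on the same two pillars as the paper's own proof: the uniqueness of the solution of the quadratic homological equation (absence of three-wave resonances, Proposition \ref{stimedalbasso}) and the fact that the resonant projection annihilates ${\rm Ad}_{X_{H^{(2)}}^\C}$ of any cubic field (the paper's \eqref{kerQ4}). The execution, however, is organized differently: the paper never composes with the inverse of the formal Zakharov--Dyachenko map. Instead it writes the actual chain --- the good-unknown transformation \eqref{Alinach-good} and the map ${\bf F}_T^{1}$ of Proposition \ref{cor:BNF} --- as (approximate inverses of) flows with generators $S,T$ admitting homogeneity expansions (Lemmata \ref{quasi-inversa}--\ref{quasi-flusso}), derives the order-by-order identities \eqref{DefG2}--\eqref{DefG4}, identifies $\mathtt{S}_2^\C+\mathtt{T}_2=X_{F^{(3)}_\C}$ from \eqref{DefG3} and \eqref{Poisson3} (this is your ``the degree-two parts agree'' step), and then applies $\Pi_{\ker}$ directly to the explicit cubic identity \eqref{DefG4}, using the Jacobi identity \eqref{identXYZ} and \eqref{formalomo} to recognize the result as $X_{H^{(4)}_{ZD}}$ via \eqref{H4tilde}. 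Your composition trick buys a shorter cubic step --- only the correction $\bral R_3,-\ii\Omega\brar$ appears, and its resonant projection vanishes --- at the price of manipulating the formal (unbounded) map $\Phi^{(3)}\circ\Phi^{(4)}$ and its jet inverse, whereas the paper's explicit Lie expansion stays attached to the actual transformations. One point you should make explicit: the actual chain acts on the good-unknown complex variable \eqref{u0}, while the formal scheme of Subsection \ref{sec:formal} normalizes the Hamiltonian in the Zakharov variable \eqref{briefu0}/\eqref{CVWW}; these differ already at quadratic order, so the good-unknown map $\mathcal{G}$ (which is ${\rm Id}$ plus quadratic and higher terms) must be incorporated into your $\Psi$ before the two chains can be compared --- exactly what the paper does in Step 1 with the generator $S$. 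Also, your phrase ``both procedures kill the quadratic terms by solving the same homological equation'' should be restated as: the final system is quadratic-free, hence the total quadratic part of the actual map satisfies the homological equation, whose solution is unique (the quadratic terms were in fact removed over several paradifferential steps, not only by \eqref{omoBNF}). With these adjustments your argument goes through and is essentially equivalent to the paper's.
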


The rest of the section is devoted to the proof of Proposition \ref{teosez9},
which is based on a uniqueness argument for the Poincar\'e-Birkhoff normal form up to quartic remainders. 
The idea is the following. We first expand  the water waves Hamiltonian vector field in  \eqref{eq:113},\eqref{HS}
in degrees of homogeneity
\be\label{HVF2}
X_H =  X_1 + X_2 + X_3 + X_{\geq 4}   \qquad {\rm where} \qquad 
X_{1}:=X_{H^{(2)}}, \  X_{2}:=X_{H^{(3)}}, \  X_{3}:=X_{H^{(4)}} \, ,
\ee
where $X_{\geq 4}  $ collects of the higher order terms
and $H^{(p)} := H_{\mathbb{C}}^{(p)}\circ\Lambda$, $p=2,3,4$, see \eqref{HVF2ham}.
Then, in order to identify the cubic monomial vector fields in \eqref{finalcubic}
we express the transformed system \eqref{sistemaBNF1000}, obtained conjugating 
\eqref{eq:113}  via the
good-unknown transformation
$ {\mathcal G} $ in \eqref{Alinach-good}  and $ {\bf F}_{T}^{1} $ in Proposition \ref{cor:BNF}, 
by a Lie commutator expansion up to terms of homogeneity at least $4$. See Lemma \ref{quasi-flusso}.
Note that the quadratic and cubic terms in \eqref{finalcubic} may arise by only the conjugation of
 $ X_1 + X_2 + X_3 $ under the
homogeneous components up to cubic terms of the paradifferential transformations $ {\mathcal G} $ 
and $ {\bf F}_{T}^{1}$. 
Then, after some algebraic manipulation, we obtain  the formulas
\eqref{DefG2}-\eqref{DefG4}. 
Since the adjoint operator $ {\rm Ad}_{X_{H^{(2)}}^\C} := [ \, \cdot \, , X_{H^{(2)}}^\C ] $
acting on quadratic monomial vector fields 
satisfying the momentum conservation property is injective and surjective we then obtain the identity \eqref{F3},
and can eventually deduce  \eqref{ident}.

\begin{itemize}
\item {\bf Notation.} 
We use the Lie  expansion \eqref{espansionecampo} 
 induced by a time-dependent vector field $ S $, which contains quadratic and cubic terms. 
Given a homogeneous vector field $ X $, we denote
by $ \Phi_{S}^\star X $ the induced (formal) push forward  
\be\label{push-forward}
\Phi_{S}^\star X = X + \bral S,X \brar_{|\theta = 0} + \frac{1}{2} \bral S, \bral S,X \brar \brar_{|\theta = 0} + 
\frac12 \bral \pa_\theta S_{|\theta=0},X \brar + \cdots 
\ee
where $\bral \cdot, \cdot \brar$ is the non linear commutator defined in  \eqref{exp:XY}.
\end{itemize}

\noindent
{\bf Step $ 1$. The good unknown change of variable $ {\mathcal G} $ in \eqref{Alinach-good}}. 
We first provide the Lie expansion  up to degree four  of the  vector field 
in \eqref{eq:1n}-\eqref{eq:2n}, which is 
obtained by transforming the
water waves vector field 
$ X_1 + X_2 + X_3 $ in \eqref{HVF2}  under the nonlinear map $ {\mathcal G} $ in \eqref{Alinach-good}.

We first note that $ {\mathcal G}(\eta,\psi)=(\Phi^{\theta}(\eta, \psi))_{\theta=1} $ where
$\Phi^{\theta} (\eta,\psi) := (\eta, \psi-\theta\opbw(B(\eta,\psi))\eta) $, $\theta\in [0,1]$.  
Since $ B(\eta, \psi) $ is a function in $ \Sigma {\mathcal F}^\R_{K,0,1} $ we have,
using the remarks under Definition  \ref{smoothoperatormaps}, that 
the map $\Phi^{\theta} (\eta, \psi ) $ has  the form  \eqref{espmultilin}
in which $ U $ denotes the real variables $(\eta,\psi)$, plus a map in $ \mathcal{M}_{K,0,3}\otimes\mathcal{M}_2(\C) $.
By Lemma \ref{compolinearflows}
we regard the inverse of the map $ {\mathcal G}_{\leq 3}$,  obtained 
approximating $ {\cal G} $ up to quartic remainders, as  the (formal) 
 time one flow of a non-autonomous vector field of the form 
\be\label{S34}
 S(\theta) := \mathtt{S}_2 +  \theta \mathtt{S}_3    \;\;\;\;\;{\rm where}\;\;\;\;\;
\mathtt{S}_2 := S_1(\eta,\psi)\vect{\eta}{\psi} \, , \quad \mathtt{S}_3 := {S}_2(\eta,\psi)\vect{\eta}{\psi}  \, , 
\ee
where
 $ S_{1}(\eta,\psi) $ is in $ \widetilde{\mathcal{M}}_{1}\otimes\mathcal{M}_2(\C) $ and 
 $ S_2(\eta,\psi) $ is in $  \widetilde{\mathcal{M}}_{2}\otimes\mathcal{M}_2(\C) $.
By \eqref{HVF2}, \eqref{push-forward} and  \eqref{S34}, we get
\begin{align}
\Phi_{S}^\star (X_1+X_2+X_3) &  = X_1 + X_{2,1}+ 
X_{3,1} + \cdots \label{VFB}
\end{align}
where
\be\label{K3K4}
X_{2,1} := X_2 + \bral\mathtt{S}_2, X_1\brar \, , \ \ 
X_{3,1} := X_3 + \bral \mathtt{S}_2, X_2\brar +  \frac12 \bral\mathtt{S}_2, \bral \mathtt{S}_2, X_1 \brar\brar 
+ \frac12 \bral   \mathtt{S}_3 , X_1\brar\, . 
\ee

\vspace{0.5em}
\noindent
{\bf Complex coordinates $\Lambda$ in \eqref{CVWW}}. In the complex 
coordinates \eqref{CVWW},  the vector field \eqref{VFB} reads, recalling the notation \eqref{notationC}, 
\be\label{VFComplex}
\Lambda^\star \Phi_{S}^\star (X_1+X_2+X_3) = 
\Lambda^\star X_1 + \Lambda^\star X_{2,1}  + \Lambda^\star X_{3,1} + \cdots =
X_1^\C + X_{2,1}^\C + X_{3,1}^\C + \cdots 
\ee
where $X_{1}^{\C} $ is the linear Hamiltonian 
vector field $ X_{1}^{\C} =X_{H^{(2)}}^{\C} =  - \ii  \sum_{j, \s} \sigma \omega_j u_j^\s \pa_{u_j^\s} $.

\vspace{0.5em}
\noindent
{\bf  Step $ 2 $. The transformation $ {\bf F}_{T}^{1}$ in Proposition \ref{cor:BNF}}. 
We consider the nonlinear  map $ ({\bf F}_{T}^1)_{\leq 3}  $ obtained retaining only the terms
of the map $ {\bf F}_{T}^{1} :=( {\bf F}_{T}^{\theta})_{|_{\theta=1}} $ up to quartic remainders. 
The approximate inverse of the map $ ({\bf F}_{T}^1)_{\leq 3}  $ 
provided by Lemma \ref{quasi-inversa}, 
can be regarded, by Lemma \ref{compolinearflows},  as the (formal) 
approximate time-one flow of a non-autonomous vector field 
$$ 
 {T}(\theta) :=  \mathtt{T}_2 +   \theta \mathtt{T}_3    \;\;\;\;\; {\rm where}\;\;\;\;\;  
\mathtt{T}_2(U):=T_1(U)[U] \, , \quad   \mathtt{T}_3(U):=T_2(U)[U] \, , 
$$
 for some 
 $ T_{1}(U) $ in $ \widetilde{\mathcal{M}}_{1}\otimes\mathcal{M}_2(\C) $ and 
 $ T_2(U) \in   \widetilde{\mathcal{M}}_{2}\otimes\mathcal{M}_2(\C) $.
We transform the system obtained retaining only the components   
$ X_1^\C + X_{2,1}^\C + X_{3,1}^\C $ in  \eqref{VFComplex}. 
By \eqref{push-forward} we get  
\begin{equation}\label{push1}
\Phi_T^\star \Lambda^\star \Phi_{S}^\star (X_1+X_2+X_3)  =  X_1^\C + X_{2,2}^\C + X_{3,2}^\C + \cdots  
\end{equation}
where 
$$
X_{2,2}^\C := X_{2,1}^\C + \bral\mathtt{T}_2, X_1^\C\brar \, , \ \ 
X_{3,2}^\C := X_{3,1}^\C + \bral \mathtt{T}_2, X_{2,1}^\C\brar +  
\frac12 \bral\mathtt{T}_2, \bral \mathtt{T}_2, X_1^\C \brar\brar + 
\frac12 \Bral  \mathtt{T}_3, X_1^\C \Brar 
$$
and, recalling the expressions of $ X_{2,1}, X_{3,1} $ in \eqref{K3K4}, 
the quadratic and the cubic components of the vector field \eqref{push1} are given by
\begin{align}\label{M3}
& X_{2,2}^{\C}   = 
X_2^\C + \bral  \mathtt{S}_2^\C + \mathtt{T}_2, X_1^\C\brar 
\end{align}
and
\be\label{M4}
\begin{aligned}
X_{3,2}^{\C} & =  X_3^\C + \bral  \mathtt{S}_2^\C+ \mathtt{T}_2, X_2^\C\brar +  
\frac12 \bral \mathtt{S}_2^\C, \bral  \mathtt{S}_2^\C, X_1^\C \brar\brar 
+ \bral\mathtt{T}_2,  \bral  \mathtt{S}_2^\C, X_1^\C\brar\brar + 
\frac12 \bral \mathtt{T}_2, \bral \mathtt{T}_2, X_1^\C \brar\brar    \\
& \ \ +  \frac12 \Bral   \mathtt{S}_3^\C +    \mathtt{T}_3, X_1^\C \Brar \, . 
\end{aligned}
\ee

\begin{lemma} 
Given vector fields $ X, Y, Z $ we have the identity
\be\label{identXYZ}
\frac12 \bral Y, \bral Y,X  \brar \brar
+ \bral Z,  \bral Y, X \brar \brar +  \frac12 \bral Z, \bral Z, X \brar \brar = 
\frac12 \bral Y+Z, \bral Y+ Z, X \brar \brar  + \frac12 \bral  \bral Y+Z, Y\brar , X\brar  \, .
\ee
\end{lemma}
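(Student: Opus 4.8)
The statement to prove is the algebraic identity \eqref{identXYZ} for nonlinear commutators. The plan is to verify it by a direct expansion of both sides using the bilinearity of the nonlinear commutator bracket $\bral \cdot, \cdot \brar$ in each argument, together with the antisymmetry already recorded in \eqref{exp:XY}. Since the bracket is bilinear, the right-hand side $\frac12 \bral Y+Z, \bral Y+ Z, X \brar \brar$ expands as
$$
\frac12 \bral Y, \bral Y, X\brar\brar + \frac12 \bral Y, \bral Z, X\brar\brar
+ \frac12 \bral Z, \bral Y, X\brar\brar + \frac12 \bral Z, \bral Z, X\brar\brar .
$$
So the identity reduces to showing that $\frac12 \bral Y, \bral Z, X\brar\brar + \bral Z, \bral Y, X\brar\brar - \bral Z, \bral Y, X\brar\brar = \frac12 \bral \bral Y+Z, Y\brar, X\brar$; wait — I should collect terms carefully. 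The difference between the two sides is
$$
\tfrac12 \bral Y, \bral Z, X\brar\brar - \tfrac12 \bral Z, \bral Y, X\brar\brar = \tfrac12 \bral \bral Y+Z, Y\brar, X\brar ,
$$
and since $\bral \bral Y+Z, Y\brar, X\brar = \bral \bral Z, Y\brar, X\brar = -\bral\bral Y,Z\brar, X\brar$ by antisymmetry, what remains to be checked is exactly the Jacobi-type relation
$$
\bral Y, \bral Z, X\brar\brar - \bral Z, \bral Y, X\brar\brar = -\bral \bral Y, Z\brar, X\brar ,
$$
which is the Jacobi identity for the nonlinear commutator $\bral\cdot,\cdot\brar$ of vector fields.

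First I would recall precisely the definition of $\bral X, Y\brar$ from \eqref{exp:XY} (the nonlinear commutator of the two nonlinear vector fields, namely $\bral X, Y\brar = dX[Y] - dY[X]$ read as a formal power series vector field), and note that this is exactly the usual Lie bracket of vector fields once one treats $X, Y$ as genuine (formal) vector fields on the phase space. The Jacobi identity then holds at the formal level because it holds for honest vector fields: it is a purely algebraic consequence of the definition, with no analytic input, and it survives passing to formal multilinear expansions since each homogeneous component involves only finitely many compositions. I would spell out the one-line computation: writing $D_X$ for the derivation associated to $X$, one has $\bral X, Y\brar = [D_X, D_Y]$ as derivations, and $[[D_Y, D_Z], D_X] + [[D_Z, D_X], D_Y] + [[D_X, D_Y], D_Z] = 0$ is the ordinary Jacobi identity; reorganizing gives the displayed relation above.

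Having the Jacobi identity, the proof is then just the bookkeeping of bilinear expansion outlined above. Concretely: expand the RHS of \eqref{identXYZ} by bilinearity into the four double-bracket terms; the two ``pure'' terms $\frac12\bral Y,\bral Y,X\brar\brar$ and $\frac12\bral Z,\bral Z,X\brar\brar$ match two of the three LHS terms immediately; the ``mixed'' terms $\frac12\bral Y,\bral Z,X\brar\brar + \frac12\bral Z,\bral Y,X\brar\brar$ must be shown to equal $\bral Z,\bral Y,X\brar\brar + \frac12\bral\bral Y+Z,Y\brar,X\brar$. Subtracting $\frac12\bral Z,\bral Y,X\brar\brar$ from both sides this becomes $\frac12\bral Y,\bral Z,X\brar\brar - \frac12\bral Z,\bral Y,X\brar\brar = \frac12\bral\bral Y+Z,Y\brar,X\brar$, and then the antisymmetry $\bral\bral Y+Z,Y\brar,X\brar = \bral\bral Z,Y\brar,X\brar = -\bral\bral Y,Z\brar,X\brar$ together with the Jacobi identity closes it.

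I do not expect any real obstacle here: the only subtlety worth a sentence in the write-up is to make sure the Jacobi identity is legitimately invoked for the \emph{nonlinear} commutator of \emph{formal} vector fields (it is, since it reduces to the identity for the associated derivations and only finitely many terms contribute in each degree of homogeneity), and to keep the signs straight when using the antisymmetry of $\bral\cdot,\cdot\brar$. Everything else is a two-line bilinear rearrangement, so I would present it as such rather than belaboring it.
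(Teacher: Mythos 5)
Your strategy is exactly the one the paper uses: the paper's entire proof is ``use the Jacobi identity'', and your expansion by bilinearity, the observation $\bral Y+Z,Y\brar=\bral Z,Y\brar$ from antisymmetry, and the remark that $\bral\cdot,\cdot\brar$ is the genuine Lie bracket of (formal) vector fields so Jacobi holds degreewise, are all sound and are the right justification. However, as written your bookkeeping carries a sign slip that propagates to the final step. Expanding the right-hand side of \eqref{identXYZ} and cancelling the two ``pure'' terms, the identity is equivalent to
\begin{equation*}
\tfrac12 \bral Z, \bral Y, X\brar\brar - \tfrac12 \bral Y, \bral Z, X\brar\brar
= \tfrac12 \bral \bral Y+Z, Y\brar, X\brar = \tfrac12 \bral \bral Z, Y\brar, X\brar ,
\end{equation*}
i.e.\ with $Z$ in the outer slot of the first term, whereas you wrote the difference in the opposite order. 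Consequently the ``Jacobi-type relation'' you end with,
\begin{equation*}
\bral Y, \bral Z, X\brar\brar - \bral Z, \bral Y, X\brar\brar = -\bral \bral Y, Z\brar, X\brar ,
\end{equation*}
is false in general: the Jacobi identity $\bral A,\bral B,C\brar\brar+\bral B,\bral C,A\brar\brar+\bral C,\bral A,B\brar\brar=0$ together with antisymmetry gives $\bral Y, \bral Z, X\brar\brar - \bral Z, \bral Y, X\brar\brar = +\bral \bral Y, Z\brar, X\brar$, the opposite sign, so your last displayed relation cannot be ``exactly the Jacobi identity'' and would not be derivable from it. The two sign errors compensate, which is why you still land on the true lemma, but the write-up should be corrected: reduce to $\bral Z,\bral Y,X\brar\brar-\bral Y,\bral Z,X\brar\brar=\bral\bral Z,Y\brar,X\brar$ and verify that this is precisely what Jacobi plus antisymmetry yield. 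With that fix your proof coincides with the paper's.
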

\begin{proof}
Use the Jacobi identity $\bral X, \bral Y, Z\brar \brar+
\bral Z, \bral X, Y\brar \brar+\bral Y, \bral Z, X\brar \brar=0$.
\end{proof}

Using the identity \eqref{identXYZ}, the term $ X_{3,2}^{\C} $ in \eqref{M4} is
\be\label{M4-bis}
X_{3,2}^{\C}  = X_3^\C + \bral  \mathtt{S}_2^\C+ \mathtt{T}_2, X_2^\C\brar +  
\frac12 \Bral \mathtt{S}_2^\C + \mathtt{T}_2, \bral  \mathtt{S}_2^\C + \mathtt{T}_2, X_1^\C\brar \Brar 
+   \frac{1}{2}  \Bral \bral \mathtt{S}_2^\C + \mathtt{T}_2, \mathtt{S}_2^\C\brar 
+  \mathtt{S}_3^\C +  \mathtt{T}_3 , X_1^\C\Brar \, .
\ee
{\bf Step $ 3$. Identification of  quadratic and cubic vector fields. }
The vector field $ \Phi_T^\star \Lambda^\star  \Phi_{S}^\star  (X_1+X_2+X_3) $ in \eqref{push1} 
is 
the  vector field in the right hand side of \eqref{sistemaBNFleq3}, up to quartic remainders. 
Thus, recalling the expression of the 
quadratic, respectively cubic, vector field in \eqref{M3}, respectively  \eqref{M4-bis}, the expansion \eqref{HVF2},
formula \eqref{compreal}, 
and the definition of $\mathcal{X}_{Res}$ in \eqref{finalcubic}, we have the identification order by order: 
\begin{align}
& X_{1}^{\C}(Y) = -  \ii\Omega Y  \label{DefG2} \\
& X_{H^{(3)}}^\C + \bral \mathtt{S}_2^\C + \mathtt{T}_2 , X_{H^{(2)}}^\C\brar  = 0 \label{DefG3} \\ 
& X_{H^{(4)}}^\C + \bral  \mathtt{S}_2^\C+ \mathtt{T}_2, X_2^\C\brar +  
\frac12 \bral \mathtt{S}_2^\C + \mathtt{T}_2, \bral  \mathtt{S}_2^\C + \mathtt{T}_2, X_1^\C\brar \brar 
+ \frac12 \Bral   \bral \mathtt{S}_2^\C + \mathtt{T}_2, \mathtt{S}_2^\C\Brar 
+  \mathtt{S}_3^\C +  \mathtt{T}_3 , X_1^\C\Brar  =\mathcal{X}_{Res}  \, . \label{DefG4}
\end{align}

\noindent
{\bf Quadratic vector fields.} 
Since  $F_{\C}^{(3)}$ solves \eqref{formalomo}, by \eqref{Poi-Bra}, we have
\be\label{Poisson3}
X_{H^{(3)}}^{\C} + \bral X_{F_{\C}^{(3)}} , X_{H^{(2)}_\C} \brar = 0 \, . 
\ee
Subtracting  \eqref{DefG3} and \eqref{Poisson3}, and since $ X_{H^{(2)}_\C} = X_{H^{(2)}}^\C  $, we deduce 
$$
\bral \mathtt{S}_2^\C + \mathtt{T}_2  - X_{F_{\C}^{(3)}} , X_{H^{(2)}}^\C \brar  = 0 \, . 
$$
The adjoint operator
$ {\rm Ad}_{X_{H^{(2)}}^\C} := [ \, \cdot \, , X_{H^{(2)}}^\C ] $
acting on quadratic monomial vector fields $ u_{j_1}^{\s_1} u_{j_2}^{\s_2}  \pa_{u^{\s}_j} $
satisfying the momentum conservation property
$ \s j = \s_1 j_1 + \s_2 j_2  $, 
is injective and surjective. Indeed we have that
$ \bral  u_{j_1}^{\s_1} u_{j_2}^{\s_2}  \pa_{u^{\s}_j} , X_{H^{(2)}}^\C \brar = $ 
$ -\ii (\s \omega(j) - \s_1 \omega({j_1}) - \s_2 \omega({j_2}) ) 
u_{j_1}^{\s_1} u_{j_2}^{\s_2} \pa_{u^{\s}_j}   $
and the system \eqref{ord3res} has no solutions.
As a consequence 
\be\label{F3}
\mathtt{S}_2^\C + \mathtt{T}_2 = X_{F_\C^{(3)}} \, .
\ee
\noindent
{\bf Cubic vector fields.} 
The vector field $\mathcal{X}_{Res}$ defined in \eqref{finalcubic}  is in Poincar\'e-Birkhoff normal form,
since the symbol $ {\mathtt D}(Y;\x)$ is \emph{integrable} (Definition \ref{defiintegro}) 
and $\tilde{\mathtt{R}}^{res}(U) $    is \emph{ Birkhoff resonant} (Definition \ref{resterm}). 
Therefore, defining the linear operator 
$ \Pi_{\ker}  $ acting on a cubic monomial vector field
$ u_{j_1}^{\s_1} u_{j_2}^{\s_2} u_{j_3}^{\s_3} \pa_{u^{\s}_j}$ 
as
\be\label{Pikerdi}
\Pi_{\ker} \Big( u_{j_1}^{\s_1} u_{j_2}^{\s_2} u_{j_3}^{\s_3} \pa_{u^{\s}_j} \Big) := 
\begin{cases}
u_{j_1}^{\s_1} u_{j_2}^{\s_2} u_{j_3}^{\s_3} \pa_{u^{\s}_j} \quad {\rm if} \ - \s\omega(j) + \s_1 \omega({j_1}) + \s_2 \omega({j_2}) + \s_3 \omega({j_3}) = 0 \\
0 \qquad \qquad\quad \quad \ {\rm otherwise} \, , 
\end{cases}
\ee
 we have 
\begin{equation}\label{sononelKer}
\Pi_{\ker}(\mathcal{X}_{Res}) = \mathcal{X}_{Res} \, .
\end{equation}
In addition, since 
$ \bral  u_{j_1}^{\s_1} u_{j_2}^{\s_2} u_{j_3}^{\s_3}  \pa_{u^{\s}_j} , X_{H^{(2)}}^\C \brar = $
$ -\ii (\s \omega(j) - \s_1 \omega({j_1}) - \s_2 \omega({j_2}) - \s_3 \omega({j_3}) ) 
u_{j_1}^{\s_1} u_{j_2}^{\s_2}  u_{j_3}^{\s_3}  \pa_{u^{\s}_j} $
we deduce that, for any cubic vector field $ G_3 $, 
\begin{equation}\label{kerQ4}
\Pi_{{\rm Ker}} \bral G_3, X_{H^{(2)}}^\C \brar =  0 \, .  
\end{equation}
We can then calculate
\begin{equation*}
\begin{aligned}
\mathcal{X}_{Res}  \stackrel{\eqref{sononelKer}} =   \Pi_{\ker}  (\mathcal{X}_{Res})  & 
\stackrel{\eqref{DefG4}, \eqref{kerQ4}} =
\Pi_{\ker} 
\Big( X_{H^{(4)}}^\C + \bral \mathtt{S}_2^\C+ \mathtt{T}_2, X_2^\C\brar  + 
\frac12 \bral \mathtt{S}_2^\C + \mathtt{T}_2,
 \bral \mathtt{S}_2^\C + \mathtt{T}_2, X_1^{\C}\brar \brar 
\Big) \\
& \stackrel{(\ref{F3}),\eqref{HVF2}}{=}
\Pi_{\ker} 
\Big( 
X_{H^{(4)}}^\C + \bral X_{F_{\C}^{(3)}},X_{H^{(3)}}^\C \brar+   \frac12 \bral X_{F_{\C}^{(3)}}, \bral X_{F_{\C}^{(3)}} , X_{H_{\C}^{(2)}} \brar \brar
\Big) \\
& \stackrel{\eqref{compreal},\eqref{Poi-Bra}} =
\Pi_{\ker} 
\Big( 
X_{H_{\C}^{(4)} + \{F_{\C}^{(3)}, H_{\C}^{(3)}\} 
+ \frac12 \{F_{\C}^{(3)} ,\{ F_{\C}^{(3)}, H_{\C}^{(2)} \} \}} \Big)  \\
& \stackrel{\eqref{formalomo}} = 
\Pi_{\ker} \big( X_{H_{\C}^{(4)} + \frac12 \{F_{\C}^{(3)}, H_{\C}^{(3)} \}}\big)  \\
& \stackrel{\eqref{Pikerdi}, \eqref{piker2}} =    X_{ {\it \Pi}_{\rm ker}  (H_{\C}^{(4)} + \frac12 \{F_{\C}^{(3)}, H_{\C}^{(3)}\})}
\end{aligned}
\end{equation*}
which is  \eqref{ident}; the second identity follows by the definition of $ H^{(4)}_{ZD}$ in \eqref{H4tilde}.

\subsection{Energy estimate  and proof of Theorem \ref{BNFtheorem}}\label{sec:EE}

We first prove the following lemma.

 \begin{lemma}\label{equiv-tempo}
 Let $ K \in \N^* $.
There is $ s_0 > 0 $ such that, 
for any $ s \geq s_0 $, for all $ 0 < r \leq r_0(s) $ small enough, 
 if  $U$  belongs to $B_{s}^{K}(I;r)$
 and solves \eqref{eq:415}, then
 there is a constant $ C_{s,K}>0$  such that
\begin{equation}\label{aslong1}
\|\pa_{t}^{k}U(t,\cdot)\|_{\dot{H}^{s-k}}\leq C_{s,K} \|U(t,\cdot)\|_{\dot{H}^{s}} \, , \quad 
\forall \, 0\leq k \leq K
\, .
\end{equation}
In particular the norm 
$\|U (t, \cdot) \|_{K,s}$ defined in \eqref{normaT} is equivalent to the norm 
$\|U(t, \cdot) \|_{\dot{H}^{s}}$
for $U(t,\cdot)$ a solution of \eqref{eq:415}.
\end{lemma}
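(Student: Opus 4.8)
The statement to prove is Lemma \ref{equiv-tempo}: for a solution $U$ of \eqref{eq:415} in $B_s^K(I;r)$, the time derivatives $\pa_t^k U$ are controlled by the spatial norm $\|U(t,\cdot)\|_{\dot H^s}$, uniformly for $0\le k\le K$. The natural strategy is induction on $k$. For $k=0$ the estimate \eqref{aslong1} is trivial. The inductive step relies on the equation \eqref{eq:415} (equivalently \eqref{eq:415tris}) itself: differentiating $k-1$ times in $t$ and using $\pa_t U = -\ii\Omega U + {\bf M}(U)[U]$, one expresses $\pa_t^k U = \pa_t^{k-1}\big(-\ii\Omega U + {\bf M}(U)[U]\big)$ and estimates the right-hand side in $\dot H^{s-k}$.

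The plan is as follows. First, I would apply $\pa_t^{k-1}$ to \eqref{eq:415tris} and expand by Leibniz. The term $\pa_t^{k-1}(-\ii\Omega U) = -\ii\Omega \,\pa_t^{k-1}U$: since $\Omega = \mathrm{diag}(|D|^{1/2},-|D|^{1/2})$ is a Fourier multiplier of order $1/2$, this is bounded in $\dot H^{s-k}$ by $\|\pa_t^{k-1}U\|_{\dot H^{s-k+1/2}} \le \|\pa_t^{k-1}U\|_{\dot H^{s-(k-1)}}$, which is $\lesssim \|U(t,\cdot)\|_{\dot H^s}$ by the inductive hypothesis (using $s - k + 1/2 \le s-(k-1)$). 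For the nonlinear term, I would use that ${\bf M}(U)$ is a real-to-real matrix of maps in $\Sigma\mathcal{M}^{m_1}_{K,1,1}\otimes\mathcal{M}_2(\C)$ for some $m_1 > 0$ (as stated after Definition \ref{smoothoperatormaps} and in \eqref{eq:415tris}); the non-homogeneous estimates built into Definition \ref{smoothoperatormaps}(ii) together with the $k$-fold time-differentiability encoded in the class (we have $K-K' \ge k-1$ since $K' = 1$ here) give $\|\pa_t^{k-1}({\bf M}(U)[U])\|_{\dot H^{s-(k-1)-m_1}}$ bounded by a sum of terms of the form $\|U\|_{k'',s}\|U\|_{k'+1,s_0}^{N} + \|U\|_{k'',s_0}\|U\|_{k'+1,s_0}^{N-1}\|U\|_{k'+1,s}$ with $k'+k'' \le k-1$. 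Here a subtlety appears: the loss of $m_1$ derivatives means we must have $s - (k-1) - m_1 \ge s - k$, i.e. $m_1 \le 1$; if $m_1 > 1$ one should instead close the induction by choosing $s_0$ large enough and absorbing the loss into the bookkeeping, noting that $\pa_t^k U$ only needs to live in $\dot H^{s-k}$ which is \emph{weaker} — in fact the loss is harmless because at each differentiation we pay only one order of regularity while $\Omega$ and ${\bf M}$ together lose at most $\max(1/2,m_1)$; the clean way is to note that \eqref{eq:415} can be rewritten so that the top-order part is the transport $\ii\opbw(A_1\xi)$ of order exactly $1$, so each $\pa_t$ costs exactly one derivative, matching the index shift $s-k$.

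The cleanest route, which I would actually carry out, is this: prove \eqref{aslong1} by strong induction on $k$, where at step $k$ one uses $\pa_t^k U = \pa_t^{k-1}\big[\opbw(\ii A_1\xi + \ii A_{1/2}|\xi|^{1/2} + A_0 + A_{-1})U + R(U)U\big]$. The operator $\opbw(\ii A_1(U;x)\xi)$ has order $1$ and its symbol vanishes at $U=0$, so by Proposition \ref{azionepara}(ii) (and the product/composition rules in \eqref{prodottodisimboli2}, Proposition \ref{composizioniTOTALI}) applying $\pa_t^{k-1}$ and Leibniz produces terms $\opbw(\pa_t^{k_1}(\ii A_1\xi))\,\pa_t^{k_2}U$ with $k_1+k_2 = k-1$, each bounded in $\dot H^{s-k}$ by $\|U\|_{K,s_0}^{\cdot}\,\|\pa_t^{k_2}U\|_{\dot H^{s-k+1}} = \|U\|_{K,s_0}^{\cdot}\,\|\pa_t^{k_2}U\|_{\dot H^{s-k_2}}$ (since $k_2 = k-1-k_1 \le k-1$, so $s - k + 1 = s - k_2 - (k_1)$... choosing $k_1=0$ gives exactly $s-k_2$, and $k_1 \ge 1$ only helps). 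The inductive hypothesis then controls $\|\pa_t^{k_2}U\|_{\dot H^{s-k_2}} \lesssim \|U(t,\cdot)\|_{\dot H^s}$. The lower-order symbols $\ii A_{1/2}|\xi|^{1/2}$, $A_0$, $A_{-1}$ and the $\rho$-smoothing remainder $R(U)$ are handled identically with strictly better regularity budget. Summing the finitely many terms and using $r$ small (so $\|U\|_{K,s_0}^{\cdot}\le 1$) yields \eqref{aslong1}, and the norm equivalence $\|U\|_{K,s}\simeq \|U\|_{\dot H^s}$ is then immediate from the definition \eqref{normaT} combined with \eqref{aslong1} (one direction) and the trivial inequality $\|U\|_{\dot H^s}\le\|U\|_{K,s}$ (the other). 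The main obstacle is purely bookkeeping: tracking that every term produced by Leibniz on $\pa_t^{k-1}$ applied to \eqref{eq:415} lands in $\dot H^{s-k}$ with the correct power of $\|U\|_{K,s_0}$, which requires invoking the non-homogeneous estimates of Definition \ref{smoothoperatormaps}/\ref{pomosimb} and Proposition \ref{azionepara} with $K' = 1 \le K - k$ — so one needs $k \le K-1$; the remaining case $k=K$ is covered by the same argument since $K' = 1$ and the relevant classes are $K$-times-minus-$1$ differentiable, which suffices as $\pa_t^{K}U$ involves $\pa_t^{K-1}$ of quantities in $\Gamma^\bullet_{K,1,\bullet}$.
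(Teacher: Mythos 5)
Your proposal is correct and follows essentially the same route as the paper: induction on $k$, estimating $\pa_t^k U$ from the paralinearized equation \eqref{eq:415} via Proposition \ref{azionepara}-(ii) and the tame bound \eqref{piove}, with the smallness of the low norms closing the estimate. The only cosmetic differences are that the paper runs a preliminary bootstrap at $s=s_0$ before treating general $s$, whereas you bound the low-norm factors directly by $r$ from the hypothesis $U\in B^K_s(I;r)$, and you should state explicitly that the self-referential term $\|U\|_{k,s}$ produced by \eqref{piove} (which carries the small coefficient $\|U\|_{\dot H^{s_0}}\lesssim r$) is absorbed into the left-hand side.
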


\begin{proof}
For $k=0$ the estimate \eqref{aslong1} is trivial. 
We are going to estimate $ \pa_t^k U$ by \eqref{eq:415}. Since the matrix of symbols
$ \ii A_1(U;x)\x+\ii A_{1/2}(U;x)|\x|^{\frac{1}{2}}+A_0(U;x,\x)+A_{-1/2}(U;x,\x) $ in \eqref{eq:415} 
belongs to $ \Sigma\Gamma_{K,1,0}^{1} \otimes\mathcal{M}_2(\C)  $
and the smoothing operator $R(U)$ is in 
$ \Sigma\mathcal{R}^{-\rho}_{K,1,1}\otimes\mathcal{M}_2(\C) $,
applying 
Proposition \ref{azionepara}-$(ii)$ (with $K'=1$, $k=0$),  
the estimate \eqref{piove} for $ R(U) $ (with $K'=1$, $k=0$, $ N = 1 $),
and recalling  \eqref{normaT}, 
we deduce,  for $ s \geq s_0$ large enough,  
\begin{equation}\label{tempo1}
\begin{aligned}
\|\pa_t U(t,\cdot)\|_{\dot{H}^{s-1}} & \lesssim_s \|U(t,\cdot)\|_{\dot{H}^{s}}
\big(1+\|U(t,\cdot)\|_{\dot{H}^{s_0}}+\|\pa_{t}U(t,\cdot)\|_{\dot{H}^{s_0-1}}\big) \\
& \ \quad +\|\pa_{t}U(t,\cdot)\|_{\dot{H}^{s-1}}\|U(t,\cdot)\|_{\dot{H}^{s_0}} \, .
\end{aligned}
\end{equation}
Evaluating \eqref{tempo1} at $s=s_0$ 
and since  $\|U(t,\cdot)\|_{\dot{H}^{s_0}}$ is small, we get
$\|\pa_tU(t,\cdot)\|_{\dot{H}^{s_0-1}}\lesssim_s\|U(t,\cdot)\|_{\dot{H}^{s_0}}.$ 
The latter inequality and \eqref{tempo1} imply \eqref{aslong1} for $k=1$, for any $s \geq s_0$ . 
Differentiating in $t$ the system \eqref{eq:415} 
and arguing by induction on $k$, one proves similarly \eqref{aslong1} for any $ k \geq 2 $. 
\end{proof}

We now prove the following energy estimate.

\begin{lemma}{\bf (Energy estimate)}\label{tempo}
Under the same assumptions of Proposition \ref{cor:BNF}
the vector field 
$\mathcal{X}_{\geq4} (U,Y) = \vect{\mathcal{X}_{\geq4}^+ (U,Y) }{\overline{\mathcal{X}_{\geq4}^+ (U,Y)} } $  in
\eqref{Stimaenergy100} satisfies, for any $t\in I$,
the  energy estimate
\begin{equation}\label{stimasuY}
{\rm Re}\int_{\T}|D|^{s}\mathcal{X}_{\geq4}^+(U,Y)\cdot \ov{|D|^{s} y }\; dx
\lesssim_{s} \| y \|_{\dot{H}^{s}}^{5} \, .
\end{equation}
\end{lemma}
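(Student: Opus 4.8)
\textbf{Proof plan for Lemma \ref{tempo}.}
The plan is to estimate separately the two contributions to $\mathcal{X}_{\geq 4}^+(U,Y)$ coming from the decomposition \eqref{Stimaenergy100}, namely the admissible paradifferential part $\opbw({\mathfrak H}_{\geq 3}(U;x,\xi))[Y]$ and the smoothing part ${\mathfrak R}_{\geq 3}(U)[Y]$. First I would recall, via Lemma \ref{equiv-tempo} applied to $U$ and via \eqref{equiv100} of Proposition \ref{cor:BNF}, that all the mixed norms $\|U\|_{K,s_0}$, $\|U\|_{K,s}$, $\|\pa_t^k U\|_{\dot H^{s-k}}$, $\|\pa_t^k Y\|_{\dot H^{s-k}}$ are controlled by $\|y\|_{\dot H^{s}}$ (and $\|y\|_{\dot H^{s_0}}$ is small). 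Since $\mathcal{X}_{\geq 4}$ vanishes at degree $\leq 3$ in its arguments, every term in the estimate will carry at least three factors of these norms besides the two explicit $|D|^s$ weights, which is exactly what produces the quintic bound $\|y\|_{\dot H^s}^5$.

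For the smoothing term, the operator ${\mathfrak R}_{\geq 3}(U)$ belongs to $\mathcal{R}^{-(\rho-\rho_0)}_{K,K',3}\otimes\mathcal{M}_2(\C)$ with $\rho-\rho_0\gg 1$; hence by the definition \eqref{piove} of the class (with $N=3$, $K'=0$ after the remark under Definition \ref{smoothoperatormaps}), for $s\geq s_0$ one has $\|{\mathfrak R}_{\geq 3}(U)[Y]\|_{\dot H^{s}}\lesssim_s \|U\|_{K,s_0}^3\|Y\|_{K,s}\lesssim_s \|y\|_{\dot H^{s_0}}^3\|y\|_{\dot H^{s}}$, so that by Cauchy--Schwarz ${\rm Re}\int_\T |D|^s{\mathfrak R}_{\geq 3}(U)[Y]\cdot\overline{|D|^s y}\,dx\lesssim_s \|y\|_{\dot H^s}^2\|y\|_{\dot H^{s_0}}^3\lesssim_s\|y\|_{\dot H^s}^5$. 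Here even one $|D|^s$ weight is harmless because the operator gains $\rho-\rho_0$ derivatives, far more than $s$.

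The core of the argument is the admissible paradifferential term. Write, as in \eqref{def:admissible2}, ${\mathfrak H}_{\geq 3}=\sm{H_{\geq 3}}{0}{0}{\ov{H_{\geq 3}(U;x,-\xi)}}$ with $H_{\geq 3}=\ii\alpha_{\geq 3}(U;x)\xi+\ii\beta_{\geq 3}(U;x)|\xi|^{1/2}+\gamma_{\geq 3}(U;x,\xi)$, where $\alpha_{\geq 3},\beta_{\geq 3}$ are real valued in $\mathcal{F}^\R_{K,K',3}$ and $\gamma_{\geq 3}\in\Gamma^0_{K,K',3}$. The term $\gamma_{\geq 3}$ is of order $0$, so $\opbw(\gamma_{\geq 3})$ maps $\dot H^s\to\dot H^s$ with norm $\lesssim_s\|U\|_{K,s_0}^3$ by Proposition \ref{azionepara}$(ii)$, giving a contribution $\lesssim_s\|y\|_{\dot H^s}^2\|y\|_{\dot H^{s_0}}^3$ after Cauchy--Schwarz. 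The genuinely unbounded pieces are $\opbw(\ii\alpha_{\geq 3}\xi)$ (order $1$) and $\opbw(\ii\beta_{\geq 3}|\xi|^{1/2})$ (order $1/2$): for these I would use the standard commutator/self-adjointness trick for real-coefficient paradifferential operators. Namely, one moves $|D|^s$ inside, writes $|D|^s\opbw(\ii\alpha_{\geq 3}\xi)=\opbw(\ii\alpha_{\geq 3}\xi)|D|^s+[|D|^s,\opbw(\ii\alpha_{\geq 3}\xi)]$ where the commutator is a paradifferential operator of order $s$ (by Proposition \ref{teoremadicomposizione}, its symbol $|\xi|^s\#_\rho(\ii\alpha_{\geq 3}\xi)-(\ii\alpha_{\geq 3}\xi)\#_\rho|\xi|^s$ has order $s$ with the same cubic vanishing in $U$); and $\opbw(\ii\alpha_{\geq 3}(U;x)\xi)$, since $\alpha_{\geq 3}$ is \emph{real}, is skew-adjoint up to an order-$0$ operator, so ${\rm Re}\int_\T \opbw(\ii\alpha_{\geq 3}\xi)[|D|^s y]\cdot\overline{|D|^s y}\,dx$ reduces to a bounded pairing $\lesssim_s\|U\|_{K,s_0}^3\|y\|_{\dot H^s}^2$. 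The term $\opbw(\ii\beta_{\geq 3}|\xi|^{1/2})$ is treated identically (it has lower order, so it is even easier). Summing, ${\rm Re}\int_\T |D|^s\mathcal{X}_{\geq 4}^+(U,Y)\cdot\overline{|D|^s y}\,dx\lesssim_s\|y\|_{\dot H^{s_0}}^3\|y\|_{\dot H^s}^2\lesssim_s\|y\|_{\dot H^s}^5$, which is \eqref{stimasuY}. The main technical obstacle is precisely this last point: handling the loss of one derivative in the transport-type symbol $\ii\alpha_{\geq 3}\xi$, which is exactly why the \emph{reality} of $\alpha_{\geq 3}$ (built into the notion of admissible symbol, Definition \ref{def:admissible}) is indispensable — it is what makes the a priori unbounded principal contribution cancel in the real part of the energy pairing, leaving only bounded and cubically-small remainders.
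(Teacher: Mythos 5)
Your proposal is correct and follows essentially the same route as the paper: the same splitting into the admissible paradifferential part and the smoothing part, the order-$0$ and smoothing terms handled by Proposition \ref{azionepara} and \eqref{piove} plus Cauchy--Schwarz, the unbounded symbols $\ii\alpha_{\geq3}\xi$, $\ii\beta_{\geq3}|\xi|^{1/2}$ controlled by commuting $|D|^s$ (Proposition \ref{teoremadicomposizione}) and exploiting the reality of $\alpha_{\geq3},\beta_{\geq3}$ in the real part of the pairing, and the final conversion of $\|U\|_{K,s}$ into $\|y\|_{\dot H^s}$ via Lemma \ref{equiv-tempo} and \eqref{equiv100}. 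The only cosmetic difference is that the paper symmetrizes the pairing and uses that for the Bony--Weyl quantization the adjoint is exactly $\opbw(\ov{h^+_{\geq3}})$, so $\mathcal{H}_{\geq3}+\mathcal{H}^*_{\geq3}=0$ identically, whereas you invoke skew-adjointness only up to an order-$0$ (cubically small) correction, which is equally sufficient.
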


\begin{proof}
By \eqref{Stimaenergy100} and 
\eqref{def:admissible2}, we have that 
$ \mathcal{X}_{\geq4}^+(U,Y) = \opbw( H_{\geq3})[y]  + \mathfrak{R}_{\geq3}^+ (U)[Y] $
where  $H_{\geq3} $ is an admissible symbol as in \eqref{highordersfinali} that we write 
\be\label{decoH3}
H_{\geq3} 
=  h^+_{\geq3} (U; x, \xi)
+ \gamma_{\geq3}(U;x,\x) \, , \quad 
h^+_{\geq3} (U; x, \xi) := \ii \alpha_{\geq3}(U;x)\x+\ii \beta_{\geq 3}(U;x)|\x|^{\frac{1}{2}} \, , 
\ee
 and 
$ \mathfrak{R}_{\geq3}^+ (U) $  denotes  the first row  of $ \mathfrak{R}_{\geq3} $.
Then the left hand side of \eqref{stimasuY} is equal to 
\begin{align}
& \ \frac12 \big(|D|^{s} y, |D|^{s}\opbw(h^+_{\geq3})[y]\big)_{L^{2}}
+ \frac12 \big( |D|^{s} \opbw(h^+_{\geq3})[y], |D|^{s} y\big)_{L^{2}}\label{linea2}\\
& + {\rm Re}\int_{\T}|D|^{s}  \opbw(\gamma_{\geq 3})[y]  \cdot \ov{|D|^{s} y }\; dx 
+ {\rm Re}\int_{\T}|D|^{s}  \mathfrak{R}_{\geq3}^+ (U)[Y] \cdot \ov{|D|^{s} y }\; dx \, .    
\label{linea3}
\end{align}
Since $ \gamma_{\geq3} \in   \Gamma_{K,K',3}^{0} $ and $  \mathfrak{R}_{\geq3}^+ (U) $ is a 
$ 1 \times 2 $ matrix of smoothing operators in $ \mathcal{R}^{0}_{K,K',3} $,   
Cauchy-Schwarz inequality, Proposition \ref{azionepara} and \eqref{piove} imply that
\be\label{stimabb}
|\eqref{linea3}| \lesssim_s \| y(t,\cdot)\|_{\dot{H}^{s}}^{2}\|U(t,\cdot)\|^{3}_{K,s} \, .
\ee
We now prove that  \eqref{linea2} satisfies the same bound. 
Since  the symbol $ h^+_{\geq 3} $ has positive order we  decompose it 
according to 
\begin{align}
\eqref{linea2} & =\tfrac{1}{2}
\big(|D|^{s}y,|D|^{s}( \mathcal{H}_{\geq 3}+ \mathcal{H}_{\geq 3}^{*})y\big)_{L^2}  +\tfrac{1}{2}\big(
|D|^{s}y, [  \mathcal{H}_{\geq 3}^{*}, |D|^{s}]y
\big)_{L^2} \! +\tfrac{1}{2} \!
\big( [|D|^{s},  \mathcal{H}_{\geq 3}] y, |D|^{s}y  \big)_{L^2}\label{linea5} 
\end{align}
where $\mathcal{H}_{\geq 3} :=\opbw(h_{\geq3}^+ (U;x,\x)) $ and 
$ \mathcal{H}_{\geq 3}^{*}=\opbw(\ov{h_{\geq3}^+(U;x,\x)}) $ is its 
adjoint 
with respect to the  $L^{2}$-scalar product. 
Recalling \eqref{decoH3} and that the functions $ \alpha_{\geq3}(U;x) $, $ \beta_{\geq 3}(U;x) $ are real 
 we have 
\be\label{fazero}
\mathcal{H}_{\geq 3} + \mathcal{H}^{*}_{\geq 3} = \opbw 
\big( h^+_{\geq3} +\ov{h^+_{\geq3}} \, \big) = 0  
\ee
Furthermore, by Proposition \ref{teoremadicomposizione} and the remark below it,
the commutators 
$ [\mathcal{H}_{\geq 3}^*, |D|^{s}] $, $ [|D|^{s}, \mathcal{H}_{\geq 3}]$   are
paradifferential operators with symbol  in $\Gamma^{s}_{K,K',3} $,  up to a bounded 
operator in $ {\cal L}(\dot H^s, \dot H^{0}) $ with operator norm bounded by $ \| U \|_{K,s_0}^3 $. 
Then applying Proposition \ref{azionepara} we get 
 \be\label{sti2}
\big| \big(
|D|^{s}y, [  \mathcal{H}_{\geq 3}^{*}, |D|^{s}]y
\big)_{L^2} \big| +
\big| \big( [|D|^{s},  \mathcal{H}_{\geq 3} ] y, |D|^{s}y  \big)_{L^2} \big| 
\lesssim_s \| y(t,\cdot)\|_{\dot{H}^{s}}^{2}\|U(t,\cdot)\|^{3}_{K,s} \, .
\ee
In conclusion, by \eqref{stimabb}, \eqref{linea5}, \eqref{fazero}, \eqref{sti2}, 
and using Lemma \ref{equiv-tempo}
we deduce 
$$
{\rm Re}\int_{\T}|D|^{s}\mathcal{X}^+_{\geq4}(U,Y)\cdot \ov{|D|^{s}y} \, dx
 \lesssim_s
\| y(t,\cdot)\|^{2}_{\dot{H}^s}\|U(t,\cdot)\|^{3}_{\dot{H}^s} \lesssim_s \|y(t,\cdot)\|_{\dot{H}^s}^{5}
$$
by \eqref{equiv100}, proving the energy estimate \eqref{stimasuY}. 
\end{proof}

We can now prove Theorem \ref{BNFtheorem}.
\begin{proof}[{\bf Proof of Theorem \ref{BNFtheorem}}]
By \eqref{u01}, 
the function $U=\vect{u}{\bar{u}}$, where
$u$ is the variable defined  in \eqref{u0} and  $\omega$ in \eqref{omega0},
belongs to the ball  $B_{N}^{K}(I;r)$  (recall \eqref{palla}) 
 with $r=\bar{\e}\ll 1$ and $I=[-T,T]$. 
By Proposition \ref{WWcomplexVar} the function
 $U$ solves system \eqref{eq:415}.
 Then we
apply Poincar\'e-Birkhoff Proposition \ref{cor:BNF}  with $s\rightsquigarrow N\gg K\geq 2\rho+2\geq 2\rho_0+2$.
The map ${\bf F}_{T}^{1}(U)=\mathfrak{C}^{1}(U)[U]$ in \eqref{FTvect} 
transforms the water waves system \eqref{eq:415} into \eqref{sistemaBNF1000}, which, thanks to  Proposition \ref{teosez9}, 
is expressed in terms of the Zakharov-Dyachenko Hamiltonian $H_{ZD} $ in \eqref{theoBirHfull}, as
\begin{equation*}
 \pa_{t}Y= X_{H_{ZD}} (Y)  + \mathcal{X}_{\geq 4 }(U,Y) \, .
\end{equation*}
Renaming $ y \rightsquigarrow z $ and recalling \eqref{HSFC},
the first component of the above system is the equation \eqref{theoBireq},
and denoting $\mathfrak{B}(u)u$ 
the first component of $\mathfrak{C}^{1}(U)[U]$.
The bound \eqref{Germe} follows by 
\eqref{stimafinaleFINFRAK} with $s\rightsquigarrow N $ and $ k=0$,
and  Lemma \ref{equiv-tempo}.
The energy estimate \eqref{theoBirR} is proved  in  Lemma \ref{tempo}.
\end{proof}

\subsection{Proof of Theorem \ref{thm:main}}\label{sec:prLTE} 
The next bootstrap Proposition \ref{proapriori}  is the main ingredient 
for the proof of the long time existence Theorem \ref{thm:main}.
 Proposition \ref{proapriori} is a consequence of 
Theorem \ref{BNFtheorem} and 
the integrability of the fourth order Hamiltonian $H^{(4)}_{ZD}$ in \eqref{theoBirH}.

By time reversibility we may, without loss of generality, only look at positive times $t>0$.

\begin{proposition}{\bf (Main bootstrap)} \label{proapriori}
Fix the constants 
$\bar{\e}, K,N$ as in Theorem \ref{BNFtheorem}  and let the function $ u \in C^{0}([0,T]; H^{N}) $  be defined as in \eqref{u0},  
 with $\omega$  in \eqref{omega0} and $(\eta,\psi)$ solution of \eqref{eq:113}
 satisfying \eqref{etapsi}, \eqref{etaave0}. The function $ u $ satisfies \eqref{uave0}.  
Then there exists $c_0>0$  such that, for any $0<\e_1 \leq \bar{\e} $, if 
\be\label{apriorih}
{\| u(0)\|}_{H^N} \leq c_0 \e_1 \, , \qquad \sup_{t\in[0,T]} \sum_{k=0}^K {\| \pa_t^k u(t) \|}_{H^{N-k}} \leq \e_1 \, ,\qquad T \leq
 c_0 \e_1^{-3} \,,
\ee 
then
we have the improved bound
\be\label{aprioric}
\sup_{t\in[0,T]} \sum_{k=0}^K {\| \pa_t^k u(t) \|}_{H^{N-k}} \leq \dfrac{\e_1}{2} \, .
\ee 
\end{proposition}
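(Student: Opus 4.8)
\textbf{Proof strategy for Proposition \ref{proapriori}.}
The plan is to run a standard bootstrap/continuity argument based on an energy quantity built from the Birkhoff normal form variable $z$. First I would apply Theorem \ref{BNFtheorem}: under the smallness hypothesis \eqref{apriorih} (with $\e_1 \leq \bar\e$), the bounded invertible transformation $\mathfrak{B}(u)$ is well-defined, \eqref{Germe} holds, and $z := \mathfrak{B}(u)u$ solves \eqref{theoBireq} with the quartic remainder $\mathcal{X}^+_{\geq 4}$ satisfying the energy estimate \eqref{theoBirR}. By \eqref{Germe} and Lemma \ref{equiv-tempo} (applied to $U$) the Sobolev norms of $u$ and $z$ are equivalent, say $C^{-1}\|u(t)\|_{\dot H^N} \leq \|z(t)\|_{\dot H^N} \leq C\|u(t)\|_{\dot H^N}$, and likewise for the $K,s$-norms; so it suffices to propagate a bound on $\|z(t)\|_{\dot H^N}$ and then transfer it back, using also that $\|\partial_t^k u\|_{H^{N-k}} \lesssim \|u\|_{\dot H^N}$ by Lemma \ref{equiv-tempo} (here I use that the mean of $u$ vanishes by \eqref{uave0} so $\dot H^N$ and $H^N$ norms coincide on the relevant subspace).

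The core estimate is the differential inequality for $E(t) := \frac12\|z(t)\|_{\dot H^N}^2$. Differentiating along \eqref{theoBireq} gives
\[
\frac{d}{dt} E(t) = \mathrm{Re}\int_\T |D|^N\big(-\ii\,\partial_{\bar z}H_{ZD}(z,\bar z)\big)\cdot \overline{|D|^N z}\, dx + \mathrm{Re}\int_\T |D|^N \mathcal{X}^+_{\geq 4}\cdot \overline{|D|^N z}\, dx.
\]
The first term vanishes: by Theorem \ref{Zakintegro} the flow of $H_{ZD}$ preserves all Sobolev norms (the actions $|z_n|^2$ are prime integrals), so the contribution of the Hamiltonian vector field $X_{H_{ZD}}$ to $\frac{d}{dt}\|z\|_{\dot H^N}^2$ is identically zero; more directly, one checks $\mathrm{Re}\int_\T |D|^N X_{H_{ZD}}(z)\cdot\overline{|D|^N z}\,dx = 0$ by inspecting \eqref{HCS1}--\eqref{primRn}, since each monomial in $[R(z)]_n$ and in the transport term has a purely imaginary coefficient times a product of $|z_k|^2$'s (the transport operator $\opbw(-\ii\zeta(Z)\xi)$ is skew-adjoint, $\zeta$ being real, and $\opbw(-\ii\mathcal{D}_{-1/2})$ is as well). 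The second term is bounded by $C\|z\|_{\dot H^N}^5$ by \eqref{theoBirR}. Hence $\frac{d}{dt}E(t) \leq C_N \|z(t)\|_{\dot H^N}^5 = 2^{5/2}C_N E(t)^{5/2}$, i.e. $\frac{d}{dt}\|z(t)\|_{\dot H^N}^2 \leq C_N' \|z(t)\|_{\dot H^N}^5$.

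Integrating this on $[0,t]$ with $t \leq T \leq c_0\e_1^{-3}$: as long as $\|z(\tau)\|_{\dot H^N} \leq 2C c_0^{1/2}\e_1$ on $[0,t]$ (the a priori bootstrap assumption, which follows from \eqref{apriorih} via the norm equivalence and a suitable choice of $c_0$ relative to the constant in \eqref{Germe}), we get
\[
\|z(t)\|_{\dot H^N}^2 \leq \|z(0)\|_{\dot H^N}^2 + C_N' (2Cc_0^{1/2}\e_1)^3 \int_0^t \|z(\tau)\|_{\dot H^N}^2 d\tau,
\]
and by Gr\"onwall, $\|z(t)\|_{\dot H^N}^2 \leq \|z(0)\|_{\dot H^N}^2 \exp\big(C_N' (2Cc_0^{1/2})^3 \e_1^3 \cdot c_0\e_1^{-3}\big) = \|z(0)\|_{\dot H^N}^2 \exp(C'' c_0^{5/2})$. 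Choosing $c_0$ small enough that $\exp(C''c_0^{5/2}) \leq 2$ and $c_0$ small relative to $C$ and $c_0 \cdot C \leq 1/(8C)$ (so that $\|z(0)\|_{\dot H^N} \leq C\|u(0)\|_{\dot H^N} \leq Cc_0\e_1$), we obtain $\|z(t)\|_{\dot H^N} \leq \sqrt{2}\,Cc_0\e_1$ on $[0,T]$. Transferring back via \eqref{Germe}/norm equivalence and Lemma \ref{equiv-tempo} yields $\sum_{k=0}^K \|\partial_t^k u(t)\|_{H^{N-k}} \leq \tilde C \|z(t)\|_{\dot H^N} \leq \tilde C\sqrt2 C c_0\e_1 \leq \e_1/2$ for $c_0$ sufficiently small. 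The only genuine subtlety — the "hard part" — is making the constants match: one must fix $c_0$ after all the structural constants ($C$ from \eqref{Germe}, $C_N$ from \eqref{theoBirR}, the equivalence constant from Lemma \ref{equiv-tempo}) are frozen, and verify that the bootstrap assumption used in the Gr\"onwall step is genuinely weaker than the target \eqref{aprioric}, which it is by the factor of $2$ versus $\sqrt2 Cc_0$ once $c_0$ is small; a routine continuity argument (the set of $t$ where the bootstrap bound holds is open, closed and nonempty) then closes the estimate on all of $[0,T]$.
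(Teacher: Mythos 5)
Your proposal is correct and follows essentially the same route as the paper: apply Theorem \ref{BNFtheorem}, use the integrability of $H_{ZD}$ (Theorem \ref{Zakintegro}) so that only the quartic remainder contributes to $\frac{d}{dt}\|z\|_{\dot H^N}^2$, bound it by \eqref{theoBirR}, integrate up to $T\leq c_0\e_1^{-3}$, and transfer back via \eqref{Germe} and Lemma \ref{equiv-tempo}. The only difference is your Gr\"onwall-plus-inner-continuity step, which is an unnecessary detour (and the parenthetical claim that the threshold $2Cc_0^{1/2}\e_1$ ``follows from \eqref{apriorih}'' is imprecise, though your continuity argument repairs it): since \eqref{apriorih} already gives $\|u(\tau)\|_{H^N}\leq\e_1$ on all of $[0,T]$, one can bound $\int_0^t\|z(\tau)\|_{\dot H^N}^5\,d\tau\lesssim T\e_1^5\leq c_0\e_1^2$ directly, which is exactly what the paper does.
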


\begin{proof}
In view of \eqref{apriorih} the smallness condition \eqref{u01} holds and 
we can apply Theorem \ref{BNFtheorem}
obtaining the new variable $z =\mathfrak{B}(u)u$ satisfying the equation \eqref{theoBireq}-\eqref{theoBirR}.
The integrability of $H^{(4)}_{ZD}$ in Theorem \ref{Zakintegro} 
(see the Hamiltonian system written in \eqref{HCS1}-\eqref{primRn}) gives 
$$
\Re \int_{\T} |D|^N \big(\ii \partial_{\bar{z}}H^{(4)}_{ZD}\big) \cdot \bar{|D|^N z} \, dx = 0 \, .
$$
From this, \eqref{theoBireq} and \eqref{theoBirR} we obtain the energy estimate
\begin{equation}\label{aprioriEne1}
\frac{d}{dt} {\|z(t)\|}^2_{\dot{H}^{N}} \lesssim_{N} \|z(t)\|^{5}_{\dot{H}^{N}} \, .
\end{equation}
Using \eqref{Germe} and \eqref{apriorih} we deduce that, for all $ 0 \leq t \leq T $, 
$$
\begin{aligned}
{\|u(t)\|}_{\dot{H}^{N}}^{2} \lesssim_N  {\|z(t)\|}^{2}_{\dot{H}^{N}}  
& \stackrel{\eqref{aprioriEne1}}{\lesssim_N }  {\| z(0) \|}^{2}_{\dot{H}^N} 
+ \int_0^t {\| z(\tau)\|}^5_{\dot{H}^N} \, d\tau \leq C {\|u(0)\|}^{2}_{\dot{H}^{N}} + C \int_{0}^{t} \|u(\tau)\|^{5}_{\dot{H}^{N}} \, d \tau 
\end{aligned}
$$
for some $C=C(N)>0$.
Then, by the a priori assumption \eqref{apriorih}
we get, for all $0\leq t\leq T\leq  c_0\e_{1}^{-3}$,
\begin{align}
\label{aprioriEne12}
{\|u(t)\|}^{2}_{\dot{H}^{N}} \leq C c^{2}_0 \e^{2}_1 + C \, {T} \, \e_1^5 
\leq  \e_1^{2}(C c_0^2+Cc_0) \, .
\end{align}
The desired conclusion \eqref{aprioric}
on the norms $C^k_t  H^{N-k}_x$
follows by Lemma \ref{equiv-tempo}, \eqref{aprioriEne12},  and recalling that 
$ \int_{\T} u(t, x )  d x = 0 $, 
choosing $c_0$  small enough depending on $N$. 
\end{proof}

We now prove the long-time existence  Theorem \ref{thm:main}, 
by Theorem \ref{BNFtheorem} and Proposition \ref{proapriori}.

\medskip
\noindent
{\it Step 1: Local existence}.
Let $ s > 3 / 2 $. By the assumption \eqref{thm:main1}, 
Theorem \ref{thm:local}  guarantees the existence of a time $T_{\mathrm{loc}}>0$ 
and a unique classical solution 
$(\eta,\psi)\in C^{0}([0,T_{\mathrm{loc}}]; 
H^{s+\frac{1}{2}}\times H^{s+\frac{1}{2}})$  
of \eqref{eq:113}, with initial data as in \eqref{thm:main1}, 
such that 
\be
\label{prLTE1}
\sup_{t\in[0,T_{\mathrm{loc}}]} {\|(\eta,\psi,V,B)(t) \|}_{X^s} \leq C \e \, ,
\qquad \int_\T \eta(t,x) \, dx \,=0 \, . 
\ee
  
\medskip
\noindent
{\it Step 2: Preliminary estimates in high norms}.
We now show that, 
for any $K>0$, if $s\geq K+ \s_0 $, for some $ \s_0 $ large enough,  and $\e$ is small enough, then  
the time derivatives $ (\pa_t^k \eta, \pa_t^k \psi ) $,  $ k=0,\ldots,K,$  satisfy,  
 for all $t\in[0,T_{\rm loc}]$,
\begin{equation}\label{pat114}
\|\pa_{t}^{k}\eta\|_{H^{s+\frac{1}{2}-k}}+\|\pa_{t}^{k}\psi\|_{H^{s+\frac{1}{2}-k}}\lesssim_s 
\|\eta\|_{H^{s+\frac{1}{2}}}+\|\psi\|_{H^{s+\frac{1}{2}}}\lesssim_{s}{\e} \, .
\end{equation}
 One argues by induction on $k$. For $k=0$ the second 
 estimate in \eqref{pat114} is  \eqref{prLTE1}. Assume that \eqref{pat114}
holds for any  $0\leq j\leq k-1\leq K-1$, $k\geq1$.
By differentiating in $t$ the water waves system \eqref{eq:113} we get
\begin{equation}\label{pat113}
\pa_{t}^{k}\eta=\pa_{t}^{k-1}\big(G(\eta)\psi\big) \, , \quad \pa_{t}^{k}\psi=\pa_{t}^{k-1}\big(\mathcal{F}(\eta,\eta_{x},\psi_x,G(\eta)\psi)\big) \, , \quad k=1,\ldots,K \, ,
\end{equation}
where $\mathcal{F}$ is an analytic function vanishing at the origin. 
Then, using that $G(\eta)\psi$ is expressed from the right hand side
 of \eqref{eq:1n}, Proposition \ref{azionepara}, 
 \eqref{piove} and the inductive hypothesis, we get
\[
\|\pa_{t}^{k-1} (G(\eta)\psi )\|_{H^{s+\frac{1}{2}-k}}\lesssim_{s}\sum_{k'\leq k-1}
\|\pa_t^{k'}\psi\|_{H^{s+\frac{1}{2}-k+1}}+
\|\pa_t^{k'}\eta\|_{H^{s+\frac{1}{2}-k+1}} \lesssim_s 
\|\eta\|_{H^{s+\frac{1}{2}}}+\|\psi\|_{H^{s+\frac{1}{2}}}  \, . 
\]
This implies,
in view of the first equation in \eqref{pat113},
 that $\|\pa_t^{k}\eta\|_{H^{s+ \frac12 -k}}$ is bounded as in \eqref{pat114}.
To estimate  $ \| \pa_t^{k}\psi\|_{H^{s+ \frac12 -k}} $, we use the second equation in 
\eqref{pat113}, the inductive estimates 
for $ (\pa_t^j \eta, \pa_t^j \psi ) $,  $0\leq j\leq k-1 $,  the 
previous bound on $ \|\pa_{t}^{k-1} (G(\eta)\psi )\|_{H^{s+\frac{1}{2}-k}} $
 and 
the fact that for $s\geq K+ \s_0 $ the space  $H^{s+ \frac12 -K}$ 
 is an algebra.

\medskip
\noindent
{\it Step 3: A priori estimate for the basic diagonal complex variable}.
We now look at the complex variable 
\be\label{u0''}
u = \frac{1}{\sqrt{2}}|D|^{-\frac{1}{4}}\eta+\frac{\ii}{\sqrt{2}}|D|^{\frac{1}{4}}\omega 
\ee
defined in \eqref{u0} where $ \om = \psi-\opbw(B(\eta,\psi))\eta  $ is the good unknown  defined 
in \eqref{omega0}.
Since the function  
$ B  $ 
is  in $ \Sigma {\cal F}^\R_{K, 0,1}$ (Proposition \ref{laprimapara}), we deduce,
applying  Proposition \ref{azionepara} for $s \geq s_0 $ large enough,  that 
$ \om   $ is in  $ C^{0}([0,T_{\mathrm{loc}}]; \dot H^{s+ \frac12}) $,
 and so
\begin{equation}\label{legoNS}
u\in C^{0}([0,T_{\mathrm{loc}}]; H^{N}) \, ,\qquad N:=s+\frac14 \, .
\end{equation}
Moreover, using  \eqref{thm:main1},
\eqref{prLTE1}-\eqref{pat114}, we estimate   $ \| \pa_t^k u \|_{H^{N-k}} $, $k=0,\ldots, K $ for $ N \gg K $, by
\[
\begin{aligned}
\|\pa_{t}^{k} u\|_{H^{N-k}} & \stackrel{\eqref{u0''}}{\lesssim_N} 
\|\pa_{t}^{k}\eta\|_{H^{N-\frac{1}{4}-k}}+\|\pa_{t}^{k}\omega\|_{H^{N+\frac{1}{4}-k}} \\
& \stackrel{\eqref{omega0}, {\rm Prop.} \ref{azionepara}}{\lesssim_N} \|\pa_{t}^{k}\eta\|_{H^{N+\frac{1}{4}-k}}
+\|\pa_{t}^{k}\psi\|_{H^{N+\frac{1}{4}-k}} \stackrel{\eqref{pat114}}{\lesssim_N}{\e} \, ,
\end{aligned}
\]
for any  $t\in [0,T_{\rm loc}]$. In conclusion, there is $C_1=C_1(N)>0$ such that
\be
\label{prLTE2}
{\|u(0) \|}_{H^{N}} \leq 2\e \, , 
\qquad \sup_{t\in[0,T_{\mathrm{loc}}]} \sum_{k=0}^{K} {\| \pa_t^k u(t) \|}_{H^{N - k}} \leq C_1 \e \, ,
\qquad \int_\T u(t,x) \, dx = 0 \, .
\ee

\noindent
{\it Step 4: Bootstrap argument and continuation criterion}.
With $ \bar \e, K, N  $ given by Theorem \ref{BNFtheorem}, and $c_0$ by  Proposition \ref{proapriori},
we choose $\e_0$ in \eqref{thm:main1} small enough so that, for 
$0<\e\leq \e_0$ we have $2\e \leq c_0 \bar{\e}$, $C_1 \e\leq \bar{\e}$ where
$ C_1 $ is the constant in \eqref{prLTE2}. 
Moreover we take $s\geq s_0$  large enough in such a way that 
\eqref{legoNS}-\eqref{prLTE2} hold with $N$ given by Theorem \ref{BNFtheorem}.
Hence 
the first two assumptions in \eqref{apriorih}
hold with $\e_1 = \e \max\{2c_0^{-1},C_1 \} $ on the time interval $ [0, T_{\rm loc}] $.
Then Proposition \ref{proapriori} 
and  a standard bootstrap argument guarantee that $ u (t)  $ can be extended up to a time 
$$
T_\e := c_0\e_1^{-3} 
\, ,
$$
consistently with the existence time \eqref{time} of the statement, 
and that
\be
\label{prLTE5}
\sup_{[0,T_\e]} {\| u(t) \|}_{H^N} \leq \e_1 \, , \qquad \int_\T u(t,x)\,dx =0 \, .
\ee

Finally, we prove that the solution of \eqref{eq:113} satisfies \eqref{thm:main2} 
and  that $(\eta,\psi,V,B)(t)  $ takes values in $ X^s $ for all $ t \in [0, T_\e ] $.
Expressing $(\eta, \omega)$ in terms of $ u, \bar u $ as in  \eqref{varfin1}, 
we deduce by \eqref{prLTE5} that 
\be\label{prLTE10}
\sup_{[0,T_\e]}\big( {\|\eta(t)\|}_{H^{s}} + {\|\omega(t)\|}_{H^{s+\frac12}} \big) \lesssim_{s} \e \, .
\ee
Then we estimate 
\be\label{prLTE11}
\sup_{[0,T_\e]}{\|\psi(t)\|}_{H^{s}} \lesssim_{s} \e
\ee
by  \eqref{omega0}, \eqref{prLTE10}  and Proposition \ref{azionepara}, 
and
\be\label{prLTE12}
\sup_{[0,T_\e]}{\|(V,B)(t)\|}_{H^{s-1} \times H^{s-1}} \lesssim_{s} \e \, ,
\ee
using \eqref{eq:1n} for $G(\eta) \psi $.
The estimates \eqref{prLTE10}-\eqref{prLTE12} imply \eqref{thm:main2} and, in 
particular,  that  
$$ \sup_{[0,T_\e]}{\|(\eta,\psi,V,B)(t)\|}_{X^{s-1}} \lesssim_{s} \e $$
thus guaranteeing \eqref{thm:localcont}, for $s-1\geq 5 $, on the time interval $[0, T_\e] $. The continuation criterion in
 Theorem \ref{thm:local}-(2)  implies   that the solution 
 $(\eta,\psi,V,B) $ is in $ C^0 ([0, T], X^s) $ for $T\geq T_\e $.  
$\hfill \Box$

\appendix
\bigskip
\section{Flows and conjugations}\label{ConjRules}

In this Appendix  we study the conjugation rules of a vector field under flow maps.

\subsection{Conjugation rules}

We first give this simple lemma that we use  in sections \ref{diagonalizzo} and \ref{sec:BNF}. 

\begin{lemma}\label{lem:tra.Vec}
For $U=\vect{u}{\bar{u}}$
consider a system $ \pa_t U  =X(U) U $ 
with $X(U) $ in $ \Sigma\mathcal{M}_{K,K',0}\otimes\mathcal{M}_2(\C) $
and let $ {\bf \Phi}^{\theta} (U) $ be the flow of
\begin{equation}\label{flussoZERO}
\pa_\theta {\bf \Phi}^{\theta} (U)  = \ii {\bf A}(U) {\bf \Phi}^{\theta} (U) \, , \quad {\bf \Phi}^{0} (U) = {\rm Id}  \,  ,
\end{equation}
where ${\bf A}:={\bf A}(U) $ is in $  \Sigma\mathcal{R}^{0}_{K,K',1}\otimes\mathcal{M}_2(\C)$.
Under the change of variable $ V := ({\bf \Phi}^{\theta} (U))_{\theta=1} U$,  the new system becomes 
\begin{equation}\label{nuovosist}
\pa_t V =X^{+}(U) V \,  , \qquad 
X^{+}(U):=  (\pa_t {\bf \Phi}^{1} (U)) 
({\bf \Phi}^{1}(U) )^{-1} + {\bf \Phi}^{1} (U) X(U)   ({\bf \Phi}^{1}(U) )^{-1}  \, .
\end{equation}
The operator $X^{+}(U)  $ is in $ \Sigma\mathcal{M}_{K,K'+1,0}\otimes\mathcal{M}_2(\C)$ and,
setting ${\rm Ad}_{\ii {\bf A}}[X]:=[\ii {\bf A},X]$, 
it  admits the Lie expansion
\begin{align} 
 \label{Lie1Vec}
  {\bf \Phi}^{1} (U) X(U)  ({\bf  \Phi}^{1} (U))^{-1} & = X+\sum_{q=1}^{L}\frac{1}{q!}{\rm Ad}_{\ii {\bf A}}^{q}[X]
 +  \frac{1}{L!} \int_{0}^{1}  (1- \theta)^{L} {\bf \Phi}^{\theta}(U) {\rm Ad}_{\ii{\bf A}}^{L+1}[X] ({\bf \Phi}^{\theta}(U))^{-1} 
d \theta \\
  \label{Lie2Vec}
 (\pa_t {\bf \Phi}^{1} (U) )({\bf  \Phi}^{1}(U))^{-1}   
& =\ii \pa_t {\bf A}+
\sum_{q=2}^{L}\frac{1}{q!}{\rm Ad}_{\ii {\bf A}}^{q-1}[\ii \pa_{t}{\bf A}] \nonumber \\ 
& \quad +\frac{1}{L!}\int_{0}^{1}
(1- \theta )^{L} 
{\bf \Phi}^{\theta}(U){\rm Ad}_{\ii {\bf A}}^{L}[\ii \pa_{t}{\bf A}]({\bf  \Phi}^{\theta}(U))^{-1} d \theta \, .
\end{align}
\end{lemma}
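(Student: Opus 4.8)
The statement is a standard ``flow conjugation'' lemma, and the plan is to verify the three assertions in order: (1) well-posedness of \eqref{flussoZERO} and the change-of-variables formula \eqref{nuovosist}; (2) the membership $X^{+}(U)\in\Sigma\mathcal{M}_{K,K'+1,0}\otimes\mathcal{M}_2(\C)$; (3) the two Lie expansions \eqref{Lie1Vec}, \eqref{Lie2Vec}. For step (1), I would invoke the flow estimates already available in the appendix (Lemma \ref{buonflusso2}, since ${\bf A}\in\Sigma\mathcal{R}^{0}_{K,K',1}\otimes\mathcal{M}_2(\C)$ is a smoothing operator of order zero) to get that ${\bf \Phi}^{\theta}(U)$ is well-defined, bounded and invertible on the relevant Sobolev spaces for $\theta\in[0,1]$, with ${\bf \Phi}^{\theta}(U)-\mathrm{Id}$ and its inverse in the map classes. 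Then \eqref{nuovosist} is just the product rule: differentiating $V=({\bf \Phi}^{1}(U))U$ in $t$ gives $\pa_t V=(\pa_t{\bf \Phi}^{1}(U))U+{\bf \Phi}^{1}(U)\pa_t U=(\pa_t{\bf \Phi}^{1}(U))({\bf \Phi}^{1}(U))^{-1}V+{\bf \Phi}^{1}(U)X(U)({\bf \Phi}^{1}(U))^{-1}V$.

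For step (2), the term ${\bf \Phi}^{1}(U)X(U)({\bf \Phi}^{1}(U))^{-1}$ lies in $\Sigma\mathcal{M}_{K,K',0}\otimes\mathcal{M}_2(\C)$ by the composition rules under Definition \ref{smoothoperatormaps} (composition of a map in $\Sigma\mathcal{M}_{K,K',0}$ with flows whose difference from the identity is smoothing). The term $(\pa_t{\bf \Phi}^{1}(U))({\bf \Phi}^{1}(U))^{-1}$ costs one extra time derivative, which is exactly why $K'$ is raised to $K'+1$; concretely one writes $\pa_t{\bf \Phi}^{1}(U)$ using \eqref{flussoZERO} differentiated in $t$, or directly via the Lie formula \eqref{Lie2Vec}, and reads off that the symbol/operator classes are preserved with the shift in $K'$, again using that $\pa_t{\bf A}\in\Sigma\mathcal{R}^{0}_{K,K'+1,1}$ by \eqref{prodottodisimboli2}.

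For step (3), the two expansions are the usual Lie series with integral remainder. For \eqref{Lie1Vec}, set $G(\theta):={\bf \Phi}^{\theta}(U)X({\bf \Phi}^{\theta}(U))^{-1}$; then $\frac{d}{d\theta}G(\theta)=[\ii{\bf A},G(\theta)]={\rm Ad}_{\ii{\bf A}}G(\theta)$ by \eqref{flussoZERO}, so Taylor expanding $G$ at $\theta=0$ to order $L$ with the integral form of the remainder gives \eqref{Lie1Vec}. For \eqref{Lie2Vec}, one observes that $H(\theta):=(\pa_t{\bf \Phi}^{\theta}(U))({\bf \Phi}^{\theta}(U))^{-1}$ satisfies $H(0)=0$ and $\frac{d}{d\theta}H(\theta)=\ii\pa_t{\bf A}+[\ii{\bf A},H(\theta)]=\ii\pa_t{\bf A}+{\rm Ad}_{\ii{\bf A}}H(\theta)$ (differentiate \eqref{flussoZERO} in $t$ and in $\theta$ and use that mixed partials commute), and then Taylor-expand $H$ in $\theta$ at $\theta=0$; the inhomogeneous term $\ii\pa_t{\bf A}$ produces the shifted index in the sum $\sum_{q=2}^{L}\frac1{q!}{\rm Ad}_{\ii{\bf A}}^{q-1}[\ii\pa_t{\bf A}]$ and the stated integral remainder. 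Convergence/meaning of these finite expansions is not an issue since they are finite sums plus an explicit integral; the only point to check is that each ${\rm Ad}_{\ii{\bf A}}^{q}[\,\cdot\,]$ stays in an appropriate class, which follows from the composition rules for $\Sigma\mathcal{R}$ and $\Sigma\mathcal{M}$.

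\textbf{Main obstacle.} The computational content is routine; the only delicate point is bookkeeping the operator classes and the time-differentiability index $K'$ through the compositions and through $\pa_t$, i.e.\ making sure that each application of ${\rm Ad}_{\ii{\bf A}}$ and each $\pa_t$ lands in the class claimed in \eqref{nuovosist} and that the remainder integrals in \eqref{Lie1Vec}--\eqref{Lie2Vec} are genuinely well-defined maps on the Sobolev scale with uniform bounds in $\theta$. All of this is handled by the composition properties stated after Definition \ref{smoothoperatormaps} and the flow estimates of Lemma \ref{buonflusso2}, so the proof is essentially an assembly of those ingredients.
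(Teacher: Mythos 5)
Your proposal is correct and follows essentially the same route as the paper: \eqref{nuovosist} by the product rule, \eqref{Lie1Vec} from the Heisenberg equation $\pa_\theta P(\theta)=[\ii{\bf A},P(\theta)]$ for $P(\theta)={\bf \Phi}^{\theta}(U)X(U)({\bf \Phi}^{\theta}(U))^{-1}$ followed by a Taylor expansion, and the class membership via the composition remarks after Definition \ref{smoothoperatormaps}. For \eqref{Lie2Vec} your packaging differs slightly (you derive the inhomogeneous ODE $H'(\theta)=\ii\pa_t{\bf A}+[\ii{\bf A},H(\theta)]$, $H(0)=0$, for $H(\theta)=(\pa_t{\bf \Phi}^{\theta}(U))({\bf \Phi}^{\theta}(U))^{-1}$, whereas the paper conjugates $\pa_t$ itself and reuses \eqref{Lie1Vec} with $[\ii{\bf A},\pa_t]=-\ii\pa_t{\bf A}$); the two are equivalent, but to land on the integral remainder exactly as written in \eqref{Lie2Vec} you should also record that $H'(\theta)={\bf \Phi}^{\theta}(U)(\ii\pa_t{\bf A})({\bf \Phi}^{\theta}(U))^{-1}$ (both sides solve $G'=[\ii{\bf A},G]$, $G(0)=\ii\pa_t{\bf A}$), equivalently that conjugation by ${\bf \Phi}^{\theta}(U)$ commutes with ${\rm Ad}_{\ii{\bf A}}$ since ${\bf A}$ is $\theta$-independent.

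One point you gloss over and the paper checks explicitly: membership of $X^{+}(U)$ in $\Sigma\mathcal{M}_{K,K'+1,0}\otimes\mathcal{M}_2(\C)$ requires, by Definition \ref{smoothoperatormaps}, that the homogeneous components satisfy the translation-invariance property \eqref{def:R-trin}. For the term $(\pa_t{\bf \Phi}^{1}(U))({\bf \Phi}^{1}(U))^{-1}$ this is not a formal consequence of the composition rules, because $\pa_t$ acts through the equation $\pa_t U=X(U)U$; the paper verifies it by the short chain-rule computation showing $\tau_{\vartheta}\big(\pa_{t}{\bf \Phi}_{\leq 2}^{1}(U)\big)=(\pa_{t}{\bf \Phi}_{\leq 2}^{1}(\tau_{\vartheta}U))\,\tau_{\vartheta}$, using the $x$-translation invariance of the generator ${\bf A}(U)$ and of the vector field $X(U)U$. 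Adding this few-line verification completes your argument; nothing else in your plan needs to change.
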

 
\begin{proof} 
The expression \eqref{nuovosist} follows by an explicit computation.
In order to prove \eqref{Lie1Vec} note that the vector field
$ P(\theta) := {\bf \Phi}^{\theta} (U) X(U)   ({\bf \Phi}^{\theta} (U))^{-1} $ 
satisfies the Heisenberg equation 
$$
\partial_\theta P(\theta) = [ \ii {\bf A}, P (\theta)]  =  \ii {\bf A}  P (\theta) - P (\theta) \ii  {\bf A}\,, \qquad P(0)=X(U) \,.
$$
Since the vector field $ {\bf A} $ is independent of $ \theta $ we also have
$$
\partial_\theta P(\theta) =  {\bf \Phi}^{\theta} (U)  {\rm Ad}_{\ii {\bf A}}[X] {\bf \Phi}^{\theta} (U)^{-1} \, . 
$$
Then  \eqref{Lie1Vec} follows by a Taylor expansion. 
To prove \eqref{Lie2Vec} we reason as follows.
We have that
\begin{equation}\label{triangolo}
{\bf \Phi}^{1} (U)\circ \pa_{t}\circ({\bf \Phi}^{1} (U))^{-1}=\pa_{t}+{\bf \Phi}^{1} (U)\big[\pa_{t}({\bf \Phi}^{1} (U))^{-1}\big]=
\pa_{t}-(\pa_{t}{\bf \Phi}^{1} (U))({\bf \Phi}^{1} (U))^{-1}.
\end{equation}
Moreover, using the Lie expansion obtained by formally replacing $X$ with $\pa_{t}$ in \eqref{Lie1Vec},
we have
\[
\begin{aligned}
{\bf \Phi}^{1} (U)\circ \pa_{t}\circ({\bf \Phi}^{1} (U))^{-1}
&=\pa_{t}-\sum_{q=1}^{L}\frac{1}{q!}{\rm Ad}^{q-1}_{\ii {\bf A}}[\ii \pa_{t}{\bf A}]-
\frac{1}{L!}\int_{0}^{1} (1-\theta)^{L} 
{\bf \Phi}^{\theta} (U) {\rm Ad}^{L}_{\ii {\bf A}}[\ii \pa_{t}{\bf A}]({\bf \Phi}^{\theta} (U))^{-1}d\theta
\end{aligned}
\]
which, together with  \eqref{triangolo}, implies \eqref{Lie2Vec}.
By Taylor expanding $ {\bf \Phi}^{1}(U)$ using \eqref{flussoZERO}, we derive that 
$ {\bf \Phi}^{1}(U) - {\rm Id} $ is in $ \Sigma\mathcal{M}_{K,K',1}\otimes\mathcal{M}_2(\C)$. 
The translation invariance property \eqref{def:R-trin} of the homogeneous components of 
$ {\bf \Phi}^{1}(U) $ follows since the generator $ {\bf A}(U) $ satisfies \eqref{def:R-trin}. 
Then,  the operator  $X^{+}(U)$ in \eqref{nuovosist} belongs 
to $ \Sigma\mathcal{M}_{K,K'+1,0}\otimes\mathcal{M}_2(\C) $ by Proposition \ref{composizioniTOTALI} 
and the remarks after Definition \ref{smoothoperatormaps}.
Let us justify the translation invariance property 
of the homogeneous components of $X^{+}(U)$.
Denoting by
 $ {\bf \Phi}_{\leq 2}^{1}(U)$ the sum of  its homogeneous components of degree
less or equal to 2 we have that, for any $ \vartheta\in \R$,  $\tau_{\vartheta} {\bf \Phi}_{\leq2}^{1}(U)= 
{\bf \Phi}_{\leq 2}^{1}(\tau_{\vartheta}U) \tau_\vartheta $, and so 
\be\label{pass-int}
 \tau_\vartheta d_U {\bf \Phi}_{\leq 2}^{1}(U)[  \hat H ] = 
 d_U {\bf \Phi}_{\leq 2}^{1}(\tau_{\vartheta}U)[ \tau_\vartheta \hat H ] \tau_\vartheta 
 \, . 
\ee
Then 
\begin{equation*}
\begin{aligned}
\tau_{\vartheta} ( \pa_{t} {\bf \Phi}_{\leq 2}^{1}(U)) = 
\tau_{\vartheta}\big( d_{U} {\bf \Phi}_{\leq2}^{1}(U)[X(U)U] \big)&
\stackrel{\eqref{pass-int}} = d_{U}{\bf \Phi}_{\leq 2}^{1}(\tau_{\vartheta}U)[\tau_{\vartheta}X(U)U]\tau_{\vartheta} 
=(\pa_{t} {\bf \Phi}_{\leq 2}^{1}(\tau_{\vartheta}U)) \tau_{\vartheta} 
\end{aligned}
\end{equation*}
using the translation invariance of  $X(U) U$. 
By composition we deduce that the homogeneous components of $ X^+ (U) $ in \eqref{nuovosist} 
satisfy \eqref{def:R-trin}.
\end{proof}

In the next subsection  we analyze how paradifferential operators change under the flow maps
generated by  paradifferential operators.

\subsection{Conjugation of paradifferential operators via flows}\label{sez:A2}

We consider the  flow equation 
 \begin{equation}\label{linprob}
 \pa_{\theta} \Phi^\theta = \ii\opbw(f(\theta, U ;x,  \x)) \Phi^\theta, \quad  \Phi^0 = {\rm Id} \, , 
\end{equation}
where $ f $ is a symbol assuming one of the following forms: 
\begin{align}
& f(\theta, U;x,  \x) := b(\theta, U;  x) \xi := \frac{\beta (U; x)}{1 + \theta \beta_x (U; x)} \xi \, , \quad \, \beta (U; x) \in 
\Sigma {\mathcal F}_{K,K',1}^\R \, , \label{sim1} \\
&  f(\theta, U;x, \x) := f(U;x, \x):=
\beta(U;x)|\x|^{\frac{1}{2}}, \qquad \quad \  \beta(U;x) \in \Sigma\mathcal{F}^{\R}_{K,K',1} \, , \label{sim2}\\
&  f(\theta, U;x, \x) := f(U;x, \x) \in \Sigma\Gamma^{m}_{K,K',1} \, \, , \quad \quad
\ \qquad m \leq 0 \, . \label{sim3}
\end{align}
Note that  \eqref{linprob}  with $ f $ as in \eqref{sim1} is a para-differential transport equation.
This is used  in  Section \ref{inteord1} and Subsection \ref{scarlatto}. 
Flows with $ f $ as in \eqref{sim2}  are used 
in Section \ref{inteord12} 
and with $f$ as in \eqref{sim3}  in Subsection \ref{inteord0} and Section \ref{sec:nega}. 

\begin{lemma}{\bf (Linear flows generated by a para-differential operator)}\label{buonflusso}
Assume that $ f $ has the form \eqref{sim1} or \eqref{sim2} or \eqref{sim3}.
Then, there is $s_0 > 0, r > 0 $ such that,  for any $U\in C^{K}_{*\R}(I; {\dot H}^{s}) \cap B^{K}_{s_0}(I;r)$, 
for any $s>0$,
the equation \eqref{linprob} has a unique solution $  \Phi^{\theta} (U) $ 
satisfying:
\\[1mm]
$(i)$ 
the linear map $\Phi^{\theta}(U)$  is invertible  and, 
for some $C_{s}>0$, 
\begin{align}\label{est1}
\|\pa_{t}^{k}\Phi^{\theta}(U)[v]\|_{\dot{H}^{s-k}}&+\|\pa_{t}^{k}(\Phi^{\theta}(U))^{-1}[v]\|_{\dot{H}^{s-k}} \leq 
\|v\|_{k,s}\big(1+C_s\|U\|_{K,s_0}\big) \, , \ \forall  0\leq k\leq K-K' \, , 
\\
C_{s}^{-1}\|v\|_{\dot{H}^s}&\leq\|\Phi^{\theta}(U)[v]\|_{\dot{H}^s} \leq C_s \|v\|_{\dot{H}^s} \, , 
\label{est1quatris}
\end{align}
for any $v\in C_{*}^{K-K'}(I;\dot{H}^{s})$ and uniformly in  $\theta \in [0,1]$;

\noindent
$(ii)$ the map $\Phi^{\theta}(U)$
admits an expansion in multilinear maps as 
$ \Phi^{\theta}(U)-{\rm Id}\in \Sigma\mathcal{M}_{K,K',1} $, $\theta\in[0,1]$.  
More precisely 
there are $M_1(U) $ in $  \widetilde{\mathcal{M}}_{1} $, 
and $M^{(1)}_2(U), M^{(2)}_2(U) $ in $  \widetilde{\mathcal{M}}_{2} $ (independent of $ \theta $) such that
\begin{equation}\label{est2}
\Phi^{\theta}(U)[U]=U+ \theta \big(M_1(U)[U] + M_{2}^{(1)}(U)[U]\big)+\theta^{2}M_2^{(2)}(U)[U]
+ M_{\geq3}(\theta;U)[U] 
\end{equation}
where $M_{\geq3}(\theta;U)$ 
is in $\mathcal{M}^{m}_{K,K',3}$ with estimates uniform in $\theta\in [0,1]$.

The same result holds for a matrix valued system
$ \pa_{\theta} {\bf \Phi}^\theta (U)  = {\bf B}(U) {\bf \Phi}^\theta (U) $, 
$ {\bf \Phi}^0 (U) = {\rm Id} $, 
where $ {\bf B}(U)$ $=\opbw{(B(U;x,\x))}$ and $B(U;x,\x) $ is a matrix of 
symbols in $ \Sigma{\Gamma}^{0}_{K,K',1} \otimes {\mathcal M}_2 (\C) $. 
\end{lemma}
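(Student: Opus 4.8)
The plan is to prove the three assertions in turn --- well-posedness and the bounds \eqref{est1}, \eqref{est1quatris}; the multilinear expansion \eqref{est2}; and finally the matrix case --- treating the scalar symbols \eqref{sim1}, \eqref{sim2}, \eqref{sim3} in a uniform way wherever possible.

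\textbf{Step 1: well-posedness and energy estimates.} First I would observe that in all three cases the generator $\opbw(\ii f(\theta,U;x,\xi))$ is, by Proposition \ref{azionepara}, a bounded operator on $\dot H^s$ with operator norm controlled by $\|U\|_{K,s_0}$ \emph{up to a gain}: in case \eqref{sim3} the symbol has order $m\le 0$ so the operator maps $\dot H^s\to\dot H^{s-m}\subseteq\dot H^s$; in cases \eqref{sim1}, \eqref{sim2} the symbol has positive order $1$ resp.\ $1/2$, so one cannot quote boundedness directly and must instead run an $\dot H^s$ energy estimate. The key point is that $f$ is real-valued (the functions $\beta$ lie in $\Sigma\mathcal F^{\R}_{K,K',1}$), so $\opbw(\ii f)$ is skew-adjoint up to an operator of order $0$ (for \eqref{sim2}) resp.\ order $0$ (for \eqref{sim1}, after using $\{f,\overline f\}$-type cancellations and the self-adjointness of the Bony--Weyl quantization of a real symbol modulo lower order, Proposition \ref{teoremadicomposizione}). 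Hence $\frac{d}{d\theta}\|\Phi^\theta(U)v\|_{\dot H^s}^2\lesssim_s \|U\|_{K,s_0}\|\Phi^\theta(U)v\|_{\dot H^s}^2$, and Gr\"onwall on $\theta\in[0,1]$ gives existence, uniqueness, invertibility, and \eqref{est1quatris}. The bound \eqref{est1} on time derivatives then follows by differentiating \eqref{linprob} $k$ times in $t$, using that $\partial_t$ applied to a symbol in $\Sigma\Gamma^m_{K,K',1}$ lands in $\Sigma\Gamma^m_{K,K'+1,1}$ (see \eqref{prodottodisimboli2}), and running the same energy argument for the system satisfied by $\partial_t^k\Phi^\theta(U)v$, inductively in $k$; here one uses $K'\ge 1$ to absorb the time derivative of the generator. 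I would phrase the case \eqref{sim1} exactly as in the paradifferential transport literature (it is the para-version of the flow of $x\mapsto x+\theta\beta$), which is why $b(\theta,U;x)=\beta/(1+\theta\beta_x)$ is the correct normalization.

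\textbf{Step 2: multilinear expansion.} Writing $\Phi^\theta(U)={\rm Id}+\int_0^\theta \opbw(\ii f(\tau,U;\cdot))\Phi^\tau(U)\,d\tau$ and iterating once more, one gets
\begin{align*}
\Phi^\theta(U)={\rm Id}+\int_0^\theta \opbw(\ii f(\tau,U;\cdot))\,d\tau
+\int_0^\theta\!\!\int_0^\tau \opbw(\ii f(\tau,U;\cdot))\opbw(\ii f(\tau',U;\cdot))\Phi^{\tau'}(U)\,d\tau'd\tau.
\end{align*}
Since $f$ vanishes at degree $1$ in $U$, the first integral is a symbol in $\Sigma\Gamma^m_{K,K',1}$ whose $1$-homogeneous part gives $M_1(U)$ and whose $2$-homogeneous part gives $M_2^{(1)}(U)$; by the remark below Proposition \ref{azionepara} (for $m\le0$) or by expanding explicitly in the positive-order cases, these contribute elements of $\widetilde{\mathcal M}_1$ resp.\ $\widetilde{\mathcal M}_2$, and in case \eqref{sim1} the $\theta$-dependence of $b$ contributes only to degree $\ge 2$, which explains the form of the $\theta$ and $\theta^2$ coefficients in \eqref{est2}. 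The double integral is at least cubic in $U$; using Proposition \ref{composizioniTOTALI} and the remarks under Definition \ref{smoothoperatormaps} (composition of maps, and insertion of $\Phi^{\tau'}(U)U={\rm Id}+\Sigma\mathcal M_{K,K',1}$), it lies in $\mathcal M^m_{K,K',3}$ uniformly in $\theta$. Collecting terms and renaming gives \eqref{est2}; the translation-invariance property \eqref{def:R-trin} of the homogeneous pieces is inherited from \eqref{def:tr-in}, \eqref{tautheta} since $f$ satisfies \eqref{def:tr-in}.

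\textbf{Step 3: the matrix case.} Here I would repeat Step 1 verbatim, the only change being that skew-adjointness of $\opbw(\ii f)$ is replaced by the real-to-real structure \eqref{prodotto}: if $B(U;x,\xi)$ is a matrix of symbols in $\Sigma\Gamma^0_{K,K',1}\otimes\mathcal M_2(\C)$, then $\opbw(B(U;\cdot))$ is bounded on $\dot H^s(\T;\C^2)$ with norm $\lesssim\|U\|_{K,s_0}$ by Proposition \ref{azionepara} directly (no positive order to fight), so Gr\"onwall is immediate and no cancellation is needed; the expansion \eqref{est2} carries over with matrix-valued $M_j$. The main obstacle is Step 1 in the positive-order cases \eqref{sim1} and \eqref{sim2}: one must verify that $\opbw(\ii f)+\opbw(\ii f)^*$ is a bounded operator on $\dot H^s$ (not just on $L^2$), with norm $\lesssim\|U\|_{K,s_0}$, which requires commuting $|D|^s$ past $\opbw(\ii f)$ and invoking the sharp G\r{a}rding / symbolic calculus statements of Proposition \ref{teoremadicomposizione} and the remark after it; everything else is a routine Gr\"onwall-and-induction bookkeeping exercise of the kind already carried out in \cite{BD}.
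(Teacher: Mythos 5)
Your proposal is correct and takes essentially the route the paper relies on: the paper's own proof simply invokes Lemma 3.22 of \cite{BD} (whose argument is exactly your Step 1--2: exact $L^2$ skew-adjointness of $\opbw(\ii f)$ for real $f$, commutator with $|D|^s$ via symbolic calculus, Gr\"onwall and induction on time derivatives, then a Duhamel/Taylor expansion in $\theta$ for the homogeneous terms) and only adds the remark that translation invariance of the homogeneous components follows from \eqref{def:tr-in}, a point you also cover. The minor inaccuracies (invoking $\{f,\bar f\}$-cancellations where Weyl quantization of a real symbol is already exactly self-adjoint, and the comment about ``$K'\geq1$'' instead of the counting $k\leq K-K'$) do not affect the argument.
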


\begin{proof}
See 
Lemma 3.22 in \cite{BD}. The translation invariance 
property \eqref{def:R-trin} of the flow map $\Phi^{\theta}(U)$ defined by \eqref{linprob} follows by the fact 
that the homogeneous components of the symbol $f(\theta, U ;x,  \x)$ satisfy \eqref{def:tr-in}. 
\end{proof}

The proof of the 
next lemma  follows by standard theory of Banach space ODEs.

\begin{lemma}{\bf (Linear flows generated by a smoothing operator)}\label{buonflusso2}
Assume that ${\bf A}(U)$ in \eqref{flussoZERO}
 is a smoothing operator in $\Sigma\mathcal{R}_{K,0,1}^{-\rho}\otimes\mathcal{M}_2(\C)$ 
for some $\rho\geq0$. 
Then, there is $s_0 > 0, r > 0 $ such that,  for any $U\in  B^{K}_{s}(I;r)$, 
for any $s>s_0$,
the equation \eqref{flussoZERO} has a unique solution $  {\bf \Phi}^{\theta} (U) $ 
satisfying, for some $C_{s}>0$,
\begin{equation}\label{est1VEC}
\|\pa_{t}^{k}({\bf \Phi}^{\theta}(U))^{\pm 1}[v]\|_{\dot{H}^{s+\rho-k}} \leq 
\|v\|_{k,s}\big(1+C_s\|U\|_{K,s_0}\big) +C_s\|v\|_{k,s_0}\|U\|_{K,s} \,  ,
\end{equation}
for any $v\in C_{*}^{K-K'}(I;\dot{H}^{s})$,  $0\leq k\leq K-K' $, and uniformly in 
$\theta \in [0,1]$.
Moreover $  {\bf \Phi}^{\theta} (U) $  satisfies 
a bound like \eqref{est1quatris}  and $(ii)$ of Lemma \ref{buonflusso}.
\end{lemma}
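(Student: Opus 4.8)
\textbf{Plan of proof for Lemma \ref{buonflusso2}.}

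The strategy is to treat \eqref{flussoZERO} as a linear non-autonomous (in $t$, autonomous in $\theta$) ODE on the scale of Sobolev spaces $\dot H^{s}(\T;\C^2)$, and to exploit the crucial structural feature that the generator $\ii {\bf A}(U)$ is $\rho$-\emph{smoothing}: by Proposition \ref{azionepara} (and the remark that a smoothing operator defines an element of $\Sigma\mathcal{M}^{m}_{K,K',1}$ for some $m\geq0$) the operator $\ii{\bf A}(U)$ is bounded on $\dot H^{s}$ for every $s\geq s_0$, with operator norm $\lesssim_s \|U\|_{K,s_0}$, and, more importantly, it maps $\dot H^{s}$ into $\dot H^{s+\rho}$. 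First I would fix $s\geq s_0$, $U\in B^{K}_s(I;r)$, and observe that on the fixed Banach space $\dot H^{s}$ the map $\theta\mapsto \ii{\bf A}(U)$ is a bounded operator, constant in $\theta$, depending continuously (indeed smoothly) on $t$ through $U(t)$. Hence a Picard iteration / contraction argument in $C^0([0,1];\mathcal{L}(\dot H^s))$ gives a unique solution ${\bf \Phi}^{\theta}(U)$, explicitly the convergent Neumann--Volterra series ${\bf \Phi}^{\theta}(U)=\sum_{k\geq0}\frac{\theta^k}{k!}(\ii{\bf A}(U))^k$ since the generator is $\theta$-independent, with $\|{\bf \Phi}^{\theta}(U)\|_{\mathcal{L}(\dot H^s)}\leq e^{\|\ii{\bf A}(U)\|_{\mathcal{L}(\dot H^s)}}\leq 1+C_s\|U\|_{K,s_0}$ for $r$ small; the inverse is $({\bf \Phi}^{\theta}(U))^{-1}={\bf \Phi}^{-\theta}(U)$, which satisfies the same bound, giving the analogue of \eqref{est1quatris}.

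Next I would upgrade the $\mathcal{L}(\dot H^s)$ bound to the tame estimate \eqref{est1VEC}. Writing ${\bf \Phi}^{\theta}(U)v = v + \int_0^\theta \ii{\bf A}(U){\bf \Phi}^{\tau}(U)v\, d\tau$, the smoothing property of $\ii{\bf A}(U)$ gains $\rho$ derivatives on the integral term: $\|{\bf \Phi}^{\theta}(U)v\|_{\dot H^{s+\rho}}\lesssim \|v\|_{\dot H^{s+\rho}}$ would be false, but the point is that only the \emph{difference} ${\bf \Phi}^{\theta}(U)v - v$ lives in $\dot H^{s+\rho}$, and its $\dot H^{s+\rho}$ norm is controlled by the $\Sigma\mathcal{R}^{-\rho}_{K,0,1}$ tame estimate \eqref{piove} applied to $R(V)U:=\ii{\bf A}(U)[\,\cdot\,]$: one gets a bound of the form $\|v\|_{k,s}(1+C_s\|U\|_{K,s_0})+C_s\|v\|_{k,s_0}\|U\|_{K,s}$, i.e. exactly the right side of \eqref{est1VEC}. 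For the time derivatives $\partial_t^k$ with $0\leq k\leq K-K'$, I would differentiate the Volterra integral equation $k$ times, using that $\partial_t$ of a smoothing operator in $\Sigma\mathcal{R}^{-\rho}_{K,0,1}$ is in $\Sigma\mathcal{R}^{-\rho}_{K,1,1}$ (hence still $\rho$-smoothing, costing one $t$-derivative on $U$), together with $\partial_t U = X(U)U$ to express everything in terms of $U$; Leibniz and induction on $k$ then reproduce the $k$-dependent norms $\|U\|_{K,s_0}$, $\|U\|_{K,s}$, and $\|v\|_{k,s_0}$, $\|v\|_{k,s}$ in \eqref{est1VEC}, absorbing the constant $K'$ worth of derivative losses into the statement (as in Lemma \ref{equiv-tempo} and the proof of Lemma \ref{buonflusso}). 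Finally, for the multilinear expansion analogous to item $(ii)$ of Lemma \ref{buonflusso}, I would Taylor-expand the Neumann series in $\theta$: since $\ii{\bf A}(U)=\mathtt{R}_1(U)+\mathtt{R}_{\geq2}(U)$ with $\mathtt{R}_1\in\widetilde{\mathcal{R}}^{-\rho}_1$, the terms $\theta\,\mathtt{R}_1(U)[U]$, $\theta\,\mathtt{R}_2^{(1)}(U)[U]+\theta^2 \mathtt{R}_2^{(2)}(U)[U]$ collect the homogeneous contributions of degree $1$ and $2$ (with $\mathtt{R}_2^{(1)}$ coming from $\mathtt{R}_{\geq2}$'s quadratic part and $\mathtt{R}_2^{(2)}$ from $\tfrac12(\ii\mathtt{R}_1)^2$), and the remainder lies in $\mathcal{M}_{K,K',3}\otimes\mathcal{M}_2(\C)$ by Proposition \ref{composizioniTOTALI}; the translation-invariance property \eqref{def:R-trin} of each homogeneous component passes to ${\bf \Phi}^{\theta}(U)$ exactly as in Lemma \ref{buonflusso}, since the generator satisfies it.

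The only mildly delicate point — not really an obstacle, but the step requiring care — is the bookkeeping of derivative losses in $t$ when iterating $\partial_t^k$ on the Volterra equation while simultaneously keeping the estimate \emph{tame} in the $\dot H^{s+\rho}$ norm: one must verify that each application of $\partial_t$ to ${\bf A}(U)$ (which replaces $\Sigma\mathcal{R}^{-\rho}_{K,0,1}$ by $\Sigma\mathcal{R}^{-\rho}_{K,1,1}$) is compatible with the smoothing gain of $\rho$ derivatives, so that the budget $K-K'$ of available $t$-derivatives is never exceeded and the $\rho$-smoothing is preserved throughout. Since $\rho\geq0$ is arbitrary and $K$ is taken $\gg\rho$ in all applications, this is purely a matter of organizing the induction, and I would simply remark that it proceeds verbatim as in the proof of Lemma 3.22 of \cite{BD} (for the non-smoothing case) combined with the $\rho$-gain of \eqref{piove}. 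This completes the proof.
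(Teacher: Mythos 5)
Your proposal is correct and is essentially the paper's argument: the paper proves this lemma only by remarking that it "follows by standard theory of Banach space ODEs", and your Neumann/Volterra-series construction in $\mathcal{L}(\dot H^{s})$, with the $\rho$-gain supplied by \eqref{piove}, the $\partial_t^k$ bounds obtained by differentiating the integral equation, and the Taylor expansion in $\theta$ for the multilinear statement, is exactly that standard argument spelled out. Your observation that the gain of $\rho$ derivatives concerns ${\bf \Phi}^{\theta}(U)-{\rm Id}$ rather than the identity component is a fair reading of \eqref{est1VEC} as it is used later, not a gap in your proof.
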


We now provide the conjugation rules  of a
paradifferential operator under the flow  $\Phi^{\theta}(U) $ in \eqref{linprob}.
We first give the result in the case when $ f $ has the form  \eqref{sim1}, i.e.  \eqref{linprob} is a transport equation. 

\begin{lemma}{\bf (Conjugation of a paradifferential operator under transport flow)} \label{conjFlowpara}
Let $\Phi^{\theta}(U)$ be the flow of \eqref{linprob} given by Lemma \ref{buonflusso} 
with $ f(\theta, U;  x, \xi )$ as in \eqref{sim1} and $U\in C^{K}_{*\R}(I; {\dot H}^{s_0}) \cap B^{K}_{s_0}(I;r)$. 
Consider the diffeomorphism of $ \T $ given by 
$$ 
\psi_U : x\mapsto x + \beta (U; x ) \, .
$$
Let $a(U;x,\x)$ be a symbol in $\Sigma\Gamma^{m}_{K,K',q}$
for some $q\in \N$, $ q \leq 2 $, $ K'\leq K$, $ r>0$ and  $m\in \R $. 
If $s_0$ is large enough and $r$ small enough
then there is a symbol $a_{\Phi}(U;x,\x) $ in $ \Sigma\Gamma^{m}_{K,K',q}$
such that
\begin{equation}\label{coniugato}
\Phi^{1}(U)\opbw(a(U;x,\x))(\Phi^1(U))^{-1}=\opbw(a_{\Phi}(U;x,\x))+R(U)
\end{equation}
where $R(U) $ is a smoothing remainder in $ \Sigma\mathcal{R}^{-\rho+m}_{K,K',q+1}$.
Moreover $a_{\Phi} $ admits  an expansion  as
\begin{equation}\label{coniugato2}
a_{\Phi}(U;x,\x)=a^{(0)}_{\Phi}(U;x,\x)+a^{(1)}_{\Phi}(U;x,\x)
\end{equation}
where
\begin{equation}\label{coniugato3}
a_{\Phi}^{(0)}(U;x,\x)=a\big(U; \psi_U(t,x),\x \pa_{y}( \psi^{-1}_{U}(t,y))_{|_{y=\psi_U(t,x)}}\big)\in \Sigma\Gamma^{m}_{K,K',q}
\end{equation}
and $a^{(1)}_{\Phi}(U;x,\x) $ is a symbol in $ \Sigma\Gamma^{m-2}_{K,K',q+1}$.
In addition, if $ a (U; x, \xi) = g(U; x) \xi $  then  $ a^{(1)}_{\Phi} = 0 $. 

Furthermore, the symbol $ a^{(0)}_{\Phi} $ in \eqref{coniugato3} admits an expansion in degrees of homogeneity as
\be\label{simbLie}
a^{(0)}_{\Phi} = a + \{ \beta  \xi, a  \} + \frac12 
\Big(  \big\{ \beta  \xi, \{ \beta \xi, a \} \big\} +
\{ - \beta  \beta_x \xi, a \}  \Big) 
\ee
up to a symbol in $ \Gamma^{m}_{K,K',3}$ 
which is real valued like $a_{\Phi}^{(0)}$ if $a(U;x,\xi)$ is real valued.
\end{lemma}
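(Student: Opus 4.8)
The statement to prove is Lemma~\ref{conjFlowpara}, the conjugation rule for a paradifferential operator under the transport flow $\Phi^\theta(U)$ generated by $\ii\opbw(b(\theta,U;x)\xi)$ with $b$ as in \eqref{sim1}. The plan is to follow the general scheme of conjugation of paradifferential operators by paradifferential flows developed in \cite{BD} and recalled in Appendix~\ref{sez:A2}, specializing the computation to the transport case where the symbol is linear in $\xi$. First I would set $P(\theta) := \Phi^\theta(U)\,\opbw(a(U;x,\xi))\,(\Phi^\theta(U))^{-1}$ and observe, as in the proof of Lemma~\ref{lem:tra.Vec}, that $P(\theta)$ satisfies the Heisenberg equation $\partial_\theta P(\theta) = [\,\ii\opbw(f(\theta,U;x,\xi)), P(\theta)\,]$ with $P(0)=\opbw(a)$. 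The next step is to look for an approximate solution of this Heisenberg equation in the form $\opbw(c(\theta,U;x,\xi))$ by imposing the transport equation at the level of symbols
\begin{equation*}
\partial_\theta c(\theta,U;x,\xi) = \{ f(\theta,U;x,\xi), c(\theta,U;x,\xi)\} + (\text{lower order}),
\end{equation*}
where the Poisson bracket is the principal part of the commutator symbol given by Proposition~\ref{teoremadicomposizione}, and all the lower-order corrections (which gain derivatives in $\xi$ and raise homogeneity in $U$) are collected into $a^{(1)}_\Phi$ and the smoothing remainder. Since $f = b(\theta,U;x)\xi$ is linear in $\xi$, the flow of the principal transport equation $\partial_\theta c = \{b\xi, c\}$ is solved explicitly by the method of characteristics: the characteristic system is the ODE $\dot x = b(\theta,U;x)$, whose time-$1$ flow is exactly the diffeomorphism $\psi_U : x\mapsto x+\beta(U;x)$ (this is precisely why $b$ is taken of the form $\beta/(1+\theta\beta_x)$ in \eqref{flow1}), and the cotangent lift acts on $\xi$ by the inverse transpose Jacobian, giving formula \eqref{coniugato3}.

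Having identified $a^{(0)}_\Phi$ as the pullback \eqref{coniugato3}, I would then verify the claimed properties in order. First, that $a^{(0)}_\Phi \in \Sigma\Gamma^m_{K,K',q}$: this follows from the composition rules for symbols under the substitution of a function (item (iv) of Proposition~\ref{composizioniTOTALI}) together with the fact that $\beta\in\Sigma\mathcal F_{K,K',1}$ and $\psi_U$ is a near-identity diffeomorphism whenever $r$ is small. Second, that $a_\Phi^{(1)} := a_\Phi - a_\Phi^{(0)}$, which collects the subleading commutator contributions accumulated along the flow, lies in $\Sigma\Gamma^{m-2}_{K,K',q+1}$ — here the gain of two orders and one degree of homogeneity comes from the structure of the commutator expansion $a\#_\rho b = ab + \frac{1}{2\ii}\{a,b\}+\cdots$ (two orders are lost in the principal symbol relation but recovered because $c$ solves the transport equation exactly at top order), and the raising of homogeneity is because the generator $b$ vanishes linearly in $U$. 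Third, the special case $a(U;x,\xi)=g(U;x)\xi$: here the pullback of a symbol homogeneous of degree one in $\xi$ under a diffeomorphism is again homogeneous of degree one in $\xi$ and \emph{smooth} (the characteristic flow is exact for such symbols, there are no curvature corrections), so $a^{(1)}_\Phi=0$. Fourth, the smoothing remainder $R(U)\in\Sigma\mathcal R^{-\rho+m}_{K,K',q+1}$: this is the difference between the true conjugate $P(1)$ and the approximate symbol $\opbw(a_\Phi)$, controlled via the integral remainder in the Lie expansion \eqref{Lie1Vec} together with Proposition~\ref{teoremadicomposizione} (which produces smoothing remainders of the announced order $-\rho$ after $\rho$ terms of the expansion), the flow bounds \eqref{est1}, and the composition rules of Proposition~\ref{composizioniTOTALI}. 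The translation invariance of the homogeneous components of $a_\Phi$ and $R(U)$ follows by the same difference argument used in Proposition~\ref{teoremadicomposizione}, since $\beta$ satisfies \eqref{def:tr-in}.

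Finally, to obtain the degree expansion \eqref{simbLie} for $a^{(0)}_\Phi$, I would Taylor-expand the explicit pullback formula \eqref{coniugato3} in powers of $\beta$. Writing $\psi_U(x) = x+\beta$ and inverting $y=x+\beta(x)$ as $x = y - \beta(y) + \beta(y)\beta_x(y) + O(\beta^3)$, one computes $\partial_y(\psi_U^{-1}(y)) = 1 - \beta_x + (\beta\beta_x)_x + O(\beta^3)$, substitutes into $a(U;\psi_U(x),\xi\,\partial_y(\psi_U^{-1})|_{y=\psi_U(x)})$, and expands $a$ to second order in its two perturbed arguments. Matching terms by degree of homogeneity in $U$, the first-order term reorganizes into the Poisson bracket $\{\beta\xi,a\}=\beta\,\partial_x a - \beta_x\xi\,\partial_\xi a$ (using that $\partial_\xi(\beta\xi)=\beta$, $\partial_x(\beta\xi)=\beta_x\xi$), and the second-order terms reorganize into $\frac12(\{\beta\xi,\{\beta\xi,a\}\} + \{-\beta\beta_x\xi,a\})$ after a short computation using the Leibniz rule for Poisson brackets; everything of degree $\geq 3$ in $U$ is absorbed into the class $\Gamma^m_{K,K',3}$. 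I expect the main obstacle to be the bookkeeping in this last step — reconciling the iterated-Poisson-bracket form that naturally emerges from the Lie/Heisenberg expansion with the change-of-variables form \eqref{coniugato3}, and in particular checking that the correction term $-\beta\beta_x\xi$ (which originates from the second-order term in the inversion of the diffeomorphism, equivalently from the $\theta$-dependence of $b$ in \eqref{flow1}) appears with the right coefficient; a useful cross-check is that for $a=g\xi$ both expressions must collapse to the exact transport pullback with no remainder, which pins down the normalization.
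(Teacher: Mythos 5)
Your proposal is correct, and for the main statement \eqref{coniugato}--\eqref{coniugato3} it follows essentially the route of the source the paper relies on: the paper's own proof simply cites Theorem 3.27 of \cite{BD}, where the conjugated principal symbol is characterized as the time-one value $a_0(\theta)_{|\theta=1}$ of the symbolic transport equation $\partial_\theta a_0=\{b(\theta,U;x)\xi,a_0\}$, $a_0(0)=a$ — precisely the Heisenberg-equation/characteristics scheme you set up, with the characteristics yielding the pullback \eqref{coniugato3}. Where you genuinely diverge is the derivation of \eqref{simbLie}: the paper Taylor-expands the transport ODE in $\theta$, so the term $\{-\beta\beta_x\xi,a\}$ drops out immediately from $\partial_\theta b_{|\theta=0}=-\beta\beta_x$ (see \eqref{flow1}), whereas you expand the explicit pullback \eqref{coniugato3} in powers of $\beta$. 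Both work: the quadratic terms of $a\big(x+\beta,\xi(1+\beta_x)^{-1}\big)$ do regroup into $\tfrac12\big(\{\beta\xi,\{\beta\xi,a\}\}+\{-\beta\beta_x\xi,a\}\big)$, so your cross-check succeeds; the ODE route is cheaper on bookkeeping, while yours never needs the $\theta$-dependence of $b$ explicitly. Two points to tighten. First, your justification of $a^{(1)}_\Phi=0$ when $a=g(U;x)\xi$ is not quite the right reason: that the pullback of a symbol linear in $\xi$ is again linear in $\xi$ only concerns $a^{(0)}_\Phi$; the actual mechanism is that in the Weyl composition of two symbols which are polynomials of degree one in $\xi$, every term of the commutator expansion beyond the Poisson bracket involves at least two $\xi$-derivatives of one factor and hence vanishes, so the symbolic transport is exact up to smoothing remainders (the paper disposes of this by quoting the vanishing of the remainder $r_{-\rho,3}$ in formula (3.5.37) of \cite{BD}). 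Second, a sign/orientation detail you already flagged: with the convention $\{a,b\}=\partial_\xi a\,\partial_x b-\partial_x a\,\partial_\xi b$, the characteristics of $\partial_\theta a_0=\{b\xi,a_0\}$ solve $\dot x=-b(\theta,x)$, $\dot\xi=b_x\xi$, and it is the time-one flow of $\dot x=-b$ that equals $\psi_U^{-1}$ — this is exactly what the choice of $b$ in \eqref{flow1} is engineered for; your final formula \eqref{coniugato3} is nevertheless the correct one.
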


\begin{proof}
Formulas \eqref{coniugato}-\eqref{coniugato3} are proved
in Theorem  3.27 of \cite{BD} (with homogeneity degree $ N = 3 $), where it is shown  that 
the symbol
$ a^{(0)}_{\Phi} (U; x, \xi) = a_0 (\theta, U;  x, \xi )_{| \theta=1} $ and $ a_0 (\theta) $
solves the transport equation 
\be\label{dthetaa0}
\frac{d}{d \theta} a_0 (\theta) = \{ b(\theta, U;  x) \xi, a_0 (\theta)  \} \, , \quad a_0 (0) = a \, . 
\ee
The claim  that, if $ a (U; x, \xi) = g(U; x) \xi $  then  $ a^{(1)}_{\Phi} = 0 $ follows because in formula 
(3.5.37) of \cite{BD}, the symbol $ r_{-\rho,3} = 0 $.
Finally we deduce \eqref{simbLie}
by  a Taylor expansion in $ \theta $  using \eqref{dthetaa0} (note that $ b $ and $  \beta $ have degree of homogeneity 
$ 1 $ in $ u $). 
Since  the homogeneous components of 
$\beta(U; x )$ satisfy the invariance 
condition \eqref{def:tr-in}, the flow $ \Phi^{1}(U) $ satisfies \eqref{def:R-trin}, and so the 
left hand side in \eqref{coniugato}.
The proof 
shows that the symbol  $a_{\Phi}$ 
 in \eqref{coniugato2} satisfies the invariance 
condition \eqref{def:tr-in} and therefore
the remainder $R(U)$ in \eqref{coniugato} satisfies \eqref{def:R-trin} 
by difference.
\end{proof}

\begin{remark}
If the symbol $a(U;x,\x)$ in Lemma \ref{conjFlowpara} is real  then 
the whole symbol $a_{\Phi}$ in \eqref{coniugato2} is real  as well. 
In this paper we shall not use this information 
 because in our application $m\leq 1$, and hence the symbol $a_{\Phi}^{(1)}$
has negative order.
\end{remark}

\begin{lemma}{\bf (Conjugation of $\pa_t$ under transport flow)} \label{conjFlowpara2}
Let $\Phi^{\theta}(U)$ be the flow of \eqref{linprob} given by Lemma \ref{buonflusso} 
with $f(\theta, U;x,  \x)$ 
  as in \eqref{sim1}.
Then 
\begin{equation}\label{pes2010}
\begin{aligned}
\big( \pa_t \Phi^1(U)\big) \big(\Phi^1(U)\big)^{-1} =  \ii \opbw(g(U;x) \xi ) + R(U)
\end{aligned}
\end{equation}
where  $ g (U;x) $ is a function in 
$\Sigma {\mathcal F}^\R_{K,K'+1,1} $ and $ R(U) $ is a smoothing operator in 
$ \Sigma\mathcal{R}^{-\rho}_{K,K'+1,1}$.

\noindent
In addition, the function $ g(U; x) $ admits the expansion in degrees of homogeneity 
\be\label{expg}
g(U; x) = \b_t - \b_x \b_t 	+ {g}_{\geq 3} (U; x) \, , \quad  {g}_{\geq 3} (U; x) \in {\mathcal F}^\R_{K,K'+1,3} \, . 
\ee
\end{lemma}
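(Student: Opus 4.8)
The statement to prove is Lemma~\ref{conjFlowpara2}: the conjugation of $\pa_t$ under the transport flow $\Phi^\theta(U)$ generated by the symbol $f(\theta,U;x,\xi)=b(\theta,U;x)\xi$ with $b(\theta,U;x)=\beta(U;x)/(1+\theta\beta_x(U;x))$. The computation is parallel to the one already carried out for Lemma~\ref{conjFlowpara}, so the plan is to mimic that structure. First I would write the identity
\[
\big(\pa_t\Phi^1(U)\big)\big(\Phi^1(U)\big)^{-1}=\int_0^1 \frac{d}{d\theta}\Big[\big(\pa_t\Phi^\theta(U)\big)\big(\Phi^\theta(U)\big)^{-1}\Big]\,d\theta
\]
and differentiate using the flow equation \eqref{linprob}. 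Setting $A(\theta):=(\pa_t\Phi^\theta)(\Phi^\theta)^{-1}$, a direct computation gives the ODE $\pa_\theta A(\theta)=\ii\,\pa_t\opbw(b(\theta,U;x)\xi)+[\ii\opbw(b(\theta,U;x)\xi),A(\theta)]$ with $A(0)=0$; equivalently $A(1)=\int_0^1 \mathrm{Ad}_{\Phi^\theta}\big(\ii\pa_t\opbw(b(\theta)\xi)\big)d\theta$ after conjugating. The leading term is $\ii\opbw\big((\pa_t b)\xi\big)$ integrated in $\theta$, and the remaining terms come from commutators and from conjugating by the flow.

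The second step is to identify the symbol $g$. Since $b=\beta/(1+\theta\beta_x)$, one has $\pa_t b = \pa_t\beta/(1+\theta\beta_x)-\beta\,\theta\,\pa_t\beta_x/(1+\theta\beta_x)^2$, so at first order in homogeneity $\pa_t b=\beta_t+O(u^2)$, and $\int_0^1\pa_t b\,d\theta$ contributes the linear term $\beta_t$ plus quadratic corrections. The commutator $[\ii\opbw(b\xi),A(\theta)]$ is of homogeneity $\geq 2$ in $U$ because $A(\theta)$ vanishes at $U=0$; by Proposition~\ref{teoremadicomposizione} this is a paradifferential operator of order $1$ (the two order-$1$ transport symbols produce, via the Poisson bracket expansion, a symbol of order $1$ since $\{b\xi,\tilde b\xi\}$ is of order $1$) plus a smoothing remainder in $\Sigma\mathcal{R}^{-\rho}_{K,K'+1,1}$. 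Conjugating by $\Phi^\theta$ (using Lemma~\ref{conjFlowpara} on the paradifferential part) again produces an order-$1$ transport-type symbol plus a further smoothing remainder. Collecting the quadratic homogeneous contributions explicitly — the $\theta$-integral of the $O(u^2)$ part of $\pa_t b$ together with the quadratic part of the conjugated commutator — and checking that they combine to $-\beta_x\beta_t$ is the arithmetic heart of the lemma; the natural way to pin this down is to Taylor-expand the flow in $\theta$ as in the proof of Lemma~\ref{conjFlowpara} (formula \eqref{simbLie}) and match degree $2$ terms, using $\pa_\theta b = -b\,\beta_x$ at $\theta=0$ and $b|_{\theta=0}=\beta$. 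Everything of homogeneity $\geq 3$ is absorbed into $g_{\geq3}\in\mathcal{F}^\R_{K,K'+1,3}$, and reality of $g$ follows from reality of $\beta$ (hence of $b$) exactly as in \eqref{condreal2}.

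The third and final step is bookkeeping: checking the symbol classes. The generator $b(\theta)\xi$ lies in $\Sigma\Gamma^1_{K,K',1}$, so $\pa_t b(\theta)\xi$ lies in $\Sigma\Gamma^1_{K,K'+1,1}$ (one time derivative consumed); the flow $\Phi^\theta(U)$ has the mapping properties of Lemma~\ref{buonflusso}, in particular $\Phi^1(U)-\mathrm{Id}\in\Sigma\mathcal{M}_{K,K',1}$; Proposition~\ref{composizioniTOTALI} then controls the composition with $A(\theta)$ and the conjugations, yielding the smoothing remainder $R(U)\in\Sigma\mathcal{R}^{-\rho}_{K,K'+1,1}$. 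The translation invariance property \eqref{def:R-trin} of $R(U)$ follows by difference: the homogeneous components of $b$, hence of $\Phi^\theta(U)$ and of $\ii\opbw(g(U;x)\xi)$, satisfy \eqref{def:tr-in}, and the left-hand side $(\pa_t\Phi^1)(\Phi^1)^{-1}$ inherits \eqref{def:R-trin} from the $x$-invariance of the flow map (Lemma~\ref{lem:tra.Vec}, last paragraph of its proof).

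\textbf{Main obstacle.} The delicate point is the explicit identification of the quadratic term $-\beta_x\beta_t$ in \eqref{expg}. Unlike the order-$1$ term, which is immediate, this requires carefully tracking the quadratic contributions from three sources — the $\theta$-dependence of $b$ through $\beta_x$, the commutator $[\ii\opbw(b\xi),A(\theta)]$ truncated at quadratic order, and the conjugation by $\Phi^\theta$ of that commutator — and verifying that the non-transport (order $<1$) pieces that arise from Poisson-bracket corrections cancel or get absorbed, leaving precisely a transport symbol $\ii g(U;x)\xi$ with $g$ having the stated quadratic part. I expect this to be a short but error-prone computation best organized via the Taylor-in-$\theta$ expansion of the flow, in analogy with formula \eqref{simbLie} and with the ``shape derivative'' bookkeeping used elsewhere in the paper; the precise coefficient $-\beta_x\beta_t$ is exactly of the form one expects from the composition rule $\psi_U\circ\psi_U$-type corrections, consistent with \eqref{coniugato3}.
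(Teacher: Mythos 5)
Your plan is essentially the paper's own proof: the paper also derives the Heisenberg-type equation for $P(\theta)=(\pa_t\Phi^{\theta}(U))(\Phi^{\theta}(U))^{-1}$ (citing Proposition 3.28 of \cite{BD}), solves it at the symbol level by transporting the source $\ii\,\pa_t b(\theta,U;x)\xi$ along the characteristics of the flow — which is exactly your Duhamel/conjugation formula, and which is why only a transport symbol plus a $\rho$-smoothing remainder survives (this is the same mechanism as the clause $a^{(1)}_{\Phi}=0$ of Lemma \ref{conjFlowpara} for symbols $g(U;x)\xi$) — and then obtains \eqref{expg} by the same Taylor expansion in $\theta$ you propose. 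The quadratic computation you defer does close as expected: with $p_0(0)=0$, $\pa_\theta p_0(0)=\ii\beta_t\xi$, $b(0)=\beta$, $\pa_\theta b(0)=-\beta\beta_x$ one gets $\pa_\theta^2 p_0(0)=\ii\big(\{\beta\xi,\beta_t\xi\}-\pa_t(\beta\beta_x)\xi\big)=-2\ii\,\beta_x\beta_t\,\xi$, hence $g=\beta_t-\beta_x\beta_t$ up to cubic terms.
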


\begin{proof}
By the proof of Proposition 3.28  of \cite{BD} 
(see formul\ae \, \eqref{triangolo} and (3.5.55) in \cite{BD})
the operator  
$
P(\theta) := ( \pa_t  \Phi^{\theta} (U) )(\Phi^{\theta}(U))^{-1}
$
solves
\be\label{eqPtheta}
\frac{d}{d \theta} P(\theta) = \big[ \ii \opbw( b(\theta, U;  x) \xi) ,  P(\theta) \big] + \ii \opbw( \pa_t  b(\theta, U;  x) \xi ) \, , 
\quad P(0) = 0 \, .
\ee
We claim that the  
solution of \eqref{eqPtheta} is, up to smoothing remainders,  
 $ P(\theta)  = \opbw(p_0(\theta, x, \xi)) $,  where 
the symbol $p_0(\theta, x, \xi) $ solves the forced transport equation 
\be\label{eq:intq0}
\frac{d}{d \theta } p_0(\theta, x, \xi) = \{ b(\theta,U;  x) \xi,  p_0(\theta, x, \xi) \} + \ii \pa_t b(\theta, U;  x) \xi \, ,
\quad p_0 (0) = 0 \,  . 
\ee
Indeed, the solution of \eqref{eq:intq0} is 
\begin{equation}\label{p0exact}
\begin{aligned}
 p_0 (\theta, x, \xi ) &  =   \ii  \int_0^\theta \pa_t  f(s, U;  \phi^{\theta, s}  (x, \xi) ) \, ds 
 \quad {\rm where} \quad 
 f(s, U; x, \xi) := b(s, U; x) \xi 
 \end{aligned}
\end{equation}
and $ \phi^{\theta, s}  (x, \xi ) $ is the solution of the characteristic Hamiltonian system 
$$ 
\begin{cases}
\frac{d}{ds} x (s) = - b(s,  x(s) ) \cr
\frac{d}{ds} \xi (s) = b_x (s, x(s) ) \xi (s) \, .
\end{cases} 
$$
with initial condition $ \phi^{\theta, \theta}  = {\rm Id }$. 
Note that $\phi^{\theta,s}(x,\xi)=\phi^{0,s}\phi^{\theta,0}$ where
\[
\phi^{\theta,0}(x,\x)=\Big(x+\theta\beta(U;x), \xi (1+\pa_{y}\gamma(U;\theta,y)_{|y=x+\theta\beta(U;x)}\Big)
\]
where $y+\gamma(U;\theta,y)$ is the inverse diffeomorphism of $x+\theta\beta(U;x)$
(see Lemma 3.21 in \cite{BD}). Then $\phi^{\theta,s}(x,\xi)$
 is linear in $\x$ and 
hence
also $p_0(\theta,x,\x)$ in \eqref{p0exact} is linear in $\xi$.
Since both $b(\theta, U;x)\x$ and $p_0(\theta,x,\x)$ are linear in $\x$
we deduce that 
the commutator $\big[ \ii \opbw( b(\theta, U;  x) \xi) ,  \opbw(p_0(\theta,x,\x)) \big]$
is given by 
$\opbw( \{ b(\theta,U;  x) \xi,  p_0(\theta, x, \xi) \})$ up to smoothing operators.
Moreover, by Lemma 3.23 in \cite{BD}, $f(s, U;   \phi^{\theta, s}  (x, \xi) )$ is in $\Sigma\Gamma^{1}_{K,K',1}$
with estimates uniform in $|\theta|, |s|\leq 1$.
Then \eqref{pes2010} follows with $ \ii g(U;x) \xi  := p_0 (1, x, \xi )  $. 
Finally we deduce \eqref{expg} by a Taylor expansion in $ \theta $ 
of the symbol $ p_0 (\theta) $, using \eqref{eq:intq0}. 
The function $  \b_t - \b_x \b_t  $
satisfies the translation invariance property  \eqref{def:tr-in} as $ \beta$.
As in Lemma \ref{lem:tra.Vec}  the operator $ ( \pa_t \Phi^1(U)) (\Phi^1(U))^{-1} $
 in \eqref{pes2010} is translation invariant 
and $R(U)$ satisfies the  property \eqref{def:R-trin}  by difference.  
\end{proof}

We now provide the conjugation  of a
paradifferential operator under the flow  $\Phi^{\theta}(U) $ in \eqref{linprob}, if  $ f $ has the form  \eqref{sim2} or \eqref{sim3}.

\begin{lemma}\label{flusso12basso}
{\bf (Conjugation of a paradifferential operator)}
Let $\Phi^{\theta}(U)$ be the flow of \eqref{linprob} given by Lemma \ref{buonflusso} 
with symbol $ f(U;  x, \xi ) $ in $  \Sigma\Gamma^{m}_{K,K',1} $ with $ m \leq 1/ 2  $, 
of the form  \eqref{sim2} or \eqref{sim3}. 
Let $a(U;x,\x)$ be a symbol in $\Sigma\Gamma^{m'}_{K,K',q}$
for some $q\in \N$, $ q\leq 2 $, $K'\leq K$, $ r>0$ and  $m'\in \R $. 
Then 
\begin{equation}\label{coniugato1000}
\begin{aligned}
& \Phi^{1}(U)\opbw(a(U;x,\x))(\Phi^{1}(U))^{-1} = \\
& 
\opbw\Big( a +  \{ f, a\} +  \frac12 \{ f, \{ f, a \} \} + r_1 + r_2 +r_3\Big)  + R(U)
\end{aligned}
\end{equation}
where  $ r_1 \in \Sigma\Gamma^{m+m'-3}_{K,K',q+1}$, 
$ r_2  \in \Sigma\Gamma^{2 m + m'- 4}_{K,K',q+2}$, $ r_3  \in \Gamma^{3 m + m'- 3}_{K,K',3}$  and 
 $ R(U) \in \Sigma\mathcal{R}^{-\rho}_{K,K',q+1} $. 
In addition  if $a(U;x,\x)$ is real 
 then also the symbols $r_i$, $i=1,2,3$, are real valued as well.
\end{lemma}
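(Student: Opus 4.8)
The plan is to prove Lemma \ref{flusso12basso} by the same Heisenberg/Taylor-expansion strategy used for Lemma \ref{conjFlowpara}, but now exploiting that the generator $f$ has \emph{negative} (or at worst order $1/2$) so that the Lie brackets in the expansion gain derivatives. First I would set $P(\theta):=\Phi^{\theta}(U)\,\opbw(a(U;x,\xi))\,(\Phi^{\theta}(U))^{-1}$ and observe, as in the proof of Lemma \ref{lem:tra.Vec}, that $P$ solves the Heisenberg equation $\partial_\theta P(\theta)=[\,\ii\opbw(f),P(\theta)\,]$, $P(0)=\opbw(a)$. Iterating this identity three times and using the integral form of Taylor's theorem gives
\begin{equation*}
P(1)=\opbw(a)+[\ii\opbw(f),\opbw(a)]+\tfrac12[\ii\opbw(f),[\ii\opbw(f),\opbw(a)]]+\tfrac12\int_0^1(1-\theta)^2\Phi^{\theta}(U)\,{\rm Ad}^3_{\ii\opbw(f)}[\opbw(a)]\,(\Phi^{\theta}(U))^{-1}\,d\theta .
\end{equation*}

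Next I would convert each iterated commutator of Bony–Weyl operators into a single para-differential operator plus a controlled smoothing remainder, using the composition Proposition \ref{teoremadicomposizione} together with the expansion $a\#_\rho b=ab+\tfrac{1}{2\ii}\{a,b\}+\cdots$. The single commutator $[\ii\opbw(f),\opbw(a)]$ equals $\opbw(\{f,a\})$ up to a symbol of order $m+m'-3$ in $\Sigma\Gamma^{m+m'-3}_{K,K',q+1}$ (two orders gained beyond the Poisson bracket, since the commutator of Weyl quantizations has an \emph{odd} expansion in the Poisson structure) plus a smoothing remainder in $\Sigma\mathcal{R}^{-\rho}_{K,K',q+1}$; this gives the term $r_1$. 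The double commutator $\tfrac12[\ii\opbw(f),[\ii\opbw(f),\opbw(a)]]$ produces $\tfrac12\opbw(\{f,\{f,a\}\})$ up to a symbol of order $2m+m'-4$, i.e. the term $r_2\in\Sigma\Gamma^{2m+m'-4}_{K,K',q+2}$, plus again a smoothing remainder; here the key bookkeeping point is that $f$ is $1$-homogeneous in $U$, so each bracket raises the homogeneity degree by one, explaining why $r_2$ lands in the class with subscript $q+2$. The integral remainder term $\tfrac12\int_0^1(1-\theta)^2\Phi^{\theta}(U)\,{\rm Ad}^3_{\ii\opbw(f)}[\opbw(a)]\,(\Phi^{\theta}(U))^{-1}\,d\theta$ is handled by first writing ${\rm Ad}^3_{\ii\opbw(f)}[\opbw(a)]=\opbw(r_3)+\widetilde R(U)$ with $r_3\in\Gamma^{3m+m'-3}_{K,K',3}$ (three brackets, hence homogeneity at least $3$, and three orders gained) and $\widetilde R(U)$ smoothing, and then conjugating this by the bounded invertible flows $\Phi^{\theta}(U)$ via Proposition \ref{composizioniTOTALI}, which preserves the symbol and smoothing classes; choosing $\rho$ large enough against $m,m'$ absorbs the orders lost in the order-counting $3m+m'-3$ vs.\ $-\rho$.

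For the two special symbol forms I would argue separately only where the order count needs it: when $f=\beta(U;x)|\xi|^{1/2}$ as in \eqref{sim2} one has $m=1/2$, and the advertised orders $m+m'-3=m'-5/2$ etc.\ are exactly what the composition theorem yields; when $f\in\Sigma\Gamma^m_{K,K',1}$ with $m\le 0$ as in \eqref{sim3} the gains are even better. In both cases the flow $\Phi^{\theta}(U)$ is well-posed, bounded and invertible by Lemma \ref{buonflusso}, and its homogeneous components are translation invariant because those of $f$ are; hence, exactly as at the end of the proof of Lemma \ref{conjFlowpara}, the symbol $a+\{f,a\}+\tfrac12\{f,\{f,a\}\}+r_1+r_2+r_3$ inherits the $x$-invariance property \eqref{def:tr-in} and the remainder $R(U)$ satisfies \eqref{def:R-trin} by difference, delivering $R(U)\in\Sigma\mathcal{R}^{-\rho}_{K,K',q+1}$. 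The main obstacle I anticipate is purely one of careful order- and homogeneity-bookkeeping through the three nested applications of Proposition \ref{teoremadicomposizione}: one must check that every symbol produced along the way lands in a class whose order, after subtracting $\rho$ from the composition remainder, is still $\le-\rho$ with the stated homogeneity index, and that the integral Taylor remainder, which involves the full flows rather than their finitely many homogeneous components, is correctly placed in the non-homogeneous smoothing class $\Gamma^{3m+m'-3}_{K,K',3}$ (degree $\ge3$) rather than in a homogeneous one — this is the same subtlety already navigated in \cite{BD} and in Lemma \ref{conjFlowpara}, so no genuinely new idea is needed, only attention to the indices.
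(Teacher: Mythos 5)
Your overall strategy (Lie/Heisenberg expansion of the conjugated operator, symbolic calculus for the iterated commutators, translation invariance by difference) is the same as the paper's, and your computation of the first two commutators, with remainders $r_1\in\Sigma\Gamma^{m+m'-3}_{K,K',q+1}$ and $r_2\in\Sigma\Gamma^{2m+m'-4}_{K,K',q+2}$, is correct. However, there is a genuine gap in your treatment of the Taylor remainder. You truncate the expansion at $L=2$, so the integral term contains $\Phi^{\theta}(U)\,{\rm Ad}^{3}_{\ii\opbw(f)}[\opbw(a)]\,(\Phi^{\theta}(U))^{-1}$, where ${\rm Ad}^{3}_{\ii\opbw(f)}[\opbw(a)]$ is a paradifferential operator of order $3m+m'-3$; this order is in general far above $-\rho$ (recall $\rho\gg1$ while $m\leq 1/2$ and $m'\in\R$ is arbitrary). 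You then claim that conjugating this operator by the flows $\Phi^{\theta}(U)$ ``preserves the symbol and smoothing classes'' via Proposition \ref{composizioniTOTALI}, so that the integral lands in $\opbw\big(\Gamma^{3m+m'-3}_{K,K',3}\big)$ plus a $\rho$-smoothing remainder. That proposition gives no such thing: composing the flow (which, a priori, is only a bounded invertible map, an element of $\Sigma\mathcal{M}_{K,K',0}$) with a paradifferential operator of positive or mildly negative order produces only a \emph{map} with a loss of derivatives, not a paradifferential operator plus an element of $\Sigma\mathcal{R}^{-\rho}_{K,K',q+1}$. Asserting that the conjugation by $\Phi^\theta(U)$ keeps the paradifferential structure is essentially the statement of the lemma itself, so this step is circular.

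The paper closes this gap by pushing the Lie expansion to an order $L$ depending on $\rho,m,m'$: it keeps all terms ${\rm Ad}^{k}_{\ii\opbw(f)}[\opbw(a)]=\opbw(b_k)$, $3\leq k\leq L$, with $b_k\in\Sigma\Gamma^{k(m-1)+m'}_{K,K',q+k}$ (these, restricted to degree $\geq 3$, constitute $r_3$, whose worst order is $3m+m'-3$), and chooses $L$ so that $(L+1)(1-m)-m'\geq\rho$. With this choice the operator ${\rm Ad}^{L+1}_{\ii\opbw(f)}[\opbw(a)]$ has order $\leq-\rho$, hence is itself a $\rho$-smoothing operator, and only \emph{then} is its conjugation by the flows harmless: composition of a smoothing operator with the bounded maps $\Phi^{\theta}(U)$, $(\Phi^{\theta}(U))^{-1}$ stays in the smoothing class by Proposition \ref{composizioniTOTALI} (with the auxiliary index $\tilde\rho$ chosen large enough to absorb the losses), cf. also Lemma 5.6 of \cite{BD} for the integral in $\theta$. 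To repair your proof you must replace the $L=2$ truncation by this $L$-dependent truncation; otherwise the placement of the Taylor remainder in $\opbw\big(\Gamma^{3m+m'-3}_{K,K',3}\big)+\Sigma\mathcal{R}^{-\rho}_{K,K',q+1}$ is unjustified.
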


\begin{proof}
The result follows by a Lie  expansion. 
Using \eqref{Lie1Vec} we have,  for $L\geq 3$,
$$
\begin{aligned}
& \Phi^{1}(U) \opbw(a)(\Phi^{1}(U))^{-1}=\opbw(a)+ \big[ \opbw(\ii f), \opbw(a) \big]
 + \frac{1}{2}{\rm Ad}^{2}_{\opbw(\ii f)}[\opbw(a)] + \label{int-Lie} \\
& \sum_{k=3}^{L}\frac{1}{k!}{\rm Ad}^{k}_{\opbw(\ii f)}[\opbw(a)]
+\frac{1}{L!}\int_{0}^{1} (1-\theta)^{L} \Phi^{\theta}(U)
\big({\rm Ad}^{L+1}_{\opbw(\ii f)}[\opbw(a)] \big)(\Phi^{\theta}(U))^{-1} d \theta\,.  
\end{aligned}
$$
By applying Propositions \ref{teoremadicomposizione}, \ref{composizioniTOTALI} 
replacing the smoothing index $\rho$ 
by some $\tilde{\rho}$ to be chosen below large enough, we get
$$
{\rm Ad}_{\opbw(\ii f)}[\opbw(a)]  =\big[\opbw(\ii f), \opbw(a)\big] 
 = \opbw\big( \{  f, a \} + r_1 \big) \, , \quad  r_1\in\Sigma\Gamma^{m+m' -3}_{K,K',q+1} \, , 
$$
up to a smoothing operator in $\Sigma\mathcal{R}^{-\tilde{\rho}+m+m'}_{K,K',q+1}$.
Moreover
$$
{\rm Ad}^{2}_{\opbw(\ii f)}[\opbw(a)] =\opbw( \{ f, \{  f, a \} \} + r_2) \, , \quad  
r_2 \in\Sigma\Gamma^{2m+m' -4}_{K,K',q+2} \, , 
$$
up to a smoothing operator in $\Sigma\mathcal{R}^{-\tilde{\rho}+2m+m'}_{K,K',q+2}$.
By  induction, for $ k \geq 3 $ we have  
\[
{\rm Ad}^{k}_{\opbw(\ii f)}[\opbw(a)]=\opbw( b_k),\quad 
b_k\in
\Sigma\Gamma^{k(m-1)+m'}_{K,K',q+k}  \, , 
\]
up to a smoothing operator in $\Sigma\mathcal{R}^{-\tilde{\rho}+m'+ k m}_{K,K',q+k}$. 
We choose $L$  in such a way that  
$  (L+1)(1-m) - m' \geq \rho$ and $ L + 1 \geq 3 $,  so that the operator $ \opbw( b_{L+1}) $ belongs to 
$ \mathcal{R}^{- \rho}_{K,K',3} $. The integral Taylor remainder in \eqref{int-Lie} 
belongs to $ \mathcal{R}^{- \rho}_{K,K',3} $ as well, see Lemma 5.6 in \cite{BD}.  
Then we choose $\tilde{\rho}$  large enough so that $ \tilde{\rho} - m' - ( L + 1) m \geq  \rho $
and the remainders are  $\rho$-smoothing.
By the third remark 
under Definition \ref{asyexpexp} we deduce that if $a(U;x,\x)$ is real
then the symbol of $\big[\opbw(\ii f), \opbw(a)\big] $ is real, and so $r_1,r_2,r_3$ are real valued as well.
\end{proof}

\begin{lemma}{\bf (Conjugation of $\pa_t $)} \label{flusso12basso2}
Let $\Phi^{\theta}(U)$ be the flow of \eqref{linprob}  
with symbol $ f(U;  x, \xi ) $ in $  \Sigma\Gamma^{m}_{K,K',1} $ with $ m \leq 1/ 2  $, 
of the form  \eqref{sim2} or \eqref{sim3}. Then 
\begin{equation}\label{coniugato1001}
\big( \pa_t \Phi^{1}(U) \big) \big( \Phi^{1}(U) \big)^{-1} = 
\ii  \opbw \big( \pa_t f + \frac{1}{2} \{  f,  \pa_t f  \}\big)  + \opbw(r_1 + r_2) + R(U) 
\end{equation}
where $ r_1 \in \Sigma \Gamma^{2 m - 3}_{K,K'+1,2}$,
$ r_2 \in \Gamma^{3 m - 2}_{K,K'+1,3}$ and $ R(U) \in \Sigma\mathcal{R}^{-\rho}_{K,K'+1,2}$.
\end{lemma}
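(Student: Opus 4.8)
\textbf{Proof plan for Lemma \ref{flusso12basso2}.}
The plan is to mimic the proof of Lemma \ref{conjFlowpara2}, replacing the transport symbol $b(\theta,U;x)\xi$ by the $\theta$-independent symbol $f(U;x,\xi)$ of order $m\le 1/2$. First I would observe, as in \eqref{triangolo}, that the operator
\[
P(\theta) := \big( \pa_t \Phi^{\theta}(U) \big) \big( \Phi^{\theta}(U) \big)^{-1}
\]
satisfies the Heisenberg-type ODE
\[
\frac{d}{d\theta} P(\theta) = \big[ \ii\opbw(f(U;x,\xi)), P(\theta) \big] + \ii\opbw\big( \pa_t f(U;x,\xi) \big) \, , \qquad P(0) = 0 \, ,
\]
which follows by differentiating \eqref{linprob} in $t$ and in $\theta$ and commuting the two derivatives, exactly as in Proposition 3.28 of \cite{BD}. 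Since $f$ is $\theta$-independent, Duhamel's formula gives the closed expression
\[
P(1) = \int_0^1 \Phi^{\theta}(U)\big(\ii\opbw(\pa_t f)\big)(\Phi^{\theta}(U))^{-1}\, d\theta \, ,
\]
and each integrand can be expanded by the Lie series \eqref{Lie1Vec} with generator $\ii\opbw(f)$ applied to $\ii\opbw(\pa_t f)$.

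Next I would run the same symbolic-calculus bookkeeping as in Lemma \ref{flusso12basso}. Using Propositions \ref{teoremadicomposizione} and \ref{composizioniTOTALI}, the first two terms of the Lie expansion produce
\[
\ii\opbw(\pa_t f) + \tfrac{\ii}{2}\big[\opbw(\ii f),\opbw(\pa_t f)\big] + \cdots
= \ii\opbw\big( \pa_t f + \tfrac12 \{ f, \pa_t f \} \big) + \opbw(r_1) + \cdots
\]
where the subprincipal correction to the single commutator has order $2m-3$ and homogeneity $2$, hence sits in $\Sigma\Gamma^{2m-3}_{K,K'+1,2}$ (the extra time derivative accounts for the bump $K'\rightsquigarrow K'+1$ and for the homogeneity shift). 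Iterating, the $k$-fold adjoint ${\rm Ad}^k_{\opbw(\ii f)}[\ii\opbw(\pa_t f)]$ is a paradifferential operator of order $k(m-1)+m$ and homogeneity $k+1$ up to a $\tilde\rho$-smoothing remainder; for $k\ge 3$ this order is $\le 3m-2$ with homogeneity $\ge 3$, hence is absorbed into $r_2\in\Gamma^{3m-2}_{K,K'+1,3}$. Choosing $L$ so that $(L+1)(1-m)-m\ge\rho$ and $L+1\ge 3$, and then $\tilde\rho$ large enough, the tail of the Lie series and the integral Taylor remainder (cf.\ Lemma 5.6 in \cite{BD}) all land in $\Sigma\mathcal{R}^{-\rho}_{K,K'+1,2}$. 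Integrating in $\theta\in[0,1]$ does not change these classes since the estimates are uniform, so one obtains \eqref{coniugato1001}.

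Finally I would check the translation invariance and real-structure bookkeeping, exactly as at the end of Lemma \ref{lem:tra.Vec} and Lemma \ref{conjFlowpara2}: the homogeneous components of $f$ satisfy \eqref{def:tr-in}, hence $\Phi^{\theta}(U)$ and $P(\theta)$ satisfy \eqref{tautheta}, the symbols $\pa_t f + \tfrac12\{f,\pa_t f\}$, $r_1$, $r_2$ satisfy \eqref{def:tr-in} by construction, and therefore $R(U)$ satisfies \eqref{def:R-trin} by difference. The main obstacle is purely organizational: keeping track of how the time derivative $\pa_t$ interacts with the homogeneity and the $K'$-bookkeeping in Definitions \ref{pomosimb}--\ref{smoothoperatormaps} when one commits an extra time derivative from $\pa_t f$, and verifying that the orders $2m-3$, $3m-2$ claimed in the statement are indeed the sharp ones produced by the first nontrivial correction terms. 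Since $m\le 1/2$, these orders are strictly negative (respectively $\le -2$ and $\le -\tfrac12$), so no loss of derivatives occurs and the expansion genuinely gains regularity at each step, which is what makes the truncation at finite $L$ legitimate.
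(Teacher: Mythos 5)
Your proposal is correct and follows essentially the paper's argument: the paper simply invokes the Lie expansion \eqref{Lie2Vec} (which your Heisenberg-ODE/Duhamel computation re-derives, using that $f$ in \eqref{sim2}--\eqref{sim3} is $\theta$-independent) together with the Weyl-calculus identity $f\#_{\rho} f_t - f_t\#_{\rho} f = \tfrac{1}{\ii}\{f,f_t\}$ up to a symbol of order $2m-3$, and then the same order/homogeneity bookkeeping you carry out. The only slip is cosmetic: the double adjoint (the $k=2$ term), of order exactly $3m-2$ and homogeneity $3$, should also be declared part of $r_2$, not only the terms with $k\geq 3$.
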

\begin{proof}
By the Lie expansion \eqref{Lie2Vec} we have 
$$
\begin{aligned}
\big( \pa_t \Phi^{1}(U) \big) \big( \Phi^{1}(U) \big)^{-1} & = \opbw(\ii \pa_t f )
+\sum_{k=2}^{L}\frac{1}{k!}{\rm Ad}^{k-1}_{\opbw(\ii f)}[\opbw(\ii\pa_t f )] \\
&+\frac{1}{L!}\int_{0}^{1} (1-\theta)^{L} \Phi^{\theta}(U)
\Big({\rm Ad}^{L}_{\opbw(\ii f)}[\opbw(\ii\pa_t f )] \Big)(\Phi^{\theta}(U))^{-1} d\theta
\end{aligned}
$$
and the lemma  follows noting that 
$ f \#_\rho f_t - f_t  \#_\rho f = \frac{1}{ \ii}\{ f, f_t \}  $ plus a symbol of order $ 2 m - 3 $. The translation invariance properties \eqref{def:tr-in}, \eqref{def:R-trin} follow since $f(U;x,\x)$ and $(\pa_{t}f)(U;x,\x)$
satisfy \eqref{def:tr-in} as well, and then arguing as in the proof of Lemma \ref{lem:tra.Vec}.
\end{proof}

\subsection{Lie expansions of vector fields up to quartic degree}\label{sec:Lie}
In  this subsection the variable $U$ may denote
both the couple of complex variables $(u,\bar{u})$ or the  real variables $(\eta,\psi)$.

\begin{lemma}\label{quasi-inversa} 
{\bf (Inverse of $ {\bf F}_{\leq 3}^{\theta}(U) $ up to $ O(u^4)$)}
Consider  a map $ \theta \mapsto {\bf F}_{\leq 3}^{\theta}(U) $, $ \theta \in [0,1] $, 
of the form 
\begin{equation}\label{espmultilin}
{\bf F}^{\theta}_{\leq 3} (U)=U
+\theta \big(M_{1}(U)[U]+M_2^{(1)}(U)[U]\big) + \theta^{2}{M}^{(2)}_2(U)[U]  
\end{equation} 
where  $ M_{1}(U) $ is  in $ \widetilde{{\mathcal{M}}}_{1}\otimes\mathcal{M}_2(\C)$ and 
the maps $ M_{2}^{(1)}(U), M^{(2)}_2(U) $ are in 
$ \widetilde{{\mathcal{M}}}_{2}\otimes\mathcal{M}_2(\C)$. 
Then there is a family of maps ${\bf G}_{\leq 3}^{\theta}(V) $ of the form 
\begin{equation}\label{espmultilin3}
{\bf G}_{\leq 3}^{\theta}(V)=V- \theta \big( M_{1}(V)[V] + M_{2}^{(1)}(V)[V]\big) + \theta^{2}\breve{M}_2^{(2)}(V)[V]
\end{equation}
where 
$  \breve{M}^{(2)}_2(V) $ is in $ \widetilde{{\mathcal{M}}}_2\otimes\mathcal{M}_2(\C) $, 
such that
\begin{equation}\label{espmultilin5}
{\bf G}_{\leq 3}^{\theta}\circ{\bf F}_{\leq 3}^{\theta}(U)= U + M_{\geq 3}(\theta; U)[U] \, ,  \quad
{\bf F}_{\leq 3}^{\theta}\circ{\bf G}_{\leq 3}^{\theta}(V)= V + M_{\geq 3}(\theta; U)[U] \, , 
\end{equation}
where $ M_{\geq 3}(\theta; U) $ is a polynomial in $ \theta $ and finitely many monomials  
$ M_p (U)[U] $ for maps $ M_p (U) \in  \widetilde{{\mathcal{M}}}_{p} \otimes\mathcal{M}_2(\C) $, $ p \geq 3 $. 
\end{lemma}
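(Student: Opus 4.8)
The plan is to treat \eqref{espmultilin5} as an identity between polynomial maps in the variable $U$ (with polynomial dependence on $\theta$), and to match the homogeneous components of degree $1$, $2$, $3$ in $U$ on both sides. First I would observe that the shape \eqref{espmultilin3} of the approximate inverse is forced: composing \eqref{espmultilin} with a tentative near-identity map $V\mapsto V + \theta N_1(V)[V] + \theta N_2^{(1)}(V)[V] + \theta^2 N_2^{(2)}(V)[V]$, the vanishing of the degree-$2$ component of the composition (modulo $O(U^{\geq 3})$) forces $N_1=-M_1$, and then the vanishing of the $\theta$-linear part of the degree-$3$ component forces $N_2^{(1)}=-M_2^{(1)}$. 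Hence ${\bf G}_{\leq 3}^{\theta}$ must be of the stated form and only the $\theta^2$ cubic coefficient $\breve{M}_2^{(2)}$ remains to be chosen.

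To determine $\breve{M}_2^{(2)}$ I would expand ${\bf G}_{\leq 3}^{\theta}\circ{\bf F}_{\leq 3}^{\theta}(U)$ in degrees of homogeneity. Writing $W:={\bf F}_{\leq 3}^{\theta}(U)=U+\theta M_1(U)[U]+\theta M_2^{(1)}(U)[U]+\theta^2 M_2^{(2)}(U)[U]$ and using bilinearity of $M_1$ and of the maps in $\widetilde{\mathcal{M}}_2$, one gets, modulo terms of homogeneity $\geq 4$ in $U$,
\[
M_1(W)[W]=M_1(U)[U]+\theta\big(M_1(M_1(U)[U])[U]+M_1(U)[M_1(U)[U]]\big),
\]
\[
M_2^{(1)}(W)[W]=M_2^{(1)}(U)[U],\qquad \breve{M}_2^{(2)}(W)[W]=\breve{M}_2^{(2)}(U)[U].
\]
Substituting $V=W$ into \eqref{espmultilin3}, the degree-$1$ component is $U$, the degree-$2$ component $\theta M_1(U)[U]-\theta M_1(U)[U]$ vanishes, the $\theta$-linear degree-$3$ component $\theta M_2^{(1)}(U)[U]-\theta M_2^{(1)}(U)[U]$ vanishes, and the $\theta^2$ degree-$3$ component is
\[
\theta^2\Big(M_2^{(2)}(U)[U]-M_1(M_1(U)[U])[U]-M_1(U)[M_1(U)[U]]+\breve{M}_2^{(2)}(U)[U]\Big).
\]
It therefore suffices to define $\breve{M}_2^{(2)}$ as the symmetrization in the first two entries of the trilinear map $(U_1,U_2,U_3)\mapsto M_1(M_1(U_1)[U_2])[U_3]+M_1(U_1)[M_1(U_2)[U_3]]-M_2^{(2)}(U_1,U_2)[U_3]$, so that on the diagonal $\breve{M}_2^{(2)}(U)[U]=M_1(M_1(U)[U])[U]+M_1(U)[M_1(U)[U]]-M_2^{(2)}(U)[U]$ and the $\theta^2$ cubic component vanishes. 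All remaining contributions in ${\bf G}_{\leq 3}^{\theta}\circ{\bf F}_{\leq 3}^{\theta}(U)$ have homogeneity $\geq 4$; since ${\bf F}_{\leq 3}^{\theta}$ and ${\bf G}_{\leq 3}^{\theta}$ are polynomial in $U$ of bounded degree, these contributions assemble into a polynomial in $\theta$ whose coefficients are finitely many monomials $M_p(U)[U]$ with $p\geq 4$, which yields the first identity in \eqref{espmultilin5}.

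For the second identity I would run the mirror computation on ${\bf F}_{\leq 3}^{\theta}\circ{\bf G}_{\leq 3}^{\theta}(V)$: expanding in degrees of homogeneity and setting $V':={\bf G}_{\leq 3}^{\theta}(V)=V-\theta M_1(V)[V]+\cdots$, the $\theta^2$ cubic order reduces to exactly the same requirement $\breve{M}_2^{(2)}(V)[V]=M_1(M_1(V)[V])[V]+M_1(V)[M_1(V)[V]]-M_2^{(2)}(V)[V]$ (the two $M_1$-terms now coming from inserting $V'$ into $\theta M_1(V')[V']$), so the same $\breve{M}_2^{(2)}$ works; alternatively one invokes the elementary fact that a left inverse modulo $O(U^{4})$ of a near-identity map whose correction starts at degree $2$ is automatically a right inverse modulo $O(U^{4})$.

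The only genuinely technical point — the one I expect to require the most care — is checking that $\breve{M}_2^{(2)}$ belongs to $\widetilde{\mathcal{M}}_2\otimes\mathcal{M}_2(\C)$ and that the higher-order remainders belong to the classes $\widetilde{\mathcal{M}}_p\otimes\mathcal{M}_2(\C)$. This follows from the composition and substitution rules for the classes $\widetilde{\mathcal{M}}$ (the homogeneous analogues of the rules listed after Definition \ref{smoothoperatormaps}): composing a $p$-homogeneous and a $q$-homogeneous map, and substituting a $q$-homogeneous map into an argument of a $p$-homogeneous map, produce $(p+q)$-homogeneous maps which lose finitely many derivatives and preserve both the frequency-support/momentum condition \eqref{omoresti2} and the translation-invariance property \eqref{def:R-trin}; symmetrization over a fixed set of arguments obviously preserves the class. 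There is no small-divisor or analytic obstruction here — the lemma is purely formal near-identity inversion truncated at cubic order.
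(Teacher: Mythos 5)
Your proposal is correct and follows essentially the same route as the paper: the paper simply inverts the relation $V={\bf F}_{\leq 3}^{\theta}(U)$ by substituting it iteratively into itself, which yields exactly your cubic coefficient $\breve{M}_2^{(2)}(V)[V]=M_1(M_1(V)[V])[V]+M_1(V)[M_1(V)[V]]-M_2^{(2)}(V)[V]$ and invokes the same composition/substitution remarks after Definition \ref{smoothoperatormaps} to place all terms in the classes $\widetilde{\mathcal{M}}_p\otimes\mathcal{M}_2(\C)$. Your additional points (symmetrizing $\breve{M}_2^{(2)}$ in its first two arguments and verifying the right-inverse identity separately) are consistent refinements of the same formal near-identity inversion.
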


\begin{proof}
Setting  $V={\bf F}_{\leq 3}^{\theta}(U)$  we have, by \eqref{espmultilin},  
\begin{equation}\label{rel2}
U=V-\theta \big(M_{1}(U)[U]+M_2^{(1)}(U)[U]\big) - \theta^{2}{M}^{(2)}_2(U)[U]  \, .  
\end{equation}
Substituting iteratively twice the relation \eqref{rel2} into itself, and 
using  the last two remarks under Definition \ref{smoothoperatormaps}, we get  
$$
U = V - \theta  M_{1}(V)[V] + \theta^2 M_{1}(V)[ M_{1}(V)[V]] +  \theta^2 M_{1}( M_{1}(V)[V])[V] -
\theta^{2}M_2^{(2)} (V)[V]- \theta M_2^{(1)} (V)[V]
$$
up to a polynomial $ M_{\geq 3}(\theta; U)[U]$ in $ \theta $ and $ U $ 
which has degree of homogeneity at least three in
$ U $ (recall that $ {\bf F}^\theta_{\leq 3}  (U)$ in \eqref{espmultilin} is a polynomial in $ U $).  
This expansion defines  $ {\bf G}_{\leq 3}^{\theta} $ in \eqref{espmultilin3}, and proves \eqref{espmultilin5}. 
\end{proof}

We regard  the map $ \theta \mapsto {\bf G}_{\leq 3}^{\theta}(V)$ in \eqref{espmultilin3} as the formal 
flow of a  non-autonomous vector field
$ S(\theta; U )$ up a remainder of degree of homogeneity four, see \eqref{linprob200}.

\begin{lemma}\label{compolinearflows}
Consider a map ${\bf F}_{\leq 3}^{\theta}(U)$ as in \eqref{espmultilin} and let $ {\bf G}_{\leq 3}^\theta (V) $ be its 
approximate inverse as in \eqref{espmultilin3} up to quartic remainders.
Then 
 \begin{equation}\label{linprob200}
\pa_{\theta}{\bf G}_{\leq 3}^{\theta}(V) 
= S(\theta;{\bf G}_{\leq 3}^{\theta}(V))+M_{\geq 3}(\theta;U)[U] \, , \quad 
{\bf G}_{\leq 3}^{0}(V) = V \, , 
\end{equation}
where 
$ S(\theta;U) $  is a  vector field  of the form
 \begin{equation}\label{essone}
S(\theta; U) = S_1(U)[U]+  \theta S_2(U)[U] 
\end{equation}
where $ S_{1}(U) $ is a map in $ \widetilde{\mathcal{M}}_{1}\otimes\mathcal{M}_2(\C) $ and 
$ S_2(U) $ in $  \widetilde{\mathcal{M}}_{2} \otimes\mathcal{M}_2(\C) $,  and 
$ M_{\geq 3}(\theta; U) $ is a polynomial in $ \theta $ and finitely many monomials  
$ M_p (U)[U] $ for maps $ M_p (U) \in  \widetilde{{\mathcal{M}}}_{p} \otimes\mathcal{M}_2(\C) $, $ p \geq 3 $. 
\end{lemma}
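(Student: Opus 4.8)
\textbf{Proof plan for Lemma \ref{compolinearflows}.}
The plan is to reverse-engineer the generating vector field $S(\theta; U)$ from the explicit polynomial form of ${\bf G}_{\leq 3}^{\theta}(V)$ given in \eqref{espmultilin3}, by matching degrees of homogeneity in $U$. First I would differentiate \eqref{espmultilin3} directly in $\theta$:
\[
\pa_\theta {\bf G}_{\leq 3}^{\theta}(V) = -\big(M_1(V)[V] + M_2^{(1)}(V)[V]\big) + 2\theta\, \breve{M}_2^{(2)}(V)[V] \, .
\]
On the other hand, since \eqref{linprob200} is a flow equation with initial datum $V$, I would postulate $S(\theta;U) = S_1(U)[U] + \theta S_2(U)[U]$ as in \eqref{essone} with unknown maps $S_1 \in \widetilde{\mathcal{M}}_1 \otimes \mathcal{M}_2(\C)$ and $S_2 \in \widetilde{\mathcal{M}}_2 \otimes \mathcal{M}_2(\C)$, and substitute ${\bf G}_{\leq 3}^{\theta}(V)$ for $U$ inside $S(\theta;\cdot)$, expanding everything up to cubic degree in $V$ using the composition/substitution rules under Definition \ref{smoothoperatormaps} (in particular the remark that $M \in \Sigma\mathcal{M}_{K,K_1',p}$ composed with $V + \tilde M(V)V$ stays in the appropriate class, and that compositions of homogeneous maps of degrees $p,q$ produce degree $p+q$).

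The matching then proceeds degree by degree. At degree $1$ in $V$ (i.e.\ the linear-in-$V$ part of the vector field, noting $S_1(V)[V]$ is quadratic) — actually the lowest nontrivial homogeneity is $2$: comparing the quadratic terms forces $S_1(V)[V] = -M_1(V)[V]$, hence $S_1 := -M_1 \in \widetilde{\mathcal{M}}_1 \otimes \mathcal{M}_2(\C)$. At cubic degree, the left side contributes $-M_2^{(1)}(V)[V]$ together with the cubic terms coming from $S_1({\bf G}_{\leq 3}^\theta(V))[{\bf G}_{\leq 3}^\theta(V)]$ — namely $-M_1$ evaluated with one argument replaced by $-\theta M_1(V)[V]$, in both the "coefficient" slot and the "acted-on" slot — plus $\theta\, S_2(V)[V]$. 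Solving this linear relation determines $S_2$ uniquely as an explicit combination of $M_1$, $M_2^{(1)}$ (and, through $\breve M_2^{(2)}$, of $M_2^{(2)}$), manifestly a map in $\widetilde{\mathcal{M}}_2 \otimes \mathcal{M}_2(\C)$; the $\theta^2$ coefficient on the left, $2\breve M_2^{(2)}$, must then agree automatically with what the ansatz produces, which is consistent precisely because $\breve M_2^{(2)}$ was constructed in Lemma \ref{quasi-inversa} exactly to make ${\bf G}_{\leq 3}^\theta \circ {\bf F}_{\leq 3}^\theta = {\rm Id}$ up to quartic order. All terms of homogeneity $\geq 3$ that are generated in this bookkeeping — from iterating the substitution, from $S_1$ hitting quadratic corrections, and from $\theta^2 S_2$ — get absorbed into $M_{\geq 3}(\theta;U)[U]$, which by construction is a $\theta$-polynomial whose coefficients are finite sums of monomials $M_p(U)[U]$ with $M_p \in \widetilde{\mathcal{M}}_p \otimes \mathcal{M}_2(\C)$, $p \geq 3$; the closure of the relevant symbol/map classes under products and substitutions (the last two remarks after Definition \ref{smoothoperatormaps} and Proposition \ref{composizioniTOTALI}) guarantees this membership. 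Finally the translation-invariance property \eqref{def:R-trin} of the homogeneous components of $S_1, S_2$ follows since ${\bf F}_{\leq 3}^\theta$, and hence ${\bf G}_{\leq 3}^\theta$, is $x$-translation invariant, and $\pa_\theta$ commutes with $\tau_\vartheta$, so each homogeneous piece inherits \eqref{def:R-trin} by the same argument as in Lemma \ref{lem:tra.Vec}.

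The main obstacle I anticipate is purely combinatorial: correctly tracking the cubic contributions obtained when the linear-in-$U$ multilinear map $S_1(U) = -M_1(U)$ is composed with the quadratic correction inside ${\bf G}_{\leq 3}^\theta(V)$, since $M_1$ appears both as the coefficient (argument) of the multilinear map and as the operator acting on the last slot, so the substitution $U \rightsquigarrow V - \theta M_1(V)[V] - \dots$ must be carried out in every slot and symmetrized. This is exactly the kind of Lie-series bookkeeping done in Lemma \ref{quasi-inversa}, so no new analytic input is needed — one only has to be careful that the $\theta$-dependence is organized as $S_1 + \theta S_2$ and that the leftover is genuinely $O(U^3)$ with coefficients in the stated classes. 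Once this matching is done, \eqref{linprob200} with \eqref{essone} follows by construction.
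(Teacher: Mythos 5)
Your proposal is correct and takes essentially the same approach as the paper: one differentiates the explicit polynomial \eqref{espmultilin3} in $\theta$ and recognizes the result as a vector field of the form \eqref{essone} evaluated at ${\bf G}_{\leq 3}^{\theta}(V)$, using the approximate inversion \eqref{espmultilin5} and the closure-under-composition remarks following Definition \ref{smoothoperatormaps} to push all remaining terms into the quartic remainder $M_{\geq 3}(\theta;U)[U]$. The only difference is presentational: rather than solving for $S_1,S_2$ by homogeneity matching, the paper writes the generator in closed form as $\partial_\theta {\bf G}_{\leq 3}^{\theta}$ with $V$ replaced by ${\bf F}_{\leq 3}^{\theta}(U)$, which yields the same $S_1=-M_1$ and the same cubic component.
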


\begin{proof}
Differentiating  \eqref{espmultilin3} we have
\begin{equation}\label{compoeq}
\pa_\theta {\bf G}_{\leq 3}^{\theta}(V)=  - M_1(V)[V] - {M}^{(1)}_2(V)[V]+ 2  \theta \breve M_2^{(2)}(V)[V] \, . 
\end{equation}
Then set  
\be\label{breveS2}
\breve S (\theta; U ) := -  M_1({\bf F}_{\leq 3}^{\theta}(U))[{\bf F}_{\leq 3}^{\theta}(U)] - 
 M_2^{(1)}({\bf F}_{\leq 3}^{\theta}(U))[{\bf F}_{\leq 3}^{\theta}(U)]
+2\theta \breve M_2^{(2)}({\bf F}_{\leq 3}^{\theta}(U))[{\bf F}_{\leq 3}^{\theta}(U)]   \, .
\ee
By \eqref{espmultilin} and 
 the last two remarks under Definition \ref{smoothoperatormaps}, 
we deduce that $ \breve S (\theta; U )  $ 
is equal to a vector field  $ S(\theta; U )$ as in \eqref{essone} 
plus  a term $ M_{\geq 3}(\theta; U)[U]$ 
which is a polynomial in $ \theta $ of monomials   
$ M_p (U)[U] $ for maps $ M_p (U) \in  \widetilde{{\mathcal{M}}}_{p} \otimes\mathcal{M}_2(\C) $, $ p \geq 3 $. 
By \eqref{breveS2}  and the second identity in  \eqref{espmultilin5} we deduce
\be\label{passofina}
 \breve S (\theta;  {\bf G}_{\leq 3}^{\theta}(V) ) =
 - M_1(V)[V] - {M}^{(1)}_2(V)[V]+ 2  \theta \breve M_2^{(2)}(V)[V]   
\ee
plus another polynomial $ M_{\geq 3}(\theta; U)[U]$ of degree at least three. 
Comparing \eqref{passofina} with \eqref{compoeq}
the lemma follows.
\end{proof}

Given polynomials  vector fields $X(U)$ and $Y (U)$ we define the nonlinear commutator 
\be\label{exp:XY}
\bral X, Y\brar (U)  := d_U Y (U) [X(U)] -  d_U X (U) [Y(U)] \, . 
\ee
Under the same notation of Lemmata \ref{quasi-inversa}, \ref{compolinearflows}, we have the following result. 

\begin{lemma}\label{quasi-flusso} {\bf (Lie expansion)}
Consider a  vector field 
$ X $ of the form $ X(U) = M(U)U $ for some map $ M (U) = M_0 + M_1 (U) + M_2 (U) $ 
where  $ M_0 $ is in $ \widetilde{\mathcal M}_0 \otimes {\mathcal M}_2 (\C) $, 
$ M_1 (U) $ is in $ \widetilde{\mathcal M}_1 \otimes {\mathcal M}_2 (\C) $ and $ M_2 (U)  $ in  
 $ \widetilde{\mathcal M}_2 \otimes {\mathcal M}_2 (\C) $. 
Consider a transformation $ {\bf F}_{\leq 3}^{\theta}(U)$ as in \eqref{espmultilin} and let 
$ S(\theta; U) $ be the vector field of the form \eqref{essone} such that  \eqref{linprob200}
holds true. 
Then,
if $U$ solves 
\begin{equation}\label{problemaU}
\pa_t U =X(U) \,,
\end{equation}
the function  $V := {\bf F}^{1}_{\leq 3} (U) $ solves
\begin{equation}\label{espansionecampo}
\pa_t V = 
 X(V) + \bral S, X\brar_{|\theta=0}(V) + \frac{1}{2} \bral S, \bral S, X\brar \brar_{|\theta=0}(V) + \frac12 \bral \pa_\theta S_{|\theta=0}, X \brar (V)+\cdots 
\end{equation}
up to terms of degree of homogeneity greater or equal to $4$.
\end{lemma}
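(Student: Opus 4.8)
\textbf{Proof plan for Lemma \ref{quasi-flusso}.}

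The plan is to derive the Lie expansion \eqref{espansionecampo} from the defining relation \eqref{linprob200} for the approximate flow $ {\bf G}_{\leq 3}^\theta $, exploiting the fact that all the maps involved are polynomials in $ U $ of controlled degree, so that ``up to terms of degree of homogeneity $ \geq 4 $'' means we may freely truncate and reorganize finite Taylor expansions. First I would set $ V(\theta) := {\bf F}_{\leq 3}^\theta(U) $, so that $ U = {\bf G}_{\leq 3}^\theta(V(\theta)) $ up to a quartic remainder by \eqref{espmultilin5}. Differentiating $ V(\theta) $ in $ \theta $ and using the chain rule together with the fact that $ U $ is $ \theta $-independent, one gets that $ W(\theta) := \pa_t V(\theta) $, where $ U $ evolves by \eqref{problemaU}, is obtained by conjugating the vector field $ X $ through the family $ {\bf F}_{\leq 3}^\theta $. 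Concretely, the standard computation for flows of non-autonomous vector fields shows that if $ \Psi^\theta $ is the (approximate) flow generated by $ S(\theta;\cdot) $ — here $ \Psi^\theta = {\bf G}_{\leq 3}^\theta $ by \eqref{linprob200} — then the push-forward $ (\Psi^\theta)_\star X $ satisfies the Heisenberg-type ODE $ \frac{d}{d\theta}(\Psi^\theta)_\star X = \bral S(\theta), (\Psi^\theta)_\star X \brar $, with $ \bral\cdot,\cdot\brar $ the nonlinear commutator \eqref{exp:XY}, modulo quartic errors coming from the remainder $ M_{\geq 3}(\theta;U)[U] $ in \eqref{linprob200}.

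Next I would integrate this ODE by a Taylor expansion in $ \theta $ around $ \theta = 0 $, keeping terms up to the order needed. Writing $ Y(\theta) := (\Psi^\theta)_\star X $, we have $ Y(0) = X $, $ Y'(0) = \bral S, X\brar_{|\theta=0} $, and $ Y''(0) = \bral \pa_\theta S_{|\theta=0}, X\brar + \bral S_{|\theta=0}, \bral S_{|\theta=0}, X\brar\brar $, so that
\[
Y(1) = X + \bral S, X\brar_{|\theta=0} + \tfrac12 \bral \pa_\theta S_{|\theta=0}, X\brar + \tfrac12 \bral S, \bral S, X\brar \brar_{|\theta=0} + \cdots,
\]
which is exactly the right-hand side of \eqref{espansionecampo} evaluated at $ V = V(1) = {\bf F}_{\leq 3}^1(U) $. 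The ``$ \cdots $'' collects the integral Taylor remainder plus all contributions of the $ M_{\geq 3} $ errors; each such contribution is a finite sum of monomial vector fields of homogeneity $ \geq 4 $, by the composition properties of the classes $ \widetilde{\mathcal M}_p $ recorded under Definition \ref{smoothoperatormaps} (in particular, composing $ M_p $ with $ M_q $ raises the homogeneity to $ p+q $, and $ S(\theta) $ has homogeneity $ \geq 1 $ while $ X $ has homogeneity $ \geq 0 $ but its quadratic-and-higher part $ M_1(U)U + M_2(U)U $ has homogeneity $ \geq 2 $). The bookkeeping that the truncation is legitimate — i.e. that the discarded terms genuinely start at degree $ 4 $ — is the only mildly delicate point: one must check that the lowest-degree piece of $ X $ that can interact with $ S $ twice is $ X_1 = M_0 U $ (homogeneity $ 1 $), and $ \bral S, \bral S, X_1\brar\brar $ with $ S $ of homogeneity $ 1 $ has homogeneity $ 1+1+1 = 3 $, which is retained, whereas the next-order terms (three brackets, or one bracket with $ \pa_\theta^2 S $, or brackets involving $ M_1, M_2 $) all have homogeneity $ \geq 4 $.

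The main obstacle, such as it is, is purely organizational rather than conceptual: one has to be careful that the non-autonomous vector field $ S(\theta;U) = S_1(U)[U] + \theta S_2(U)[U] $ has the affine $ \theta $-dependence coming from \eqref{essone}, so $ \pa_\theta S_{|\theta=0} = S_2(U)[U] $ is itself a genuine (quadratic) vector field and the term $ \tfrac12\bral \pa_\theta S_{|\theta=0}, X\brar $ must be kept separately from the double-bracket term — this is precisely why the expansion \eqref{espansionecampo} has that extra half-bracket, compared to the autonomous case \eqref{Lie1Vec}. Once this is accounted for, the result follows by combining: (i) Lemma \ref{compolinearflows}, which identifies $ {\bf G}_{\leq 3}^\theta $ as the approximate flow of $ S(\theta;\cdot) $; (ii) the chain rule / Heisenberg ODE for the push-forward; and (iii) a finite Taylor expansion in $ \theta $, with all remainders absorbed into monomial vector fields of homogeneity $ \geq 4 $ via the algebra of the maps classes. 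I would not carry out the (routine) verification of each bracket's homogeneity in detail, merely invoke the composition rules under Definition \ref{smoothoperatormaps}.
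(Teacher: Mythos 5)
Your argument is correct and is essentially the paper's own: identify $ {\bf G}_{\leq 3}^{\theta} $ as the approximate flow of $ S(\theta;\cdot) $ via Lemma \ref{compolinearflows}, write $ \pa_t V $ as the conjugated vector field $ \big(d{\bf G}^{\theta}_{\leq 3}(V)\big)^{-1} X\big({\bf G}^{\theta}_{\leq 3}(V)\big) $, Taylor expand in $ \theta $ at $ 0 $ to second order (the usual Lie formula), evaluate at $ \theta=1 $, and absorb all errors (from \eqref{espmultilin5}, \eqref{linprob200} and the $ \theta^{3} $-remainder) into homogeneity $ \geq 4 $. Two harmless slips do not affect the outcome: the conjugated field is the pull-back of $ X $ by $ {\bf G}^{\theta}_{\leq 3} $ (equivalently the push-forward by $ {\bf F}^{\theta}_{\leq 3} $), not the push-forward by $ {\bf G}^{\theta}_{\leq 3} $, and for a $ \theta $-dependent generator the exact identity is $ \frac{d}{d\theta}(\Psi^{\theta})^{*}X=(\Psi^{\theta})^{*}\bral S(\theta),X\brar $ rather than your Heisenberg form $ \frac{d}{d\theta}Y=\bral S(\theta),Y\brar $, but both give the same $ Y'(0) $ and $ Y''(0) $ and differ only from order $ \theta^{3} $ on, i.e.\ at homogeneity $ \geq 4 $, so \eqref{espansionecampo} is unaffected.
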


\begin{proof}
In order to find the quadratic and cubic components of the transformed system, it is sufficient  to write 
$ V := {\bf F}_{\leq 3}^\theta (U) $, $ \theta \in [0,1] $,  and the first identity 
in \eqref{espmultilin5}  as 
$ U = {\bf G}_{\leq 3}^\theta (V) - M_{\geq 3}(\theta; U )[U] $.  
Then, differentiating with $ \pa_t $ 
the first identity in \eqref{espmultilin5}, and using \eqref{problemaU}, we obtain,  up to a quartic term, 
\be\label{traspuA}
X( {\bf G}^\theta_{\leq 3} (V) ) = d {\bf G}^\theta_{\leq 3} (V)[ V_t ]   = ({\rm Id} - M(\theta; V))[V_t ]
\ee
where $ M(\theta; V)  = \theta (\check M_1 (V)  + \check M_2^{(1)} (V)  )+ \theta^2 \check M_2 (V) $ for suitable maps 
$  \check M_1 (V) $ in $ \widetilde{\mathcal{M}}_{1}\otimes\mathcal{M}_2(\C) $ 
and  $ \check M_2 (V), \check M_2^{(1)} (V)  $ in  $  \widetilde{\mathcal{M}}_{2} \otimes\mathcal{M}_2(\C) $,
recall \eqref{espmultilin3}. 
Applying in \eqref{traspuA} the ``pseudo-inverse" 
$$
\big( d {\bf G}_{\leq 3}^\theta (V) \big)^{-1} :=  {\rm Id} + M(\theta; V)  + M^2 (\theta; V) \, , 
$$
and since, by \eqref{problemaU}, we have 
 $ \pa_t V  = \pa_t U $ plus a quadratic  term in $ U $, we deduce that, up to a quartic term,   
$$
\big( d {\bf G}_{\leq 3}^\theta (V) \big)^{-1} X( {\bf G}^\theta_{\leq 3} (V) ) = V_t  \, . 
$$
The left hand side of this formula can be expanded in Taylor at $ \theta = 0 $ up to degree $ 2 $,  
obtaining, using  \eqref{linprob200},  
the usual Lie formula (see e.g. \cite{He})
\be\label{XLie}
X(V)+ \theta \bral S, X\brar_{|\theta=0}(V) + 
\frac{\theta^2}{2}   \Big( \bral S, \bral S, X\brar \brar_{|\theta=0}(V) 
+ \bral  (\pa_{\theta} S(\theta))_{|\theta=0}, X\brar (V)\Big) 
\ee
up to terms of degree $ 4 $. Evaluating \eqref{XLie} at $ \theta = 1 $ we get  \eqref{espansionecampo}.
\end{proof}

\subsection{Proof of \eqref{uguale}-\eqref{uguale10}. } 
\label{Psiasflow}

\begin{proof}
For any function $ m_{-1}(U; x) $ in $ \Sigma\mathcal{F}_{K,1,1}[r] $, by Lemma \ref{buonflusso}  
the flow in \eqref{generatore20}
is well defined. 
We claim that 
\begin{equation}\label{exp}
({\bf \Psi}_{-1}^{\theta}(U))_{|_{\theta=1}}=\exp\{\opbw(M_{-1})\}=\opbw(\exp\{ M_{-1}\})+R(U)
\end{equation}
where $R $ is in $ \Sigma\mathcal{R}^{-\tilde\rho}_{K,1,1}\otimes\mathcal{M}_{2}({\C})$ for any $\tilde\rho>0$.
 Indeed 
$$
 \exp\{\opbw(M_{-1})\} ={\rm Id}+\opbw( M_{-1})
 +\frac{1}{2}\opbw( M_{-1})\opbw(M_{-1})+\sum_{k\geq3}\frac{1}{k!}\big(\opbw( M_{-1})\big)^{k}\,.
$$
 By Proposition \ref{teoremadicomposizione} (applied with some $\tilde{\rho}$ to be chosen later)
we have that $ \opbw( M_{-1})\opbw(M_{-1}) $  is equal to $ \opbw(( M_{-1})^{2})$
plus  a smoothing remainder in $\Sigma\mathcal{R}^{-\tilde{\rho}}_{K,1,2}$.
Furthermore, by Proposition 3.6 in \cite{FI1} we deduce 
that 
\[
\sum_{k\geq3}\frac{1}{k!}\Big(\big(\opbw( M_{-1})\big)^{k}-\opbw\big((M_{-1})^{k}\big)\Big)
\]
belongs to the class of non-homogeneous smoothing remainders $\mathcal{R}^{-\tilde{\rho}}_{K,1,3}$. This proves \eqref{exp}.
By an explicit computation (see the proof of Corollary
$3.1$ in \cite{FI1}) we have 
\be
 \exp\{M_{-1}\}:=\left(
\begin{matrix}g_1 & g_{2} \\ \ov{g_{2}} & g_{1}
\end{matrix}
\right) \, , \quad 
g_1 := 1 + \Psi_1 (|m_{-1}|^2  ) \, , \  \quad g_{2}:= m_{-1} \Psi_2 (| m_{-1}|^2)  \, , 
\label{iperbolici}
\ee
where $ \Psi_1 (y), \Psi_2 (y) $ are the analytic functions
$$
\Psi_1 (y) :=  \frac{y}{2} + \sum_{k\geq 2} \frac{y^k}{(2k)!} \, , \quad \Psi_2 (y) := 
1 + \sum_{k \geq 1}^{\infty}\frac{y^k}{(2k+1)!}  \, . 
$$
We now choose $m_{-1}(U;x)$ in such a way that 
$ \exp\{M_{-1}\} :=C^{-1}(U;x)$, namely  (recalling \eqref{invC}, \eqref{TRA}) 
we have to solve the following equations 
$$
\Psi_1 (|m_{-1}|^2)= f - 1  \, , \quad m_{-1} \Psi_2 (| m_{-1}|^2) = - g \, .
$$
Note that  $ \Psi_1(y)  $ is locally invertible near $ 0 $ and 
 that, since $1+a+\lambda_{+} \geq1/2 $, the function $f$ in \eqref{TRA} satisfies 
$ f^{2}-1=\frac{|a|^{2}}{2\lambda_{+}(1+a+\lambda_{+})}\geq 0 $, thus $ f - 1 \geq 0 $.
 Therefore we have 
 \begin{equation}\label{definizioneC2}
|m_{-1}|^{2} = \Psi_1^{-1} ( f - 1  ) \, , \quad m_{-1} = - \frac{g}{ \Psi_2 (| m_{-1}|^2) } \, . 
\end{equation}
Since $ f - 1 $ and $ g $ belong to $\Sigma\mathcal{F}_{K,1,1} $ and $ \Psi_1^{-1}, \Psi_2 $ are analytic, it follows that  
the function $ m_{-1} $ belongs to $\Sigma\mathcal{F}_{K,1,1}$ as well. 
Formulas \eqref{exp}-\eqref{definizioneC2} prove \eqref{uguale}.

Let us prove  \eqref{uguale10}. The flow $ ({\bf \Psi}_{-1}^{\theta}(U))_{|_{\theta=1}} $ 
is invertible and, 
setting  
$Q(U):= ({\bf \Psi}_{-1}^{\theta}(U))^{-1}_{|_{\theta=1}} - \opbw(C) $,  we have 
\[
{\rm Id}= ({\bf \Psi}_{-1}^{\theta}(U))^{-1}_{|_{\theta=1}} ({\bf \Psi}_{-1}^{\theta}(U))_{|_{\theta=1}}
\stackrel{\eqref{uguale}}{=}
\opbw(C)\opbw(C^{-1})+\opbw(C)R(U)+Q(U)({\bf \Psi}_{-1}^{\theta}(U))_{|_{\theta=1}}.
\]
Hence, using Propositions \ref{teoremadicomposizione}, \ref{composizioniTOTALI}, we deduce that
$
Q(U)=\tilde{R}(U) ({\bf \Psi}_{-1}^{\theta}(U))^{-1}_{|_{\theta=1}},
$
for some $\tilde{R}$ in the class $\Sigma\mathcal{R}^{-\tilde{\rho}}_{K,1,1}$. 
We conclude that 
$Q$ is in $\Sigma\mathcal{R}^{-{\rho}}_{K,1,1}$, using that 
$({\bf \Psi}_{-1}^{\theta}(U))^{-1}_{|_{\theta=1}} $ is in $ \Sigma {\mathcal M}_{K,1,0} \otimes {\mathcal M}_2 (\C) $ 
by item $(ii)$ of 
Lemma \ref{buonflusso},  and  choosing $\tilde\rho$ large enough. 
\end{proof}

\bigskip

\end{document}